\newtheorem{theorem}{Theorem}[section]
\newtheorem{remark}{Remark}[section]
\newtheorem{lemma}{Lemma}[section]
\newtheorem{proposition}{Proposition}[section]
\newtheorem{corollary}{Corollary}[section]
\newcommand{\R}{\mathbb{R}}
\newcommand{\C}{\mathbb{C}}
\newcommand{\Z}{\mathbb{Z}}
\newcommand{\A}{\mathcal{A}}
\newcommand{\K}{\mathcal{K}}
\newcommand{\p}{{\rm p}}
\newcommand{\tomega}{{\widetilde\omega}}
\newcommand{\tvarphi}{{\widetilde\varphi}}
\newcommand{\tpsi}{{\widetilde\psi}}
\newcommand{\tu}{{\widetilde u}}
\newcommand{\tr}{{\widetilde r}}
\newcommand{\hrho}{{\widehat\rho}}
\newcommand{\hA}{{\widehat A}}
\newcommand{\x}{\langle{\rm x}\rangle}
\newcommand{\z}{\langle z\rangle}
\newcommand{\w}{\langle w\rangle}
\newcommand{\divop}{\mathop{\rm div}}
\newcommand{\curlop}{\mathop{\rm curl}}
\newcommand{\idmap}{{\rm id}}
\newcommand{\diff}{\mathop{\rm diff}\nolimits}
\newcommand{\Diff}{\mathop{\rm Diff}\nolimits}
\newcommand{\dt}[1]{\accentset{\mbox{\bfseries .}}{#1}}
\newcommand{\dtt}[1]{\accentset{\mbox{\bfseries ..}}{#1}}
\newcommand{\bolddot}{{\mbox{\bfseries .}}}
\newcommand{\norm}[1]{\left\lVert#1\right\rVert}
\newcommand{\abnorm}[1]{\left\lvert#1\right\rvert}
\newcommand{\vertiii}[1]{{\left\vert\kern-0.25ex\left\vert\kern-0.25ex\left\vert #1
\right\vert\kern-0.25ex\right\vert\kern-0.25ex\right\vert}}
\begin{document}

\title{On the asymptotic behavior of solutions of the 2d Euler equation}

\author{Saif Sultan and Peter Topalov\footnote{P.T. is partially supported by the Simons Foundation, Award \#526907}}

\date{}

\maketitle

\begin{abstract}
We prove that the 2d Euler equation is globally well-posed in a space of vector fields having spatial asymptotic 
expansion at infinity of any a priori given order. The asymptotic coefficients of the solutions are holomorphic functions of $t$,
do {\em not} involve (spacial) logarithmic terms, and develop even when the initial data has fast decay at infinity. 
We discuss the evolution in time of the asymptotic terms and their approximation properties. 

\vspace{0.5cm}

\noindent{{\em MSC}: 58B25, 58D05, 35B40, 35B30, 35Q35, 76D03}
\end{abstract}

\section{Introduction}
The objective of this paper is to study the spatial asymptotic behavior at infinity of the solutions
of the 2d Euler equation on $\R^2$. We do this by introducing a specific scale of Banach spaces of vector fields on $\R^2$
that captures the asymptotic phenomena  arising at the spatial infinity of the solutions of the equation.
We prove that the 2d Euler equation is globally well-posed in this scale of spaces and show that the corresponding 
solutions are well approximated by their asymptotic part.

Recall that the incompressible Euler equation on $\R^d$ with $d\ge 2$ is given by
\begin{equation}\label{eq:euler}
\left\{
\begin{array}{l}
v_t+v\cdot\nabla v=-\nabla\p,\quad\divop v=0,\\
v|_{t=0}=v_0,
\end{array}
\right.
\end{equation}
where $v(\rm x,t)$ is the velocity of the fluid and $\p(\rm x,t)$ is the scalar pressure.
The divergence $\divop$ and the covariant derivative $\nabla$ are computed with respect to
the Euclidean metric on $\R^d$. 
It was proven in \cite{McOTo3} that the Euler equation \eqref{eq:euler} is locally well-posed in the class 
of asymptotic vector fields on $\R^d$. An asymptotic vector field can be written in the form
\begin{equation}\label{eq:asymptotic_expansion_logs}
v({\rm x})=a_0(\theta)+\frac{a_1^0(\theta)+a_1^1(\theta)\log r}{r}+...+
\frac{a_N^0(\theta)+...+a_N^N(\theta)(\log r)^N}{r^N}+f({\rm x}),\quad |{\rm x}|\ge 1/2,
\end{equation}
where $r\equiv r({\rm x}):=|{\rm x}|$, $\theta\equiv\theta({\rm x}):={\rm x}/|{\rm x}|$ is a point on the unit sphere 
$S^{d-1}$, $N\ge 0$ is a given integer, the coefficients $a_k^j$ for $0\le k\le N$ and $0\le j\le k$ are continuous vector-valued 
functions on the sphere $a_k^j : S^{d-1}\to\R^d$ having an additional regularity, and $f : \R^d\to\R^d$ is
the {\em remainder} that belongs to a weighted Sobolev space on $\R^d$ so that it is continuous and 
$f({\rm x})=o(1/r^N)$ as $r\to\infty$.  The class of asymptotic vector fields is denoted by $\A^{m,p}_{N;0}$ where
$m>d/p$ is a regularity exponent and $1<p<\infty$. (We refer for the details to \cite[Section 2]{McOTo3}.)
In this sense, any vector field in $\A^{m,p}_{N;0}$ has partial asymptotic expansion at infinity of order $N$.
There are two points worth mentioning. The first one is that the class $\A^{m,p}_{N;0}$ of asymptotic vector fields
is {\em natural} for the Euler equation for the following reason: if we take a generic initial data
$v_0$ of order $o\big(1/r^N\big)$ with $N> d+1$, the corresponding local solution of the Euler equation 
develops at spatial infinity an asymptotic expansion of the form \eqref{eq:asymptotic_expansion_logs} with {\em non-vanishing} 
leading asymptotic term $a(\theta)/r^{d+1}$ (cf. \cite[Example 2, Appendix B]{McOTo3} as well as, its generalization,
Corollary \ref{coro:asymptotics_pop-up} below). The second remark is that if we take a generic initial data of the form 
\eqref{eq:asymptotic_expansion_logs} with vanishing coefficients in front of the log terms then the local solution of the Euler equation 
develops an asymptotics expansion with non-vanishing log terms. 
These log terms appear because the space $\A^{m,p}_N$ with vanishing log terms imposes no restrictions on the spherical 
Fourier modes of the coefficients $a_k^0$, $0\le k\le N$ (cf. \cite[Example 1, Appendix B]{McOTo3} as well as 
Remark \ref{rem:real_components} below).

\medskip

As mentioned above, in this work we restrict our attention to the two dimensional case $d=2$. 
Our first objective is to answer the following questions:

\begin{itemize}
\item[1)] Is there a closed subspace of the asymptotic space $\A^{m,p}_{N;0}$ that is {\em invariant} with respect to 
the Euler equation, closed (within the scale) with respect to the commutator of vector fields, 
and has an asymptotic part {\em without} log terms?
\item[2)] Are the solutions of the Euler equation in such a space {\em global} in time?
\end{itemize}

We will show that the answer to the both questions is {\em positive}. An important ingredient of our analysis is
that we identify $\R^2$ with the complex plane $\C$ by setting $z=x+i y$ for $(x,y)\in\R^2$ and 
re-write the Euler equation \eqref{eq:euler} in complex form as
\begin{equation}\label{eq:euler_complex}
\left\{
\begin{array}{l}
u_t+u u_z+{\bar u} u_{\bar z}=-2\p_{\bar z},\quad\divop u=u_z+{\bar u}_{\bar z}=0,\\
u|_{t=0}=u_0,
\end{array}
\right.
\end{equation}
where $u : \C\to\C$ is the holomorphic component of the fluid velocity $v$ (i.e., $u(z,\bar z):=v_1(z,\bar z)+i v_2(z,\bar z)$
where $(v_1,v_2)$ are the components of the velocity vector field $v$)
and the subscripts $z$ and $\bar z$ denote the partial differentiations with respect to 
the Cauchy operators $\partial_z$ and $\partial_{\bar z}$ respectively (see Appendix \ref{appendix:complex_form}).
With this identification in mind, we define for integer $m>2/p$ and $N\ge 0$ the following asymptotic space of vector fields on $\R^2$,
\begin{align}\label{eq:Z-space}
\mathcal{Z}_N^{m,p}:=\Big\{ \chi(|z|)\sum_{0\le k+l\le N}\frac{a_{kl}}{z^k\bar{z}^l}+f
\,\Big|\,f \in W_{\gamma_N}^{m,p}\,\,\text{\rm and}\,\,a_{kl}\in\mathbb{C}\Big\},
\end{align}
where $\chi : \R\to\R$ is a $C^\infty$-smooth cut-off function such that $\chi(\rho)=1$ for $|\rho|\ge 2$,
$\chi(\rho)=0$ for $|\rho|\le 1$, and $0\le\chi(\rho)\le 1$ for $1\le |\rho|\le 2$, equipped with the norm
\begin{align}\label{eq:Z-norm}
\norm{u}_{\mathcal{Z}^{m,p}_N}:=\sum_{0\le k+l\le N} \abnorm{a_{kl}} + 
\norm{f}_{W^{m,p}_{\gamma_N}}
\end{align}
where we implicitly assume in the sums that $k,l\ge 0$.
Here $W^{m,p}_\delta$ denotes the weighted Sobolev space with weight $\delta\in\R$ and regularity exponent $m\ge 0$,
\[
W^{m,p}_\delta:=\big\{f\in H^{m,p}_{loc}(\C,\C)\,\big|\,
\z^{\delta+|\alpha|}\partial^\alpha f\in L^p\,\,\text{\rm for}\,\,|\alpha|\le m\big\},\quad
\partial^\alpha\equiv\partial_z^{\alpha_1}\partial_{\bar z}^{\alpha_2},
\]
supplied with the norm $\|f\|_{W^{m,p}_\delta}:=\sum_{|\alpha|\le m}\big\|\z^{\delta+|\alpha|}\partial^\alpha f\big\|_{L^p}$
where $\z:=\big(1+|z|^2\big)^{1/2}$. The asymptotic space $\mathcal{Z}_N^{m,p}$ is a {\em Banach algebra} with respect to 
the pointwise multiplication of complex-valued functions.
The weight $\gamma_N$ and the regularity $m>2/p$ in \eqref{eq:Z-space} are chosen so that the remainder 
$f\in W_{\gamma_N}^{m,p}$ is of order $o\big(1/r^N\big)$ as $r\to\infty$. More specifically, we choose $\gamma_N:=N+\gamma_0$ 
where the number $\gamma_0$ with $0<\gamma_0+(2/p)<1$ is fixed and does not depend on the choice of the order of 
the asymptotic $N\ge 0$ or the regularity exponents $m>2/p$. As mentioned above, $f \in W_{\gamma_N}^{m,p}$ with 
$m>d/p$ implies that $f$ is of order $o\big(1/r^N\big)$ as $r\to\infty$ (see e.g. Proposition \ref{prop:main_technical} $(ii)$ 
and Remark \ref{rem:R^d} in Appendix \ref{appendix:technical}). In this way, the elements $u$ of $\mathcal{Z}_N^{m,p}$ 
represent vector fields $v=(v_1,v_2)$ on $\R^2$ that have asymptotic expansion at infinity of the form \eqref{eq:asymptotic_expansion_logs} 
without log terms.

\begin{remark}\label{rem:real_components}
More specifically, since $\frac{1}{z^k\bar{z}^l}=\frac{e^{i(l-k)\vartheta}}{r^{k+l}}$ any of the components 
$v_1$ and $v_2$ of such vector fields have asymptotic expansions at infinity of the form
$\sum_{0\le k\le N}\frac{a_k(\vartheta)}{r^k}$ where the coefficients $a_k(\vartheta)$ have truncated Fourier series 
that involve only the Fourier modes $e^{i(k-2j)\vartheta}$, $0\le j\le k$. 
(Here $r>0$ and $0\le\vartheta<2\pi$ denote  the polar coordinates on $\R^2$.)
These specific restrictions on the spherical modes of the asymptotic coefficients of the solutions of the 2d Euler equation
result in the absence of logarithmic terms along the time evolution of the initial data.
\end{remark}

\noindent For $0\le n\le N+1$ introduce the space
\begin{equation}\label{e:Zn-space}
\mathcal{Z}_{n,N}^{m,p}:=\Big\{\chi\!\!\!\sum_{n\le k+l\le N}\frac{a_{kl}}{z^k\bar{z}^l}+f
\,\Big|\,f \in W_{\gamma_N}^{m,p}\,\,\text{\rm and}\,\,a_{kl}\in\C\Big\}
\end{equation}
where we {\em omit} the summation term if $n=N+1$ and set $\mathcal{Z}_{N+1,N}^{m,p}\equiv W^{m,p}_{\gamma_N}$.
One of the main result of this work is the following theorem proven in Section \ref{sec:global_solutions}.

\begin{theorem}\label{th:main_introduction}
Assume that $m>3+(2/p)$ where $1<p<\infty$.
Then for any $u_0 \in \mathcal{Z}^{m,p}_N$ with $N\ge 0$ the 2d Euler equation has a unique {\em global} in time solution 
$u\in C\big([0,\infty),\mathcal{Z}^{m,p}_N\big) \cap C^1\big([0,\infty), \mathcal{Z}^{m-1,p}_N\big)$
such that the pressure $\p\equiv\p(t)$ lies in $\mathcal{Z}^{m+1,p}_{1,N}$ for any $t\ge 0$. 
The solution depends continuously on the initial data $u_0\in\mathcal{Z}^{m,p}_N$ in the sense that 
for any given $T>0$ the data-to-solution map $u_0 \mapsto u$, 
$\mathcal{Z}^{m,p}_N\to C\big([0,T],\mathcal{Z}^{m,p}_N\big)\cap C^1\big([0,T],\mathcal{Z}^{m-1,p}_N\big)$,
is continuous. 
The coefficients $a_{kl} : [0,\infty)\to\C$, $0\le k+l\le N$, in the asymptotic expansion of the solution
\begin{equation}\label{eq:u_expansion}
u(t)=\chi\sum_{0\le k+l\le N}\frac{a_{kl}(t)}{z^k\bar{z}^l}+f(t),\quad f(t)\in W^{m,p}_{\gamma_N},
\end{equation}
are {\em holomorphic} functions of $t$ in an open neighborhood of $[0,\infty)$ in $\C$.
\end{theorem}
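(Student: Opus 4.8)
The plan is to recast the 2d Euler equation in Lagrangian coordinates as an ordinary differential equation on the group of (volume-preserving) diffeomorphisms of $\C$ modeled on the asymptotic scale, in the spirit of the geometric approach of \cite{McOTo3}, and then to exploit two features of the present situation: the Lagrangian vector field turns out to be \emph{real-analytic} (which, by the analytic theory of ODEs in Banach spaces, forces holomorphy of the flow — hence of the asymptotic coefficients — in the time variable), and in dimension two the vorticity is transported by the flow (which supplies the a priori bounds needed for globality).

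\emph{Step 1 (Lagrangian picture).} Since $\mathcal{Z}^{m,p}_N$ is a Banach algebra and is closed under the commutator of vector fields, one first checks that the set $\mathcal{D}^{m,p}_N$ of orientation-preserving diffeomorphisms of $\C$ of the form $\idmap+w$ with $w\in\mathcal{Z}^{m,p}_N$ and everywhere positive Jacobian is a topological group under composition and a smooth Banach manifold; composition is continuous and is smooth in the first argument for a fixed second argument, but gains no regularity in the second argument, and this loss is exactly the origin of the gap $C\big([0,\infty),\mathcal{Z}^{m,p}_N\big)\cap C^1\big([0,\infty),\mathcal{Z}^{m-1,p}_N\big)$ in the statement. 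Writing $\phi(t)$ for the flow of $v(t)$, so that $\dot\phi=v\circ\phi$, equation \eqref{eq:euler_complex} turns into the second-order ODE $\ddot\phi=-2\,(\p_{\bar z})\circ\phi$ on $\mathcal{D}^{m,p}_N$ with $(\phi,\dot\phi)|_{t=0}=(\idmap,u_0)$, the pressure being recovered from $v=\dot\phi\circ\phi^{-1}$ by solving the Poisson-type equation $\Delta\p=-\sum_{i,j}(\partial_i v_j)(\partial_j v_i)$ obtained by applying $\divop$ to the momentum equation.

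\emph{Step 2 (invariance of the scale and analyticity; no logarithms).} The heart of the matter is to show that the right-hand side of the Lagrangian ODE, regarded as a vector field on $T\mathcal{D}^{m,p}_N$, is well-defined and real-analytic. This amounts to: $(a)$ the operator solving $\Delta\p=-\sum_{i,j}(\partial_i v_j)(\partial_j v_i)$ for a decaying $\p$ maps quadratic expressions in $Dv$ with $v\in\mathcal{Z}^{m,p}_N$ into $\mathcal{Z}^{m+1,p}_{1,N}$, in particular producing \emph{no} constant asymptotic term and — crucially — \emph{no} $\log r$ terms; and $(b)$ the composition operations assembling $v\cdot\nabla v$ and $(\p_{\bar z})\circ\phi$ depend analytically on $(\phi,\dot\phi)$. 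Part $(a)$ is precisely the invariance claim, and it rests on a mode/parity computation: by Remark \ref{rem:real_components} a field of $\mathcal{Z}^{m,p}_N$ carries at order $1/r^k$ only the angular modes $e^{i(k-2j)\vartheta}$, this set of admissible modes is closed under pointwise multiplication and under $\partial_z,\partial_{\bar z}$ (consistent with $\mathcal{Z}^{m,p}_N$ being an algebra), and one checks, using the constraint $\divop v=0$, that the admissible modes occurring on the right-hand side of the pressure equation never coincide with the resonant modes on which $\Delta^{-1}$ would generate a $\log r$ — the model identity $\Delta\big(r^{-\alpha}e^{in\vartheta}\big)=(\alpha^2-n^2)\,r^{-\alpha-2}e^{in\vartheta}$ making the resonances explicit. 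The same bookkeeping shows that $u u_z+\bar u u_{\bar z}$ stays in $\mathcal{Z}^{m,p}_N$.

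\emph{Step 3 (local holomorphic solution and transfer).} With a real-analytic vector field on the Banach manifold $T\mathcal{D}^{m,p}_N$, the analytic Picard--Lindel\"of theorem yields a unique local solution $(\phi(t),\dot\phi(t))$ depending analytically on $(t,u_0)$, hence extending holomorphically to an open neighborhood of $[0,T_0)$ in $\C$. Setting $u(t):=\dot\phi(t)\circ\phi(t)^{-1}$ produces a solution of \eqref{eq:euler_complex} in $C\big([0,T_0),\mathcal{Z}^{m,p}_N\big)\cap C^1\big([0,T_0),\mathcal{Z}^{m-1,p}_N\big)$ with $\p(t)\in\mathcal{Z}^{m+1,p}_{1,N}$, and continuous dependence on $u_0$ follows from the (smooth) dependence on parameters in the analytic ODE theory. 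For the holomorphy of the coefficients, observe that $u\mapsto a_{kl}$ is a bounded linear functional on each $\mathcal{Z}^{s,p}_N$; since $\phi(t)$ depends holomorphically on $t$ with values in $\mathcal{D}^{m,p}_N$ and composition and inversion in $\mathcal{D}^{m,p}_N$ are analytic (Step 2$(b)$), $u(t)$ depends holomorphically on $t$ with values in the scale, whence each coefficient $a_{kl}(t)$ in \eqref{eq:u_expansion} is holomorphic near $[0,T_0)$.

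\emph{Step 4 (globality).} It remains to show that $T_0$ may be taken arbitrarily large, via the standard continuation criterion: the $\mathcal{Z}^{m,p}_N$-solution extends past $T_0$ unless $\norm{u(t)}_{\mathcal{Z}^{m,p}_N}\to\infty$ as $t\to T_0$. Here dimension two is decisive. The vorticity $\omega=\curlop v$ satisfies $\omega_t+v\cdot\nabla\omega=0$, hence $\omega(t)=\omega_0\circ\phi(t)^{-1}$ and all $L^q$-norms, as well as the relevant weighted norms, of $\omega$ are preserved in time. Combining the conservation of $\norm{\omega(t)}_{L^\infty}$ with a logarithmic elliptic estimate of the form $\norm{\nabla v}_{L^\infty}\lesssim\norm{\omega}_{L^\infty}\log\big(e+\norm{f}_{W^{m,p}_{\gamma_N}}\big)+(\text{lower order})$ and a Beale--Kato--Majda/Gronwall argument applied to the evolution of the higher derivatives of the remainder along the flow, one bounds $\norm{f(t)}_{W^{m,p}_{\gamma_N}}$ on every finite interval. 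Finally, the finitely many asymptotic coefficients satisfy a triangular system of ODEs in which the equation for $a_{kl}$ is forced only by coefficients of order $\le k+l$ and by finitely many functionals of the remainder, so, using the a priori bound on $f(t)$ and arguing by induction on the asymptotic order, none of the $a_{kl}(t)$ can escape to infinity in finite time. This gives $T_0=\infty$. I expect Step 2$(a)$ — showing that the pressure operator and the Euler nonlinearity preserve the no-log structure of $\mathcal{Z}^{m,p}_N$, i.e.\ the invariance of the scale — together with the finite-time non-blowup of the coefficient system in Step 4, to be the main obstacles; the remaining ingredients are adaptations of known techniques for the Euler equation on weighted and asymptotic function spaces.
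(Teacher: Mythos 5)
The main gap is in Step 3. You deduce holomorphy of the coefficients from the claim that ``composition and inversion in $\mathcal{D}^{m,p}_N$ are analytic,'' so that $u(t)=\dot\phi(t)\circ\phi(t)^{-1}$ would be holomorphic in $t$ with values in $\mathcal{Z}^{m,p}_N$. This is false: on these diffeomorphism groups the composition $(u,\psi)\mapsto u\circ\psi$ and the inversion $\varphi\mapsto\varphi^{-1}$ are only continuous at fixed regularity and $C^1$ with a loss of one derivative (Theorem \ref{th:ZD}); they are not analytic, and indeed the remainder $f(t)$ of the solution is in general \emph{not} holomorphic in $t$ (see the remark following the theorem), so your conclusion that the whole curve $u(t)$ is holomorphic in the scale cannot be right. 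What is true --- and what the paper uses --- is that the curve $t\mapsto(\varphi(t),\dot\varphi(t))$ is real analytic because the Lagrangian vector field is, and that the \emph{asymptotic part} of $\dot\varphi(t)\circ\varphi(t)^{-1}$ can be computed entirely inside a finite-dimensional Lie group $\hA$ via the homomorphism $\hrho:\mathcal{ZD}^{m,p}_N\to\hA$ (Proposition \ref{prop:hA-group} and formula \eqref{eq:hrho_*-representation}); since the group operations of $\hA$ are polynomial, the finitely many coefficients $a_{kl}(t)$ are real analytic even though the full solution curve is not. Without this finite-dimensional reduction (or some substitute), your argument for the holomorphy of the $a_{kl}$ does not close.

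Step 4 is also a genuinely different route from the paper's, and the hard part is missing. The paper does not run a Beale--Kato--Majda/Gronwall scheme in the weighted scale; it imports the known global solvability of 2d Euler in Lagrangian form on the little H\"older groups $\diff^{m,\gamma}(\R^2)$ (Serfati, Misiolek--Yoneda) and then upgrades that solution to $\mathcal{Z}^{m,p}_N$ by a bootstrap built on the transport of vorticity $\omega(t)=\omega_0\circ\varphi(t)^{-1}$ and the elliptic theory for $\partial_z^{-1}$ on weighted spaces (Proposition \ref{prop:more_decay}, Proposition \ref{prop:longer_expansion}). In your version, the two statements carrying all the weight --- the logarithmic estimate for $\nabla v$ in the weighted setting, and the claim that the ``relevant weighted norms'' of $\omega$ are preserved (they are only \emph{controlled}, since composition with $\varphi(t)^{-1}$ distorts the weight $\z^{\delta}$, and that control itself presupposes bounds on the flow) --- are asserted rather than proved. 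Note also that a separate ``triangular ODE system'' for the coefficients is unnecessary: the coefficients are explicit moments of the transported vorticity, e.g.\ $a_{0k}(t)=\frac1\pi\big(\omega_0,\overline\varphi(t)^{\,k-1}\big)$, so they are automatically bounded once the flow is.
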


\begin{remark}
Note that the remainder $f : [0,\infty)\to W^{m,p}_{\gamma_N}$ of the solution \eqref{eq:u_expansion} given by 
Theorem \ref{th:main_introduction} is {\em not} necessarily holomorphic in time.
\end{remark}

\begin{remark}\label{rem:uniqueness}
Without the assumption that for $t\in[0,\infty)$ the pressure $\p(t)$ lies in $\mathcal{Z}^{m+1,p}_{1,N}$ the solutions of 
the 2d Euler equation are {\em not} unique. Moreover, one can show that for any initial data 
$u_0\in\mathcal{Z}^{m,p}_N$ and for any $0<\tau<\infty$ there exists a solution 
$u\in C\big([0,\tau),\mathcal{Z}^{m,p}_N\big)\cap C^1\big([0,\tau),\mathcal{Z}^{m-1,p}_N\big)$
of the 2d Euler equation that is defined on the interval $[0,\tau)$ but blows-up at $t=\tau$.
The condition on the pressure can be relaxed, e.g. one can require that $\p(t)\in L^\infty$ and
$\p_{\bar{z}}(t)=o(1)$ as $|z|\to\infty$ for any $t\in\R$.
In any case, by Lemma \ref{LemmaPressureTermAsymp} in Section \ref{sec:euler_equation}, 
the pressure $\p$ in Theorem \ref {th:main_introduction} will belong to 
$C\big([0,\infty),\mathcal{Z}^{m+1,p}_{1,N}\big)\cap C^1\big([0,\infty),\mathcal{Z}^{m,p}_{1,N}\big)$.
\end{remark}

The following statements give a general characterization of the time evolution of the asymptotic coefficients of the solutions
in Theorem \ref{th:main_introduction}. These statements are proven in Section \ref{sec:euler_equation}. 

\begin{proposition}\label{prop:integrals}
Let $u\in C\big([0,\infty), \mathcal{Z}^{m,p}_N\big)\cap C^1\big([0,\infty),\mathcal{Z}^{m-1,p}_N\big)$ be the solution of the 2d Euler equation
given by Theorem \ref{th:main_introduction}, $u(t)=\chi\sum_{0\le k+l\le N}\frac{a_{kl}(t)}{z^k\bar{z}^l}+f(t),\quad f(t)\in W^{m,p}_{\gamma_N}$.
Then, for any divergence free initial data $u_0\in\mathcal{Z}^{m,p}_N$, the asymptotic coefficient $a_{10}(t)$ vanishes and the coefficients 
$a_{00}(t)$ and $a_{01}(t)$ are independent of $t\in[0,\infty)$. In addition, if $N\ge 2$ then for any $t\in[0,\infty)$ we have that 
$a_{20}(t)=a_{11}(t)=0$ and $a_{02}(t)=a_{02}(0)+\overline{a_{00}(0)}\,a_{01}(0)\,t$.
\end{proposition}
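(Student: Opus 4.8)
The plan is to substitute the asymptotic expansion \eqref{eq:u_expansion} of the solution into the complex form \eqref{eq:euler_complex} of the equation and to compare asymptotic coefficients order by order. For $w$ in one of the asymptotic spaces I will write $[w]_{kl}$ for the coefficient of $1/(z^k\bar z^l)$ in its asymptotic part; this is well defined because the splitting of an element into an asymptotic part and a remainder in a weighted Sobolev space is unique, and $w,w'$ agree modulo that Sobolev space iff $[w]_{kl}=[w']_{kl}$ for all $k,l$. Throughout I would use the elementary rules $[\partial_z w]_{kl}=-(k-1)[w]_{k-1,l}$, $[\partial_{\bar z} w]_{kl}=-(l-1)[w]_{k,l-1}$, $[\bar w]_{kl}=\overline{[w]_{lk}}$, and $[ww']_{kl}=\sum[w]_{\alpha\beta}[w']_{\gamma\delta}$ (sum over $\alpha+\gamma=k$, $\beta+\delta=l$, all indices $\ge0$), the last one because the relevant product spaces are Banach algebras.

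First I would extract the consequences of $\divop u(t)=u_z(t)+\bar u_{\bar z}(t)=0$, which holds for all $t\ge0$. Comparing the coefficient of $1/(z^k\bar z^l)$ for $1\le k+l\le N+1$ yields the algebraic relations $(k-1)a_{k-1,l}(t)+(l-1)\overline{a_{l-1,k}(t)}=0$ (with $a_{kl}:=0$ when $\min(k,l)<0$). Taking $(k,l)=(m+1,0)$ with $1\le m\le N$ gives $a_{m,0}(t)=0$, in particular $a_{10}(t)=0$ and, when $N\ge2$, $a_{20}(t)=0$; taking $(k,l)=(1,m)$ with $2\le m\le N$ gives $a_{m-1,1}(t)=0$, in particular $a_{11}(t)=0$ when $N\ge2$. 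Thus every $a_{kl}(t)$ with $l\in\{0,1\}$ vanishes except possibly $a_{00}(t)$ and $a_{01}(t)$. Next I would compare the coefficients of $1/(z^0\bar z^0)$ and of $1/\bar z$ in \eqref{eq:euler_complex}: on the right one has $[\p_{\bar z}]_{00}=[\p_{\bar z}]_{01}=0$ because $\p\in\mathcal{Z}^{m+1,p}_{1,N}$, while on the left every nonlinear contribution at these two orders carries one of the vanishing factors $[u_z]_{0,\ast}$, $[u_{\bar z}]_{0,0}$, $[u_{\bar z}]_{0,1}$, or $a_{10}$; hence $\dot a_{00}(t)=\dot a_{01}(t)=0$, so $a_{00},a_{01}$ are constant.

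For the last claim (case $N\ge2$) I would compare the coefficients of $1/\bar z^2$ in \eqref{eq:euler_complex}. One computes $[uu_z]_{02}=0$ and $[\bar u u_{\bar z}]_{02}=\overline{a_{00}}\,[u_{\bar z}]_{02}=-\overline{a_{00}}a_{01}$, and $[\p_{\bar z}]_{02}=-p_{01}$ with $p_{01}:=[\p]_{01}$, so $\dot a_{02}(t)=\overline{a_{00}}a_{01}+2p_{01}(t)$, and the proposition reduces to showing $p_{01}\equiv0$. For this I would use the pressure equation, obtained by applying $\partial_z$ to \eqref{eq:euler_complex}, adding the $\partial_{\bar z}$-derivative of its conjugate, and invoking $\partial_t(\divop u)=0$ together with $u_z=-\bar u_{\bar z}$:
\[
\Delta\p=4\p_{z\bar z}=-2u_z^2-2|u_{\bar z}|^2=-\partial_z^2(u^2)-\partial_{\bar z}^2(\bar u^2)-2\partial_z\partial_{\bar z}(u\bar u)=:F .
\]
An asymptotic term $1/(z^a\bar z^b)$ of $F$ with $b\le1$ can only come from $\partial_z^2(u^2)$ and is proportional to $[u^2]_{a-2,b}=\sum_{p+p'=a-2,\,q+q'=b}a_{pq}a_{p'q'}$, which vanishes whenever $a-2\ge1$ since every $a_{p0},a_{p1}$ with $p\ge1$ is zero, while the case $a=2$ is killed by the prefactor $(a-2)(a-1)$; by conjugation the terms with $a\le1$ vanish as well. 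Hence the mixed coefficients are forced by $[\p]_{kl}=[F]_{k+1,l+1}/(4kl)$ for $k,l\ge1$, and $\chi\sum_{k,l\ge1}\big([F]_{k+1,l+1}/(4kl)\big)z^{-k}\bar z^{-l}$ plus an appropriate remainder solves $\Delta\p=F$ within $\mathcal{Z}^{m+1,p}_{1,N}$ and has no pure asymptotic terms $1/z^k$ or $1/\bar z^l$. Since two solutions in $\mathcal{Z}^{m+1,p}_{1,N}$ differ by a function harmonic on $\R^2$ that is $O(1/|z|)$, hence $\equiv0$, this must be the pressure of Theorem \ref{th:main_introduction} (this is exactly the role of the condition on $\p$; cf. Remark \ref{rem:uniqueness}). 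In particular $p_{01}\equiv0$, so $\dot a_{02}(t)=\overline{a_{00}}a_{01}$, and integrating (with $a_{00},a_{01}$ constant) gives $a_{02}(t)=a_{02}(0)+\overline{a_{00}(0)}a_{01}(0)t$.

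The one genuinely delicate point is the vanishing $p_{01}\equiv0$: this coefficient is a harmonic mode, invisible to the order-by-order matching of the pressure Poisson equation, and it is pinned to zero only by combining the divergence-free relations among the $a_{kl}$ (which annihilate the potential pure asymptotic terms of the source $F$) with the uniqueness of the pressure inside $\mathcal{Z}^{m+1,p}_{1,N}$. Everything else is routine bookkeeping with the four transformation rules, and the regularity of the $a_{kl}$ in $t$ furnished by Theorem \ref{th:main_introduction} legitimizes the identities $\dot a_{00}=\dot a_{01}=0$ and $\dot a_{02}=\overline{a_{00}}a_{01}$.
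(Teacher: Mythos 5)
Your bookkeeping for the first statement is correct and amounts to the same argument as the paper's: the divergence-free relations kill $a_{k0}$ ($k\ge 1$) and $a_{k1}$ ($k\ge 1$), and the observation that $[\p_{\bar z}]_{00}=[\p_{\bar z}]_{01}=0$ together with the vanishing of the corresponding coefficients of $uu_z+\bar u u_{\bar z}$ gives $\dot a_{00}=\dot a_{01}=0$ (the paper phrases this as $\dt u\in\overline{\widetilde{\mathcal{Z}}^{m-1,p}_{N}}$). Likewise your reduction of the last claim to $\dot a_{02}=\overline{a_{00}}a_{01}+2p_{01}$ with $p_{01}:=[\p]_{01}$ is right.

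The argument you give for $p_{01}\equiv 0$, however, does not close, and the intermediate claim it rests on is actually false. You infer from $[F]_{ab}=0$ for $\min(a,b)\le 1$ that $\Delta\p=F$ admits a solution in $\mathcal{Z}^{m+1,p}_{1,N}$ with \emph{no} pure asymptotic terms $1/z^k$, $1/\bar z^l$. But the harmonic modes of the decaying solution are not controlled by the asymptotic coefficients of the source; they are controlled by integral moments of the \emph{entire} source, including its rapidly decaying remainder. A minimal counterexample to your inference: $F_0:=\Delta(\chi/\bar z)$ is compactly supported, so every asymptotic coefficient $[F_0]_{ab}$ vanishes, yet the unique decaying solution of $\Delta\p=F_0$ is $\chi/\bar z$, which has $[\p]_{01}=1$. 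Moreover, the conclusion you construct toward is false for the actual Euler pressure: by Proposition \ref{prop:evolution_coefficients} (ii), (iii) and Corollary \ref{coro:asymptotics_pop-up}, $-2\p_{\bar z}=\partial_z^{-1}Q(u)$ generically carries nonzero pure terms $b_k/\bar z^k$ for $k\ge 3$ (coming from the moments $(f,\bar z^{k-1})$ of the remainder of $Q(u)$), so $\p$ generically has nonzero $1/\bar z^j$ terms for $j\ge 2$. What is true — and what you need — is only that the two lowest such moments vanish: $\big(Q(u),1\big)=\big(Q(u),\bar z\big)=0$. This is exactly Lemma \ref{lem:vanishing_momenta}, proved by rewriting $uu_z+\bar u u_{\bar z}=(u^2)_z+(u\bar u)_{\bar z}$ (using $\divop u=0$) and applying Stokes' theorem; it feeds into Lemma \ref{LemmaPressureTermAsymp} to give $b_1=b_2=0$ in \eqref{eq:integral of Q}, i.e.\ $p_{01}=0$. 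Your proof needs this Stokes-type identity (or an equivalent moment computation); the order-by-order matching plus uniqueness of decaying harmonic functions cannot produce it.
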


\begin{remark}\label{rem:divergence_free_asymptotics}
One can easily see that the divergence free condition $a_z+\bar{a}_{\bar{z}}=0$ implies that the asymptotic coefficients
$a_{k0}$ for $1\le k\le N$ and $a_{k1}$ for $1\le k\le N-1$ vanish.
\end{remark}

As a direct consequence we obtain the following 2d analog of \cite[Theorem 2.3]{Cantor} (see also \cite{Con1}). 

\begin{corollary}\label{coro:cantor}
For $m>3+(2/p)$ and for any weight $\delta+(2/p)\in(0,1)\cup(1,2)\cup(2,3)$ the 2d Euler equation is 
globally well-posed in the weighted Sobolev space $W^{m,p}_\delta$.
\end{corollary}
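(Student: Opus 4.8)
The plan is to deduce Corollary~\ref{coro:cantor} by showing that, for the stated range of weights, the weighted Sobolev space $W^{m,p}_\delta$ is a \emph{closed, invariant} subspace of $\mathcal{Z}^{m,p}_N$ for an appropriate choice of $N$, and then to transfer global well-posedness from Theorem~\ref{th:main_introduction}. Concretely, given $\delta$ with $\delta+(2/p)\in(0,1)\cup(1,2)\cup(2,3)$, choose $N$ to be the integer with $N<\delta+(2/p)<N+1$ (so $N\in\{0,1,2\}$) together with the fixed $\gamma_0$ from the definition of the scale; since $\gamma_0+(2/p)\in(0,1)$ and $\gamma_N=N+\gamma_0$, the condition on $\delta$ guarantees $\gamma_N\le\delta<\gamma_N+1$ after possibly adjusting $\gamma_0$ (or, more cleanly, one uses the standard embeddings among weighted Sobolev spaces to sandwich $W^{m,p}_\delta$ appropriately — the point is only that $W^{m,p}_\delta\hookrightarrow W^{m,p}_{\gamma_N}$ and that $W^{m,p}_\delta$ sits inside $\mathcal{Z}^{m,p}_N$ as the subspace with all asymptotic coefficients $a_{kl}=0$). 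Thus $W^{m,p}_\delta$ is a closed subspace of $\mathcal{Z}^{m,p}_N$.

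First I would verify \emph{invariance}: by Theorem~\ref{th:main_introduction}, for $u_0\in W^{m,p}_\delta\subset\mathcal{Z}^{m,p}_N$ there is a unique global solution $u\in C([0,\infty),\mathcal{Z}^{m,p}_N)\cap C^1([0,\infty),\mathcal{Z}^{m-1,p}_N)$ whose asymptotic coefficients $a_{kl}(t)$ are holomorphic in $t$. I claim all these coefficients vanish identically. Any divergence-free $u_0$ has $a_{k0}(0)=0$ for $1\le k\le N$ and $a_{k1}(0)=0$ for $1\le k\le N-1$ by Remark~\ref{rem:divergence_free_asymptotics}, and for $u_0\in W^{m,p}_\delta$ \emph{all} coefficients vanish at $t=0$ including $a_{00}(0)=a_{01}(0)=a_{02}(0)=\cdots=0$. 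Now invoke Proposition~\ref{prop:integrals}: it gives $a_{10}(t)\equiv0$, $a_{00}(t)\equiv a_{00}(0)=0$, $a_{01}(t)\equiv a_{01}(0)=0$, and (for $N\ge2$) $a_{20}(t)=a_{11}(t)\equiv0$ and $a_{02}(t)=a_{02}(0)+\overline{a_{00}(0)}\,a_{01}(0)\,t=0$. Since $N\le2$ in all three cases, this exhausts every coefficient $a_{kl}$ with $0\le k+l\le N$, so $u(t)=f(t)\in W^{m,p}_{\gamma_N}$ for all $t$; combined with $u(t)\in\mathcal{Z}^{m,p}_N$ having the \emph{specific} decay rate matching $\delta$ (the extra integrability is recovered because the remainder $f(t)$, once the leading coefficients are known to vanish, lies in the smaller weighted space — this uses the elliptic/transport structure as in the proof of Theorem~\ref{th:main_introduction} or, more directly, an a~posteriori bootstrap of the remainder's weight), we conclude $u(t)\in W^{m,p}_\delta$.

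Second, I would record that the flow restricted to $W^{m,p}_\delta$ inherits all the qualitative properties: global existence and uniqueness are immediate from Theorem~\ref{th:main_introduction} (uniqueness in the larger space $\mathcal{Z}^{m,p}_N$ implies uniqueness in the subspace, and the pressure condition $\p(t)\in\mathcal{Z}^{m+1,p}_{1,N}$ is automatically met — in fact $\p(t)$ also loses its asymptotic coefficients by the same argument, cf.\ Remark~\ref{rem:uniqueness}), and continuous dependence on $W^{m,p}_\delta$ follows by restricting the continuous data-to-solution map $\mathcal{Z}^{m,p}_N\to C([0,T],\mathcal{Z}^{m,p}_N)$ to the closed subspace and composing with the continuous inclusion/projection; regularity in $C^1([0,T],W^{m-1,p}_\delta)$ is read off from the equation once $u(t)\in W^{m,p}_\delta$ is known.

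The main obstacle is the weight-sharpening step: Theorem~\ref{th:main_introduction} a~priori only places the remainder in $W^{m,p}_{\gamma_N}$ with the \emph{fixed} $\gamma_0$, whereas Corollary~\ref{coro:cantor} asserts solutions stay in $W^{m,p}_\delta$ for the \emph{given} $\delta$, whose fractional part can be any value in $(0,1)$ within each of the three intervals. I expect this to be handled either by noting that $\gamma_0$ was chosen arbitrarily in $(0,1-2/p)$ so one may simply set $\gamma_0$ equal to the fractional part of $\delta$ (then $\gamma_N=\delta$ exactly and there is nothing to sharpen), or — if one wants a single scale — by an a~posteriori argument: with all asymptotic coefficients vanishing, the solution solves the Euler equation with data in $W^{m,p}_\delta$, and a contraction/ODE argument in $W^{m,p}_\delta$ on each bounded time interval (using that the transport and Biot--Savart operators preserve $W^{m,p}_\delta$ for $\delta$ in the admissible range, exactly as in the proof of the main theorem but with the remainder space enlarged to the full $W^{m,p}_\delta$) shows the solution never leaves $W^{m,p}_\delta$. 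The boundary values $\delta+(2/p)\in\{1,2\}$ are excluded precisely because the Biot--Savart law and the elliptic theory on $\mathbb{R}^2$ degenerate there (harmonic homogeneous terms resonate with the weight), which is why the stated intervals are open and capped at $3$.
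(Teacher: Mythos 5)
Your proposal is correct and is essentially the paper's own argument: the corollary is presented there as a direct consequence of Proposition~\ref{prop:integrals}, obtained precisely by realizing $W^{m,p}_\delta$ as the remainder space $W^{m,p}_{\gamma_N}$ of $\mathcal{Z}^{m,p}_N$ with $N=\lfloor\delta+(2/p)\rfloor\in\{0,1,2\}$ and $\gamma_0=\delta-N$, and then checking via Proposition~\ref{prop:integrals} that every asymptotic coefficient of the global solution from Theorem~\ref{th:main_introduction} vanishes identically. The ``weight-sharpening'' issue you flag is settled by the first of your two options (choosing $\gamma_0$ so that $\gamma_N=\delta$ exactly, which the admissible range of $\delta$ permits), so no a posteriori bootstrap of the remainder's weight is needed.
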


We refer to Corollary \ref{coro:weighted_spaces} below for a generalization of this statement for any weight $\delta+(2/p)>3$
with $\delta+(2/p)\notin\Z$. We expect that Corollary \ref{coro:cantor} holds also for $\delta+(2/p)\in\{1,2\}$ but the proof
will require additional considerations. 
Let $(\cdot,\cdot)$ be the pairing $(u_1,u_2)=\int_{\R^2} u_1 u_2\,dx dy$ of complex valued functions on $\R^2$, and let
$\divop v$ and $\curlop v$ be the divergence and the curl of a (real) vector field $v$ on $\R^2$. 
The following proposition follows from Proposition \ref{prop:integrals} and 
Lemma \ref{lem:a_kl-derivatives} (cf. \cite[Example 2, Appendix B]{McOTo3}).

\begin{proposition}\label{prop:evolution_coefficients}
Assume that $m>3+(2/p)$. Then we have
\begin{itemize}
\item[(i)] For $0\le n\le 3$ the  asymptotic space $\mathcal{Z}^{m,p}_{n,N}$, $n\le N+1$, is invariant with respect to the 2d Euler equation. 
\item[(ii)] For any $u_0\in\mathcal{Z}^{m,p}_{n,N}$, $3<n\le N+1$, the solution
$u\in C\big([0,\infty), \mathcal{Z}^{m,p}_N\big)\cap C^1\big([0,\infty),\mathcal{Z}^{m-1,p}_N\big)$ of the 2d Euler equation
given by Theorem \ref{th:main_introduction} is of the form
\begin{equation}\label{eq:no_decay}
u(t)={\chi}\sum_{3\le k\le n-1}\frac{a_{0k}(t)}{\bar{z}^k}+g(t),\quad g(t)\in\mathcal{Z}^{m,p}_{n,N},
\end{equation}
where $a_{0k}(t)=\frac{1}{\pi}\big(\omega_0,\overline{\varphi}(t)^{k-1}\big)$, $3\le k\le n$, $\omega_0:=i \curlop u_0$
is the initial (complex) vorticity\footnote{Slightly abusing the notation, we will call $\omega:=i \curlop u$ the (complex) vorticity of $u$.}, 
and $\varphi\in C^1\big([0,\infty),\mathcal{ZD}^{m,p}_N\big)$ is the solution of the equation
$\dt\varphi=u\circ\varphi$, $\varphi|_{t=0}=\idmap$ (cf. Lemma \ref{lem:ode} in Section \ref{sec:cauchy_operator}). 
\item[(iii)] For any $3<n\le N+1$ there exists an open {\em dense} set $\mathcal{N}_{n,N}$ inside the closed linear space 
$\mathcal{\accentset{\circ}{Z}}^{m,p}_{n,N}:=\big\{w\in \mathcal{Z}^{m,p}_{n,N}\,\big|\,\divop w=0\big\}$ of divergence 
free vector fields in $\mathcal{Z}^{m,p}_{n,N}$ such that for any $u_0\in\mathcal{N}_{n,N}$ and for any $3\le k\le n$ the coefficient 
$a_{0k}(t)$ in \eqref{eq:no_decay} does {\em not} vanish for almost any $t>0$.
\end{itemize}
\end{proposition}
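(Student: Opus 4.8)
The plan splits along the three parts. Part (i) should come directly from Proposition~\ref{prop:integrals}: if $u_0\in\mathcal{Z}^{m,p}_{n,N}$ is divergence free with $0\le n\le3$, then all initial coefficients $a_{kl}(0)$ with $k+l<n$ vanish, and Proposition~\ref{prop:integrals} gives $a_{10}(t)\equiv0$, $a_{00}(t)\equiv a_{00}(0)$, $a_{01}(t)\equiv a_{01}(0)$ and, for $N\ge2$, $a_{20}(t)\equiv a_{11}(t)\equiv0$ with $a_{02}(t)=a_{02}(0)+\overline{a_{00}(0)}\,a_{01}(0)\,t$; inserting the vanishing initial coefficients for $n=0,1,2,3$ in turn forces $a_{kl}(t)=0$ for all $k+l<n$ and all $t$, i.e.\ the solution stays in $\mathcal{Z}^{m,p}_{n,N}$.

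For (ii) I would work through the complex vorticity $\omega(t):=i\,\curlop u(t)$. By (i) and $\mathcal{Z}^{m,p}_{n,N}\subseteq\mathcal{Z}^{m,p}_{3,N}$ the solution stays in $\mathcal{Z}^{m,p}_{3,N}$, hence decays at infinity, and $\divop u(t)=0$ gives $\omega(t)=2\,u(t)_z$. The two-dimensional vorticity equation $\omega_t+u\,\omega_z+\bar u\,\omega_{\bar z}=0$ then shows, along the flow $\varphi\in C^1\!\big([0,\infty),\mathcal{ZD}^{m,p}_N\big)$ of $\dt\varphi=u\circ\varphi$, $\varphi|_{t=0}=\idmap$ (Lemma~\ref{lem:ode}), that $\omega(t)\circ\varphi(t)=\omega_0$, while $\divop u(t)=0$ makes each $\varphi(t)$ area preserving. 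Since $u_0\in\mathcal{Z}^{m,p}_{n,N}$ makes $\omega_0=2\,(u_0)_z$ have vanishing asymptotic coefficients of order $\le n$, the same holds for $\omega(t)=\omega_0\circ\varphi(t)^{-1}$ (composing with the near-identity $\varphi(t)^{-1}$ does not affect the leading asymptotics), and reading this back through $\omega(t)=2\,u(t)_z$ forces every asymptotic coefficient of $u(t)$ of order below $n$ carrying a positive power of $z$ to vanish; with Proposition~\ref{prop:integrals} (which makes $a_{00},a_{01},a_{02}$ vanish along the flow) only the $a_{0k}(t)$ with $3\le k\le n-1$ survive below order $n$, which is the decomposition \eqref{eq:no_decay} with $g(t)\in\mathcal{Z}^{m,p}_{n,N}$. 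Finally, since $u(t)$ is divergence free and decaying it is recovered from $\omega(t)$ by the complex Biot--Savart law; expanding the Cauchy kernel $1/\overline{z-w}=\sum_{j\ge0}\bar w^{j}/\bar z^{j+1}$ for $|w|<|z|$ and then substituting $w=\varphi(t,y)$ (area preserving) together with $\omega(t,\varphi(t,y))=\omega_0(y)$ identifies $a_{0k}(t)$ with a fixed multiple of $\big(\omega_0,\overline{\varphi(t)}^{k-1}\big)$ for $3\le k\le n$. (The $a_{kl}(t)$ can equivalently be tracked through the ODEs of Lemma~\ref{lem:a_kl-derivatives}.) I expect the real work of the proposition to be this asymptotic bookkeeping --- proving that no $z$-power asymptotic term is created below order $n$, and that the borderline moment defining $a_{0n}(t)$ is meaningful --- both coming down to the fact that the leading asymptotics of $\omega(t)=2\,u(t)_z$ involve only monomials $z^{-k}\bar z^{-l}$ with $k\ge2$ (cf.\ Remark~\ref{rem:divergence_free_asymptotics}), whose angular averages against $\bar w^{n-1}$ vanish.

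For (iii): by Theorem~\ref{th:main_introduction} each $t\mapsto a_{0k}(t)$ is holomorphic in a neighbourhood of $[0,\infty)$, so ``$a_{0k}(t)\ne0$ for almost every $t>0$'' is equivalent to ``$a_{0k}(\cdot)\not\equiv0$'', and $\mathcal{N}^{(k)}:=\{u_0\in\mathcal{\accentset{\circ}{Z}}^{m,p}_{n,N}\mid a_{0k}(\cdot\,;u_0)\not\equiv0\}=\bigcup_{t_0\in\mathbb{Q}_{>0}}\{u_0\mid a_{0k}(t_0;u_0)\ne0\}$ is open, by continuity of the data-to-solution map. It is then enough to show each $\mathcal{N}^{(k)}$, $3\le k\le n$, is dense, for then $\mathcal{N}_{n,N}:=\bigcap_{k=3}^{n}\mathcal{N}^{(k)}$ is open and dense in the Banach space $\mathcal{\accentset{\circ}{Z}}^{m,p}_{n,N}$, and for $u_0\in\mathcal{N}_{n,N}$ each $a_{0k}(\cdot)$, $3\le k\le n$, is a nonzero real-analytic function, hence vanishes only on a null set. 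For $k=n$, $a_{0n}(0;u_0)$ is the $\bar z^{-n}$-asymptotic coefficient of $u_0$, a nonzero bounded linear functional of $u_0$, so $\{a_{0n}(0;\cdot)\ne0\}\subseteq\mathcal{N}^{(n)}$ is already open and dense. For $3\le k\le n-1$, $a_{0k}(0;u_0)=0$; differentiating the formula of (ii) at $t=0$ (using $\dt\varphi(0)=u_0$) and integrating by parts --- legitimate because $u_0=O(r^{-n})$ --- shows that $\dt a_{0k}(0;u_0)$ equals $(k-1)(k-2)$ times the Biot--Savart constant times the continuous quadratic form $Q_k(u_0):=\int_{\R^2}\bar z^{k-3}\,\bar u_0^2\,dx\,dy$, hence a nonzero multiple of $Q_k$ as $k\ge3$. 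Finally $Q_k$ is not identically zero on $\mathcal{\accentset{\circ}{Z}}^{m,p}_{n,N}$ --- test it on $u_0=2i\,\psi_{\bar z}$ with $\psi\in C_c^\infty(\R^2)$ a bump localized near a point $p\ne0$ with unequal horizontal and vertical profiles, which is divergence free and compactly supported (so lies in $\mathcal{Z}^{m,p}_{n,N}$) and for which $Q_k(u_0)$ is a nonzero multiple of $\bar p^{k-3}$ --- and since the zero set of a nontrivial continuous quadratic form on a Banach space has empty interior, $\{u_0\mid\dt a_{0k}(0;u_0)\ne0\}\subseteq\mathcal{N}^{(k)}$ is dense, completing (iii).
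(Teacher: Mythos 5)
Your proposal is correct and follows essentially the same route as the paper: (i) from Proposition \ref{prop:integrals}; (ii) via the vorticity transport $\omega(t)=\omega_0\circ\varphi(t)^{-1}$, the moment expansion of $\partial_z^{-1}$ (Proposition \ref{prop:asymptotic_operator}), and volume preservation of $\varphi(t)$; (iii) via the formula $\dt{a}_{0k}(0)\propto\int_{\R^2}\bar{u}_0^2\,\bar{z}^{k-3}\,dx\,dy$ (Lemma \ref{lem:a_kl-derivatives}), non-degeneracy of that quadratic form on compactly supported divergence-free fields, and holomorphy of the coefficients in $t$. One small normalization point: with the paper's convention $\omega:=i\curlop u=\frac12\big(u_z-\bar{u}_{\bar{z}}\big)$, the divergence-free condition gives $\omega=u_z$ (not $2u_z$), which is what produces the exact constant $\frac{1}{\pi}$ in $a_{0k}(t)=\frac{1}{\pi}\big(\omega_0,\overline{\varphi}(t)^{k-1}\big)$ rather than your unspecified multiple.
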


\begin{remark}
It follows from Proposition \ref{prop:evolution_coefficients} (ii), (iii), that the asymptotic space 
$\mathcal{Z}^{m,p}_{n,N}$ with $3<n\le N+1$ is {\em not} invariant with respect to the 2d Euler equation. 
The elements of $\mathcal{Z}^{m,p}_{2,N}$, $N\ge 1$, have finite kinetic energy ${\rm E}(u)\equiv\frac{1}{2}(u,\bar{u})$. 
By Proposition \ref{prop:integrals} and Proposition \ref{prop:evolution_coefficients} the asymptotic space 
$\mathcal{Z}^{m,p}_{2,N}$ is invariant with respect to the 2d Euler equation and for $N\ge 2$ has a constant in time leading asymptotic
coefficient $a_{02}(t)=a_{02}(0)$.
\end{remark}

By taking $n=N+1$ in Proposition \ref{prop:evolution_coefficients} and then by using that 
$\mathcal{Z}^{m,p}_{n,N}\equiv W^{m,p}_{\gamma_N}$ we obtain from  Proposition \ref{prop:evolution_coefficients} $(ii)$, $(iii)$,
the following generalization of Corollary \ref{coro:cantor}.

\begin{corollary}\label{coro:weighted_spaces}
For any $\delta+(2/p)>3$ such that $\delta+(2/p)\notin\Z$ and for any
initial data $u_0\in W^{m,p}_\delta$ there exists a unique solution 
$u\in C\big([0,\infty), \mathcal{Z}^{m,p}_{N_\delta}\big)\cap C^1\big([0,\infty),\mathcal{Z}^{m-1,p}_{N_\delta}\big)$
of the 2d Euler equation such that for any $t\in[0,\infty)$ we have that
\[
u(t)={\chi}\sum_{3\le k\le N_\delta}\frac{a_{0k}(t)}{\bar{z}^k}+f(t),\quad f(t)\in W^{m,p}_\delta,
\]
where $N_\delta$ is the integer part of $\delta+(2/p)$.
The solution depends continuously on the initial data $u_0\in W^{m,p}_\delta$ in the sense of 
Theorem \ref{th:main_introduction}. In particular, the weighted Sobolev spaces $W^{m,p}_\delta$ are {\em not} invariant
with respect to the 2d Euler equation if $\delta+(2/p)>3$ and $\delta+(2/p)\notin\Z$.
\end{corollary}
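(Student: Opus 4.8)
The plan is to read the statement off from Theorem~\ref{th:main_introduction} and Proposition~\ref{prop:evolution_coefficients} after fitting the free parameter of the asymptotic scale to the given weight. Given $\delta$ with $\delta+(2/p)>3$ and $\delta+(2/p)\notin\Z$, I would put $N:=N_\delta$ equal to the integer part of $\delta+(2/p)$, so that $N\ge 3$, and fix the otherwise unconstrained scale parameter to be $\gamma_0:=\delta-N$. Since $N<\delta+(2/p)<N+1$, this choice satisfies $0<\gamma_0+(2/p)<1$ as demanded in the definition of the scale, and it gives $\gamma_N=N+\gamma_0=\delta$. Consequently $W^{m,p}_\delta=W^{m,p}_{\gamma_N}=\mathcal{Z}^{m,p}_{N+1,N}$ is a closed subspace of $\mathcal{Z}^{m,p}_N$ --- namely the vector fields all of whose asymptotic coefficients $a_{kl}$ vanish --- and on it the $\mathcal{Z}^{m,p}_N$-norm coincides with the $W^{m,p}_\delta$-norm.

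With this identification in hand the corollary becomes a read-off. First, since $u_0\in W^{m,p}_\delta\subset\mathcal{Z}^{m,p}_N$ and $m>3+(2/p)$, Theorem~\ref{th:main_introduction} produces the unique global solution $u\in C\big([0,\infty),\mathcal{Z}^{m,p}_N\big)\cap C^1\big([0,\infty),\mathcal{Z}^{m-1,p}_N\big)$ (with pressure in $\mathcal{Z}^{m+1,p}_{1,N}$; cf. Remark~\ref{rem:uniqueness}), together with continuous dependence on $u_0\in\mathcal{Z}^{m,p}_N$, which restricts to continuous dependence on the closed subspace $W^{m,p}_\delta$. Second, because $N\ge 3$ the hypothesis $3<n\le N+1$ holds with $n=N+1$, so Proposition~\ref{prop:evolution_coefficients}$(ii)$ applies to $u_0\in\mathcal{Z}^{m,p}_{N+1,N}$ and gives $u(t)=\chi\sum_{3\le k\le N}\frac{a_{0k}(t)}{\bar{z}^k}+g(t)$ with $g(t)\in\mathcal{Z}^{m,p}_{N+1,N}=W^{m,p}_\delta$; with $f:=g$ this is exactly the asserted representation (with $N_\delta=N$ and $a_{0k}(t)=\frac{1}{\pi}\big(\omega_0,\overline{\varphi}(t)^{k-1}\big)$). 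Uniqueness and continuous dependence within $C\big([0,\infty),\mathcal{Z}^{m,p}_{N_\delta}\big)\cap C^1\big([0,\infty),\mathcal{Z}^{m-1,p}_{N_\delta}\big)$ are then inherited verbatim from Theorem~\ref{th:main_introduction}.

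For the non-invariance clause I would use Proposition~\ref{prop:evolution_coefficients}$(iii)$ once more with $n=N+1$: it furnishes an open dense subset $\mathcal{N}_{N+1,N}$ of $\mathcal{\accentset{\circ}{Z}}^{m,p}_{N+1,N}$, which by the identification above is precisely the divergence free part of $W^{m,p}_\delta$, such that for every $u_0\in\mathcal{N}_{N+1,N}$ and every $3\le k\le N+1$ the coefficient $a_{0k}(t)$ is nonzero for almost every $t>0$. Since $3\le N=N_\delta$, the coefficient $a_{03}(t)$ in the $\mathcal{Z}^{m,p}_{N_\delta}$-expansion of $u(t)$ is nonzero for a sequence of times $t\downarrow 0$, whereas $u(t)\in W^{m,p}_\delta=\mathcal{Z}^{m,p}_{N+1,N}$ would force $a_{03}(t)=0$; hence $u(t)\notin W^{m,p}_\delta$ for such $t$ and $W^{m,p}_\delta$ is not invariant under the 2d Euler flow. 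I expect the only real subtlety here to be the bookkeeping of the first paragraph --- the observation that the free scale parameter $\gamma_0$ may be chosen equal to $\delta-N_\delta$, so that $W^{m,p}_\delta$ is realized as the innermost space $\mathcal{Z}^{m,p}_{N_\delta+1,N_\delta}$ of the scale exactly when $\delta+(2/p)>3$ is non-integral and $n=N_\delta+1>3$, whereupon Proposition~\ref{prop:evolution_coefficients} applies as stated; beyond this, and the minor point that the uniqueness claim carries the implicit pressure normalization of Theorem~\ref{th:main_introduction}, no genuine obstacle remains.
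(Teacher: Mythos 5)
Your proposal is correct and follows the paper's own (one-sentence) derivation exactly: take $n=N_\delta+1$ in Proposition \ref{prop:evolution_coefficients}, identify $W^{m,p}_\delta$ with $\mathcal{Z}^{m,p}_{N_\delta+1,N_\delta}\equiv W^{m,p}_{\gamma_{N_\delta}}$, and read off the expansion from item $(ii)$ and the non-invariance from item $(iii)$. Your explicit normalization $\gamma_0:=\delta-N_\delta$ is the right way to make the identification precise and is exactly what the paper leaves implicit.
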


\medskip

Another application of Theorem \ref{th:main_introduction} is the global well-posedness of the 2d Euler equation in the Fr\'echet 
space of {\em ``symbols''} $\mathcal{Z}^\infty$ (cf. \cite{BS,KPST}). More specifically, denote by $\mathcal{Z}^\infty$ the space of
$C^\infty$-smooth vector fields $u$ on $\R^2$ that have an asymptotic expansion at infinity of infinite order
\begin{equation}\label{eq:infinite_asymptotic}
u(z,\bar{z})\sim\sum_{k+l\ge 0}\frac{a_{kl}}{z^k\bar{z}^l}\quad\text{as}\quad|z|\to\infty.
\end{equation}
The meaning of this asymptotic formula is that for any truncation number $N\ge 0$ and for any multi-index $\alpha\in\Z^2_{\ge 0}$
there exists a constant $C_{N,\alpha}>0$ such that 
\begin{equation}\label{eq:infinite_asymptotic'}
\sup_{z\in\C}\Big|\z^{N+1+|\alpha|}\,\partial^\alpha\Big(u-\chi\sum_{0\le k+l\le N}\frac{a_{kl}}{z^k\bar{z}^l}\Big)\Big|
\le C_{N,\alpha},\quad\partial^\alpha\equiv\partial_z^{\alpha_1}\partial_{\bar{z}}^{\alpha_2}.
\end{equation}
The conditions in \eqref{eq:infinite_asymptotic'} give a set of semi-norms on the space of $C^\infty$-smooth vector fields on $\R^2$
that define a Fr\'echet topology on $\mathcal{Z}^\infty$. The following theorem follows from Theorem \ref{th:main_introduction}, 
Proposition \ref{prop:main_technical} (ii) and Remark \ref{rem:R^d} in Appendix \ref{appendix:technical}.

\begin{theorem}\label{th:Z^infty}
The 2d Euler equation is globally well-posed in the Fr\'echet space $\mathcal{Z}^\infty$ of $C^\infty$-smooth vector fields
allowing asymptotic expansion at infinity of infinite order. In particular, the solution depends continuously on the initial data
$u_0\in\mathcal{Z}^\infty$ in the sense that for any $T>0$ the data-to-solution map  $u_0\mapsto u$,
$\mathcal{Z}^\infty\to C^1\big([0,T],\mathcal{Z}^\infty\big)$, is continuous. The asymptotic coefficients in the
infinite asymptotic expansion \eqref{eq:infinite_asymptotic} extend to holomorphic functions in $t$ in open neighborhoods of
$[0,\infty)$ in $\C$.
\end{theorem}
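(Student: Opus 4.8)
The plan is to deduce Theorem \ref{th:Z^infty} from Theorem \ref{th:main_introduction} by an inverse-limit argument, exploiting the fact that $\mathcal{Z}^\infty$ is (topologically) the projective limit of the Banach algebras $\mathcal{Z}^{m,p}_N$ as $N\to\infty$ and $m\to\infty$ (with $p$ fixed, say $p=2$). First I would make precise the identification: given $u_0\in\mathcal{Z}^\infty$, the semi-norm estimates \eqref{eq:infinite_asymptotic'} together with Proposition \ref{prop:main_technical} (ii) and Remark \ref{rem:R^d} show that for every $N\ge 0$ and every $m>2/p$ the truncated remainder $u_0-\chi\sum_{0\le k+l\le N}a_{kl}/(z^k\bar z^l)$ lies in $W^{m,p}_{\gamma_N}$; hence $u_0\in\mathcal{Z}^{m,p}_N$ for all admissible $m,N$, and conversely a vector field lying in all these spaces with a consistent family of asymptotic coefficients lies in $\mathcal{Z}^\infty$. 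Here one must check that the coefficients $a_{kl}$ obtained at truncation level $N$ agree with those at level $N'>N$ — this is immediate because the asymptotic coefficients of an element of $\mathcal{Z}^{m,p}_N$ are uniquely determined (the decomposition \eqref{eq:Z-space} is a direct sum).

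The second step is to apply Theorem \ref{th:main_introduction} at each level. Fix $u_0\in\mathcal{Z}^\infty$ and $m_0>3+(2/p)$; for each $N\ge 0$ Theorem \ref{th:main_introduction} produces a unique global solution $u^{(N)}\in C\big([0,\infty),\mathcal{Z}^{m,p}_N\big)\cap C^1\big([0,\infty),\mathcal{Z}^{m-1,p}_N\big)$ for every $m\ge m_0$ (with holomorphic coefficients). The key point is \emph{compatibility}: for $N'>N$ and $m'\ge m$ the natural inclusion $\mathcal{Z}^{m',p}_{N'}\hookrightarrow\mathcal{Z}^{m,p}_N$ is continuous and commutes with the flow, so by uniqueness in Theorem \ref{th:main_introduction} the image of $u^{(N',m')}$ under this inclusion coincides with $u^{(N,m)}$. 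Therefore the family $\{u^{(N,m)}\}$ glues to a single $u(t)$ lying in $\bigcap_{N,m}\mathcal{Z}^{m,p}_N=\mathcal{Z}^\infty$ for every $t$, it solves \eqref{eq:euler_complex}, and its asymptotic coefficients (those of any $u^{(N)}$, which are consistent across $N$) extend holomorphically in $t$ to a neighborhood of $[0,\infty)$ in $\C$. Regularity in $t$ at the level of $\mathcal{Z}^\infty$: since $u\in C^1\big([0,\infty),\mathcal{Z}^{m-1,p}_N\big)$ for every $m,N$ and the $\mathcal{Z}^\infty$ topology is generated by the $\|\cdot\|_{\mathcal{Z}^{m,p}_N}$ semi-norms, one gets $u\in C^1\big([0,T],\mathcal{Z}^\infty\big)$ directly. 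Uniqueness in $\mathcal{Z}^\infty$ follows from uniqueness at the level $N=0$ together with the characterization of the pressure in Remark \ref{rem:uniqueness} (the pressure condition $\p(t)\in L^\infty$, $\p_{\bar z}(t)=o(1)$ is automatically met for a $\mathcal{Z}^\infty$ solution).

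For continuous dependence, fix $T>0$ and suppose $u_{0,j}\to u_0$ in $\mathcal{Z}^\infty$, i.e. $u_{0,j}\to u_0$ in $\mathcal{Z}^{m,p}_N$ for every $m,N$. Applying the continuity of the data-to-solution map from Theorem \ref{th:main_introduction} at each fixed pair $(m,N)$ gives $u_j\to u$ in $C\big([0,T],\mathcal{Z}^{m,p}_N\big)\cap C^1\big([0,T],\mathcal{Z}^{m-1,p}_N\big)$ for every $(m,N)$, which is exactly convergence in $C^1\big([0,T],\mathcal{Z}^\infty\big)$ since the target Fréchet topology is the initial topology for the maps into the $C^1\big([0,T],\mathcal{Z}^{m-1,p}_N\big)$.

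I expect the only genuine subtlety — rather than an obstacle — to be the careful verification of the dictionary between the semi-norm bounds \eqref{eq:infinite_asymptotic'} defining $\mathcal{Z}^\infty$ and membership in the weighted Sobolev spaces $W^{m,p}_{\gamma_N}$: one needs that pointwise decay of $\z^{N+1+|\alpha|}\partial^\alpha(\text{remainder})$ of \emph{all} orders $|\alpha|$ implies the $W^{m,p}_{\gamma_N}$ bound, and conversely, for every $m$ and every $N$. The forward direction requires trading a bit of decay for $L^p$-integrability (losing an $\varepsilon$ in the weight, which is harmless since $\gamma_0$ has slack $0<\gamma_0+(2/p)<1$), and the reverse direction is the Sobolev embedding content of Proposition \ref{prop:main_technical} (ii) / Remark \ref{rem:R^d}; both are exactly the technical facts the theorem statement says we may invoke. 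Everything else is the soft functional-analytic machinery of projective limits plus the uniqueness clause of Theorem \ref{th:main_introduction}, which is what makes the gluing canonical.
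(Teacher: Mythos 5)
Your proposal is correct and follows the same route the paper intends: the paper offers no separate proof of Theorem \ref{th:Z^infty} beyond the remark that it ``follows from Theorem \ref{th:main_introduction}, Proposition \ref{prop:main_technical} (ii) and Remark \ref{rem:R^d},'' and your projective-limit argument — translating the semi-norms \eqref{eq:infinite_asymptotic'} into membership in every $\mathcal{Z}^{m,p}_N$ via the slack $0<\gamma_0+(2/p)<1$, gluing the level-$(m,N)$ solutions by the uniqueness clause of Theorem \ref{th:main_introduction}, and reading off continuity and holomorphy of the coefficients levelwise — is exactly the bookkeeping that remark leaves implicit.
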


Let us briefly discuss this result. If we pick an initial data $u_0\in\mathcal{Z}^\infty$ that is a finite sum of asymptotic terms
$u_0=\chi\sum_{n\le k+l\le N}\frac{a_{kl}^0}{z^k\bar{z}^l}$, $0\le n\le N$, with non-vanishing coefficients, then, by Theorem \ref{th:Z^infty}, 
the corresponding solution of the 2d Euler equation will lie in the space of symbols $\mathcal{Z}^\infty$ for any $t\ge 0$. In particular,
the solution will satisfy the infinite set of decay conditions \eqref{eq:infinite_asymptotic'} and we will have that
$a_{kl}(t)\ne 0$ for any $n\le k+l\le N$ and for almost any $t\ge 0$. By Proposition \ref{prop:evolution_coefficients} $(ii)$, for $n>3$ 
the solution can develop lower order asymptotics of the form $\sum_{3\le k\le n-1}\frac{a_{0k}(t)}{\bar{z}^k}$. 
Moreover, by the last statement of Lemma \ref{lem:a_kl-derivatives} in Section \ref{sec:euler_equation}, we can perturb $u_0$
by an element of $C^\infty_c$ with support concentrated in a neighborhood of zero so that such lower order asymptotic terms appear 
and we have that $a_{0k}(t)\ne 0$ for $3\le k\le n-1$ and for almost any $t>0$. Note also that for any infinite sequence of coefficients 
$a_{kl}^0$, $k+l\ge 0$, that satisfy a divergence free condition (see Remark \ref{rem:divergence_free_asymptotics}) we can construct a 
divergence free $u_0\in\mathcal{Z}^\infty$ such that $u_0\sim\sum_{k+l\ge 0}\frac{a_{kl}^0}{z^k\bar{z}^l}$ by using arguments similar to
those used to prove the classical Borel theorem (see e.g. \cite[\S 3.3]{AG}, \cite[Proposition 3.5]{Subin} as well as the proof of 
Lemma \ref{lem:a_kl-derivatives} in Section \ref{sec:euler_equation}). For any such $u_0$ we obtain a unique solution of the 2d Euler equation 
in $\mathcal{Z}^\infty$ that depends continuously on the initial data in $\mathcal{Z}^\infty$. The following corollary 
(see Section \ref{sec:euler_equation} for the proof) shows that {\em infinite} number of spatial asymptotics terms of the solutions of the 
2d Euler equation appear generically from initial data in the Schwartz space.

\begin{corollary}\label{coro:asymptotics_pop-up}
There exists a {\em dense} set $\mathcal{N}$ inside the closed linear space of divergence 
free vector fields $\accentset{\circ}S:=\big\{w\in S\big(\R^2,\R^2\big)\,\big|\,\divop w=0\big\}$ in the Schwartz space 
$S\big(\R^2,\R^2\big)$ such that for any $u_0\in\mathcal{N}$ the solution $u\in C^1\big([0,\infty),\mathcal{Z}^\infty\big)$ 
given by Theorem \ref{th:Z^infty} satisfies $u(t)\sim\sum_{k\ge 3}\frac{a_{0k}(t)}{\bar{z}^k}$ where for any $k\ge 3$ we have that 
$a_{0k}(t)\ne 0$ for almost any $t>0$.
\end{corollary}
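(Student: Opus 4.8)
The plan is to combine Theorem \ref{th:Z^infty} with Proposition \ref{prop:evolution_coefficients} (iii) and a density/approximation argument. First I would note that an element $u_0$ of the Schwartz space $S(\R^2,\R^2)$ lies in $\mathcal{Z}^\infty$ with \emph{all} asymptotic coefficients $a_{kl}^0$ equal to zero, i.e. $u_0\in\mathcal{Z}^{m,p}_{N+1,N}\equiv W^{m,p}_{\gamma_N}$ for every $N$, and in particular $u_0\in\mathcal{Z}^{m,p}_{n,N}$ for every $n\le N+1$. Choosing any $n$ with $3<n\le N+1$, Proposition \ref{prop:evolution_coefficients} (ii) tells us that the solution has the form $u(t)=\chi\sum_{3\le k\le n-1}\frac{a_{0k}(t)}{\bar z^k}+g(t)$ with $g(t)\in\mathcal{Z}^{m,p}_{n,N}$ and $a_{0k}(t)=\frac{1}{\pi}\big(\omega_0,\overline\varphi(t)^{k-1}\big)$ where $\omega_0=i\curlop u_0$. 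Since $N$ (hence $n$) is arbitrary, this already gives that the Schwartz solution lies in $\mathcal{Z}^\infty$ with $u(t)\sim\sum_{k\ge 3}\frac{a_{0k}(t)}{\bar z^k}$ and $a_{0k}(t)=\frac{1}{\pi}\big(\omega_0,\overline\varphi(t)^{k-1}\big)$ for all $k\ge 3$; this is the structural part and is essentially a direct transcription of the earlier results.

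The real content is the genericity: producing the dense set $\mathcal{N}\subset\accentset{\circ}S$. Here I would invoke Proposition \ref{prop:evolution_coefficients} (iii), which already furnishes, for each fixed $3<n\le N+1$, an open dense set $\mathcal{N}_{n,N}\subset\mathcal{\accentset{\circ}{Z}}^{m,p}_{n,N}$ on which all coefficients $a_{0k}(t)$, $3\le k\le n$, are nonzero for a.e. $t>0$. The task is to upgrade this to a single dense subset of $\accentset{\circ}S$ on which \emph{all} $a_{0k}$, $k\ge 3$, are simultaneously nonzero a.e. I would proceed as follows: fix a countable dense sequence $\{w_j\}$ in $\accentset{\circ}S$ (with respect to its Fréchet topology) and, for each $j$ and each $k\ge 3$, perturb $w_j$ by a small compactly supported divergence-free field (as in the last statement of Lemma \ref{lem:a_kl-derivatives}, alluded to in the discussion preceding the corollary) to arrange $a_{0k}(t)\ne 0$ a.e.; doing this for all $k$ simultaneously requires showing the perturbations can be made compatible. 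The cleanest route is to observe that for fixed $t$ the map $u_0\mapsto a_{0k}(t)=\frac{1}{\pi}\big(i\curlop u_0,\overline\varphi(t)^{k-1}\big)$ is real-analytic in $u_0$ (the flow $\varphi(t)$ depends analytically on $u_0$ by the analytic dependence built into the iteration scheme underlying Theorem \ref{th:main_introduction}), so the set $\{u_0 : a_{0k}(t)=0\}$ is either all of $\accentset{\circ}S$ or a proper analytic subvariety; Lemma \ref{lem:a_kl-derivatives} rules out the former by exhibiting one Schwartz datum with $a_{0k}\ne 0$. One then takes a countable intersection over a countable dense set of times $t$ and over $k\ge 3$, using a Baire-category argument in the Fréchet space $\accentset{\circ}S$: each set $\{u_0 : a_{0k}(t)\ne 0 \text{ for some } t \text{ in a given dense sequence}\}$ is open and dense, and the resulting $G_\delta$ is dense, hence in particular contains a dense set $\mathcal{N}$. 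Finally, "$a_{0k}(t)\ne 0$ for a.e.\ $t>0$" follows from "$a_{0k}(t_0)\ne0$ for some $t_0$" because $t\mapsto a_{0k}(t)$ extends holomorphically to a neighborhood of $[0,\infty)$ by Theorem \ref{th:Z^infty}, so its zero set on $[0,\infty)$ is discrete (hence measure zero) unless it is identically zero.

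The main obstacle I anticipate is the simultaneity over infinitely many $k\ge 3$: a single compactly supported perturbation that fixes $a_{03}$ might undo a previous adjustment of $a_{04}$, etc. This is why I would route everything through the analyticity / Baire-category formulation rather than a direct iterative perturbation — once each individual non-vanishing condition is known to define an open dense set (equivalently, the complement is a nowhere-dense analytic subvariety), the countable intersection is automatically dense with no need to track interactions between the conditions. The two facts that make this work and that I would need to state carefully are: (a) real-analytic dependence of $\varphi(t)$, and hence of each $a_{0k}(t)$, on the initial data $u_0$, which I expect follows from the contraction-mapping-with-parameter / implicit-function argument used to construct the solutions (analogous to \cite{McOTo3}); and (b) the existence, for each $k\ge 3$, of at least one $u_0\in\accentset{\circ}S$ with $a_{0k}\not\equiv 0$, which is exactly the content of the last part of Lemma \ref{lem:a_kl-derivatives}. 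Given (a) and (b), assembling $\mathcal{N}$ and verifying its density and the a.e.-nonvanishing of all $a_{0k}$ on it is routine.
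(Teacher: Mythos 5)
Your proposal is correct in outline and shares the paper's three essential ingredients: the non-triviality statement in Lemma \ref{lem:a_kl-derivatives}, a Baire-category argument to handle all $k\ge 3$ simultaneously, and holomorphy of $t\mapsto a_{0k}(t)$ to pass from ``not identically zero'' to ``nonzero for a.e.\ $t$''. The difference is where the genericity is tested. The paper never looks at the solution at positive times at all: it defines the explicit continuous quadratic functionals $B_k(w):=\int_{\R^2}w^2z^{k-3}\,dx\,dy$, which by \eqref{eq:a_kl-derivatives} compute $\dt a_{0k}(0)$ up to a nonzero constant, observes that each zero set $\mathcal{Z}_k=\{w\in\accentset{\circ}S : B_k(w)=0\}$ is closed and nowhere dense (a nonzero quadratic form cannot vanish on an open set, and Lemma \ref{lem:a_kl-derivatives} provides a $w$ with $B_k(w)\ne 0$), and sets $\mathcal{N}:=\accentset{\circ}S\setminus\bigcup_{k\ge 3}\mathcal{Z}_k$, dense by Baire. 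On $\mathcal{N}$ one has $\dt a_{0k}(0)\ne 0$ for every $k$, and holomorphy finishes the proof. Your route instead tests $a_{0k}(t)$ at positive times and therefore needs regularity of the data-to-solution map on $\accentset{\circ}S$: either the real-analytic dependence of $\varphi(t)$ on $u_0$ (which you correctly flag as plausible but which the paper never establishes, and which would also require justifying that a proper real-analytic subvariety of a Fr\'echet space is nowhere dense), or, in your fallback formulation, at least continuity of $u_0\mapsto a_{0k}(t)$ to get openness of the sets $\{u_0 : a_{0k}(t)\ne 0 \text{ for some } t \text{ in a countable dense set}\}$, with density again supplied by the perturbation in Lemma \ref{lem:a_kl-derivatives}. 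That version does close, but it is strictly heavier than necessary; the observation that $\dt a_{0k}(0)$ is itself an explicit quadratic form in $u_0$ collapses the whole genericity argument to a single Baire step with no reference to the flow.
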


Note that Theorem \ref{th:main_introduction} also implies that for any $u_0\in\mathcal{Z}^{m,p}_N$ and for any $T>0$ 
the asymptotic part $\sum_{0\le k+l\le N}\frac{a_{kl}(t)}{z^k\bar{z}^l}$ of the solution \eqref{eq:u_expansion} 
of the 2d Euler equation {\em approximates} the solution 
$u\in C\big([0,\infty),\mathcal{Z}^{m,p}_N\big)\cap C^1\big([0,\infty),\mathcal{Z}^{m-1,p}_N\big)$
uniformly for $t\in [0,T]$ and $z$ in an open neighborhood of infinity. 
In fact, assume that $m>3+(2/p)$ and $N\ge 0$. 
Then, by the compactness of the interval $[0,T]$ and the continuity of the remainder 
$f : [0,T]\to W^{m,p}_{\gamma_N}$ in \eqref{eq:u_expansion} we conclude that
there exists $C_T>0$ such that $\|f(t)\|_{W^{m,p}_{\gamma_N}}<C_T$ for any $t\in[0,T]$.
By combining this with Proposition \ref{prop:main_technical} (ii) and Remark \ref{rem:R^d} in 
Appendix \ref{appendix:technical}, we then conclude that for any multi-index $\alpha\in\Z_{\ge 0}$,
$0\le|\alpha|<m-(2/p)$, we have that for any $R\ge 2$,
\begin{equation}\label{eq:approximation_at_infinity}
\Big|\partial^\alpha\Big(u(t)-\sum_{0\le k+l\le N}\frac{a_{kl}(t)}{z^k\bar{z}^l}\Big)\Big|\le
\left(\frac{C_T}{R^{\gamma_0+2/p}}\right)\left(\frac{1}{\z}\right)^{N+|\alpha|},
\end{equation}
uniformly for $t\in[0,T]$ and for $z\in\C$ such that $|z|\ge R$ and $R\ge 2$. Moreover, this formula
holds locally uniformly on the choice of $u_0\in\mathcal{Z}^{m,p}_N$. The question whether a long time version of
\eqref{eq:approximation_at_infinity} holds is open.

Multi dimensional analogs of the asymptotic spaces $\mathcal{Z}^{m,p}_N$ as well as local in time versions of the results above 
can also be proved but they require various technical restrictions on the spherical modes in the asymptotic spaces and will be discussed separately.

\medskip

\noindent{\em Related work.} There is a vast literature on the well-posedness of the incompressible Euler equation 
(see e.g.\ \cite{BardosTiti,Kato1} and the references therein). The existence of solution of the Euler equations in 
weighted Sobolev space was studied in \cite{Cantor}. Solutions of the KdV and mKdV in symbol classes on the line
are considered in \cite{BS,KPST}. Various asymptotic spaces and groups of asymptotic diffeomorphisms were defined 
and studied in \cite{McOTo1,McOTo2}. The local existence of solutions of the Euler equation in asymptotic spaces involving log terms 
was established in \cite{McOTo3}. Finally, note that specific spatial decay and even certain asymptotic expansions are known 
to naturally appear for the solutions of the Navier-Stokes equation with {\em rapidly decreasing} initial data 
(see e.g.\ \cite{DobrShaf,BranMe,Bran,KR} and the references therein).

\medskip

\noindent{\em Organization of the paper.} The paper is organized as follows. In Section \ref{sec:spaces} we define a class 
of diffeomorphisms $\mathcal{ZD}^{m,p}_N$ of $\R^2$ and prove that they form a topological group with Banach manifold 
structure modeled on $\mathcal{Z}^{m,p}_N$. The group $\mathcal{ZD}^{m,p}_N$ has additional regularity properties summarized 
in Theorem \ref{th:ZD}. At the end of the section we prove that there exists a natural real analytic homomorphism
$\hrho : \mathcal{ZD}^{m,p}_N\to\hA$ onto a finite dimensional Lie group $\hA$. 
In Section \ref{sec:cauchy_operator} we study in detail the properties of the Cauchy operators $\partial_z$ when acting
on the scale of spaces $\mathcal{Z}^{m,p}_N$. In particular, we prove Theorem \ref{th:Z->Z-tilde} that
characterizes the image of the Cauchy operator in this scale. The local existence and uniqueness of solutions 
of the 2d Euler equation in $\mathcal{Z}^{m,p}_N$, as well as the statements characterizing the behavior of
the asymptotic coefficients are proven in Section \ref{sec:euler_equation}. The global existence is proven in 
Section \ref{sec:global_solutions}. We conclude the paper with two appendices where we prove several technical results 
used in the main body of the paper.

\medskip

\noindent{\em Acknowledgments}: The second author is grateful to IMI BAS.

\section{Asymptotic spaces and maps} \label{sec:spaces}
In this section we study the properties of the Banach algebra $\mathcal{Z}^{m,p}_N$ and an associated group of diffeomorphisms 
$\mathcal{ZD}^{m,p}_N$ of $\R^2$. In particular, we prove in Theorem \ref{th:ZD} that $\mathcal{ZD}^{m,p}_N$
is a topological group with additional regularity properties of the composition and the inversion of the diffeomorphisms.
At the end of the section we construct a real analytic homomorphism of $\mathcal{ZD}^{m,p}_N$ onto a finite dimensional Lie 
group $\hA$ that is used in Section \ref{sec:euler_equation} to prove the analyticity of the asymptotic coefficients of the solutions 
of the 2d Euler equation given by Theorem \ref{th:main_introduction}.

The following two propositions summarize the properties of the asymptotic spaces $\mathcal{Z}_N^{m,p}$ and 
their reminder spaces $W^{m,p}_\delta$. For the proof we refer to Proposition \ref{prop:main_technical},
Proposition \ref{prop:W_R}, and Remark \ref{rem:R^d} in Appendix \ref{appendix:technical} (cf. also \cite[\S\,2]{McOTo2}). 

\begin{proposition}\label{prop:properties_W-spaces}
\begin{itemize}
\item[(i)] For any $0\le m_1\le m_2 $ and for any real weights $\delta_1\le\delta_2$ we have that
the inclusion map $W^{m_2,p}_{\delta_2}\to W^{m_1,p}_{\delta_1}$ is bounded.
\item[(ii)] For any $m\ge 0$, $\delta\in\R$, and for any $k\in\Z$ the map
$f\mapsto f\cdot\frac{\chi}{z^k}$, $W^{m,p}_\delta\to W^{m,p}_{\delta+k}$ 
is bounded. The same holds with $z$ replaced by $\bar{z}$.
Here $\chi(z,\bar{z})\equiv\chi(|z|)$ is a $C^\infty$-smooth cut-off function such that $\chi : \R\to\R$ and 
$\chi|_{(-R,R)}\equiv 1$ for some $R>0$.
\item[(iii)] For any regularity exponent $m\ge 0$, $\delta\in\R$, and a multi-index $\alpha\in\Z_{\ge 0}^2$
such that $|\alpha|\le m$ the map
$\partial^\alpha: W^{m,p}_\delta\to W^{m-|\alpha|,p}_{\delta+|\alpha|}$ is bounded.
\item[(iv)] For $m>2/p$ and for any weights $\delta_1,\delta_2\in\R$ the map 
$(f,g)\mapsto fg$, $W^{m,p}_{\delta_1}\times W^{m,p}_{\delta_2}\to W^{m,p}_{\delta_1+\delta_2+(2/p)}$
is bounded. In particular, for $\delta\ge 0$ the space $W^{m,p}_\delta$ is a Banach algebra.
\end{itemize}
\end{proposition}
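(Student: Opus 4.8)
The plan is to reduce all four statements to classical (unweighted) Sobolev estimates on a \emph{fixed} annulus by means of a dyadic rescaling. Set $A_0:=\{z\in\C:|z|\le 2\}$ and $A_j:=\{2^{j-1}\le|z|\le 2^{j+1}\}$ for $j\ge1$, so that $\C=\bigcup_{j\ge0}A_j$ with uniformly bounded overlap and $\z\asymp 2^j$ on $A_j$. Fixing the reference annulus $\tilde A:=\{1/2\le|w|\le2\}$ and writing $f_j(w):=f(2^jw)$ for $j\ge1$, a change of variables together with $\partial^\alpha f_j=2^{j|\alpha|}\,(\partial^\alpha f)(2^j\,\cdot\,)$ shows that the weight factor $2^{j(\delta+|\alpha|)p}$, the Jacobian $2^{2j}$ and the derivative scaling $2^{-j|\alpha|p}$ collapse into the single factor $2^{j(\delta p+2)}$, which no longer depends on $|\alpha|$. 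Hence
\[
\norm{f}_{W^{m,p}_\delta}^p\ \asymp\ \norm{f}_{W^{m,p}(A_0)}^p+\sum_{j\ge1}2^{j(\delta p+2)}\,\norm{f_j}_{W^{m,p}(\tilde A)}^p ,
\]
with constants depending only on $m$ and $p$, where $W^{m,p}(\Omega)$ denotes the ordinary Sobolev space on the bounded domain $\Omega$ (one passes freely between the Cauchy operators $\partial_z,\partial_{\bar z}$ and the real derivatives $\partial_x,\partial_y$, since for each order they span the same space of operators). This equivalence is the engine of the proof; combined with the ordinary Sobolev embedding on $\tilde A$ it also yields, for $m>2/p$, the weighted embedding $\norm{\z^{\delta+2/p}f}_{L^\infty}\lesssim\norm{f}_{W^{m,p}_\delta}$.

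Items (i) and (iii) are then immediate from the definition of the norm: for (i) one uses $\z\ge1$ and $\delta_1\le\delta_2$ to get $\z^{\delta_1+|\alpha|}\le\z^{\delta_2+|\alpha|}$ pointwise, together with $\{|\alpha|\le m_1\}\subseteq\{|\alpha|\le m_2\}$; for (iii) one uses that $\partial_z$ and $\partial_{\bar z}$ commute, so $\partial^\beta(\partial^\alpha f)=\partial^{\alpha+\beta}f$, and re-indexes the sum. For (ii) I would first record the pointwise bounds $|\partial^\gamma(\chi/z^k)|\le C_\gamma\,\z^{-k-|\gamma|}$ (and the same with $\bar z$ in place of $z$), valid for every multi-index $\gamma$: on the region where $\chi$ is constant this is the elementary estimate for derivatives of $z^{-k}$, while on the transition annulus of $\chi$ the variable $z$ ranges over a fixed compact set, so the bound holds there with a large constant. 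Then the Leibniz rule gives, for $|\alpha|\le m$,
\[
\z^{(\delta+k)+|\alpha|}\,\partial^\alpha\!\Big(f\cdot\frac{\chi}{z^k}\Big)=\sum_{\beta\le\alpha}\binom{\alpha}{\beta}\,\z^{(\delta+k)+|\alpha|}\,\big(\partial^\beta f\big)\,\partial^{\alpha-\beta}\!\Big(\frac{\chi}{z^k}\Big),
\]
and each summand is pointwise $\le C\,\z^{\delta+|\beta|}|\partial^\beta f|$, whose $L^p$-norm is $\le C\norm{f}_{W^{m,p}_\delta}$; summing over $\beta\le\alpha$ and $|\alpha|\le m$ finishes the claim.

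The substantive point is (iv). Here I would invoke the classical fact that, since $m>2/p=d/p$ with $d=2$, the ordinary Sobolev space $W^{m,p}(\tilde A)$ on the fixed bounded domain $\tilde A$ is a Banach algebra, with an algebra constant $C_0=C_0(m,p)$ that is the same for every $j$ (and likewise on $A_0$). Because $(fg)_j=f_jg_j$ and, crucially, $\big(\delta_1+\delta_2+\tfrac2p\big)p+2=(\delta_1 p+2)+(\delta_2 p+2)$, applying the dyadic equivalence to $fg$ with weight $\delta_1+\delta_2+2/p$ and setting $a_j^f:=2^{j(\delta_1p+2)}\norm{f_j}_{W^{m,p}(\tilde A)}^p$, $a_j^g:=2^{j(\delta_2p+2)}\norm{g_j}_{W^{m,p}(\tilde A)}^p$ gives
\[
\norm{fg}_{W^{m,p}_{\delta_1+\delta_2+2/p}}^p\ \lesssim\ \sum_{j\ge0}2^{j(\delta_1p+2)}2^{j(\delta_2p+2)}\norm{f_j}_{W^{m,p}(\tilde A)}^p\norm{g_j}_{W^{m,p}(\tilde A)}^p=\sum_{j\ge0}a_j^f a_j^g\ \le\ \Big(\sup_j a_j^f\Big)\sum_{j\ge0}a_j^g .
\]
By the dyadic equivalence once more, $\sup_j a_j^f\lesssim\norm{f}_{W^{m,p}_{\delta_1}}^p$ and $\sum_j a_j^g\asymp\norm{g}_{W^{m,p}_{\delta_2}}^p$, which yields $\norm{fg}_{W^{m,p}_{\delta_1+\delta_2+2/p}}\lesssim\norm{f}_{W^{m,p}_{\delta_1}}\norm{g}_{W^{m,p}_{\delta_2}}$; taking $\delta_1=\delta_2=\delta\ge0$ and using (i) to absorb the surplus weight $2/p>0$ shows that $W^{m,p}_\delta$ is a Banach algebra. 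The one place that genuinely needs care is the normalization step: verifying that the weight power, the Jacobian, and the derivative scaling conspire into the $|\alpha|$-independent factor $2^{j(\delta p+2)}$, and that the bounded overlap of consecutive annuli costs only a fixed multiplicative constant. Everything analytic after that — the Sobolev embedding and the Sobolev algebra property on a fixed bounded domain, both valid exactly because $m>d/p$ — is classical; the only genuinely delicate ingredient is arranging this bookkeeping so that the exponents in (iv) cancel as displayed.
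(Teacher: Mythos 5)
Your argument is correct, and its engine --- the dyadic decomposition of the plane into annuli and the rescaling to a fixed reference annulus, under which the weight, the Jacobian and the derivative scaling collapse into the $|\alpha|$-independent factor $2^{j(\delta p+2)}$ --- is precisely the scaling identity the paper uses in Appendix A (formula \eqref{eq:scaling_weighted_sobolev} and the computation around it) to prove the embeddings of Proposition \ref{prop:main_technical}; items (i)--(iii) are elementary in both treatments. Where you genuinely diverge is item (iv): the paper (Proposition \ref{prop:W_R}) obtains the product estimate from the generalized H\"older inequality with weights (Lemma \ref{lem:holder_with_weights}) combined with the weighted embeddings $W^{m,p}_{\delta-(d/p)}\subseteq L^{q}_{\delta-(d/q)}$ for a suitable splitting $1/p=1/p_1+1/p_2$, whereas you localize the product to each annulus, invoke the unweighted Sobolev algebra property of $W^{m,p}$ on the fixed bounded domain (valid since $m>d/p$), and close the estimate with the exponent identity $(\delta_1+\delta_2+\tfrac2p)p+2=(\delta_1p+2)+(\delta_2p+2)$ together with $\sum_j a^f_ja^g_j\le(\sup_j a^f_j)\sum_j a^g_j$. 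Your route is more self-contained for the statement as asserted, but it only delivers the equal-regularity case; the paper's H\"older-based argument also yields the finer mixed-regularity and sub-critical version of Proposition \ref{prop:W_R} ($0\le k\le l\le m$ with $m+l-k>d/p$, including $m\le d/p$), which is the form recorded in the appendix. Two small points worth making explicit in your write-up: the bounded overlap of consecutive annuli and the separate treatment of the bounded piece $A_0$ cost only fixed constants, and in item (ii) the pointwise bound $|\partial^{\gamma}(\chi z^{-k})|\le C_\gamma\z^{-k-|\gamma|}$ must be checked for negative $k$ as well (it holds because $\partial_z^{\gamma_1}z^{|k|}$ vanishes once $\gamma_1>|k|$ and $\partial_{\bar z}z^{|k|}=0$ off the transition annulus of $\chi$).
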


Proposition \ref{prop:properties_W-spaces} implies

\begin{proposition}\label{prop:properties_Z-spaces}
Assume that $m>2/p$. Then
\begin{itemize}
\item[(i)] For any $0\le n_1\le n_2\le N_1\le N_2$ we have that the inclusion map 
$\mathcal{Z}^{m,p}_{n_2,N_2}\to\mathcal{Z}^{m,p}_{n_1,N_1}$ is bounded.
\item[(ii)] For any $0\le n\le N$, an integer $k\ge 0$, a regularity exponent $m>k+(2/p)$, and a multi-index $|\alpha|\le k$ the map 
$\partial^\alpha : \mathcal{Z}_{n,N}^{m,p}\to\mathcal{Z}_{n+|\alpha|,N+|\alpha|}^{m-|\alpha|,p}$
is bounded. 
\item[(iii)] For $m>2/p$ and for any $0\le n_1\le N_1$ and $0\le n_2\le N_2$ the map 
\[
(f,g)\mapsto fg,\quad\mathcal{Z}^{m,p}_{n_1,N_1}\times\mathcal{Z}^{m,p}_{n_2,N_2}\to\mathcal{Z}^{m,p}_{n,N},
\]
where $n:=n_1+n_2$ and $N:=\min(n_1+N_2,n_2+N_1)$ is bounded. 
In particular, $\mathcal{Z}^{m,p}_N$ is a Banach algebra for any $N\ge 0$.
\end{itemize}
\end{proposition}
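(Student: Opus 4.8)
The statement to prove is Proposition \ref{prop:properties_Z-spaces}, which transfers the analytic properties of the weighted Sobolev spaces $W^{m,p}_\delta$ (Proposition \ref{prop:properties_W-spaces}) to the asymptotic spaces $\mathcal{Z}^{m,p}_{n,N}$. The overall strategy is to decompose every element $u\in\mathcal{Z}^{m,p}_{n,N}$ into its finite asymptotic part $P_u:=\chi\sum_{n\le k+l\le N}a_{kl}z^{-k}\bar z^{-l}$ and its remainder $f\in W^{m,p}_{\gamma_N}$, perform each operation separately on the (finitely many, explicitly given) asymptotic monomials and on the remainder, and then recombine, absorbing all error terms (those coming from derivatives falling on the cutoff $\chi$, and from products of monomials with remainders) into the remainder space using Proposition \ref{prop:properties_W-spaces}. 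The key arithmetic fact to keep track of is that the weight $\gamma_N=N+\gamma_0$ is tied to the truncation order $N$, so whenever $N$ increases we gain decay in the remainder, and conversely a monomial $z^{-k}\bar z^{-l}$ with $k+l=N$ "costs" exactly one unit of weight relative to $\gamma_{N-1}$ — this bookkeeping is what makes the index arithmetic in (ii) and (iii) come out as stated.

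For part (i), I would observe that an element of $\mathcal{Z}^{m,p}_{n_2,N_2}$ has its asymptotic sum ranging over $n_2\le k+l\le N_2$, which is a sub-range of $n_1\le k+l\le N_1$; the monomials with $N_1 < k+l \le N_2$ must be moved into the remainder of the target space $\mathcal{Z}^{m,p}_{n_1,N_1}$, which has weight $\gamma_{N_1}$. Since $\chi z^{-k}\bar z^{-l}\in W^{m,p}_\delta$ for any $\delta < k+l-(2/p)$ (a standard computation: away from the origin the function decays like $r^{-(k+l)}$ and each derivative gains a power of $r$, matching the weight), and since $\gamma_{N_1}=N_1+\gamma_0 < N_1 + 1 \le (k+l) - (2/p)$ would need checking — in fact $\gamma_{N_1}+(2/p)=N_1+\gamma_0+(2/p)<N_1+1\le k+l$, so indeed $\chi z^{-k}\bar z^{-l}\in W^{m,p}_{\gamma_{N_1}}$ whenever $k+l>N_1$ — these monomials land in the remainder. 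The remainder $f\in W^{m,p}_{\gamma_{N_2}}\hookrightarrow W^{m,p}_{\gamma_{N_1}}$ by Proposition \ref{prop:properties_W-spaces}(i) since $\gamma_{N_1}\le\gamma_{N_2}$. Boundedness of the inclusion then follows by estimating the norm of each of the finitely many transferred monomials by a constant (depending only on $m,p,N_1,N_2$) times $|a_{kl}|$.

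For part (ii) I would apply $\partial^\alpha$ termwise. On the remainder, $\partial^\alpha f\in W^{m-|\alpha|,p}_{\gamma_N+|\alpha|}$ by Proposition \ref{prop:properties_W-spaces}(iii); since $\gamma_{N+|\alpha|}=\gamma_N+|\alpha|$, this is exactly $W^{m-|\alpha|,p}_{\gamma_{N+|\alpha|}}$, the correct remainder space for the target $\mathcal{Z}^{m-|\alpha|,p}_{n+|\alpha|,N+|\alpha|}$. On each monomial $\chi z^{-k}\bar z^{-l}$ with $n\le k+l\le N$, the Leibniz rule produces a main term $\chi\cdot\partial^\alpha(z^{-k}\bar z^{-l})$, which is a linear combination of $\chi z^{-k'}\bar z^{-l'}$ with $k'+l'=k+l+|\alpha|$ (so $n+|\alpha|\le k'+l'\le N+|\alpha|$, the correct range), plus terms where at least one derivative hits $\chi$; the latter are supported in the annulus $1\le|z|\le 2$, hence lie in $C^\infty_c\subset W^{m-|\alpha|,p}_{\gamma_{N+|\alpha|}}$. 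Boundedness is again automatic from the finiteness of the sum and the explicit constants.

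Part (iii) is the main obstacle and requires the most careful index arithmetic. Write $u_j=\chi\sum a^{(j)}_{kl}z^{-k}\bar z^{-l}+f_j$ for $j=1,2$ with $f_j\in W^{m,p}_{\gamma_{N_j}}$. The product $u_1u_2$ splits into three types of terms: (a) monomial times monomial, (b) monomial times remainder (and symmetrically), (c) remainder times remainder. For (a), $(\chi z^{-k_1}\bar z^{-l_1})(\chi z^{-k_2}\bar z^{-l_2})=\chi^2 z^{-(k_1+k_2)}\bar z^{-(l_1+l_2)}$; since $\chi^2-\chi\in C^\infty_c$, we may replace $\chi^2$ by $\chi$ at the cost of a compactly supported error absorbed in the remainder, and the resulting monomial has total degree $k_1+k_2+l_1+l_2$ ranging in $[n_1+n_2,\,N_1+N_2]$; the ones with degree exceeding $N=\min(n_1+N_2,n_2+N_1)$ must be pushed into the remainder $W^{m,p}_{\gamma_N}$, which is legitimate exactly because such a monomial of degree $d>N$ lies in $W^{m,p}_{\gamma_N}$ (as $\gamma_N+(2/p)<N+1\le d$). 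This is the crux: one must verify that the threshold $N=\min(n_1+N_2,n_2+N_1)$ is precisely the largest value for which the monomials of that degree are genuinely produced as asymptotic terms rather than forced into the remainder — degree-$d$ monomials with $d>n_1+N_2$ cannot arise from a degree-$\ge n_1$ factor times a degree-$\le N_2$ factor, and symmetrically, so above $N$ the only contributions are already remainder-type. For (b), a monomial $\chi z^{-k}\bar z^{-l}$ (degree $\ge n_1$) times $f_2\in W^{m,p}_{\gamma_{N_2}}$: by Proposition \ref{prop:properties_W-spaces}(ii), multiplication by $\chi z^{-k}\bar z^{-l}$ maps $W^{m,p}_{\gamma_{N_2}}\to W^{m,p}_{\gamma_{N_2}+k+l}$, and $\gamma_{N_2}+k+l\ge\gamma_{N_2}+n_1=\gamma_{N_2+n_1}\ge\gamma_N$ since $N\le n_1+N_2$, so this lands in $W^{m,p}_{\gamma_N}$ by the embedding (i) of Proposition \ref{prop:properties_W-spaces}. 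For (c), Proposition \ref{prop:properties_W-spaces}(iv) gives $f_1f_2\in W^{m,p}_{\gamma_{N_1}+\gamma_{N_2}+(2/p)}$, and $\gamma_{N_1}+\gamma_{N_2}+(2/p)=N_1+N_2+2\gamma_0+(2/p)\ge N+\gamma_0=\gamma_N$ (using $N\le N_1+N_2$ and $\gamma_0+(2/p)>0$), so again this embeds into $W^{m,p}_{\gamma_N}$. Collecting all three types shows $u_1u_2\in\mathcal{Z}^{m,p}_{n,N}$ with the product norm bounded by $C\,\|u_1\|_{\mathcal{Z}^{m,p}_{n_1,N_1}}\|u_2\|_{\mathcal{Z}^{m,p}_{n_2,N_2}}$, the constant coming from the finitely many monomial–monomial products plus the operator norms in Proposition \ref{prop:properties_W-spaces}. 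The special case $n_1=n_2=0$, $N_1=N_2=N$ gives $n=0$, $N=N$, i.e.\ $\mathcal{Z}^{m,p}_N$ is a Banach algebra.
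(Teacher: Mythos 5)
Your proof is correct and follows exactly the route the paper intends: the paper gives no written proof beyond the one-line assertion that Proposition \ref{prop:properties_W-spaces} implies the statement, and your argument is precisely the detailed elaboration of that deduction (splitting off the finite asymptotic part, handling monomials explicitly, and absorbing all error terms into the remainder via the weight arithmetic $\gamma_N=N+\gamma_0$). The only blemish is the side remark in part (iii) that degree-$d$ monomials with $d>n_1+N_2$ ``cannot arise from a degree-$\ge n_1$ factor times a degree-$\le N_2$ factor'' --- they can (e.g.\ a degree-$N_1$ monomial times a degree-$n_2$ monomial may exceed $n_1+N_2$), but this claim is not used in your argument, since you correctly push \emph{all} monomial products of degree exceeding $N$ into $W^{m,p}_{\gamma_N}$; the correct heuristic for the threshold $N=\min(n_1+N_2,n_2+N_1)$ is that the cross terms $P_1f_2$ and $f_1P_2$ are only $o(r^{-(n_1+N_2)})$ and $o(r^{-(n_2+N_1)})$ respectively.
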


For a given $R>0$ and $m>2/p$ define the auxiliary space 
\begin{align}\label{eq:Z_R-space}
\mathcal{Z}_N^{m,p}(B_R^c):=\Big\{\sum_{0\le k+l\le N}\frac{a_{kl}}{z^k\bar{z}^l} + 
f \; \Bigl\vert \; f \in W_{\gamma_N}^{m,p}(B_R^c)\,\,\text{\rm and}\,\,a_{kl}\in\mathbb{C} \Big\},
\end{align}
where $B^c_R := \big\{z\in\C\big| \, |z| > R\big\}$ and $W_{\gamma_N}^{m,p}(B_R^c)$
denotes the weighted Sobolev space,
\[
W^{m,p}_\delta(B_R^c):=\big\{f\in H^{m,p}_{loc}(B_R^c,\C)\,\big|\,
|z|^{\delta+|\alpha|}\partial^\alpha f\in L^p\,\,\text{\rm for}\,\,|\alpha|\le m\big\},\quad
\partial^\alpha\equiv\partial_z^{\alpha_1}\partial_{\bar z}^{\alpha_2},
\]
equipped with the norm
$\|u\|_{W^{m,p}_\delta(B_R^c)}:=\sum_{|\alpha|\le m}\big\||z|^{\delta+|\alpha|}\partial^\alpha f\big\|_{L^p(B_R^c)}$.
We equip $\mathcal{Z}_N^{m,p}(B_R^c)$ with the norm
$\norm{u}_{\mathcal{Z}^{m,p}_N(B_R^c)}:=\sum_{0\le k+l\le N} \abnorm{a_{kl}} + 
\norm{f}_{W^{m,p}_{\gamma_N}(B_R^c)}$. (If $m=0$ we set $L^p_\delta\equiv W^{0,p}_\delta$.)
In addition to this norm we will also consider the equivalent norm on $\mathcal{Z}_N^{m,p}(B_R^c)$
(see Appendix \ref{appendix:technical} for detail)
\begin{align}\label{eq:ZR-norm}
|u|_{\mathcal{Z}^{m,p}_N(B_R^c)} := \sum_{0\le k+l \le N} \frac{|a_{kl}|}{R^{k+l}} + 
\norm{f}_{W^{m,p}_{\gamma_N}(B_R^c)}.
\end{align}
For $R>0$ denote $B_R:=\big\{z\in\C\,\big|\,|z|<R\big\}$.
The proof of the following lemma is straightforward.

\begin{lemma}\label{lem:break}
Assume that $R>0$ and $m>2/p$. Then, for $u\in H^{m,p}_{loc}(\C,\C)$ we have that 
$u \in \mathcal{Z}^{m,p}_N$ if and only if $u \in H^{m,p}(B_{R+1})$ and $u\in\mathcal{Z}^{m,p}_N(B^c_R)$. 
The norms $\|\cdot\|_{\mathcal{Z}^{m,p}_N}$ and
$\|\cdot\|_{H^{m,p}(B_{R+1})}+|\cdot|_{\mathcal{Z}^{m,p}_N(B^c_R)}$ on $\mathcal{Z}^{m,p}_N$
are equivalent.\footnote{For simplicity of notation we use the same symbol for a function $u$ and its restriction to an open
subset of its domain of definition.}
\end{lemma}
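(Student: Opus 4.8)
The plan is to reduce the statement to the local description of the weighted Sobolev space $W^{m,p}_{\gamma_N}(B_R^c)$, which is essentially the definition of the global space $W^{m,p}_{\gamma_N}$ restricted to a neighborhood of infinity. First I would observe that the ``only if'' direction is immediate: if $u = \chi\sum_{0\le k+l\le N}\frac{a_{kl}}{z^k\bar z^l}+f$ with $f\in W^{m,p}_{\gamma_N}$, then on the bounded set $B_{R+1}$ the cut-off $\chi$ is smooth with compact support contribution and $f\in W^{m,p}_{\gamma_N}$ restricts to an element of $H^{m,p}(B_{R+1})$ since on a bounded set the weight $\z^{\delta+|\alpha|}$ is bounded above and below by positive constants; hence $u\in H^{m,p}(B_{R+1})$. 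On $B_R^c$, since we may assume $R$ large enough that $\chi\equiv 1$ there (or absorb the transition region, which lies in a compact annulus, into the remainder after multiplying by a smooth bounded function — here I would invoke Proposition \ref{prop:properties_W-spaces}(ii) and the fact that $|z|^{-k}$ and $\z^{-k}$ are comparable on $B_R^c$), we see directly that $u$ has the required form on $B_R^c$, so $u\in\mathcal{Z}^{m,p}_N(B_R^c)$.

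For the ``if'' direction, suppose $u\in H^{m,p}_{loc}$, $u\in H^{m,p}(B_{R+1})$, and $u\in\mathcal{Z}^{m,p}_N(B_R^c)$, so that on $B_R^c$ we can write $u=\sum_{0\le k+l\le N}\frac{a_{kl}}{z^k\bar z^l}+g$ with $g\in W^{m,p}_{\gamma_N}(B_R^c)$. Set $f:=u-\chi\sum_{0\le k+l\le N}\frac{a_{kl}}{z^k\bar z^l}$ (using a global cut-off $\chi$ with $\chi\equiv 1$ on $B_R^c$, or on $B_{R'}^c$ for $R'>R$). Then $f$ is supported — in the sense of being the only term — as $g$ on $B_{R'}^c$, hence $\z^{\delta+|\alpha|}\partial^\alpha f\in L^p(B_{R'}^c)$ for $|\alpha|\le m$; and on the bounded set $B_{R'+1}$, $f$ differs from $u\in H^{m,p}(B_{R+1})$ by a smooth function times a polynomial in $1/z,1/\bar z$ that is smooth away from the origin but $\chi$ vanishes near the origin, so $f\in H^{m,p}(B_{R'+1})$, and on a bounded set this is the same as $\z^{\delta+|\alpha|}\partial^\alpha f\in L^p$ for $|\alpha|\le m$. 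Patching these two $L^p$ bounds via a partition of unity on the overlap (a compact annulus) gives $f\in W^{m,p}_{\gamma_N}$, hence $u\in\mathcal{Z}^{m,p}_N$.

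The equivalence of norms follows by the same bookkeeping: one direction of each inequality comes from the boundedness of restriction maps $W^{m,p}_{\gamma_N}\to H^{m,p}(B_{R+1})$ and $W^{m,p}_{\gamma_N}\to W^{m,p}_{\gamma_N}(B_R^c)$ and the triangle inequality on the $a_{kl}$ terms, while the reverse direction uses that $f$ reconstructed as above has its $W^{m,p}_{\gamma_N}$ norm controlled by $\|u\|_{H^{m,p}(B_{R+1})}+\|g\|_{W^{m,p}_{\gamma_N}(B_R^c)}+\sum|a_{kl}|$, together with the fact (stated just before the lemma, via \eqref{eq:ZR-norm}) that $|\cdot|_{\mathcal{Z}^{m,p}_N(B_R^c)}$ is equivalent to $\|\cdot\|_{\mathcal{Z}^{m,p}_N(B_R^c)}$; the constants depend on $R$, $m$, $p$, $N$ but that is permitted.

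I do not expect any genuine obstacle here — the lemma is, as the authors say, straightforward — but the one point requiring a little care is the treatment of the transition annulus $1\le|z|\le 2$ (or wherever $\chi$ varies) and the uniqueness of the coefficients $a_{kl}$: one should note that the asymptotic coefficients of $u$ on $B_R^c$ are uniquely determined (the monomials $1/(z^k\bar z^l)$, $0\le k+l\le N$, are linearly independent modulo $W^{m,p}_{\gamma_N}(B_R^c)$, since $f\in W^{m,p}_{\gamma_N}(B_R^c)$ forces $f=o(1/r^N)$ by Proposition \ref{prop:main_technical}(ii) / Remark \ref{rem:R^d}), so the decomposition used to build $f$ globally is well-defined and matches the one on $B_R^c$.
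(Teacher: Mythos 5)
Your argument is correct. The paper gives no proof of this lemma (it is dismissed as ``straightforward''), and your decomposition --- restricting to $B_{R+1}$ and $B_R^c$, absorbing the transition annulus of $\chi$ into the remainder, reconstructing $f=u-\chi\sum a_{kl}z^{-k}\bar z^{-l}$ and patching the two $L^p$ bounds, with the norm equivalence by direct bookkeeping --- is exactly the intended route; you also correctly isolate the one point that deserves mention, namely that the coefficients $a_{kl}$ are uniquely determined because the monomials $z^{-k}\bar z^{-l}$ are linearly independent modulo $W^{m,p}_{\gamma_N}(B_R^c)$ (whose elements are $o(1/r^N)$ by Proposition \ref{prop:main_technical}(ii)), so the two decompositions carry the same coefficients and the exterior norm is well defined.
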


For $m>2/p$ the space $\mathcal{Z}^{m,p}_N(B_R^c)$ is a Banach algebra with respect to pointwise multiplication of
complex-valued functions. Moreover, this space has the following uniform in $R\ge 1$ multiplicative property
that is proved in Appendix \ref{appendix:technical}.

\begin{lemma}\label{coro:strict_banach}
For $m>2/p$ there exists a constant $C>0$ independent of the choice of $R\ge 1$ such that for 
any $u_1,u_2 \in \mathcal{Z}^{m,p}_N(B_R^c)$ we have
$\abnorm{u_1 \cdot u_2}_{\mathcal{Z}^{m,p}_N(B_R^c)} \le C \; 
\abnorm{u_1}_{\mathcal{Z}^{m,p}_N(B_R^c)} \abnorm{u_2}_{\mathcal{Z}^{m,p}_N(B_R^c)}$.
\end{lemma}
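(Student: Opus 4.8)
The plan is to eliminate the radius $R$ from the estimate by a rescaling argument, reducing the claim to the radius-one case, where $\mathcal{Z}^{m,p}_N(B_1^c)$ is already known to be a Banach algebra. For $R\ge 1$ I would introduce the dilation map $\Lambda_R:\mathcal{Z}^{m,p}_N(B_1^c)\to\mathcal{Z}^{m,p}_N(B_R^c)$, $(\Lambda_R v)(z):=v(z/R)$. The first step is to record its structural properties: $\Lambda_R$ is a linear bijection with inverse $u\mapsto u(R\cdot)$, and since precomposition with the linear map $z\mapsto z/R$ commutes with pointwise multiplication, $\Lambda_R$ is an algebra isomorphism; moreover it preserves the splitting into an asymptotic part and a remainder, since for $v=\sum_{0\le k+l\le N}\frac{b_{kl}}{z^k\bar z^l}+g$ with $g\in W^{m,p}_{\gamma_N}(B_1^c)$ one has $\Lambda_R v=\sum_{0\le k+l\le N}\frac{R^{k+l}b_{kl}}{z^k\bar z^l}+g(\cdot/R)$, i.e.\ $\Lambda_R v\in\mathcal{Z}^{m,p}_N(B_R^c)$ with asymptotic coefficients $R^{k+l}b_{kl}$ and remainder $g(\cdot/R)$.

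The next, and main, step is to understand how the norm behaves under $\Lambda_R$; this is precisely the purpose for which the equivalent norm \eqref{eq:ZR-norm} is introduced. For the remainder, the change of variables $z=Rw$ together with $\partial^\alpha[g(\cdot/R)](Rw)=R^{-|\alpha|}(\partial^\alpha g)(w)$ and $dA(z)=R^2\,dA(w)$ gives, for each $|\alpha|\le m$,
\[
\bigl\||z|^{\gamma_N+|\alpha|}\,\partial^\alpha[g(\cdot/R)]\bigr\|_{L^p(B_R^c)}=R^{\gamma_N+(2/p)}\,\bigl\||w|^{\gamma_N+|\alpha|}\,\partial^\alpha g\bigr\|_{L^p(B_1^c)},
\]
so that $\|g(\cdot/R)\|_{W^{m,p}_{\gamma_N}(B_R^c)}=R^{\gamma_N+(2/p)}\|g\|_{W^{m,p}_{\gamma_N}(B_1^c)}$; for the asymptotic part the rescaling $b_{kl}\mapsto R^{k+l}b_{kl}$ of the coefficients is exactly compensated by the $R^{-(k+l)}$ weights in \eqref{eq:ZR-norm}. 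Feeding these two facts into the definition of $|\cdot|_{\mathcal{Z}^{m,p}_N(B_R^c)}$ (with the precise normalization pinned down in Appendix \ref{appendix:technical}) shows that $\Lambda_R$ carries $\|\cdot\|_{\mathcal{Z}^{m,p}_N(B_1^c)}$ onto $|\cdot|_{\mathcal{Z}^{m,p}_N(B_R^c)}$ with constants independent of $R\ge 1$ --- in fact, with that normalization, isometrically.

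Granting the norm identity, the lemma follows at once: writing $u_j=\Lambda_R v_j$ with $v_j\in\mathcal{Z}^{m,p}_N(B_1^c)$, we have $u_1u_2=\Lambda_R(v_1v_2)$, and the Banach algebra property at $R=1$ provides $C_0>0$ (independent of $R$) with $\|v_1v_2\|_{\mathcal{Z}^{m,p}_N(B_1^c)}\le C_0\|v_1\|_{\mathcal{Z}^{m,p}_N(B_1^c)}\|v_2\|_{\mathcal{Z}^{m,p}_N(B_1^c)}$; transporting this inequality through $\Lambda_R$ and using the norm identity yields $|u_1u_2|_{\mathcal{Z}^{m,p}_N(B_R^c)}\le C\,|u_1|_{\mathcal{Z}^{m,p}_N(B_R^c)}\,|u_2|_{\mathcal{Z}^{m,p}_N(B_R^c)}$ with $C$ independent of $R\ge 1$. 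The step that needs genuine care --- and the reason one works with \eqref{eq:ZR-norm} rather than with $\|\cdot\|_{\mathcal{Z}^{m,p}_N(B_R^c)}$ --- is the matching of powers of $R$ in the second step: when $u_1u_2$ is multiplied out, the products of asymptotic terms whose combined order $s:=(k+k')+(l+l')$ exceeds $N$ are demoted into the remainder of $u_1u_2$, and one needs the uniform bound $\|z^{-j}\bar z^{-i}\|_{W^{m,p}_{\gamma_N}(B_R^c)}\le C_{N,m,p}R^{\gamma_N+(2/p)-s}$ for $j+i=s\ge N+1$; this is under control because the hypothesis $0<\gamma_0+(2/p)<1$ forces $\gamma_N+(2/p)-s\le\gamma_0+(2/p)-1<0$, so that norm is bounded (indeed nonincreasing) for $R\ge 1$. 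If one wishes to bypass the dilation, an equivalent route is to expand $u_1u_2=P_1P_2+P_1f_2+f_1P_2+f_1f_2$ into asymptotic and remainder pieces and estimate the four summands separately using the $B_R^c$-analogues of Proposition \ref{prop:properties_W-spaces} (ii)--(iv), checking uniformity of the constants in $R\ge 1$; the delicate summand is again the part of $P_1P_2$ of combined order exceeding $N$.
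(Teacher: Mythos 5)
Your reduction to $R=1$ rests on the claim that the dilation $\Lambda_R$ carries $\|\cdot\|_{\mathcal{Z}^{m,p}_N(B_1^c)}$ onto $|\cdot|_{\mathcal{Z}^{m,p}_N(B_R^c)}$ uniformly (even isometrically), and this claim is contradicted by your own two scaling computations. You correctly find that the asymptotic block is preserved (the coefficients $R^{k+l}b_{kl}$ against the weights $R^{-(k+l)}$ in \eqref{eq:ZR-norm} give back $\sum|b_{kl}|$), but that the remainder block is multiplied by $R^{\gamma_N+(2/p)}$, and $\gamma_N+(2/p)=N+\gamma_0+(2/p)>0$. These two scaling factors are incompatible, and the norm \eqref{eq:ZR-norm} as written carries no compensating factor in front of $\|f\|_{W^{m,p}_{\gamma_N}(B_R^c)}$. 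What your computations actually give is $\|v\|_{\mathcal{Z}^{m,p}_N(B_1^c)}\le|\Lambda_R v|_{\mathcal{Z}^{m,p}_N(B_R^c)}\le R^{\gamma_N+(2/p)}\|v\|_{\mathcal{Z}^{m,p}_N(B_1^c)}$, with both sides sharp, so transporting the $R=1$ algebra inequality through $\Lambda_R$ produces a constant of order $R^{\gamma_N+(2/p)}$, not a uniform one. Your closing check of the ``delicate summand'' exhibits the same loss rather than curing it: for $u_1=a/(z^k\bar z^l)$ and $u_2=a'/(z^{k'}\bar z^{l'})$ with combined order $s\ge N+1$ the product is pure remainder with $|u_1u_2|_{\mathcal{Z}^{m,p}_N(B_R^c)}\asymp|aa'|\,R^{\gamma_N+(2/p)-s}$, whereas $|u_1|_{\mathcal{Z}^{m,p}_N(B_R^c)}\,|u_2|_{\mathcal{Z}^{m,p}_N(B_R^c)}=|aa'|\,R^{-s}$; the bound you need is therefore $\lesssim R^{-s}$, not mere boundedness of $\|z^{-j}\bar z^{-i}\|_{W^{m,p}_{\gamma_N}(B_R^c)}$, and the ratio $R^{\gamma_N+(2/p)}$ is unbounded. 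So the step ``that norm is bounded for $R\ge1$, hence under control'' does not close the argument.

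For comparison, the paper proves the lemma by direct case analysis rather than by dilation: remainder times remainder via the $R$-uniform product estimate of Proposition \ref{prop:W_R}, monomial times remainder by an explicit Leibniz computation, and monomial times monomial split according to whether the combined order exceeds $N$. Your dilation idea is attractive and would in fact be cleaner than the case analysis, but only after the remainder term in \eqref{eq:ZR-norm} is renormalized to $R^{-(\gamma_N+(2/p))}\|f\|_{W^{m,p}_{\gamma_N}(B_R^c)}$: with that normalization $\Lambda_R$ becomes an exact isometry by your computations, the demoted monomials come out right since $R^{-(\gamma_N+(2/p))}\cdot R^{\gamma_N+(2/p)-s}=R^{-s}$, and the lemma follows in two lines from the $R=1$ case. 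As the proof stands, you must either introduce and justify that renormalization (and check it is compatible with the way the lemma is invoked, e.g.\ in Lemma \ref{lem:division}) or give up the isometry claim and fall back on the direct estimates.
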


The notion of a Banach algebra we adopt in this paper does {\em not} require that the constant $C$ above is equal to one. 
We will also need the following lemma.

\begin{lemma}\label{lem:division}
Assume that $v\in\mathcal{Z}^{m,p}_N$ with $m>2/p$ has a non-vanishing leading asymptotic term $a_{00}\ne 0$ and 
that $v(z,\bar{z})\ne 0$ for any $z\in\C$. 
Then $1/v\in\mathcal{Z}^{m,p}_N$ and there exists an open neighborhood $U$ of 
zero in $\mathcal{Z}^{m,p}_N$ such that the map
\[
u\mapsto 1/(v+u),\quad U\to\mathcal{Z}^{m,p}_N,
\] 
is analytic. 
\end{lemma}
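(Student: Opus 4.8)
\noindent\emph{Proof proposal.} The statement has two parts: (a) $v$ is invertible in the Banach algebra $\mathcal{Z}^{m,p}_N$, i.e.\ $1/v\in\mathcal{Z}^{m,p}_N$, and (b) inversion near $v$ is analytic. Part (b) follows formally from part (a): the set of invertible elements of a unital complex Banach algebra is open and inversion is analytic on it (by the Neumann series), and $\mathcal{Z}^{m,p}_N$ is unital, since $1=\chi\cdot 1+(1-\chi)$ lies in it because $1-\chi\in C^\infty_c\subset W^{m,p}_{\gamma_N}$. Thus $u\mapsto 1/(v+u)$ is the composition of the affine map $u\mapsto v+u$ with inversion, hence analytic on the open set $U:=\{u\in\mathcal{Z}^{m,p}_N\,:\,v+u\ \text{is invertible in}\ \mathcal{Z}^{m,p}_N\}$, which contains $0$; concretely, for $\norm{u}_{\mathcal{Z}^{m,p}_N}<1/(C^2\norm{1/v}_{\mathcal{Z}^{m,p}_N})$ (with $C$ the algebra constant of Proposition \ref{prop:properties_Z-spaces}(iii)) the commutative Neumann series
\[
\frac{1}{v+u}=\sum_{j\ge 0}(-1)^j\,\frac{u^j}{v^{j+1}}
\]
converges absolutely and is a series of continuous homogeneous polynomials in $u$; its sum is the pointwise reciprocal of $v+u$ because multiplication in $\mathcal{Z}^{m,p}_N$ is pointwise.

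So the heart of the matter is part (a). I would establish it using the decomposition of Lemma \ref{lem:break}: fix $R$ large (to be specified) and verify separately that $1/v\in H^{m,p}(B_{R+1})$ and $1/v\in\mathcal{Z}^{m,p}_N(B_R^c)$. Since $m>2/p$, $v$ is continuous, hence (being nowhere zero) bounded away from zero on the compact set $\overline{B_{R+1}}$; as $H^{m,p}(B_{R+1})$ is a Banach algebra in which the reciprocal of a function bounded away from zero again lies (a routine computation: the derivatives of $1/v$ of order $\le m$ are polynomials in the $\partial^\beta v$, $|\beta|\le m$, divided by powers of $v$), we get $1/v\in H^{m,p}(B_{R+1})$.

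For $B_R^c$ take $R$ so large that $\chi\equiv 1$ there, and write $v=a_{00}+w$ on $B_R^c$, where $w\in\mathcal{Z}^{m,p}_N(B_R^c)$ has vanishing constant asymptotic coefficient, so $w$ consists of the terms $\sum_{1\le k+l\le N}a_{kl}/(z^k\bar z^l)$ together with the remainder $f|_{B_R^c}$. Using the equivalent norm \eqref{eq:ZR-norm},
\[
\big|w/a_{00}\big|_{\mathcal{Z}^{m,p}_N(B_R^c)}=\frac{1}{|a_{00}|}\Big(\sum_{1\le k+l\le N}\frac{|a_{kl}|}{R^{k+l}}+\norm{f}_{W^{m,p}_{\gamma_N}(B_R^c)}\Big),
\]
and both terms tend to $0$ as $R\to\infty$ — the first visibly, the second because it is the tail of the $L^p$-integrals defining $\norm{f}_{W^{m,p}_{\gamma_N}(\C)}<\infty$. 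Hence, by the uniform-in-$R\ge 1$ multiplicative estimate of Lemma \ref{coro:strict_banach}, choosing $R$ large enough makes $\tfrac{1}{a_{00}}\sum_{j\ge 0}(-w/a_{00})^j$ converge absolutely in $\mathcal{Z}^{m,p}_N(B_R^c)$; its sum is the algebra inverse of $1+w/a_{00}$, which — since multiplication is pointwise and $1+w/a_{00}=v/a_{00}$ is nowhere zero — must equal $a_{00}/v$. Thus $1/v\in\mathcal{Z}^{m,p}_N(B_R^c)$, and Lemma \ref{lem:break} yields $1/v\in\mathcal{Z}^{m,p}_N$, proving (a).

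The main obstacle is precisely this last step: one must pair the $R^{-(k+l)}$ rescaling of the asymptotic coefficients in the equivalent norm \eqref{eq:ZR-norm} with the decay of the weighted-Sobolev tail of the remainder to make the non-leading part of $v$ small on $B_R^c$, and then crucially invoke the \emph{$R$-independent} Banach-algebra constant of Lemma \ref{coro:strict_banach} so that the Neumann series truly converges; without uniformity in $R$ this would break down. Everything else — the reciprocal estimate on the bounded ball and the analyticity of inversion in a unital Banach algebra — is standard.
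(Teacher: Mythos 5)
Your proposal is correct and follows essentially the same route as the paper: split $1/v$ via Lemma \ref{lem:break} into the bounded ball (where $v$ is bounded away from zero and the Sobolev algebra handles the reciprocal) and $B_R^c$ (where the equivalent norm \eqref{eq:ZR-norm} together with the $R$-independent constant of Lemma \ref{coro:strict_banach} makes the Neumann series converge for $R$ large), and then obtain analyticity of $u\mapsto 1/(v+u)$ from the Neumann series in the unital Banach algebra $\mathcal{Z}^{m,p}_N$. The only cosmetic difference is that you phrase part (b) as the general openness/analyticity of inversion in a unital Banach algebra, whereas the paper writes out the series $1/(v+u)=(1/v)\sum_{k\ge 0}(-1)^k(u/v)^k$ directly; these are the same argument.
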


\begin{proof}[Proof of Lemma \ref{lem:division}]
We will assume without loss of generality that the leading asymptotic term $a_{00}$ of $v\in\mathcal{Z}^{m,p}_N$ is 
equal to one. Then, by \eqref{eq:ZR-norm} (cf. Lemma \ref{coro:norms} $(i)$) we can choose $R\ge 2$ large enough so that 
$|v-1|_{\mathcal{Z}^{m,p}_N(B_R^c)}<\min\big(1/2,1/(2C)\big)$ 
where $C>0$ is the constant appearing in Lemma \ref{coro:strict_banach}. 
Then we have,
\[
1/v=1/\big(1-(1-v)\big)=\sum_{k\ge 0}(1-v)^k,
\]
where the series converges in $\mathcal{Z}^{m,p}_N(B_R^c)$. The series also converges pointwisely by Lemma \ref{coro:norms} $(ii)$. 
In particular, we see that $1/v\in\mathcal{Z}^{m,p}_N(B_R^c)$. The condition that
$v\in H^{m,p}(B_{R+1})$ with $m>d/p$ and the fact that $v(z,\bar{z})\ne 0$ for any $z\in\C$ then imply that 
$1/v\in H^{m,p}(B_{R+1})$. Hence, $1/v\in\mathcal{Z}^{m,p}_N$ by Lemma \ref{lem:break}.
Since the space $\mathcal{Z}^{m,p}_N$ is a Banach algebra, it satisfies an analog of Lemma \ref{coro:strict_banach}
with an appropriately chosen constant $C>0$. By this Banach algebra property of $\mathcal{Z}^{m,p}_N$ and the continuity of the embedding
$\mathcal{Z}^{m,p}_N\subseteq L^\infty$ (see Lemma \ref{coro:norms} $(iii)$ in Appendix \ref{appendix:technical}) we can find an open 
neighborhood $U$ of zero in $\mathcal{Z}^{m,p}_N$ such that $\|u/v\|_{\mathcal{Z}^{m,p}_N}<1/(2C)$ and 
$\|u/v\|_{L^\infty}<1/2$ for any $u\in U$. Then, we have that
\[
1/(v+u)=(1/v)\sum_{k\ge 0}(-1)^k(u/v)^k
\]
where the series converges in $\mathcal{Z}^{m,p}_N$ uniformly in $U$. This completes the proof of the lemma. 
\end{proof}

For $m>1+(2/p)$ consider the set of maps $\R^2\to\R^2$,
\begin{equation}\label{eq:Z-group}
\mathcal{ZD}_N^{m,p}:=\Big\{\varphi\in\mathop{\rm Diff_+^1}(\R^2)\;\Big|\;
\varphi=\idmap+u,\;u\in\mathcal{Z}_N^{m,p}\Big\},
\end{equation} 
where $\mathop{\rm Diff_+^1}(\R^2)$ denotes the group of $C^1$-smooth orientation preserving 
diffeomorphisms of $\R^2$. As described in the Introduction, we identify $\R^2$ with the complex plane $\C$ and 
by following the standard notation in complex analysis write $\varphi(z,\bar{z})=z+u(z,\bar{z})$ for $z\in\C$. 
It follows easily from Hadamard-Levy's theorem (see e.g. \cite[Supplement 2.5D]{AMRBook}) and the decay properties of 
the elements of the Banach algebra $\mathcal{Z}_N^{m,p}$ that the set of maps $\mathcal{ZD}_N^{m,p}$ in \eqref{eq:Z-group} 
can be identified with the open set $\mathcal{O}$ in $\mathcal{Z}_N^{m,p}$ of those $u\in\mathcal{Z}_N^{m,p}$ such that there exists 
$\varepsilon>0$ so that uniformly on $\R^2$,
\begin{equation}\label{eq:open_condition}
\det(I+du)>\varepsilon,
\end{equation} 
where $I$ is the identity $2\times 2$ matrix and $du$ denotes the Jacobian matrix of the map $u : \R^2\to\R^2$. 
In fact, for any given $C^1$-diffeomorphism $\varphi\in\mathcal{ZD}_N^{m,p}$, $\varphi=z+u$, $u\in\mathcal{Z}_N^{m,p}$, 
we obtain from Proposition \ref{prop:properties_Z-spaces} that $du\in\mathcal{Z}_{1,N+1}^{m-1,p}$, and hence,
the entries of $du$ are continuous functions of order $O(1/r)$ as $r\to\infty$ (cf. Proposition \ref{prop:main_technical} and 
Remark \ref{rem:R^d} in Appendix \ref{appendix:technical}). This, together with the fact that $\varphi$ is an orientation preserving
$C^1$-diffeomorphism, then implies that there exists $\varepsilon>0$ such that \eqref{eq:open_condition} holds.
By the continuity of the differentiation $d : \mathcal{Z}_N^{m,p}\to\mathcal{Z}_{1,N+1}^{m-1,p}$, the Banach algebra property
of $\mathcal{Z}_{1,N+1}^{m-1,p}$, and the continuity of the embedding $\mathcal{Z}_{1,N+1}^{m-1,p}\subseteq L^\infty$
(Lemma \ref{coro:norms} $(iii)$) one also concludes that \eqref{eq:open_condition} holds locally uniformly in $u\in\mathcal{Z}_N^{m,p}$. 
This implies that $\mathcal{O}$ is an open set in $\mathcal{Z}_N^{m,p}$ and that $u\in\mathcal{O}$. 
Conversely, if $\varphi=z+u$ with $u\in\mathcal{O}$ we conclude from \eqref{eq:open_condition} and Hadamard-Levy's theorem that 
$\varphi=z+u$ is a $C^1$-diffeomorphism of the plane $\R^2$ and hence $\varphi\in\mathcal{ZD}_N^{m,p}$. This implies that 
$\mathcal{ZD}_N^{m,p}$ is a Banach manifold modeled on $\mathcal{Z}_N^{m,p}$ where $\mathcal{O}$ is the only chart and 
$u\in\mathcal{O}\subseteq\mathcal{Z}_N^{m,p}$ is the coordinate of $\varphi\in\mathcal{Z}_N^{m,p}$.
Note that by Remark \ref{rem:real_components} the set $\mathcal{ZD}_N^{m,p}$ is a closed submanifold in 
the group of asymptotic diffeomorphisms
\begin{equation*}
\mathcal{AD}_N^{m,p}:=\Big\{\varphi\in\mathop{\rm Diff_+^1}(\R^2)\;\Big|\;
\varphi =\idmap+u,\;u\in\mathcal{A}_N^{m,p}\Big\}
\end{equation*} 
considered in \cite[Section 2]{McOTo3}. Recall that the elements of $\mathcal{A}_N^{m,p}$ are vector fields on $\R^2$ of the form
\eqref{eq:asymptotic_expansion_logs} with vanishing coefficients in front of the log terms and that
the Banach manifold structure on $\mathcal{AD}_N^{m,p}$ is introduced in a fashion similar to the one for $\mathcal{ZD}_N^{m,p}$.

Consider the auxiliary space

\begin{equation}\label{Z_bar-space}
\widetilde{\mathcal{Z}}_N^{m,p}:=\Big\{\frac{\chi}{z^2} 
\sum_{0\le k+l\le N-2}\frac{a_{kl}}{z^k\bar{z}^l} + f\; 
\Bigl\vert \; f \in W_{\gamma_N}^{m,p} \,\,\text{\rm and}\,\,a_{kl}\in\mathbb{C}\Big\}
\end{equation}
where $\chi\equiv\chi(|z|)$ and we set that $\widetilde{\mathcal{Z}}_1^{m,p}\equiv W^{m,p}_{\gamma_1}$.
Note that $\widetilde{\mathcal{Z}}_N^{m,p}$ is a closed ideal in the Banach algebra $\mathcal{Z}_N^{m,p}$.
We have 

\begin{lemma}\label{lem:composition}
Assume that $m>1+(2/p)$. Then for any $\varphi\in\mathcal{ZD}^{m,p}_N$ and for any $u\in\mathcal{Z}^{m,p}_N$ we have that
$u\circ\varphi\in\mathcal{Z}^{m,p}_N$.
Similarly, for any $\varphi\in\mathcal{ZD}^{m,p}_N$ and for any $u\in\widetilde{\mathcal{Z}}^{m,p}_N$ we have that
$u\circ\varphi\in\widetilde{\mathcal{Z}}^{m,p}_N$.
\end{lemma}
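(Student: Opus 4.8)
The plan is to decompose $\varphi = \idmap + w$ with $w \in \mathcal{Z}^{m,p}_N$ and write $u = \chi\sum_{0\le k+l\le N}\frac{a_{kl}}{z^k\bar z^l} + g$ with $g \in W^{m,p}_{\gamma_N}$, and then to analyze $u\circ\varphi$ term by term. For the individual asymptotic terms I would expand $\frac{1}{(z+w)^k\overline{(z+w)}^{\,l}}$ for large $|z|$ using the geometric-series identity $\frac{1}{z+w} = \frac{1}{z}\sum_{j\ge 0}(-w/z)^j$, which converges in $\mathcal{Z}^{m,p}_N(B_R^c)$ for $R$ large by Lemma~\ref{coro:strict_banach} since $w/z \in \mathcal{Z}^{m-1,p}_{1,N+1}$ has small norm on $B_R^c$ (this is exactly the kind of estimate used in the proof of Lemma~\ref{lem:division}). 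Each resulting summand is, up to a convergent correction in $W^{m,p}_{\gamma_N}$, again a product of bounded asymptotic coefficients times $1/z^a\bar z^b$ with $a+b \le N$, so it lies in $\mathcal{Z}^{m,p}_N(B_R^c)$; combined with $C^1$-smoothness on the ball $B_{R+1}$ and Lemma~\ref{lem:break}, it lies in $\mathcal{Z}^{m,p}_N$.

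For the remainder $g$, the hard part is to show $g\circ\varphi \in W^{m,p}_{\gamma_N}$. This is the crux. I would argue that on $B_R^c$ (for $R$ large, using that $\varphi$ maps a neighborhood of infinity into a neighborhood of infinity, which follows from $w = o(1)$) one has pointwise $\langle\varphi(z)\rangle \sim \langle z\rangle$, so the weights are comparable; then by the chain rule $\partial^\alpha(g\circ\varphi)$ is a finite sum of terms $(\partial^\beta g)\circ\varphi$ multiplied by products of derivatives $\partial^\gamma w$ with $|\gamma|\le|\alpha|$ (Fa\`a di Bruno). The derivatives $\partial^\gamma w$ with $|\gamma|\ge 1$ lie in $\mathcal{Z}^{m-|\gamma|,p}_{1,N+1}\subseteq L^\infty$ and in fact decay, so they are harmless; the main estimate is $\|\langle z\rangle^{\delta}((\partial^\beta g)\circ\varphi)\|_{L^p(B_R^c)} \le C\|\langle z\rangle^{\delta}\partial^\beta g\|_{L^p}$, which follows from the change of variables $z \mapsto \varphi(z)$ together with the bound $\det d\varphi \ge \varepsilon > 0$ from \eqref{eq:open_condition} and the weight comparability. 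On the ball $B_{R+1}$ the composition $g\circ\varphi$ is handled by the standard fact that $H^{m,p}(B_{R+2})$ is stable under composition with $C^1$-diffeomorphisms whose Sobolev regularity is $m$ — here $\varphi|_{B_{R+2}} \in H^{m,p}$ with $m > 1+2/p$ — giving $g\circ\varphi\in H^{m,p}(B_{R+1})$. Applying Lemma~\ref{lem:break} once more yields $g\circ\varphi\in W^{m,p}_{\gamma_N}\subseteq\mathcal{Z}^{m,p}_N$.

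For the second assertion, note that $\widetilde{\mathcal{Z}}^{m,p}_N$ is obtained from $\mathcal{Z}^{m,p}_N$ by the same construction but with the asymptotic sum multiplied by $\chi/z^2$; so if $u\in\widetilde{\mathcal{Z}}^{m,p}_N$ then writing $u = \frac{\chi}{z^2}\sum_{0\le k+l\le N-2}\frac{a_{kl}}{z^k\bar z^l} + f$ and composing, the remainder $f\circ\varphi\in W^{m,p}_{\gamma_N}$ by the argument above, while each asymptotic term, after the geometric expansion of $\frac{1}{(z+w)^{k+2}\overline{(z+w)}^{\,l}}$, produces terms of the form $\frac{\chi}{z^2}\cdot(\text{bounded})\cdot\frac{1}{z^a\bar z^b}$ with $a+b\le N-2$ plus a $W^{m,p}_{\gamma_N}$ correction, hence lands in $\widetilde{\mathcal{Z}}^{m,p}_N$; here I use that $\widetilde{\mathcal{Z}}^{m,p}_N$ is a closed ideal in $\mathcal{Z}^{m,p}_N$ (stated just above the lemma) to absorb the bounded asymptotic factors and conclude.

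I expect the main obstacle to be the weighted change-of-variables estimate for the remainder: one must verify carefully that $\varphi$ preserves a neighborhood of infinity, that $\langle\varphi(z)\rangle$ and $\langle z\rangle$ are comparable there uniformly, and that the Jacobian bound from \eqref{eq:open_condition} combines with the Fa\`a di Bruno expansion to control all $L^p$ norms of $\partial^\alpha(g\circ\varphi)$ for $|\alpha|\le m$ — in particular the top-order term, where one of the factors is $\partial^\alpha\varphi$ which only lies in $W^{m-|\alpha|,p}$, so one needs $m > 1 + 2/p$ to keep it in $L^\infty$ (or, at top order, to pair it against $(\partial g)\circ\varphi \in L^\infty$ via the embedding $W^{m-1,p}_{\gamma_N}\hookrightarrow L^\infty$).
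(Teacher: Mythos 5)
Your proposal is correct and its overall architecture coincides with the paper's: split $u$ into asymptotic part plus remainder, treat the asymptotic terms near infinity via the geometric series in $v/z$ (the Lemma~\ref{lem:division}/Lemma~\ref{coro:strict_banach} mechanism), and glue with the bounded region via Lemma~\ref{lem:break}. The one genuine divergence is the remainder: the paper does not prove $g\circ\varphi\in W^{m,p}_{\gamma_N}$ from scratch but simply invokes Corollary~6.1\,(b) of \cite{McOTo2} (which it also uses to see that $\chi\circ\varphi\in H^{m,p}_{loc}$ on the bounded part), whereas you sketch a self-contained derivation via weight comparability $\langle\varphi(z)\rangle\sim\langle z\rangle$, the Jacobian lower bound \eqref{eq:open_condition}, and Fa\`a di Bruno. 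Your sketch identifies the right difficulties and, in particular, correctly isolates the top-order term where $\partial^\alpha\varphi$ is only $L^p$ and must be paired against $(dg)\circ\varphi\in L^\infty$ using $m-1>2/p$; carried out in full this is essentially a re-proof of the cited corollary, so nothing is gained over the citation except self-containedness, and the bookkeeping of weights in the Fa\`a di Bruno sum (each extra order of differentiation of $w$ buying one extra power of decay) would need to be written out. One small inaccuracy: you justify that $\varphi$ maps a neighborhood of infinity into a neighborhood of infinity by ``$w=o(1)$'', but $w\in\mathcal{Z}^{m,p}_N$ tends to the constant $a_{00}$, which need not vanish; boundedness of $w$ is what you actually need (and have), so the argument survives. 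Your treatment of the second assertion via the ideal property of $\widetilde{\mathcal{Z}}^{m,p}_N$ matches the paper's ``the same arguments apply'' remark.
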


\begin{proof}[Proof of Lemma \ref{lem:composition}]
Let $\varphi\in\mathcal{ZD}^{m,p}_N$ where $\varphi = z + v$ with $v\in\mathcal{Z}^{m,p}_N$.
First, assume that $u = \chi/z$. Then we have
\begin{equation}\label{eq:expansion1}
u \circ \varphi = \chi\circ\varphi\cdot\frac{1}{z+v} = \chi\circ\varphi\cdot\frac{1}{z}\cdot
\frac{1}{\left(1+\frac{v}{z}\right)}.
\end{equation}
By taking $R>0$ large enough we can ensure that $\chi\circ\varphi\equiv 1$ on $B_R^c$
and $|v/z|_{\mathcal{Z}^{m,p}_N(B_R^c)}<\max\big(1/2,1/(2C)\big)$ (see Lemma \ref{coro:norms}).
Then, by arguing as in the proof of Lemma \ref{lem:division}, we conclude that the term $1/\left(1+\frac{v}{z}\right)$ 
in \eqref{eq:expansion1} belongs to $\mathcal{Z}^{m,p}_N(B_R^c)$. Hence, by \eqref{eq:expansion1}, 
$u\circ\varphi\in\mathcal{Z}^{m,p}_N(B_R^c)$. Since $\varphi$ is a $C^1$-diffeomorphism of the complex plane $\C=\R^2$,
the expression $\varphi=z+v$ does not vanish on the support of $\chi\circ\varphi$. This shows that 
$u\circ\varphi=\chi\circ\varphi/(z+v)$ belongs to $H^{m,p}(B_{R+1})$ since $\chi\circ\varphi=1-(1-\chi)\circ\varphi\in H^{m,p}_{loc}$
by Corollary 6.1 (b) in \cite{McOTo2}. Hence, $u\circ\varphi\in\mathcal{Z}^{m,p}_N$ by Lemma \ref{lem:break}. 
By arguing in the same way one also treats the case when $u=\chi/\bar{z}$. Since $\mathcal{Z}^{m,p}_N$ is an algebra,
the statement of the lemma holds for $u=\chi/z^k$ and $u=\chi/\bar{z}^k$ with $k\ge 1$.
Finally, if $u$ belongs to the remainder space $W^{m,p}_{\gamma_N}$ then $u\circ \varphi \in W^{m,p}_{\gamma_N}$ 
by Corollary 6.1 (b) in \cite{McOTo2} and the fact that $m>1+(2/p)$. This completes the proof of the lemma for the space 
$\mathcal{Z}^{m,p}_N$. The case when $u$ belongs to the auxiliary space $\widetilde{\mathcal{Z}}^{m,p}_N$ follows easily 
by the same arguments.
\end{proof}

\begin{remark}\label{rem:improved_composition}
Formula \eqref{eq:expansion1} implies that for $u=\chi/z$ and for any $\varphi\in\mathcal{ZD}^{m,p}_N$ we have that
\begin{equation}\label{eq:improved_composition}
u\circ\varphi=\frac{\chi}{z}\big(1+f\big),\quad f\in\mathcal{Z}^{m,p}_{1,N+1},
\end{equation}
and a similar formula for $u=\chi/\bar{z}$. This, together with Proposition \ref{prop:properties_Z-spaces} and 
Lemma 6.5 in \cite{McOTo2}, then shows that the following stronger statement holds: for any $\varphi\in\mathcal{ZD}^{m,p}_N$ and 
for any $u\in\mathcal{Z}^{m,p}_{n,N+1}$, $0\le n\le N+2$ we have that $u\circ\varphi\in\mathcal{Z}^{m,p}_{n,N+1}$.
\end{remark}

As a corollary we obtain the following proposition.

\begin{proposition}\label{prop:composition}
For $m>1+(2/p)$ the set $\mathcal{ZD}^{m,p}_N$ is closed under the composition of diffeomorphisms.
In addition, the composition map
$(\varphi,\psi)\mapsto\varphi\circ\psi$, $\mathcal{ZD}^{m,p}_N\times\mathcal{ZD}^{m,p}_N\to\mathcal{ZD}^{m,p}_N$,
is continuous and the associated map
\[
(\varphi,\psi)\mapsto\varphi\circ\psi,\quad\mathcal{ZD}^{m+1,p}_N\times\mathcal{ZD}^{m,p}_N\to\mathcal{ZD}^{m,p}_N,
\]
is $C^1$-smooth.
\end{proposition}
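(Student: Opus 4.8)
\textbf{Proof proposal for Proposition \ref{prop:composition}.}

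The plan is to deduce the three assertions — closedness under composition, continuity of composition on $\mathcal{ZD}^{m,p}_N\times\mathcal{ZD}^{m,p}_N$, and $C^1$-smoothness of the ``loss-of-one-derivative'' composition map — from Lemma \ref{lem:composition} together with the chart description of $\mathcal{ZD}^{m,p}_N$ established right before the statement. For the first assertion I would write $\varphi=\idmap+u$ and $\psi=\idmap+v$ with $u,v\in\mathcal{Z}^{m,p}_N$, so that $\varphi\circ\psi=\idmap+\big(v+u\circ\psi\big)$; since $\psi\in\mathcal{ZD}^{m,p}_N$ and $u\in\mathcal{Z}^{m,p}_N$, Lemma \ref{lem:composition} gives $u\circ\psi\in\mathcal{Z}^{m,p}_N$, hence the displacement $v+u\circ\psi$ lies in $\mathcal{Z}^{m,p}_N$. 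The composition $\varphi\circ\psi$ is automatically an orientation preserving $C^1$-diffeomorphism of $\R^2$ as a composition of two such, so it satisfies the open condition \eqref{eq:open_condition} and therefore lies in $\mathcal{ZD}^{m,p}_N$.

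For the continuity and the $C^1$-statement, the key reduction is that in the single chart $\mathcal{O}\subseteq\mathcal{Z}^{m,p}_N$ the composition map is read as $(u,v)\mapsto v+u\circ(\idmap+v)$, so everything comes down to the regularity of the ``right translation'' / substitution map $R\colon (u,v)\mapsto u\circ(\idmap+v)$. The standard mechanism (as in the analogous arguments for asymptotic diffeomorphism groups in \cite{McOTo2,McOTo3}) is: (a) separate variables — $R$ is linear in $u$ for fixed $v$, so continuity jointly in $(u,v)$ follows once one has continuity in $v$ uniformly on bounded sets of $u$, which in turn follows from the uniform bounds hidden in Lemma \ref{lem:composition} (the constant $C$ from Lemma \ref{coro:strict_banach} and the radius $R$ can be chosen locally uniformly in $v$); (b) for the $C^1$ statement, use that when the target regularity is $m$ but $u$ is taken one derivative smoother (i.e. $u\in\mathcal{Z}^{m+1,p}_N$, so $\varphi\in\mathcal{ZD}^{m+1,p}_N$), the candidate derivative of $R$ in the $v$-direction is $\delta v\mapsto (\partial u\circ(\idmap+v))\cdot\delta v$, where $\partial u\in\mathcal{Z}^{m,p}_N$ (componentwise, after applying Proposition \ref{prop:properties_Z-spaces}(ii)) composes into $\mathcal{Z}^{m,p}_N$ by Lemma \ref{lem:composition} and multiplies against $\delta v\in\mathcal{Z}^{m,p}_N$ by the Banach algebra property; the derivative in the $u$-direction is just $R(\cdot,v)$ itself by linearity. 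One then verifies that these candidate partials depend continuously on $(u,v)$ — again reducing to the substitution map one regularity level down — and that they genuinely give the Fréchet derivative via a Taylor-with-remainder estimate $u\circ(\idmap+v+\delta v)-u\circ(\idmap+v)-(\partial u\circ(\idmap+v))\delta v=o(\|\delta v\|)$ in $\mathcal{Z}^{m,p}_N$, the remainder being controlled by a mean-value argument and the continuity of $\partial u$ as a map into $\mathcal{Z}^{m,p}_N$. Having continuous first partials, $C^1$-smoothness of $R$, and hence of composition, follows from the standard criterion.

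The main obstacle is the $C^1$-statement, specifically the Taylor-remainder estimate for the substitution map in the $\mathcal{Z}^{m,p}_N$-norm: one must control $u\circ(\idmap+v+\delta v)-u\circ(\idmap+v)$ simultaneously in the asymptotic-coefficient part and in the weighted-Sobolev remainder part of $\mathcal{Z}^{m,p}_N$, and these interact because substitution mixes the asymptotic head with the tail (this is exactly the phenomenon recorded in Remark \ref{rem:improved_composition}, where $u=\chi/z$ composed with a diffeomorphism acquires a correction in $\mathcal{Z}^{m,p}_{1,N+1}$). The cleanest route is to split $u=\chi\sum_{k+l\le N}a_{kl}z^{-k}\bar z^{-l}+g$ with $g\in W^{m,p}_{\gamma_N}$: for the finitely many rational head terms use \eqref{eq:expansion1}/\eqref{eq:improved_composition} and the analyticity of division from Lemma \ref{lem:division} to get explicit, analytic (hence certainly $C^1$) dependence on $v$; for the remainder $g$, invoke the corresponding $C^1$-composition estimate for the weighted Sobolev spaces $W^{m,p}_\delta$ on $\R^2$ — the 2d instance of the results of \cite{McOTo2} already cited in the proof of Lemma \ref{lem:composition} (Corollary 6.1 and Lemma 6.5 there) — which requires precisely the hypothesis $m>1+(2/p)$ so that one derivative can be spent. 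Assembling these two pieces and noting that the ``$+v$'' summand is linear (hence smooth) in $v$ completes the argument; the loss of one derivative enters only through the $W^{m,p}_\delta$-composition estimate and through needing $\partial u$ to land in the right space with the original regularity $m$.
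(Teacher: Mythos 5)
Your proof of the first assertion is exactly the paper's: write $\varphi\circ\psi=\idmap+\big(w+v\circ\psi\big)$ and invoke Lemma \ref{lem:composition} for $v\circ\psi$, plus the fact that a composition of orientation-preserving $C^1$-diffeomorphisms is one. For the continuity and $C^1$ assertions, however, you take a genuinely different route. The paper disposes of both in one line: $\mathcal{ZD}^{m,p}_N$ is a closed submanifold of the asymptotic group $\mathcal{AD}^{m,p}_N$, it is preserved by composition (by the first part), and the continuity/$C^1$-regularity of composition on $\mathcal{AD}^{m,p}_N\times\mathcal{AD}^{m,p}_N$ is \cite[Proposition 5.1]{McOTo2}; restriction to an invariant closed submanifold preserves these properties. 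You instead re-derive the regularity directly in the single chart, reducing to the substitution map $R(u,v)=u\circ(\idmap+v)$, exploiting linearity in $u$, computing the candidate $v$-partial $(du\circ(\idmap+v))\cdot\delta v$ (correctly read as the real Jacobian acting on $\delta v$, consistent with Remark \ref{rem:real_analyticity}), and splitting $u$ into the rational asymptotic head — treated explicitly via \eqref{eq:expansion1} and Lemma \ref{lem:division} — and the $W^{m,p}_{\gamma_N}$ tail, treated by the composition estimates of \cite{McOTo2}. This is essentially a re-proof of \cite[Proposition 5.1]{McOTo2} adapted to the $\mathcal{Z}$-scale: more self-contained in structure, but it leans on the same external weighted-Sobolev composition results (Corollary 6.1 and Lemma 6.5 of \cite{McOTo2}) that underlie the cited proposition, and several steps (the joint-continuity reduction, the Taylor remainder) remain sketches. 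One small imprecision: the factor $\chi\circ(\idmap+v)$ in \eqref{eq:expansion1} is not covered by the analyticity of division from Lemma \ref{lem:division}; it is handled separately as composition of a fixed $C^\infty_c$ function (hence of arbitrarily high Sobolev regularity) with the diffeomorphism, which is $C^1$ (indeed $C^\infty$) in $v$ — this does not affect the conclusion. Both approaches are valid; the paper's buys brevity at the cost of importing the full $\mathcal{AD}^{m,p}_N$ machinery, yours makes the mechanism visible but would need the omitted estimates written out.
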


\begin{remark}\label{rem:composition}
Similarly, for $m>1+(2/p)$ the composition map 
$(u,\psi)\mapsto u\circ\psi$, $\mathcal{Z}^{m,p}_N\times \mathcal{ZD}^{m,p}_N\to\mathcal{Z}^{m,p}_N$,
is continuous and the map
\[
(u,\psi)\mapsto u\circ\psi,\quad\mathcal{Z}^{m+1,p}_N\times \mathcal{ZD}^{m,p}_N\to\mathcal{Z}^{m,p}_N,
\]
is $C^1$-smooth.
\end{remark}

\begin{proof}[Proof of Proposition \ref{prop:composition}]
Take $\varphi,\psi\in\mathcal{ZD}^{m,p}_N$ where $\varphi=z+v$ and $\psi=z+w$ with 
$v,w\in \mathcal{Z}^{m,p}_N$.
Then, $\varphi\circ\psi=\psi+v\circ\psi = z+w+v\circ\psi=z+u$ where $u := w + v\circ \psi$.
By Lemma \ref{lem:composition}, $v\circ \psi\in{\mathcal{Z}}^{m,p}_N$. Hence, $u\in{\mathcal{Z}}^{m,p}_N$.
Since, the composition of $C^1$-diffeomorphisms is again a $C^1$-diffeomorphism we then conclude that 
$\varphi\circ\psi\in\mathcal{ZD}^{m,p}_N$.
The last statement of the proposition then follows from the analogous result for the asymptotic group $\mathcal{AD}^{m,p}_N$
(see \cite[Proposition 5.1]{McOTo2}) and the fact that ${\mathcal{ZD}}^{m,p}_N$ is a closed submanifold in 
$\mathcal{AD}^{m,p}_N$.
\end{proof}

The following three lemmas are needed for the proof that $\mathcal{ZD}^{m,p}_N$ is closed under the inversion of the diffeomorphisms.

\begin{lemma}\label{propInvNghd} 
Assume that $m>2+(2/p)$. Then there exists an open neighborhood $U$ of the identity $\idmap$ in $\mathcal{ZD}^{m,p}_N$
such that for any $\varphi\in U$ we have that $\varphi^{-1}\in\mathcal{ZD}^{m,p}_N$.
\end{lemma}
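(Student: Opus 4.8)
The plan is to invert the diffeomorphism near the identity by a fixed-point / implicit-function argument carried out directly in the chart $\mathcal{O}\subseteq\mathcal{Z}^{m,p}_N$. Writing $\varphi=\idmap+u$ with $u\in\mathcal{Z}^{m,p}_N$ small, we seek $\psi=\idmap+w$ with $w\in\mathcal{Z}^{m,p}_N$ such that $\varphi\circ\psi=\idmap$, i.e.
\[
w=-\,u\circ(\idmap+w)=-\,u\circ\psi.
\]
Define $F_u(w):=-\,u\circ(\idmap+w)$. First I would check that for $u$ in a sufficiently small neighborhood of $0$ and $w$ in a small ball $B_r\subseteq\mathcal{Z}^{m,p}_N$, the map $\idmap+w$ lies in $\mathcal{ZD}^{m,p}_N$ (using the openness of $\mathcal{O}$ established after \eqref{eq:open_condition}) and $F_u(w)\in\mathcal{Z}^{m,p}_N$ by Lemma \ref{lem:composition}; moreover $F_u$ maps $B_r$ into itself once $\|u\|_{\mathcal{Z}^{m,p}_N}$ is small, since $\|u\circ\psi\|_{\mathcal{Z}^{m,p}_N}$ is controlled by $\|u\|$ locally uniformly in $\psi$ (Remark \ref{rem:composition}). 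The key analytic input is that $F_u$ is a contraction on $B_r$: for this I would use the $C^1$-smoothness of the composition map $\mathcal{Z}^{m+1,p}_N\times\mathcal{ZD}^{m,p}_N\to\mathcal{Z}^{m,p}_N$ from Remark \ref{rem:composition}, together with the mean value inequality, to bound $\|F_u(w_1)-F_u(w_2)\|_{\mathcal{Z}^{m,p}_N}\le C\|du\|\,\|w_1-w_2\|_{\mathcal{Z}^{m,p}_N}$; since $du\in\mathcal{Z}^{m-1,p}_{1,N+1}$ depends continuously on $u$ and vanishes at $u=0$, shrinking the neighborhood of $\idmap$ makes the Lipschitz constant $<1$. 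By the Banach fixed-point theorem there is a unique $w=w(u)\in B_r$ solving $w=F_u(w)$.

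The hypothesis $m>2+(2/p)$ (rather than merely $m>1+(2/p)$) is exactly what is needed for this step: the Lipschitz estimate requires differentiating $u$ once and still landing in a Banach algebra that embeds in $L^\infty$, i.e. it uses $du\in\mathcal{Z}^{m-1,p}_{1,N+1}$ with $m-1>1+(2/p)$. I would make this loss of one derivative explicit, noting that $F_u$ involves the composition $u\circ\psi$ and that the relevant $C^1$ statements in Remark \ref{rem:composition} and Proposition \ref{prop:composition} cost one derivative.

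Having produced a right inverse $\psi=\idmap+w(u)$ with $w(u)\in\mathcal{Z}^{m,p}_N$, it remains to identify $\psi$ with the honest inverse diffeomorphism $\varphi^{-1}$ and to conclude $\varphi^{-1}\in\mathcal{ZD}^{m,p}_N$. Since $\varphi$ is a genuine $C^1$-diffeomorphism of $\R^2$ (being an element of $\mathop{\rm Diff_+^1}(\R^2)$), its set-theoretic inverse $\varphi^{-1}$ exists and is $C^1$; the identity $\varphi\circ\psi=\idmap$ forces $\psi=\varphi^{-1}$ by uniqueness of inverses. Thus $\varphi^{-1}=\idmap+w$ with $w\in\mathcal{Z}^{m,p}_N$, and since $\varphi^{-1}\in\mathop{\rm Diff_+^1}(\R^2)$, by definition \eqref{eq:Z-group} we get $\varphi^{-1}\in\mathcal{ZD}^{m,p}_N$. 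Finally I would remark that the fixed point $w(u)$ depends continuously (in fact the construction gives more) on $u$ near $0$, so $U$ can be taken to be the preimage of a small ball under the chart, which is an open neighborhood of $\idmap$ in $\mathcal{ZD}^{m,p}_N$.

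The main obstacle I anticipate is not the fixed-point scheme itself but verifying the contraction estimate with the correct derivative count — i.e. making sure that the bound $\|u\circ\psi_1-u\circ\psi_2\|_{\mathcal{Z}^{m,p}_N}\lesssim\|u\|_{\mathcal{Z}^{m,p}_N}\|\psi_1-\psi_2\|$ follows cleanly from the stated $C^1$-regularity of composition (which is phrased with a loss of one derivative) rather than requiring a direct, from-scratch estimate on $u\circ\psi$ in the $\mathcal{Z}^{m,p}_N$-norm; one must either invoke Remark \ref{rem:composition} carefully or reprove a tame estimate by hand using Proposition \ref{prop:properties_Z-spaces} and the structure of $u\circ\psi$ exhibited in the proof of Lemma \ref{lem:composition}.
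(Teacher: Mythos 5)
Your overall strategy (solve $\varphi\circ\psi=\idmap$ by a fixed-point/implicit-function argument in the chart) is the same in spirit as the paper's, which applies the inverse function theorem to $F(\varphi,\psi)=\varphi\circ\psi$. But there is a genuine gap at exactly the point you flag as "the main obstacle": the contraction estimate
\[
\norm{u\circ\psi_1-u\circ\psi_2}_{\mathcal{Z}^{m,p}_N}\le C\,\norm{du}\,\norm{\psi_1-\psi_2}_{\mathcal{Z}^{m,p}_N}
\]
cannot be obtained from Remark \ref{rem:composition}, and is in fact false for $u$ merely in $\mathcal{Z}^{m,p}_N$. The $C^1$-smoothness (hence local Lipschitz property) of $(u,\psi)\mapsto u\circ\psi$ is stated for $\mathcal{Z}^{m+1,p}_N\times\mathcal{ZD}^{m,p}_N\to\mathcal{Z}^{m,p}_N$: one derivative is lost. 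Concretely, estimating the top-order part of $u\circ\psi_1-u\circ\psi_2$ in $\mathcal{Z}^{m,p}_N$ requires controlling $(\partial^\alpha u)\circ\psi_1-(\partial^\alpha u)\circ\psi_2$ for $|\alpha|=m$ in a weighted $L^p$ norm, and the mean value theorem then calls on $\partial^{m+1}u$, which your $u\in\mathcal{Z}^{m,p}_N$ does not possess. So $F_u$ is a contraction only in the weaker norm $\mathcal{Z}^{m-1,p}_N$ (where $u\in\mathcal{Z}^{m,p}_N=\mathcal{Z}^{(m-1)+1,p}_N$ supplies the needed extra derivative — this is why the hypothesis is $m>2+(2/p)$, so that $m-1>1+(2/p)$), and the fixed point you produce a priori lies only in $\mathcal{Z}^{m-1,p}_N$.

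What is then missing from your proposal is the second half of the argument: a bootstrap recovering $w\in\mathcal{Z}^{m,p}_N$ from $w\in\mathcal{Z}^{m-1,p}_N$. The paper does this by differentiating $\varphi^{-1}\circ\varphi=\idmap$ pointwise to get $d(\varphi^{-1})=(d\varphi)^{-1}\circ\varphi^{-1}$, showing $(d\varphi)^{-1}=\mathop{\rm Adj}(d\varphi)/\det(d\varphi)\in\mathcal{Z}^{m-1,p}_{N+1}$ via the Banach algebra property and Lemma \ref{lem:division} (since $\det(d\varphi)=1+f$ with $f$ having vanishing leading term), and then composing with $\varphi^{-1}\in\mathcal{ZD}^{m-1,p}_N$ using Remark \ref{rem:improved_composition} to conclude $d(\varphi^{-1})\in\mathcal{Z}^{m-1,p}_{N+1}$, hence $\varphi^{-1}\in\mathcal{ZD}^{m,p}_N$. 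Your proof would be complete if you ran the Banach fixed-point argument at the level $m-1$ and appended this bootstrap; as written, it asserts a Lipschitz bound at the top regularity that the cited results do not provide.
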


\begin{proof}[Proof of Lemma \ref{propInvNghd}]
This lemma follows directly from the regularity statement in Proposition \ref{prop:composition}, the inverse function theorem,
and a simple bootstrapping argument.
In fact, consider the map $F : \mathcal{ZD}^{m,p}_N\times\mathcal{ZD}^{m-1,p}_N\to\mathcal{ZD}^{m-1,p}_N$,
$F(\varphi,\psi):=\varphi\circ\psi$. Since $m>2+(2/p)$, we obtain from Proposition \ref{prop:composition} that
$F$ is a $C^1$-map. For any $\psi\in\mathcal{ZD}^{m-1,p}_N$ we have that $F(\idmap,\psi)=\psi$. This implies that
$(D_2F)(\idmap,\idmap)=\idmap_{\mathcal{Z}^{m-1,p}_N}$ where 
$(D_2F)(\idmap,\idmap)\in\mathcal{L}\big(\mathcal{Z}^{m-1,p}_N,\mathcal{Z}^{m-1,p}_N\big)$ denotes the partial derivative of 
$F$ with respect to the second argument at the point $(\idmap,\idmap)\in\mathcal{ZD}^{m,p}_N\times\mathcal{ZD}^{m-1,p}_N$.
Since $F(\idmap,\idmap)=\idmap$, we then conclude from the inverse function theorem that there exists an open neighborhood
$U$ of identity $\idmap$ in $\mathcal{ZD}^{m,p}_N$ and a $C^1$-map $I : U\to\mathcal{ZD}^{m-1,p}_N$ such that
for any $\varphi\in\mathcal{ZD}^{m,p}_N$ we have that $F(\varphi,I(\varphi))=\idmap$ and $I(\idmap)=\idmap$.
In particular, we see that $I(\varphi)=\varphi^{-1}\in\mathcal{ZD}^{m-1,p}_N$ for any $\varphi\in U\subseteq\mathcal{ZD}^{m,p}_N$.
A simple bootstrapping argument then shows that $\varphi^{-1}\in\mathcal{ZD}^{m,p}_N$.
In fact, by taking the pointwise derivative in the equality $\varphi^{-1}\circ\varphi=\idmap$ we easily obtain that
\begin{equation}\label{eq:relation1}
d(\varphi^{-1})=(d\varphi)^{-1}\circ\varphi^{-1}.
\end{equation}
Recall that $(d\varphi)^{-1}=\mathop{\rm Adj}(d\varphi)/\det(d\varphi)$ where $\mathop{\rm Adj}(d\varphi)$ is the transpose of 
the cofactor matrix of $d\varphi$,
\begin{equation}\label{eq:jacobian_relation}
d\varphi=
\begin{pmatrix}
\varphi_z&\varphi_{\bar{z}}\\
\overline{\varphi}_z&\overline{\varphi}_{\bar{z}}
\end{pmatrix}
=
\begin{pmatrix}
1+w_z&w_{\bar{z}}\\
\overline{w}_z&1+\overline{w}_{\bar{z}}
\end{pmatrix}
\end{equation}
and $\varphi=z+w$ with $w\in\mathcal{Z}^{m,p}_N$.
Since $\varphi\in\mathcal{ZD}^{m,p}_N$, we obtain from the Banach algebra property of $\mathcal{Z}^{m-1,p}_{N+1}$ that 
the matrix elements of $\mathop{\rm Adj}(d\varphi)$ belong to $\mathcal{Z}^{m-1,p}_{N+1}$. For simplicity of notation we will
write $\mathop{\rm Adj}(d\varphi)\in\mathcal{Z}^{m-1,p}_{N+1}$.
On the other side, we conclude from \eqref{eq:jacobian_relation} that $\det(d\varphi)=1+f$ where $f\in\mathcal{Z}^{m-1,p}_{N+1}$ has a  
vanishing leading asymptotic term. By Lemma \ref{lem:division} we then conclude that $1/\det(d\varphi)\in\mathcal{Z}^{m-1,p}_{N+1}$. 
This and the Banach algebra property then imply that $(d\varphi)^{-1}\in\mathcal{Z}^{m-1,p}_{N+1}$. Hence, by \eqref{eq:relation1} and 
Remark \ref{rem:improved_composition}, we conclude that $d(\varphi^{-1})\in\mathcal{Z}^{m-1,p}_{N+1}$.  
This, together with the fact that $\varphi^{-1}\in\mathcal{ZD}^{m-1,p}_N$, then implies that $\varphi^{-1}\in\mathcal{ZD}^{m,p}_N$.
\end{proof}

Although the set $\mathcal{ZD}^{m,p}_N$ is not path connected we have the following weaker version of path 
connectedness that is sufficient for our purposes.

\begin{lemma}\label{propPthConnCmptSup}
For $m>1+(2/p)$ and any $\varphi\in\mathcal{ZD}^{m,p}_N$ there exists a continuous path 
$\gamma : [0,1]\to\mathcal{ZD}^{m,p}_N$ such that $\gamma(0)=\idmap+f$ where $f$ has compact support and 
$\gamma(1)=\varphi$.
\end{lemma}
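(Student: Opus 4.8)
The plan is to produce the path in two stages: first connect $\varphi$ to a diffeomorphism whose coordinate $u$ has an asymptotic part that is a "pure polynomial in $1/z,1/\bar z$" plus a compactly supported remainder, and then connect the latter to $\idmap + (\text{compact support})$. Write $\varphi = \idmap + u$ with $u = \chi\sum_{0\le k+l\le N}\frac{a_{kl}}{z^k\bar z^l} + g$, $g\in W^{m,p}_{\gamma_N}$. The first step is to scale down the remainder: set $g_s := \lambda(s)\, g$ for a suitable continuous real function $\lambda$ with $\lambda(0)=1$; more precisely, since $g\in W^{m,p}_{\gamma_N}$ can be approximated in that space by functions with compact support (smooth compactly supported functions are dense in weighted Sobolev spaces for the relevant range of weights), I would instead interpolate linearly between $g$ and a fixed compactly supported $\tilde g$ with $\|g-\tilde g\|_{W^{m,p}_{\gamma_N}}$ as small as we like. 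The key point is that the open condition \eqref{eq:open_condition}, $\det(I+du)>\varepsilon$, holds locally uniformly in $u\in\mathcal{Z}^{m,p}_N$; hence along a short enough straight-line segment in $\mathcal{Z}^{m,p}_N$ starting at $u$ the image stays inside the chart $\mathcal{O}$, so each intermediate map is genuinely in $\mathcal{ZD}^{m,p}_N$. If a single segment does not stay in $\mathcal{O}$ we concatenate finitely many short segments; but in fact the cleanest route is: connect $\varphi$ to $\idmap$ by the straight-line homotopy $\idmap + (1-s)u$ whenever this path lies in $\mathcal{O}$, and otherwise first shrink $u$ toward a small neighborhood of $0$ where \eqref{eq:open_condition} certainly holds, using that $\mathcal{O}$ is open and star-shaped enough near $0$ (indeed $\det(I+d(tu)) \to 1$ as $t\to 0$ locally uniformly).

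Concretely I would argue as follows. Fix $\varphi = \idmap + u \in \mathcal{ZD}^{m,p}_N$. Since $\mathcal O$ is open and contains $0$, and since the map $t\mapsto \det(I + d(tu))$ is continuous $[0,1]\to C_b(\R^2)$ with value the constant function $1$ at $t=0$, there is $t_0\in(0,1]$ with $tu\in\mathcal O$ for all $t\in[0,t_0]$; this handles a segment from $\varphi$ partway toward $\idmap$. To cover the whole segment we cannot in general take $t_0=1$, so instead I decompose $u = u_{\mathrm{as}} + g$ into its asymptotic part and remainder and build the path as a concatenation: (a) $s\mapsto \idmap + u_{\mathrm{as}} + (1-s)g + s\tilde g$ with $\tilde g\in C^\infty_c$ chosen so close to $g$ in $W^{m,p}_{\gamma_N}$ that the whole segment stays in $\mathcal O$ — possible because $\|(1-s)g+s\tilde g - g\|_{W^{m,p}_{\gamma_N}}\le\|\tilde g-g\|_{W^{m,p}_{\gamma_N}}$ is uniformly small and $\mathcal O$ is open and contains $\idmap + u_{\mathrm{as}} + g = \varphi$ together with a ball around it; (b) $s\mapsto \idmap + (1-s)u_{\mathrm{as}} + \tilde g$, shrinking the asymptotic coefficients $a_{kl}$ linearly to zero. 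For (b) one must check the intermediate maps lie in $\mathcal O$: the entries of $d\big((1-s)u_{\mathrm{as}}\big)$ lie in $\mathcal{Z}^{m-1,p}_{1,N+1}$ and are $O(1/r)$ uniformly in $s$, while $d\tilde g$ has compact support; this does not by itself give \eqref{eq:open_condition}, so here too one may need to first further shrink the compactly supported part, i.e. replace (b) by the two-segment path that first goes to $\idmap + \tfrac12(u_{\mathrm{as}} + \tilde g)$-type midpoints — equivalently, cover $[0,1]$ by a Lebesgue number argument using that each point of the compact segment $\{(1-s)u_{\mathrm{as}}+\tilde g : s\in[0,1]\}$ has an $\mathcal O$-neighborhood. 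Since a continuous path into a compact subset of the model space that stays in the open set $\mathcal O$ can always be realized (finitely many overlapping charts / segments), the concatenation of (a) and (b) is the desired $\gamma$, with $\gamma(1)=\varphi$ and $\gamma(0)=\idmap+\tilde g$, $\tilde g\in C^\infty_c$.

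The main obstacle is verifying that the chosen paths remain inside the chart $\mathcal O$, i.e. that \eqref{eq:open_condition} is not violated along the homotopy, since $\mathcal{ZD}^{m,p}_N$ is not convex and the diffeomorphism condition is only an open (not a linear) constraint. The right tool is precisely the fact, already established in the text, that \eqref{eq:open_condition} holds \emph{locally uniformly} in $u\in\mathcal{Z}^{m,p}_N$ — this turns the problem into a covering argument on the compact parameter interval $[0,1]$: each point of the candidate path has a neighborhood in $\mathcal O$, finitely many such neighborhoods cover the path, and on overlaps one interpolates linearly (a short enough straight segment between two nearby points of $\mathcal O$ stays in $\mathcal O$). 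I would also note that one can avoid the density-in-$W^{m,p}_{\gamma_N}$ step entirely by a slightly different bookkeeping — simply scale the \emph{entire} coordinate $u$ toward $0$ in finitely many short admissible segments and then, once near $\idmap$, perturb within the small ball to reach a genuinely compactly supported representative — but the two-stage version above makes the endpoint $\idmap+f$ with $f$ compactly supported completely explicit.
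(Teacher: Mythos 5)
There is a genuine gap, and it sits exactly where the content of the lemma lies. Your stage (a) (replacing the remainder $g$ by a nearby $\tilde g\in C^\infty_c$) is fine: the whole segment stays within $\|\tilde g-g\|_{W^{m,p}_{\gamma_N}}$ of $\varphi$ in $\mathcal{Z}^{m,p}_N$, and $\mathcal{O}$ is open. But stage (b), shrinking the asymptotic part linearly to zero, is not justified by anything you write. The set $\mathcal{O}$ is open but not convex, and a straight-line homotopy between two of its points can leave it: already pointwise, if $d u=-3I$ at some point then $\det(I+du)=4>0$ while $\det(I+t\,du)=(1-3t)^2$ vanishes at $t=1/3$, so neither ``scale the entire coordinate $u$ toward $0$'' nor the linear interpolation $(1-s)u_{\rm as}+\tilde g$ is admissible in general. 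Your proposed fixes are circular: the ``Lebesgue number argument using that each point of the compact segment has an $\mathcal{O}$-neighborhood'' presupposes that each point of the segment lies in $\mathcal{O}$, which is precisely what must be proved; and ``a short segment between two nearby points of $\mathcal{O}$ stays in $\mathcal{O}$'' only lets you perturb a path you already have, not connect two distant points of a non-convex open set.

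The idea you are missing is to localize the deformation near infinity rather than interpolate linearly in the coefficient. The paper's proof uses the single path $\gamma(s)=z+u\bigl(1-(1-s)\chi_R\bigr)$ with $\chi_R(z)=\chi(|z|/R)$, which only alters $u$ on $\{|z|\ge R\}$. There the Jacobian of the added term $(s-1)u\chi_R$ is $O(1/R)$ uniformly in $s$ (because $du=O(1/r)=O(1/R)$ on the support of $\chi_R$ and $d\chi_R=O(1/R)$ while $u$ is bounded), so for $R$ large the condition $\det(d\gamma(s))>\varepsilon$ persists along the whole path and Hadamard--Levy applies at every $s$; the endpoint $\gamma(0)=\idmap+u(1-\chi_R)$ has compactly supported perturbation. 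If you want to keep your two-stage structure, stage (b) must be replaced by a deformation of this cut-off type; as written it does not go through.
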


\begin{proof}[Proof of Lemma \ref{propPthConnCmptSup}]
The proof of this lemma follows the lines of the proof of Lemma 7.2 in \cite{McOTo2}.
In fact, take $\varphi\in\mathcal{ZD}^{m,p}_N$ such that $\varphi=z+u$ and 
$u=\sum_{0\le k+l\le N}\frac{a_{kl}}{z^k\bar{z}^l}+g$ with $g\in W^{m,p}_{\gamma_N}$.
Then, consider the deformation
\begin{equation}\label{eq:deformation}
\gamma(s)=z+\Big[u+(s-1)\,\Big(\sum_{0\le k+l\le N}\frac{a_{kl}}{z^k\bar{z}^l}+g\Big)\chi_R\Big],\quad s\in[0,1],
\end{equation}
of the diffeomorphism $\varphi\in\mathcal{ZD}^{m,p}_N$ where 
$\chi_R(x,y):=\chi\big(|z|/R\big)$ for $(x,y)\in\R^2$ and $R\ge 1$, and $\chi$ is the cut-off function appearing 
in the definition of $\mathcal{Z}^{m,p}_N$.
It is clear that $\gamma(1)=\varphi$ and $\gamma(0)=\idmap+f$ where $f\in H^{m,p}$ has compact support.
Hence, we will conclude the proof of the lemma if we show that $\gamma(s)$ is an orientation preserving 
$C^1$-diffeomorphism of $\R^2$ for any $s\in[0,1]$.  Since the Jacobian matrix of the vector field 
$(s-1)\Big(\sum_{k+l\le N}\frac{a_{kl}}{z^k\bar{z}^l}+g\Big)\chi_R\in\mathcal{Z}^{m,p}_N$ 
is of order $O(1/R)$ as $R\to\infty$ uniformly in $s\in[0,1]$, we see that by choosing $R\ge 1$ sufficiently large 
we obtain that there exists $\varepsilon>0$ such that $\det[d\gamma(s)]>\varepsilon$ uniformly on $\R^2$ 
and $s\in[0,1]$. Then, by Hadamard-Levy's theorem, $\gamma(s)$ is an orientation preserving diffeomorphism of $\R^2$
for any $s\in[0,1]$. This completes the proof of the lemma.
\end{proof}

We will also need the following lemma.

\begin{lemma}\label{propTrnsOpn}
For $m>2+(2/p)$ and any given $\varphi\in\mathcal{ZD}^{m,p}_N$ there exists an open neighborhood $U$ of 
the identity $\idmap$ in $\mathcal{ZD}^{m-1,p}_N$ such that the left translation,
\[
\psi\mapsto L_\varphi(\psi):=\varphi\circ\psi,\quad U\to L_\varphi(U)\subseteq\mathcal{ZD}^{m-1,p}_N,
\]
is a $C^1$-diffeomorphism where $L_\varphi(U)$ is an open neighborhood of $\varphi$ in $\mathcal{ZD}^{m-1,p}_N$.
\end{lemma}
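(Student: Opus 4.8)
The plan is to apply the inverse function theorem to the left translation $L_\varphi$, regarded as a map of the Banach manifold $\mathcal{ZD}^{m-1,p}_N$ (modeled on a chart in $\mathcal{Z}^{m-1,p}_N$) into itself, near the identity. The point to keep in mind is that left translation ``loses a derivative''---composition on the right with a fixed map $\varphi$ is only as smooth as $\varphi$---which is precisely why the hypothesis takes $\varphi$ in $\mathcal{ZD}^{m,p}_N$, one order smoother than the space $\mathcal{ZD}^{m-1,p}_N$ on which $L_\varphi$ is considered. Since $m-1>1+(2/p)$ and $\varphi\in\mathcal{ZD}^{m,p}_N=\mathcal{ZD}^{(m-1)+1,p}_N$, restricting the $C^1$ composition map of Proposition \ref{prop:composition} (with $m$ replaced by $m-1$) to the slice $\{\varphi\}\times\mathcal{ZD}^{m-1,p}_N$ shows that $L_\varphi:\psi\mapsto\varphi\circ\psi$ is a $C^1$ map from $\mathcal{ZD}^{m-1,p}_N$ to itself with $L_\varphi(\idmap)=\varphi$. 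Hence it suffices to show that the Fr\'echet derivative $DL_\varphi(\idmap)\in\mathcal{L}(\mathcal{Z}^{m-1,p}_N,\mathcal{Z}^{m-1,p}_N)$ is a linear isomorphism; the inverse function theorem then yields an open neighborhood $U$ of $\idmap$ in $\mathcal{ZD}^{m-1,p}_N$ on which $L_\varphi$ is a $C^1$-diffeomorphism onto an open set $L_\varphi(U)$ that contains $L_\varphi(\idmap)=\varphi$.

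Next I would identify the derivative. Writing $\varphi=z+w$ with $w\in\mathcal{Z}^{m,p}_N$ and differentiating $L_\varphi(\idmap+sv)(z)=\varphi(z+sv(z))$ at $s=0$ gives $(DL_\varphi(\idmap)v)(z)=(d\varphi)(z)\cdot v(z)$; that is, $DL_\varphi(\idmap)$ is pointwise multiplication by the (real-linear) Jacobian $d\varphi=I+dw$, which in complex notation reads $v\mapsto v+w_z v+w_{\bar z}\bar v$. By Proposition \ref{prop:properties_Z-spaces} the differentiations $\partial_z,\partial_{\bar z}:\mathcal{Z}^{m,p}_N\to\mathcal{Z}^{m-1,p}_{1,N+1}\subseteq\mathcal{Z}^{m-1,p}_N$ are bounded, so $w_z,w_{\bar z}\in\mathcal{Z}^{m-1,p}_N$; since $\mathcal{Z}^{m-1,p}_N$ is a Banach algebra and is invariant under complex conjugation (conjugation merely permutes the asymptotic coefficients $a_{kl}\mapsto\overline{a_{lk}}$ and conjugates the remainder in $W^{m-1,p}_{\gamma_N}$), the operator $DL_\varphi(\idmap)$ is bounded on $\mathcal{Z}^{m-1,p}_N$.

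Finally, for invertibility note that, $\varphi$ being an orientation preserving $C^1$-diffeomorphism, $\det(d\varphi)\ge\varepsilon>0$ on all of $\C$, so $d\varphi$ is pointwise invertible with inverse $(d\varphi)^{-1}=\mathop{\rm Adj}(d\varphi)/\det(d\varphi)$; as already established in the proof of Lemma \ref{propInvNghd} (using that $\mathop{\rm Adj}(d\varphi)$ has entries in $\mathcal{Z}^{m-1,p}_{N+1}$, that $\det(d\varphi)=1+f$ with $f\in\mathcal{Z}^{m-1,p}_{N+1}$ of vanishing leading asymptotic term, and Lemma \ref{lem:division}) the entries of $(d\varphi)^{-1}$ lie in $\mathcal{Z}^{m-1,p}_{N+1}\subseteq\mathcal{Z}^{m-1,p}_N$. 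Arguing exactly as in the previous paragraph, pointwise multiplication by $(d\varphi)^{-1}$ is therefore a bounded operator on $\mathcal{Z}^{m-1,p}_N$, and it is a two-sided inverse of $DL_\varphi(\idmap)$ because $(d\varphi)^{-1}(z)\,(d\varphi)(z)=I$ for every $z$. Thus $DL_\varphi(\idmap)$ is an isomorphism and the inverse function theorem finishes the proof as indicated above. The only genuine subtlety here is the derivative loss noted at the outset---one is forced to work on $\mathcal{ZD}^{m-1,p}_N$ rather than on $\mathcal{ZD}^{m,p}_N$ in order for $L_\varphi$ to be differentiable---while the remaining work is exactly the bookkeeping of decay orders already packaged into Proposition \ref{prop:properties_Z-spaces}, Lemma \ref{lem:division}, and the proof of Lemma \ref{propInvNghd}.
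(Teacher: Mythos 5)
Your proof is correct and follows the same route as the paper, which disposes of this lemma in one line by invoking the $C^1$-regularity of composition from Proposition \ref{prop:composition} together with the inverse function theorem. The only difference is that you explicitly identify $DL_\varphi(\idmap)$ as pointwise multiplication by $d\varphi$ and verify it is an isomorphism of $\mathcal{Z}^{m-1,p}_N$ via the $\mathop{\rm Adj}(d\varphi)/\det(d\varphi)$ argument from Lemma \ref{propInvNghd}; the paper leaves this check implicit, so your write-up is simply a more detailed version of the same argument.
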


This lemma follows directly from the regularity statement in Proposition \ref{prop:composition} and the inverse function theorem.

\medskip

Now we are ready to prove that $\mathcal{ZD}^{m,p}_N$ with $m>3+(2/p)$ is closed under the inversion of diffeomorphisms.
More specifically, we prove the following:

\begin{proposition}\label{prop:inverse}
For $m>3+(2/p)$ and for any $\varphi\in\mathcal{ZD}^{m,p}_N$ we have that $\varphi^{-1}\in\mathcal{ZD}^{m,p}_N$. In addition,
the inversion map $\varphi\mapsto\varphi^{-1}$, $\mathcal{ZD}^{m,p}_N\rightarrow\mathcal{ZD}^{m,p}_N$, is continuous and
the associated map $\varphi\mapsto\varphi^{-1}$, $\mathcal{ZD}^{m,p}_N\rightarrow\mathcal{ZD}^{m-1,p}_N$, 
is $C^1$-smooth.
\end{proposition}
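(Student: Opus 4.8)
The plan is to combine the three preparatory lemmas (Lemma \ref{propInvNghd}, Lemma \ref{propPthConnCmptSup}, and Lemma \ref{propTrnsOpn}) with a connectedness-along-a-path argument, exactly in the spirit of the analogous statement for the asymptotic group $\mathcal{AD}^{m,p}_N$ proved in \cite{McOTo2}. Fix $\varphi\in\mathcal{ZD}^{m,p}_N$ with $m>3+(2/p)$. By Lemma \ref{propPthConnCmptSup} (applied at regularity $m-1>1+(2/p)$, say, or directly at regularity $m$) there is a continuous path $\gamma:[0,1]\to\mathcal{ZD}^{m,p}_N$ with $\gamma(1)=\varphi$ and $\gamma(0)=\idmap+f$ with $f$ of compact support; in particular $f\in H^{m,p}$ with compact support, so $\gamma(0)$ is a compactly supported perturbation of the identity and hence $\gamma(0)^{-1}\in\mathcal{ZD}^{m,p}_N$ (this is the classical statement for diffeomorphism groups of Sobolev class with compact support, or one applies Lemma \ref{propInvNghd} after noting that a compactly supported perturbation of the identity lies in the neighborhood $U$ up to rescaling — more simply, its inverse is again a compactly supported $C^1$-diffeomorphism, so it lies in $\mathcal{ZD}^{m,p}_N$). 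Now let
\[
S:=\big\{t\in[0,1]\,\big|\,\gamma(t)^{-1}\in\mathcal{ZD}^{m,p}_N\big\}.
\]
We have $0\in S$. The goal is to show $S$ is both open and closed in $[0,1]$, whence $S=[0,1]$ and $1\in S$ gives $\varphi^{-1}\in\mathcal{ZD}^{m,p}_N$.

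For openness of $S$: suppose $t_0\in S$, so $\psi_0:=\gamma(t_0)^{-1}\in\mathcal{ZD}^{m,p}_N$. By Lemma \ref{propTrnsOpn} applied to the diffeomorphism $\psi_0$, there is an open neighborhood $V$ of $\idmap$ in $\mathcal{ZD}^{m-1,p}_N$ on which the left translation $L_{\psi_0}$ is a $C^1$-diffeomorphism onto an open neighborhood of $\psi_0$. For $t$ near $t_0$ the diffeomorphism $\gamma(t)$ is close to $\gamma(t_0)=\psi_0^{-1}$ in $\mathcal{ZD}^{m,p}_N$, hence also in $\mathcal{ZD}^{m-1,p}_N$; composing, $\psi_0\circ\gamma(t)$ is close to $\idmap$ in $\mathcal{ZD}^{m-1,p}_N$, so it lies in $V$ and, by Lemma \ref{propInvNghd} (at regularity $m-1>2+(2/p)$), its inverse $\gamma(t)^{-1}\circ\psi_0^{-1}$ lies in $\mathcal{ZD}^{m-1,p}_N$. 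Then $\gamma(t)^{-1}=\big(\gamma(t)^{-1}\circ\psi_0^{-1}\big)\circ\psi_0\in\mathcal{ZD}^{m-1,p}_N$ by Proposition \ref{prop:composition}. A bootstrapping argument identical to the one in the proof of Lemma \ref{propInvNghd} — taking the pointwise derivative in $\gamma(t)^{-1}\circ\gamma(t)=\idmap$, writing $d(\gamma(t)^{-1})=(d\gamma(t))^{-1}\circ\gamma(t)^{-1}$, using \eqref{eq:jacobian_relation}, Lemma \ref{lem:division}, and Remark \ref{rem:improved_composition} — then upgrades $\gamma(t)^{-1}$ to $\mathcal{ZD}^{m,p}_N$. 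Hence a whole neighborhood of $t_0$ lies in $S$.

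For closedness of $S$: this is the step I expect to be the main obstacle, since it requires a uniform estimate rather than a soft inverse-function-theorem argument. Let $t_n\to t_\infty$ with $t_n\in S$. The diffeomorphisms $\gamma(t_n)$ converge to $\gamma(t_\infty)$ in $\mathcal{ZD}^{m,p}_N$, and I must show the inverses $\gamma(t_n)^{-1}$ converge (or at least stay bounded) in $\mathcal{ZD}^{m,p}_N$ so that the limit $\gamma(t_\infty)^{-1}$ — which exists as a $C^1$-diffeomorphism — actually lies in $\mathcal{ZD}^{m,p}_N$. The way to do this is to go back to the explicit finite path: rather than an abstract $\gamma$, use the deformation \eqref{eq:deformation} from the proof of Lemma \ref{propPthConnCmptSup} for the \emph{specific} $\varphi$, with $R$ fixed large enough that $\det[d\gamma(s)]>\varepsilon$ uniformly in $s$. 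For this concrete family one has uniform control of $\|\gamma(s)-\idmap\|_{\mathcal{Z}^{m,p}_N}$ and of $1/\det(d\gamma(s))$ (via Lemma \ref{lem:division}, whose hypotheses hold uniformly), and the relation $d(\gamma(s)^{-1})=(d\gamma(s))^{-1}\circ\gamma(s)^{-1}$ together with Remark \ref{rem:improved_composition} gives a closed a priori bound $\|\gamma(s)^{-1}\|_{\mathcal{ZD}^{m,p}_N}\le M$ uniform in $s$ on any subinterval on which the inverse is known to lie in $\mathcal{ZD}^{m,p}_N$; combined with openness this bootstraps to all of $[0,1]$ and, in the limit, to closedness of $S$. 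This establishes $\varphi^{-1}\in\mathcal{ZD}^{m,p}_N$.

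Finally, the regularity of the inversion map: continuity of $\varphi\mapsto\varphi^{-1}$ as a self-map of $\mathcal{ZD}^{m,p}_N$ and $C^1$-smoothness as a map into $\mathcal{ZD}^{m-1,p}_N$ follow from the corresponding statements for the asymptotic group $\mathcal{AD}^{m,p}_N$ in \cite[Proposition 5.1]{McOTo2} (or \cite[\S 7]{McOTo2}) together with the fact, established in Section \ref{sec:spaces}, that $\mathcal{ZD}^{m,p}_N$ is a closed submanifold of $\mathcal{AD}^{m,p}_N$: the inversion map on $\mathcal{AD}$ restricts to $\mathcal{ZD}$ by what we have just proved, and restriction of a continuous (resp.\ $C^1$) map to a closed submanifold preserves continuity (resp.\ $C^1$-smoothness). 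Alternatively, the $C^1$-smoothness into $\mathcal{ZD}^{m-1,p}_N$ can be obtained directly from the implicit function theorem applied to $F(\varphi,\psi)=\varphi\circ\psi$ as in the proof of Lemma \ref{propInvNghd}, now that we know the inverse stays in $\mathcal{ZD}^{m,p}_N$ globally, and continuity into $\mathcal{ZD}^{m,p}_N$ then follows from the formula $d(\varphi^{-1})=(d\varphi)^{-1}\circ\varphi^{-1}$ together with Lemma \ref{lem:division}, Remark \ref{rem:improved_composition}, and Remark \ref{rem:composition}.
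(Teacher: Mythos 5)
Your overall skeleton (deform $\varphi$ to a compactly supported perturbation of the identity, invert locally near the identity via Lemma \ref{propInvNghd}, translate via Lemma \ref{propTrnsOpn}, then bootstrap from $\mathcal{ZD}^{m-1,p}_N$ to $\mathcal{ZD}^{m,p}_N$, and quote \cite{McOTo2} for the regularity of the inversion map) is exactly the paper's, and your openness argument is correct. The genuine gap is the closedness step of your open--closed decomposition of $S$, which you yourself flag as the obstacle and then do not actually close. A uniform bound $\|\gamma(s)^{-1}\|_{\mathcal{ZD}^{m,p}_N}\le M$ on the subintervals where the inverse is already known to lie in the space would not by itself yield closedness: you would still have to show that the limit of $\gamma(t_n)^{-1}$, which a priori exists only as a $C^1$-diffeomorphism, belongs to $\mathcal{Z}^{m,p}_N$, and a norm bound gives at best weak compactness, not membership of the pointwise limit without further argument. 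Worse, the proposed a priori estimate is not obviously ``closed'': to bound $d(\gamma(s)^{-1})=(d\gamma(s))^{-1}\circ\gamma(s)^{-1}$ in $\mathcal{Z}^{m-1,p}_{N+1}$ you must control the composition operator $u\mapsto u\circ\gamma(s)^{-1}$, whose norm depends (via the change of variables and the higher-order chain rule) on $\gamma(s)^{-1}$ itself, i.e.\ on the very quantity you are trying to bound. So as written the argument is circular.

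The paper avoids closedness altogether. For \emph{every} $t\in[0,1]$ (not only $t\in S$), Lemmas \ref{propInvNghd} and \ref{propTrnsOpn} produce a neighborhood $V_t=L_{\gamma(t)}(U_t)$ of $\gamma(t)$ in $\mathcal{ZD}^{m-1,p}_N$ whose elements are all of the form $\gamma(t)\circ\psi$ with $\psi^{-1}\in\mathcal{ZD}^{m-1,p}_N$. By compactness of the image of $\gamma$, finitely many such $V_{t_k}$ cover it, with consecutive ones overlapping. Invertibility of $\gamma(0)^{-1}$ makes every element of $V_{t_0}$ invertible in $\mathcal{ZD}^{m-1,p}_N$; picking $\psi_*\in V_{t_0}\cap V_{t_1}$ and writing $\psi_*=\gamma(t_1)\circ\psi$ gives $\gamma(t_1)^{-1}=\psi\circ\psi_*^{-1}\in\mathcal{ZD}^{m-1,p}_N$, and one propagates along the chain to reach $\varphi=\gamma(1)$. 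Note also that the chain argument is run entirely at regularity $m-1$, with the upgrade to $m$ done once at the end by the bootstrap of Lemma \ref{propInvNghd}. If you replace your closedness step by this finite-covering chain (everything else in your write-up can stay), the proof is complete. One smaller point: the inverse of $\idmap+f$ with $f$ compactly supported is a compactly supported perturbation of the identity, but you still need its $H^{m,p}$-regularity, which the paper gets from the Sobolev diffeomorphism group results of \cite{IKT}; merely being a compactly supported $C^1$-diffeomorphism does not place it in $\mathcal{ZD}^{m,p}_N$.
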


\begin{proof}[Proof of Proposition \ref{prop:inverse}]
Take $\varphi\in\mathcal{ZD}^{m,p}_N$. By Lemma \ref{propPthConnCmptSup}, there exists a continuous path 
$\gamma: [0,1]\to\mathcal{ZD}^{m,p}_N$ that connects $\varphi$ with $\psi_0\in\mathcal{ZD}^{m,p}_N$ where
$\psi_0=\idmap+f$, $f$ has compact support, $\gamma(0)=\psi_0$, and $\gamma(1)=\varphi$.
In particular, we see that $f\in H^{m,p}(\C,\C)$, and hence $\psi_0\in\mathcal{D}^{m,p}(\R^2)$ where 
$\mathcal{D}^{m,p}(\R^2)$ is the group of Sobolev type diffeomorphisms of $\R^2$ considered in \cite{IKT}. 
Hence, $\psi_0^{-1}\in\mathcal{D}^{m,p}(\R^2)$. Since $f$ has compact support, we obtain that
$\psi_0^{-1}=\idmap+g$ where $g$ has compact support. This implies that $\psi_0^{-1}\in\mathcal{ZD}^{m,p}_N$ with vanishing
asymptotic part.
By Lemma \ref{propInvNghd} and Lemma \ref{propTrnsOpn}, for any given $t\in[0,1]$ we can find an open neighborhood $U_t$ of 
the identity in $\mathcal{ZD}^{m-1,p}_N$ with the property that $\psi^{-1}\in\mathcal{ZD}^{m-1,p}_N$ for any
$\psi\in U_t$ and an open neighborhood $V_t$ of $\gamma(t)$ in $\mathcal{ZD}^{m-1,p}_N$ such
that $V_t=L_{\gamma(t)}(U_t)$. By the compactness of the image of  $\gamma$ in $\mathcal{ZD}^{m-1,p}_N$ we can cover it
with finitely many such open neighborhoods in $\mathcal{ZD}^{m-1,p}_N$,
\[
V_{t_k}=L_{\gamma(t_k)}(U_{t_k}),\quad 0\le k\le\ell,
\] 
where $t_0=0$ and $t_\ell=1$. We will assume without loss of generality that
$V_{t_k}\cap V_{t_{k+1}}\ne\emptyset$.
Note that any element of $V_0\equiv V_{t_0}$ is of the form $\psi_0\circ\psi$ where $\psi\in U_0$ with $\psi^{-1}\in\mathcal{ZD}^{m-1,p}_N$.
Since $\psi_0^{-1}\in\mathcal{ZD}^{m-1,p}_N$ we then conclude that $(\psi_0\circ\psi)^{-1}=\psi^{-1}\circ\psi_0^{-1}$
belongs to $\in\mathcal{ZD}^{m-1,p}_N$ by Proposition \ref{prop:composition}. Hence, the open neighborhood $V_0$ in 
$\mathcal{ZD}^{m-1,p}_N$ consists of diffeomorphisms the inverse diffeomorphisms of which belong to $\mathcal{ZD}^{m-1,p}_N$.
Now, consider the neighborhood $V_{t_1}$. Since, $V_0\cap V_{t_1}\ne\emptyset$ we can find $\psi_*\in V_0\cap V_{t_1}$.
Then, $\psi_*^{-1}\in\mathcal{ZD}^{m-1,p}_N$ and $\psi_*=\gamma(t_1)\circ\psi$ where $\psi^{-1}\in\mathcal{ZD}^{m-1,p}_N$.
Hence, $\gamma(t_1)^{-1}=\psi\circ\psi_*^{-1}$ belongs to $\mathcal{ZD}^{m-1,p}_N$.
Since any element in $V_{t_1}$ is of the form $\gamma(t_1)\circ\psi$ where $\psi^{-1}\in\mathcal{ZD}^{m-1,p}_N$ 
and $\gamma(t_1)^{-1}\in\mathcal{ZD}^{m-1,p}_N$ we conclude that $V_{t_1}$ consists of elements invertible in 
$\mathcal{ZD}^{m-1,p}_N$. By continuing this argument inductively, we conclude that $V_1\equiv V_{t_\ell}$ consists of 
elements invertible in $\mathcal{ZD}^{m-1,p}_N$. In particular, we see that
$\varphi^{-1}\in\mathcal{ZD}^{m-1,p}_N$. 
The bootstrapping argument from the proof of Lemma \ref{propInvNghd} now implies that $\varphi^{-1}\in\mathcal{ZD}^{m,p}_N$.
The last statement of the proposition follows from the analogous results for the asymptotic group $\mathcal{AD}^{m,p}_N$
(see \cite[Proposition 5.2]{McOTo2}, \cite{Montgomery}) and the fact that ${\mathcal{ZD}}^{m,p}_N$ is a closed submanifold in 
$\mathcal{AD}^{m,p}_N$.
\end{proof} 

By combining Proposition \ref{prop:composition} with Proposition \ref{prop:inverse} we obtain the following important

\begin{theorem}\label{th:ZD}
For $m>3+(2/p)$ the set $\mathcal{ZD}^{m,p}_N$ is a topological group under composition of diffeomorphisms.
In addition, the composition map $(\psi, \varphi)\mapsto\psi\circ\varphi$, 
$\mathcal{ZD}^{m,p}_N \times \mathcal{ZD}^{m-1,p}_N\to\mathcal{ZD}^{m-1,p}_N$ and 
the inverse map $\varphi \mapsto \varphi^{-1}$, $\mathcal{ZD}^{m,p}_N \rightarrow \mathcal{ZD}^{m-1,p}_N$ are $C^1$-smooth.
\end{theorem}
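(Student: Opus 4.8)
The plan is to assemble Theorem~\ref{th:ZD} directly from the three structural results already established: Proposition~\ref{prop:composition} (closure under composition plus the continuity/$C^1$-regularity of the composition map), Proposition~\ref{prop:inverse} (closure under inversion plus the corresponding regularity), and Lemma~\ref{propInvNghd} (local invertibility near the identity). Since $\mathcal{ZD}^{m,p}_N$ with $m>3+(2/p)$ is a Banach manifold modeled on $\mathcal{Z}^{m,p}_N$ with the single chart $\mathcal{O}$, and since composition and inversion both map $\mathcal{ZD}^{m,p}_N$ into itself and are continuous by Propositions~\ref{prop:composition} and~\ref{prop:inverse}, the axioms of a topological group are all verified. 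The two $C^1$-regularity assertions in the statement are precisely the last sentences of Proposition~\ref{prop:composition} and Proposition~\ref{prop:inverse}, so there is essentially nothing new to prove there.

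First I would record that $\mathcal{ZD}^{m,p}_N$ is nonempty and contains $\idmap$ (corresponding to $u=0\in\mathcal{O}$), and that it is a group as a set: closure under composition is the first statement of Proposition~\ref{prop:composition}, closure under inversion and the identity $(\varphi\circ\psi)^{-1}=\psi^{-1}\circ\varphi^{-1}$ hold by Proposition~\ref{prop:inverse} together with associativity of composition of maps $\R^2\to\R^2$. Next I would invoke the continuity of the composition map $\mathcal{ZD}^{m,p}_N\times\mathcal{ZD}^{m,p}_N\to\mathcal{ZD}^{m,p}_N$ from Proposition~\ref{prop:composition} and the continuity of the inversion map $\mathcal{ZD}^{m,p}_N\to\mathcal{ZD}^{m,p}_N$ from Proposition~\ref{prop:inverse}; these two continuity statements are exactly the content of the topological-group axioms, so the first sentence of the theorem follows. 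Finally, the $C^1$-smoothness of $(\psi,\varphi)\mapsto\psi\circ\varphi$ as a map $\mathcal{ZD}^{m+1,p}_N\times\mathcal{ZD}^{m,p}_N\to\mathcal{ZD}^{m,p}_N$ in Proposition~\ref{prop:composition}, applied with $m$ replaced by $m-1$ (legitimate since $m-1>2+(2/p)$), gives the $C^1$-regularity of the composition map $\mathcal{ZD}^{m,p}_N\times\mathcal{ZD}^{m-1,p}_N\to\mathcal{ZD}^{m-1,p}_N$; similarly Proposition~\ref{prop:inverse} gives the $C^1$-regularity of $\varphi\mapsto\varphi^{-1}$, $\mathcal{ZD}^{m,p}_N\to\mathcal{ZD}^{m-1,p}_N$.

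There is genuinely no hard step here: the theorem is a packaging statement, and the analytical content has all been discharged in the preceding lemmas and propositions. If I had to name a point requiring mild care, it is bookkeeping with the regularity exponents --- one must check that the hypotheses $m>1+(2/p)$ and $m>2+(2/p)$ and $m>3+(2/p)$ needed by Propositions~\ref{prop:composition} and~\ref{prop:inverse} and by Lemma~\ref{propInvNghd} are all implied by the standing assumption $m>3+(2/p)$ (they are, with room to spare, which is what permits the ``loss of one derivative'' formulations in the $C^1$ statements). A second small point is to make explicit that the manifold topology on $\mathcal{ZD}^{m,p}_N$ is the one transported from the chart $\mathcal{O}\subseteq\mathcal{Z}^{m,p}_N$, so that ``continuous'' in Propositions~\ref{prop:composition} and~\ref{prop:inverse} and in the statement of the theorem all refer to the same topology; this was already fixed in the discussion following~\eqref{eq:open_condition}.

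\begin{proof}[Proof of Theorem \ref{th:ZD}]
Assume $m>3+(2/p)$. Then in particular $m>1+(2/p)$, $m>2+(2/p)$, and $m-1>2+(2/p)$, so all the hypotheses of Proposition~\ref{prop:composition}, Lemma~\ref{propInvNghd}, Lemma~\ref{propTrnsOpn}, and Proposition~\ref{prop:inverse} are satisfied for the exponents $m$ and $m-1$. By Proposition~\ref{prop:composition} the set $\mathcal{ZD}^{m,p}_N$ is closed under composition, and by Proposition~\ref{prop:inverse} it is closed under inversion; since composition of maps $\R^2\to\R^2$ is associative and $\idmap\in\mathcal{ZD}^{m,p}_N$ (corresponding to $u=0\in\mathcal{O}$), the set $\mathcal{ZD}^{m,p}_N$ is a group under composition. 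The manifold topology on $\mathcal{ZD}^{m,p}_N$ is the one transported from the chart $\mathcal{O}\subseteq\mathcal{Z}^{m,p}_N$. With respect to this topology, the composition map $(\varphi,\psi)\mapsto\varphi\circ\psi$, $\mathcal{ZD}^{m,p}_N\times\mathcal{ZD}^{m,p}_N\to\mathcal{ZD}^{m,p}_N$, is continuous by Proposition~\ref{prop:composition}, and the inversion map $\varphi\mapsto\varphi^{-1}$, $\mathcal{ZD}^{m,p}_N\to\mathcal{ZD}^{m,p}_N$, is continuous by Proposition~\ref{prop:inverse}. Hence $\mathcal{ZD}^{m,p}_N$ is a topological group.

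It remains to establish the $C^1$-regularity statements. Applying the second part of Proposition~\ref{prop:composition} with $m$ replaced by $m-1$ (which is legitimate since $m-1>2+(2/p)$, hence $m-1>1+(2/p)$), we obtain that the map $(\psi,\varphi)\mapsto\psi\circ\varphi$, $\mathcal{ZD}^{m,p}_N\times\mathcal{ZD}^{m-1,p}_N\to\mathcal{ZD}^{m-1,p}_N$, is $C^1$-smooth. Likewise, by the last statement of Proposition~\ref{prop:inverse}, the inverse map $\varphi\mapsto\varphi^{-1}$, $\mathcal{ZD}^{m,p}_N\to\mathcal{ZD}^{m-1,p}_N$, is $C^1$-smooth. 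This completes the proof.
\end{proof}
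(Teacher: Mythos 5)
Your proposal is correct and coincides with the paper's own treatment: Theorem \ref{th:ZD} is obtained there precisely by combining Proposition \ref{prop:composition} and Proposition \ref{prop:inverse}, with the $C^1$-statements being the final assertions of those two propositions (the composition statement after the shift $m\mapsto m-1$, which your exponent bookkeeping justifies). Nothing further is needed.
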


\medskip\medskip

{\em The Asymptotic Homomorphism of $\mathcal{ZD}^{m,p}_N$:} The asymptotic group $\mathcal{ZD}^{m,p}_N$ has
a natural homomorphism onto a finite dimensional Lie group. Here we define and study the main properties of this homomorphism. 
Assume that $m>3+(2/p)$ and $N\ge 0$. Then, any $\varphi\in\mathcal{ZD}^{m,p}_N$ can be written in the form
\[
\varphi(x,y)=z+\sum_{0\le k+l\le N}\frac{a_{kl}}{z^k\bar{z}^l}+f,\quad f\in W^{m,p}_{\gamma_N},
\]
where $f(x,y)=o\big(1/\z^N\big)$ by Proposition \ref{prop:main_technical} and Remark \ref{rem:R^d}.
Denote by $\hrho$ the map that assigns to an asymptotic diffeomorphism $\varphi\in\mathcal{ZD}^{m,p}_N$ its
asymptotic part $\hrho(\varphi):=\sum_{0\le k+l\le N}\frac{a_{kl}}{z^k\bar{z}^l}$, that we identify with a
point in $\C^M=\R^{2M}$ represented by the real and the imaginary part of the asymptotic coefficients
$a_{kl}$, $0\le k+l\le N$. Here  $M\equiv M_N$ denotes the total number of the asymptotic coefficients. 
In this way we obtain the real analytic map
\begin{equation}\label{eq:hrho}
\hrho : \mathcal{ZD}^{m,p}_N\to\R^{2M}.
\end{equation}
Denote by $\hA\equiv\hA_N$ the image of \eqref{eq:hrho} in $\R^{2M}$. Obviously, $\hA$ is an open set in $\R^{2M}$.
Let us introduce the following multiplication operation on the elements of $\hA$: Take $a,b\in\hA$. Then, for
$\varphi,\psi\in\mathcal{ZD}^{m,p}_N$ such that $a=\hrho(\varphi)$ and $b=\hrho(\psi)$, we set
\begin{equation}\label{eq:*-product}
a\star b:=\hrho(\varphi\circ\psi).
\end{equation}
It is clear from the arguments in the proof of Lemma \ref{lem:composition} that the definition above
is independent of the choice of $\varphi,\psi\in\mathcal{ZD}^{m,p}_N$ such that $a=\hrho(\varphi)$ and $b=\hrho(\psi)$ and that
\[
\hA\times\hA\to\hA,\quad(a,b)\mapsto a\star b,
\]
is a polynomial map. Since the composition of diffeomorphisms is associative, we see from \eqref{eq:*-product}
that the $\star$-product is associative (but not commutative). The element of $\hA$ corresponding to $0\in\R^{2M}$
is the identity element $e\in\hA$. Since $\mathcal{ZD}^{m,p}_N$ is a group, one also concludes from \eqref{eq:*-product}
that the elements of $\hA$ are invertible and for $a=\hrho(\varphi)$ we have that $a^{-1}=\hrho(\varphi^{-1})$.
This shows that $\hA\equiv\hA_N$ is a group. Note that for $N=0$ we have that $\big(\hA_0,\star\big)=\big(\R^2,+\big)$.
In fact, we have the following proposition.

\begin{proposition}\label{prop:hA-group}
For any $N\ge 0$ we have:
\begin{itemize}
\item[(i)] $\big(\hA_N,\star\big)$ is a group modeled on $\R^{2M}$ with real analytic product and inverse operations.
\item[(ii)] For any $m>3+(2/p)$ the map $\hrho : \mathcal{ZD}^{m,p}_N\to\hA_N$ is a homomorphism of real analytic groups.
\end{itemize}
\end{proposition}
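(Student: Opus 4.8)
The plan is to verify directly that $\big(\hA_N,\star\big)$ is a real analytic group and that $\hrho$ is a homomorphism, using the polynomial structure of the $\star$-product that was already established before the statement. Most of the group axioms are essentially for free: $\hA_N$ is an open subset of $\R^{2M}$, hence a real analytic manifold; the $\star$-product is a polynomial (hence real analytic) map $\hA_N\times\hA_N\to\hA_N$; associativity follows from associativity of composition of diffeomorphisms together with \eqref{eq:*-product}; the element $e=0\in\R^{2M}$ is a two-sided identity since $\hrho(\idmap)=0$ and $\hrho(\varphi\circ\idmap)=\hrho(\idmap\circ\varphi)=\hrho(\varphi)$; and invertibility with $a^{-1}=\hrho(\varphi^{-1})$ follows from the fact that $\mathcal{ZD}^{m,p}_N$ is a group (Theorem \ref{th:ZD}) and well-definedness of $\hrho$ on inverses. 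The one genuine point requiring an argument is part $(i)$'s claim that the inversion $a\mapsto a^{-1}$ on $\hA_N$ is real analytic, and part $(ii)$'s claim that $\hrho$ is real analytic and a homomorphism.

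For part $(ii)$, real analyticity of $\hrho : \mathcal{ZD}^{m,p}_N\to\hA_N$ has already been recorded in \eqref{eq:hrho} (it is the composition of the chart identification $\varphi\mapsto u\in\mathcal{O}\subseteq\mathcal{Z}^{m,p}_N$ with the bounded linear projection $\mathcal{Z}^{m,p}_N\to\R^{2M}$ onto the asymptotic coefficients), so the only thing to check is the homomorphism property $\hrho(\varphi\circ\psi)=\hrho(\varphi)\star\hrho(\psi)$, which is literally the definition \eqref{eq:*-product} of $\star$ once we know $\star$ is well-defined. Thus the logical skeleton is: (a) show \eqref{eq:*-product} is independent of the choice of representatives $\varphi,\psi$ — this is exactly the content already asserted right after \eqref{eq:*-product}, relying on the composition formulas in the proof of Lemma \ref{lem:composition} and Remark \ref{rem:improved_composition}, which show that $\hrho(\varphi\circ\psi)$ depends only on the asymptotic parts of $\varphi$ and $\psi$; (b) conclude $\star$ is a polynomial map by tracking, through those same composition formulas, how the finitely many asymptotic coefficients of $\varphi\circ\psi$ are built from those of $\varphi$ and $\psi$ by finitely many additions and multiplications (the Banach algebra products of asymptotic parts, truncated at order $N$, together with the expansion $\frac{1}{z^k\bar z^l}\circ(z+v)$ as a geometric-type series that terminates modulo $W^{m,p}_{\gamma_N}$); (c) identify the identity and, for the inverse, use that composition with $\varphi^{-1}$ gives $\hrho(\varphi)\star\hrho(\varphi^{-1})=\hrho(\idmap)=0$.

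Real analyticity of the inversion on $\hA_N$ — the main obstacle — I would handle as follows. The cleanest route is to deduce it from the corresponding property on $\mathcal{ZD}^{m,p}_N$: by Proposition \ref{prop:inverse} the inversion $\varphi\mapsto\varphi^{-1}$ is at least $C^1$ on $\mathcal{ZD}^{m,p}_N$ (valued in $\mathcal{ZD}^{m-1,p}_N$), and $\hrho$ intertwines inversion on the group with inversion on $\hA_N$; however, to get \emph{real analyticity} of the finite-dimensional inversion from a merely $C^1$ infinite-dimensional one requires care, so instead I would argue intrinsically on $\hA_N$. Since $\hA_N\subseteq\R^{2M}$ is open and $\star$ is a polynomial map with $0$ as identity, fix $a\in\hA_N$ with $b:=a^{-1}$; the differential $D_2(\star)(a,b)\colon\R^{2M}\to\R^{2M}$ of $c\mapsto a\star c$ at $c=b$ is invertible (it is the differential at the identity of left $\star$-translation by $a$ followed by its inverse — concretely, left translation $L_a\colon\hA_N\to\hA_N$, $c\mapsto a\star c$, is a polynomial bijection with polynomial inverse $L_{a^{-1}}$, hence a real analytic diffeomorphism, so its differential is everywhere invertible). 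Then the equation $a\star c=0$ defining $c=a^{-1}$ satisfies the hypotheses of the real analytic implicit function theorem in the variable $(a,c)$, yielding that $a\mapsto a^{-1}$ is real analytic near $a$; since $a$ was arbitrary, inversion is real analytic on all of $\hA_N$. Finally I would note that the identification $\big(\hA_0,\star\big)=\big(\R^2,+\big)$ for $N=0$ is immediate since then the only asymptotic coefficient is $a_{00}$ and $(z+a_{00})\circ(z+b_{00})=z+(a_{00}+b_{00})$.
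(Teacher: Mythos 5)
Your proposal is correct and follows essentially the same route as the paper: both reduce the real analyticity of the inversion on $\hA_N$ to the real analytic inverse/implicit function theorem applied to the polynomial product map $F(a,b)=a\star b$, exploiting that the translations $L_a$ and $R_a$ are polynomial bijections with polynomial inverses, while the remaining assertions (well-definedness and associativity of $\star$, the identity element, the homomorphism property of $\hrho$) are disposed of exactly as in the discussion preceding the proposition. The only architectural difference is that the paper applies the implicit function theorem once at the identity, where $(D_1F)(0,0)=I$, and then propagates analyticity to an arbitrary point via $c^{-1}=\big(L_{a_\bullet^{-1}}\circ I_U\circ R_{a_\bullet^{-1}}\big)(c)$, whereas you apply it directly at each point $a$ after checking that $D_2(\star)(a,a^{-1})$ is invertible; both variants are valid.
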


\begin{remark}
In geometrical terms, the group $\hA$ can be identified with a group of germs of real analytic diffeomorphisms on the Riemann sphere
that are defined in an open neighborhood of $\infty$ and have $\infty$ as a fixed point.
\end{remark}

\begin{proof}[Proof of Proposition \ref{prop:hA-group}]
It remains to prove item $(i)$ only. As described above, we identify the elements of $\hA$ with points in $\R^{2M}$.
Denote $F(a,b):=a\star b$. Then, $F : \hA\times\hA\to\hA$ is a polynomial map 
that is defined in an open neighborhood of zero 
(that corresponds to the identity element $e$).\footnote{Note that $F$ is a well defined polynomial map 
$\R^{2M}\times\R^{2M}\to\R^{2M}$.}
Since $e\star e=e$ and $a\star e=a$, we obtain that for any $a\in\R^{2M}$,
\[
F(0,0)=0\quad\text{and}\quad F(a,0)=a.
\]
In particular, the partial derivative of $F$ with respect to the first argument $\big(D_1F\big)(0,0)$ is the identity $2M\times 2M$ matrix. 
Hence, by the inverse function theorem, there exists an open neighborhood $U(e)$ of the zero in $\R^{2M}$ and a unique 
{\em real analytic} map $I_U : U(e)\to\hA$, $b\mapsto I_U(b)$, such that $F\big(I_U(b),b\big)=0$ for any $b\in U(e)$. 
This implies that for any $b\in U(e)$ we have that
\[
I_U(b)=b^{-1},
\]
and hence, the inverse operation in $\hA$ is real analytic in the open neighborhood $U(e)$. 
Further, we argue as in the proof of Proposition \ref{prop:inverse}: Take an arbitrary element $a_\bullet\in\hA$.
Then, since $a_\bullet$ is invertible, we obtain from the inverse function theorem that 
the right and the left translation $a\stackrel{R_{a_\bullet}}{\longmapsto} a\star a_\bullet$ and
$a\stackrel{L_{a_\bullet}}{\longmapsto} a_\bullet\star a$ are real analytic diffeomorphisms of $\hA$ so that
$R_{a_\bullet}^{-1}=R_{a_\bullet^{-1}}$ and $L_{a_\bullet}^{-1}=L_{a_\bullet^{-1}}$.
Now, consider the open neighborhood 
\[
U(a_\bullet):=R_{a_\bullet}\big(U(e)\big)=\big\{a\star a_\bullet\,\big|\,a\in U(e)\big\}
\] 
of $a_\bullet$ in $\hA$. Note that for any $c\in U(a_\bullet)$ we have that $c=a\star a_\bullet$ and hence
\[
c^{-1}=a_\bullet^{-1}\star a^{-1}=L_{a_\bullet^{-1}}\big(I_U(a)\big)=
\big(L_{a_\bullet^{-1}}\circ I_U\circ R_{a_\bullet^{-1}}\big)(c).
\]
Since the maps $L_{a_\bullet^{-1}} : \hA\to\hA$, $R_{a_\bullet^{-1}} : \hA\to\hA$, and
$I_U : U(e)\to\hA$ appearing on the right hand side of the formula above are real analytic, 
we conclude that the inverse operation on $\hA$ is real analytic when restricted to $U(a_\bullet)$.
This completes the proof of the proposition.
\end{proof}

Now, take a curve $\varphi\in C^1\big((-\varepsilon,\epsilon),\mathcal{ZD}^{m,p}_N\big)$ with $\varepsilon>0$ such that
$\dt\varphi|_{t=0}=u$. Then, by \eqref{eq:*-product}, for any given $\psi\in\mathcal{ZD}^{m,p}_N$ we have that
\begin{equation}\label{eq:hrho-representation}
\hrho\big(\varphi(t)\circ\psi\big)=\hrho\big(\varphi(t)\big)\star\hrho(\psi),\quad t\in(-\varepsilon,\varepsilon).
\end{equation}
Note that the right translation $R_\psi : \mathcal{ZD}^{m,p}_N\to\mathcal{ZD}^{m,p}_N$ on $\mathcal{ZD}^{m,p}_N$ is 
an affine linear, and hence a real analytic map. By taking the $t$ derivative at zero in \eqref{eq:hrho-representation},
we then obtain that for any $u\in\mathcal{Z}^{m,p}_N$,
\begin{equation}\label{eq:hrho_*-representation}
\hrho_*(u\circ\psi)=d_eR_{\hrho(\psi)}\big(\hrho_*(u)\big),
\end{equation}
where $\hrho_* : \mathcal{Z}^{m,p}_N\to\R^{2M}$ denotes the linear map assigning to $u\in\mathcal{Z}^{m,p}_N$
its asymptotic part and $d_eR_a : \R^{2M}\to\R^{2M}$ with $a\in\hA$ is the differential of the right translation in $\hA$ 
at the identity element $e\in\hA$ .

\section{The Cauchy operator in asymptotic spaces}\label{sec:cauchy_operator}
In this section we study the properties of the Cauchy operator $\partial_z$ acting on the scale of asymptotic spaces $\mathcal{Z}^{m,p}_N$.
To this end, we first study the properties of this operator on the scale of weighted Sobolev spaces $W^{m,p}_\delta$ with $\delta\in\R$.
In particular, we show that as in the case of the Laplace operator (cf. \cite{McOwen1}), the Cauchy operator is an injective or
surjective Fredholm operator for all but a discrete values of the weight $\delta\in\R$. In the Fredholm case we describe explicitly the kernel
and the (closed) image of $\partial_z$. Note that the fundamental solution of the Cauchy operator
$\partial_z$ is $\K:=1/\pi\bar{z}$.
The following estimate will play an important role in our analysis.  
 
\begin{lemma}\label{lem:kernel}
Let $\Psi$ be a $C^\infty$-smooth complex valued function such that
$\Psi(z,\bar{z})=1/\bar{z}$ for $|z|\ge 2$ and $\big|\Psi(z,\bar{z})\big|\le 1$ for $z\in\C$. For $z,w\in\C$ and any given 
$l\in\Z_{\ge 0}$ define,
\begin{equation*}
K(\bar{z},\bar{w}):=\frac{1}{\bar{z}-\bar{w}},\quad
\widetilde{K}_l(z,\bar{z};w,\bar{w}):=\Psi(z,\bar{z})\sum_{k=0}^{l}\bar{w}^ k \Psi(z,\bar{z})^k.
\end{equation*}
Then, there exists a constant $C\equiv C(l)>0$ such that for any $z,w\in\C$,
\begin{equation}\label{eq:kernel_estimate}
\big|K(\bar{z},\bar{w})-\widetilde{K}_l(z,\bar{z};w,\bar{w})\big|\le C(l)\,\frac{\w^{l+1}}{|z-w|\z^{l+1}}.
\end{equation}
\end{lemma}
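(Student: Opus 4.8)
The plan is to write the left-hand side of \eqref{eq:kernel_estimate} as a telescoping-type difference and extract a factor $\bar z-\bar w$ in the numerator, which will cancel against $|z-w|$ in the denominator, leaving behind a manageable polynomial in $\bar z,\bar w$ that one bounds using $|w|\le\w$ and the lower bound $|\Psi(z,\bar z)|\le 1$ together with $\Psi(z,\bar z)=1/\bar z$ for $|z|\ge 2$. Concretely, first I would treat the principal region $|z|\ge 2$, where $\Psi(z,\bar z)=1/\bar z$ exactly. There one computes
\[
\widetilde K_l(z,\bar z;w,\bar w)=\frac{1}{\bar z}\sum_{k=0}^{l}\Big(\frac{\bar w}{\bar z}\Big)^k
=\frac{1}{\bar z}\cdot\frac{1-(\bar w/\bar z)^{l+1}}{1-\bar w/\bar z}
=\frac{1}{\bar z-\bar w}-\frac{\bar w^{\,l+1}}{\bar z^{\,l}(\bar z-\bar w)},
\]
provided $\bar z\ne\bar w$; hence $K-\widetilde K_l=\bar w^{\,l+1}/\big(\bar z^{\,l}(\bar z-\bar w)\big)$, and taking absolute values gives exactly $|w|^{l+1}/\big(|z|^{l}\,|z-w|\big)$, which is $\le \w^{l+1}/\big(\z^{l+1}|z-w|\big)$ up to a constant since $|z|\ge 2$ makes $|z|^{l}$ comparable to $|z|^{l+1}/|z|\asymp \z^{l+1}$ on that range (and one absorbs the case $\bar z=\bar w$, i.e. $z=\bar w$ real-reflected, by noting both sides of the original inequality are then interpreted via the singular kernel and the bound is trivial, or simply by observing the offending set has measure zero if the estimate is only needed a.e.).

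The remaining region is the compact set $|z|\le 2$. Here $\z$ is bounded above and below by absolute constants, $\Psi$ is a fixed $C^\infty$ function bounded by $1$, and $\widetilde K_l$ is a polynomial in $\bar w$ with coefficients that are bounded $C^\infty$ functions of $z$, so $|\widetilde K_l(z,\bar z;w,\bar w)|\le C\,\w^{l+1}$ uniformly for $|z|\le 2$, $w\in\C$ (using $|\bar w|^k\le\w^k\le\w^{l+1}$ for $k\le l$ and absorbing the $k$ with lower powers). For the genuine kernel, $|K(\bar z,\bar w)|=1/|z-w|$. Thus
\[
\big|K-\widetilde K_l\big|\le \frac{1}{|z-w|}+C\,\w^{l+1}
\le \frac{1}{|z-w|}+C\,\w^{l+1}\cdot\frac{C'}{|z-w|\,\z^{l+1}}\cdot\frac{|z-w|\,\z^{l+1}}{C'},
\]
which is not quite the clean way to phrase it; instead I would simply argue: since $|z|\le 2$ we have $\z^{l+1}\le C$, so the target bound $C(l)\w^{l+1}/(|z-w|\z^{l+1})$ is $\ge c\,\w^{l+1}/|z-w|\ge c/|z-w|$ and also $\ge c$ wherever $|z-w|$ is bounded; a short case split on whether $|z-w|\le 1$ or $|z-w|\ge 1$ then closes it, the near-diagonal part being controlled by $1/|z-w|\le \w^{l+1}/(|z-w|\z^{l+1})\cdot(\z^{l+1}/\w^{l+1})$ and $\z\le\w+$const in that small ball...

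To keep this transparent, the cleanest route on $|z|\le 2$ is: bound $|K|\le 1/|z-w|$ and $|\widetilde K_l|\le C\le C\,\w^{l+1}$; then since $\z\le 5^{1/2}$ on this set, $\w^{l+1}/(|z-w|\z^{l+1})\ge c\,\w^{l+1}/|z-w|\ge c/|z-w|$, handling the $K$ term, while for the $\widetilde K_l$ term we split: if $|z-w|\le\w$ then $\w^{l+1}/(|z-w|\z^{l+1})\ge c\,\w^{l+1}/(\w\cdot 1)=c\,\w^{l}\ge c$ (as $\w\ge1$), and if $|z-w|\ge\w$ then $|w|\le|z|+|z-w|\le 2+|z-w|$ forces $\w\le C|z-w|$, whence $\w^{l+1}/(|z-w|\z^{l+1})\ge c\,\w^{l+1}/(|z-w|)\ge c\,\w^{l}$, again $\ge c$. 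Combining the two regions and taking $C(l)$ to be the larger of the two constants completes the proof. The only mildly delicate point — and the one I'd be most careful about — is bookkeeping the transition near $|z|=2$, where $\Psi$ interpolates between $1/\bar z$ and its extension; but since there $\z\asymp 1$ and everything in sight is smooth and bounded, it falls entirely under the compact-region estimate, so there is no real obstacle, only routine care with constants.
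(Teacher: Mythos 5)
Your decomposition is genuinely different from the paper's and, in my view, cleaner. The paper splits into four cases according to whether $|w/z|\ge 1/2$ or $\le 1/2$ and whether $|z|\ge 2$ or $|z|\le 2$, using the finite geometric sum in one regime and the tail of the infinite series $\frac{1}{\bar z}\sum_{k\ge0}(\bar w/\bar z)^k$ in the other. You observe instead that for $|z|\ge 2$ one has $\Psi=1/\bar z$ exactly, so the finite geometric sum gives the error in closed form,
\[
K-\widetilde K_l=\frac{\bar w^{\,l+1}}{\bar z^{\,l+1}(\bar z-\bar w)}\qquad(|z|\ge2,\ z\ne w),
\]
valid for every ratio $|w/z|$; this collapses the paper's two exterior cases into a single line, and the remaining compact region $|z|\le 2$ is handled by crude bounds much as in the paper. (Also, $\bar z=\bar w$ holds iff $z=w$, where both sides of the estimate are singular, so there is nothing to interpret there.)

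Two local slips need fixing. First, your closed form has $\bar z^{\,l}$ where it should have $\bar z^{\,l+1}$, since $\frac{1}{\bar z}\cdot\frac{(\bar w/\bar z)^{l+1}}{1-\bar w/\bar z}=\frac{\bar w^{\,l+1}}{\bar z^{\,l+1}(\bar z-\bar w)}$; the ensuing claim that $|z|^{l}$ is comparable to $\z^{l+1}$ on $|z|\ge 2$ is false, but with the correct exponent you only need $|z|^{l+1}\asymp\z^{l+1}$ for $|z|\ge 2$ together with $|w|\le\w$, and the desired bound follows at once. Second, on $|z|\le 2$ your sub-case $|z-w|\ge\w$ argues from $\w\le C|z-w|$ to $\w^{l+1}/|z-w|\ge c\,\w^{l}$, which points the wrong way; the inequality you actually need (and which makes the whole case split unnecessary) is $|z-w|\le|z|+|w|\le 2+|w|\le 3\w$, valid throughout $|z|\le 2$. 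Combined with $\bigl|\widetilde K_l\bigr|\le\sum_{k=0}^{l}|w|^k\le(l+1)\w^{l}$ (a sharper bound than your $C\w^{l+1}$, and the one that matches) and $\z^{l+1}\le 5^{(l+1)/2}$ on this region, this closes the estimate. With these corrections the proof is complete.
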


\begin{proof}[Proof of Lemma \ref{lem:kernel}]
We will prove \eqref{eq:kernel_estimate} with $\z^{l+1}$ and $\w^{l+1}$ replaced by $1+|z|^{l+1}$  and $1+|w|^{l+1}$
respectively. Assume that $\big|\frac{w}{z}\big|\ge\frac{1}{2}$. If $|z| \geq 2$ then
\begin{align}
\abnorm{\left(K - \widetilde{K}_l\right)(\bar{z}-\bar{w})(1+|z|^{l+1})}&= 
(1+|z|^{l+1})\Big|1-(\bar{z}-\bar{w})\Psi(z,\bar{z})\sum_{k=0}^{l} \bar{w}^k\Psi(z,\bar{z})^k\Big|\label{eqEst1}\\ 
&=(1+|z|^{l+1})\abnorm{1-\Big(1 - \frac{\bar{w}}{\bar{z}}\Big)\sum_{k=0}^{l} \frac{\bar{w}^k}{\bar{z}^k}}\nonumber\\
&=(1+|z|^{l+1})\abnorm{\frac{\bar{w}}{\bar{z}}}^{l+1}\le\Big(1+\frac{1}{|z|^{l+1}}\Big)(1+|w|^{l+1})\nonumber\\
&\le 2\big(1+|w|^{l+1}\big)\nonumber
\end{align}
which proves \eqref{eq:kernel_estimate} in the considered case.
If $|z|\le 2$ then the right hand side of \eqref{eqEst1} is estimated by
$\big(1+2^{l+1}\big)\big(1+3|w|\sum_{k=0}^l |w^k|\big)$, since 
$\big|\Psi(z,\bar{z})\big|\le 1$ and $|\bar{z}-\bar{w}|\le|z|+|w|\le 3|w|$.
This, together with the estimate
\[
|w|\,\sum_{k=0}^l|w|^k=|w|\,\frac{1+|w|^{l+1}}{1+|w|}\le 1+|w|^{l+1},
\]
then implies that the right hand side of \eqref{eqEst1} is bounded by $4\big(1+2^{l+1}\big)\big(1+|w|^{l+1}\big)$.
This proves \eqref{eq:kernel_estimate} in the case when $\big|\frac{w}{z}\big|\ge\frac{1}{2}$.

Now, assume that $\big|\frac{w}{z}\big|\le\frac{1}{2}$. Then, 
$K(\bar{z},\bar{w})=\frac{1}{\bar{z}} \sum_{k=0}^{\infty} (\frac{\bar{w}}{\bar{z}})^k$. 
If $|z|\ge 2$ we have
\begin{equation*}
\begin{split}
\abnorm{K(\bar{z},\bar{w}) - \widetilde{K}_l(z,\bar{z};w,\bar{w})}& = \abnorm{\frac{1}{\bar{z}}
\sum_{k=l+1}^{\infty}\Big(\frac{\bar{w}}{\bar{z}}\Big)^k}\le\frac{2}{|z|}\frac{|w|^{l+1}}{|z|^{l+1}}\\ &
\le\frac{3}{2} \frac{2}{|z-w|}\frac{|w|^{l+1}}{|z|^{l+1}} 
\le 3\frac{1+|w|^{l+1}}{|z-w|(1+|z|^{l+1})}
\end{split}
\end{equation*}
where we used that
\begin{equation}\label{eq:two_estimates}
|z-w|\le|z|+|w|\le\frac{3}{2}|z|\quad\text{\rm and}\quad\frac{|w|^{l+1}}{|z|^{l+1}}\le\frac{1+|w|^{l+1}}{1+|z|^{l+1}}.
\end{equation}
The second inequality in \eqref{eq:two_estimates} follows from the estimate
\[
|w|^{l+1}\big(1+|z|^{l+1}\big)-|z|^{l+1}\big(1+|w|^{l+1}\big)=|w|^{l+1}-|z|^{l+1}=\Big(\frac{1}{2^{l+1}}-1\Big)|z|^{l+1}\le 0.
\]
If $|z| \le 2$ we obtain from $\big|\Psi(z,\bar{z})\big|\le 1$ and $\big|\frac{w}{z}\big|\le\frac{1}{2}$ that
\begin{eqnarray*}
\Big|K(\bar{z},\bar{w}) - \widetilde{K}_l(z,\bar{z};w,\bar{w})\Big|&=& 
\abnorm{\frac{1}{\bar{z}}\sum_{k=l+1}^{\infty}\Big(\frac{\bar{w}}{\bar{z}}\Big)^k + 
\frac{1}{\bar{z}}\sum_{k=0}^{l}\Big(\frac{\bar{w}}{\bar{z}}\Big)^k\big(1-\bar{z}^{k+1}\,\Psi(z,\bar{z})^{k+1}\big)}\\
&\le&\frac{1}{|z|}\sum_{k=l+1}^{\infty}\Big|\frac{w}{z}\Big|^k+
\frac{1}{|z|}\sum_{k=0}^l\Big(\frac{1}{2}\Big)^k\big(1+2^{k+1}\big)\\
&\le& \frac{2}{|z|}\frac{|w|^{l+1}}{|z|^{l+1}}+\frac{1}{|z|}(4+2l)\\
&\le&C(l)\,\frac{(1+|w|^{l+1})}{|z-w|(1+|z|^{l+1})}
\end{eqnarray*}
where, in order to conclude the final estimate, we used \eqref{eq:two_estimates} and the fact that 
\begin{equation*}
1\le\big(1+2^{l+1}\big)\,\frac{1+|w|^{l+1}}{1+|z|^{l+1}}
\end{equation*}
for any $|z|\le 2$ and for any $w\in\C$. This completes the proof of the lemma.
\end{proof}

We will use Lemma \ref{lem:kernel} to construct a bounded map that inverts the Cauchy operator acting on weighed Sobolev and 
asymptotic spaces. For $\frac{1}{p}+\frac{1}{q}=1$ consider the non-degenerate pairing
\begin{equation}\label{eq:pairing}
(\cdot,\cdot) : L^p_{\delta } \times L^{q}_{-\delta}\to\C,\quad(u,v):= \int_{\R^2} uv\,dx\,dy=-\frac{1}{2i}\int_{\C}uv\,dz\wedge d\bar{z}
\end{equation}
where $L^p_\delta\equiv W^{0,p}_\delta$ consists of maps $u : \C\to\C$ such that $|u|\z^\delta\in L^p(\C,\R)$.
Note that by H\"older's inequality $|(u,v)|\le\|u\|_{L^p_\delta}\|v\|_{L^q_{-\delta}}$, and hence the bilinear map in \eqref{eq:pairing}
is bounded. For any $u,v\in C^\infty_c$,
\[
\big(\partial_zu,v\big)=-\frac{1}{2i}\int_{\C}\big((uv)_z-u v_z\big)\,dz\wedge d\bar{z}=-\big(u,\partial_z v\big),
\]
where we used that by Stokes' theorem
\[
\int_{\C}(uv)_z\,dz\wedge d\bar{z}=\int_{|z|\le R}d\big(uv\,d\bar{z}\big)=\oint_{|z|=R}uv\,d\bar{z}=0
\] 
for $R>0$ taken sufficiently large. 

Denote by $X^*$ the dual of the Banach space $X$.
The Cauchy operator extends to a continuous operator $\partial_z : S'\to S'$ where $S'$ denotes Schwartz space of
tempered distributions acting on the Schwartz space  $S$ of maps $\C\to\C$. The continuity of the pairing \eqref{eq:pairing} then easily implies 
that for any $u\in L^p_\delta$ we have that $\partial_zu\in(W^{1, q}_{-\delta - 1})^*$ and the map 
$\partial_z : L^p_\delta\to(W^{1, q}_{-\delta - 1})^*$ is bounded. Moreover, for $u\in L^p_\delta$ with 
$\partial_z u\in L^p_{\delta+1}$ we have that for any $v\in L^q_{-\delta-1}$,
\begin{equation}\label{eq:weak_derivative}
\big(\partial_z v\big)(u)=-\big(v,\partial_z u\big).
\end{equation}

We will first prove the following simple lemma.

\begin{lemma}\label{lem:injectivity}
Assume that $\delta+(2/p)>0$. Then, $u\in L^p_\delta$ and $\partial_z u=0$ imply that $u=0$.
In particular, the (bounded) map $\partial_z : W^{1,p}_{\delta}\to L^p_{\delta+1}$ is injective.
\end{lemma}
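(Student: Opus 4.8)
The plan is to combine interior elliptic regularity with a weighted mean-value estimate and Liouville's theorem. Since $u\in L^p_\delta\subseteq L^1_{loc}(\C)$ and $\partial_z u=0$ holds in the sense of distributions, Weyl's lemma for the elliptic operator $\partial_z$ (equivalently, for $\partial_{\bar z}$ after complex conjugation) shows that $u$ is $C^\infty$-smooth on $\C$. Setting $F:=\bar u$ and conjugating the equation gives $\partial_{\bar z}F=\overline{\partial_z u}=0$ everywhere, so $F$ is an entire holomorphic function with $|F|=|u|$ and $\z^\delta F\in L^p(\C)$.

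Next I would extract pointwise decay of $F$. Fix $z_0\in\C$ with $|z_0|\ge 1$ and apply the mean-value property of $F$ over the disk $D:=\{\,|z-z_0|<|z_0|/2\,\}$. On $D$ one has $|z_0|/2\le|z|<3|z_0|/2$, so $\z$ is comparable to $|z_0|$ with constants independent of $z_0$. Hölder's inequality with exponents $p$ and $q=p/(p-1)$ then yields
\[
|F(z_0)|\le\frac{4}{\pi|z_0|^2}\int_D|F|\,dx\,dy\le\frac{4}{\pi|z_0|^2}\,\|\z^\delta F\|_{L^p(D)}\,\|\z^{-\delta}\|_{L^q(D)}\le\frac{C}{|z_0|^{\delta+2/p}}\,\|u\|_{L^p_\delta},
\]
where we used $\|\z^{-\delta}\|_{L^q(D)}\le C\,|z_0|^{-\delta}|D|^{1/q}\le C\,|z_0|^{2/q-\delta}$ together with $2-2/q=2/p$, and $\|\z^\delta F\|_{L^p(D)}\le\|\z^\delta u\|_{L^p(\C)}=\|u\|_{L^p_\delta}$.

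Since by hypothesis $\delta+2/p>0$, the right-hand side tends to $0$ as $|z_0|\to\infty$; thus $F$ is a bounded entire function vanishing at infinity, hence $F\equiv 0$ by Liouville's theorem, i.e.\ $u\equiv 0$. This proves the first assertion. For the map $\partial_z:W^{1,p}_\delta\to L^p_{\delta+1}$: its boundedness is the case $\alpha=(1,0)$ of Proposition \ref{prop:properties_W-spaces}(iii), and any $u$ in its kernel satisfies $u\in W^{1,p}_\delta\subseteq L^p_\delta$ (the $|\alpha|=0$ term of the $W^{1,p}_\delta$-norm) and $\partial_z u=0$, hence $u=0$ by what was just shown.

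The only step that is not a completely routine computation is the first one: passing from the distributional identity $\partial_z u=0$ to smoothness of $u$, so that $F=\bar u$ genuinely is an entire function to which the mean-value property and Liouville's theorem apply. This is classical (Weyl's lemma / interior elliptic regularity for the Cauchy--Riemann operator), so I would simply invoke it; everything else is a one-line weighted Hölder estimate followed by Liouville.
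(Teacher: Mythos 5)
Your proof is correct, but it takes a genuinely different route from the paper's. The paper argues by duality: for an arbitrary test function $\varphi\in C^\infty_c$ it writes $\varphi=\partial_z(\K*\varphi)$, checks that $\K*\varphi=O(1/\z)$ lies in $L^q_{-\delta-1}$ precisely because $\delta+(2/p)>0$, and then uses the weak-derivative identity \eqref{eq:weak_derivative} to conclude $(\varphi,u)=-(\K*\varphi,\partial_z u)=0$ for all $\varphi$. You instead upgrade $u$ to a smooth (anti)holomorphic function via hypoellipticity, derive the pointwise bound $|u(z_0)|\le C|z_0|^{-\delta-2/p}\|u\|_{L^p_\delta}$ from the mean-value property plus a weighted H\"older estimate, and finish with Liouville. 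Both arguments are sound, and each uses the hypothesis $\delta+(2/p)>0$ at exactly one place (the paper to put $\K*\varphi$ in the dual weighted space, you to make the decay exponent positive). Your route is closer in spirit to Remark \ref{rem:cauchy_kernel} (the kernel of $\partial_z$ on $S'$ is $\C[\bar z]$, and no such polynomial is in $L^p_\delta$), and it has the side benefit of producing an explicit pointwise decay rate for holomorphic elements of $L^p_\delta$, which is a Liouville-type statement of independent interest. The paper's duality proof, on the other hand, stays entirely at the distributional level and reuses the same machinery (the pairing \eqref{eq:pairing}, the mapping properties of convolution with $\K$) that drives Lemma \ref{lem:almost_onto} and Proposition \ref{th:isomorphism W->L}, so it integrates more tightly with what follows. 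One small point worth making explicit in your write-up: hypoellipticity of $\partial_z$ can be reduced to Weyl's lemma for the Laplacian, since $\partial_z u=0$ forces $\Delta u=4\partial_{\bar z}\partial_z u=0$ in $\mathcal{D}'$, whence $u$ is harmonic and therefore smooth, after which $\partial_{\bar z}\bar u=0$ holds classically.
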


\begin{remark}\label{rem:cauchy_kernel}
In fact, one can easily see by using the Fourier transform that the kernel of the Cauchy operator
$\partial_z : S'\to S'$ consists of all polynomials of $\bar{z}$ with complex coefficients, $\ker\partial_z=\C[\bar{z}]$.
Then, Lemma \ref{lem:injectivity} follows, since for $\delta+(2/p)>0$ we have that $\bar{z}^k\notin L^p_\delta$ for any
$k\ge 0$.
\end{remark}

\begin{proof}[Proof of Lemma \ref{lem:injectivity}]
Assume that $u\in L^p_\delta$ and $\partial_zu=0$. Take an arbitrary test function $\varphi\in C^\infty_c$.
Since $\delta+(2/p)>0$ we conclude that $(\delta+2)q>2$ where $(1/p)+(1/q)=1$. This, together with the estimate 
$\K*\varphi=O\big(1/\z\big)$, implies that $\K*\varphi\in L^q_{-\delta-1}$.
By using that $\K$ is the fundamental solution of $\partial_z$ we obtain from \eqref{eq:weak_derivative} that
\begin{equation*}
\big(\varphi,u\big)=\big(\partial_z(\K*\varphi)\big)(u)=-\big(\K*\varphi,\partial_z u\big) = 0
\end{equation*}
since $\partial_zu=0$. Since this holds for any $\varphi\in C^\infty_c$, we conclude that $u=0$.
\end{proof}

Further, we prove

\begin{lemma}\label{lem:almost_onto}
For any weight $\delta\in\R$ we have that $u\in L^p_{\delta}$ and $\partial_zu \in L^p_{\delta+1}$ imply that 
$u\in W^{1,p}_{\delta}$.
\end{lemma}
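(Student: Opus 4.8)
The plan is to establish the missing derivative bound $\z^{\delta+1}\partial_{\bar z}u\in L^p$ by convolving with the fundamental solution $\K=1/(\pi\bar z)$ and its truncated Taylor-type approximants $\widetilde K_l$ from Lemma \ref{lem:kernel}. First I would reduce the statement to the scalar (complex-valued) Calder\'on--Zygmund estimate: the Beurling--Ahlfors type operator $T:=\partial_{\bar z}\partial_z^{-1}$, realized as $Tg=\partial_{\bar z}(\K*g)$, is a bounded singular integral operator on $L^p(\C)$ for $1<p<\infty$. The remaining issue is that this only controls $\partial_{\bar z}u$ when $\partial_z u$ lies in an \emph{unweighted} $L^p$, whereas we are given $\partial_z u\in L^p_{\delta+1}$, i.e.\ weighted; so the real content is to show the weighted boundedness $g\mapsto\partial_{\bar z}(\K*g)$, $L^p_{\delta+1}\to L^p_{\delta+1}$, and to check that $\K*(\partial_z u)$ recovers $u$ up to an entire function, which must vanish by a Liouville-type argument using $u\in L^p_\delta$.

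Concretely, set $g:=\partial_z u\in L^p_{\delta+1}$ and $h:=\K*g$. The first step is to verify $h-u$ is $\partial_z$-harmonic: since $\partial_z h=g=\partial_z u$ in $S'$, we get $\partial_z(h-u)=0$, so by Remark \ref{rem:cauchy_kernel} $h-u\in\C[\bar z]$ is a polynomial in $\bar z$. Since $u\in L^p_\delta$ and (once established) $h\in L^p_\delta$, the difference is a polynomial lying in $L^p_\delta$, forcing $h-u=0$, hence $u=h=\K*g$. The second and main step is the weighted mapping property: I would split $\K=\widetilde K_l+(\K-\widetilde K_l)$ with $l$ chosen so that $l+1>\delta+1$ works out, i.e.\ $l$ large relative to $|\delta|$. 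The smooth-kernel remainder $\K-\widetilde K_l$ has, by the estimate \eqref{eq:kernel_estimate} of Lemma \ref{lem:kernel} (applied with $\Psi$ a smoothed $1/\bar z$), a kernel bounded by $C\,\w^{l+1}/(|z-w|\,\z^{l+1})$; differentiating once in $z$ and pairing against the weight $\z^{\delta+1}$ turns the convolution into an integral operator with kernel decaying like $\z^{-(l+1)+(\delta+1)}\w^{(l+1)}|z-w|^{-2}$-type bounds, which one estimates by Schur's test (or by the standard $L^p$ fractional-integration lemma on $\C$), using that the weight shift is absorbed by choosing $l$ large. The ``principal part'' $\widetilde K_l*g=\Psi\sum_{k\le l}\bar w^k\Psi^k * g$ is a finite sum of honest translation-invariant operators: each term $g\mapsto \Psi(z,\bar z)^{k+1}\big(\,(\bar w^k g)*\text{(nothing)}\big)$ — more precisely after the differentiation $\partial_z$ it reduces to $\partial_{\bar z}(\K*\cdot)$ composed with multiplication by powers of $\bar z$ and the smooth bounded factor $\Psi$, so one invokes the unweighted $L^p$ boundedness of $\partial_{\bar z}\partial_z^{-1}$ together with Proposition \ref{prop:properties_W-spaces}(ii) to track the weight powers introduced by the $\bar z^k$ and $1/\bar z^{k+1}$ factors. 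Combining the two pieces gives $\partial_{\bar z}u=\partial_{\bar z}(\K*g)\in L^p_{\delta+1}$; together with $u\in L^p_\delta$, $\partial_z u\in L^p_{\delta+1}$ this is exactly $u\in W^{1,p}_\delta$.

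The step I expect to be the main obstacle is the weighted $L^p$ bound for the smooth remainder operator with kernel $\asymp \w^{l+1}\big/\big(|z-w|\,\z^{l+1}\big)$ after one $z$-derivative: one must make sure the extra $\w^{l+1}$ growth is genuinely compensated by the $\z^{-(l+1)}$ decay \emph{uniformly} in the regime $|w|\sim|z|$ (where the two nearly cancel and only the $1/|z-w|$ — a borderline, non-integrable-at-infinity-in-2D singularity — survives), and in the regimes $|w|\ll|z|$ and $|w|\gg|z|$ separately; this is a Schur-test bookkeeping argument but it is where the choice of $l$ in terms of $\delta$ and $p$ is pinned down, and where \eqref{eq:kernel_estimate} does the essential work. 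I would handle it by a dyadic decomposition in $|z-w|$ and in $\max(|z|,|w|)$, using Young's inequality on each annular piece.
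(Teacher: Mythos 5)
Your overall skeleton (set $g=\partial_z u$, recover $u$ as $\K*g$ modulo $\ker\partial_z$, and prove a weighted bound for $\partial_{\bar z}(\K*\cdot)$) matches the paper's, but two steps fail as written. The paper's first move, which you omit, is to reduce to $0<\delta+(2/p)<1$ by multiplying $u$ by $\z^\alpha$ (this shifts the weights of $u$ and $\partial_z u$ coherently, up to a lower-order term from $\partial_z\z^\alpha=O(\z^{\alpha-1})$). Trying to treat general $\delta$ directly creates problems you cannot absorb. For $\delta+(2/p)\le 0$ your Liouville step collapses: low-degree polynomials of $\bar z$ \emph{do} lie in $L^p_\delta$, so ``$h-u\in\C[\bar z]$ and $h-u\in L^p_\delta$'' does not force $h=u$. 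For $\delta+(2/p)>1$ your use of the decomposition of Lemma \ref{lem:kernel} misreads its structure: $\widetilde K_l$ is a degenerate (finite-rank) kernel, smooth in $z$, whose contribution is the moment sum $\frac{\Psi}{\pi}\sum_k(g,\bar z^k)\Psi^k$ --- it is not a translation-invariant operator and contains no singular integral --- while the entire diagonal singularity sits in the remainder $K-\widetilde K_l$. Moreover the pairings $(g,\bar z^k)$ converge only for $k$ in the range dictated by $\delta$, so $l$ cannot be chosen ``large relative to $|\delta|$'' at will; this decomposition is what the paper uses for the image characterization in Proposition \ref{th:W->L}, not for the present lemma.

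The decisive gap is the weighted bound itself. The operator $g\mapsto\partial_{\bar z}(\K*g)$ has kernel $-\mathrm{p.v.}\,1/\big(\pi(\bar z-\bar w)^2\big)$, and $|z-w|^{-2}$ is not locally integrable in $\R^2$: no Schur-test, Young-inequality, or dyadic bookkeeping on $|K(z,w)|$ can succeed, because the $L^p$ boundedness rests on the cancellation $\int_0^{2\pi}e^{2i\theta}\,d\theta=0$, which absolute-value estimates destroy. (Your own diagnosis locates the trouble at infinity in the regime $|w|\sim|z|$, but the fatal singularity is at the diagonal $w=z$.) The paper sidesteps all of this: after normalizing to $0<\delta+(2/p)<1$ it quotes the Stein--Weiss fractional integration theorem for $\K*:L^p_{\delta+1}\to L^p_\delta$ and Stein's theorem on singular integrals with power weights (valid for $0<\delta+(2/p)<2$) for $(\partial_{\bar z}\K)*:L^p_{\delta+1}\to L^p_{\delta+1}$, approximates $f=\partial_z u$ by $C^\infty_c$ functions, and concludes with the injectivity of $\partial_z$ on $L^p_\delta$ (Lemma \ref{lem:injectivity}), which is available precisely because the weight has been normalized to satisfy $\delta+(2/p)>0$. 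To repair your argument you would either reproduce that reduction and cite the same two classical theorems, or supply a genuine weighted Calder\'on--Zygmund argument for the differentiated kernel, which is well beyond a Schur test.
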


\begin{proof}[Proof of Lemma \ref{lem:almost_onto}]
By multiplying $u$ with the weight $\z^\alpha$ with $\alpha\in\R$ appropriately chosen, we reduce the lemma to the case when 
$0<\delta+(2/p)<1$.
Take $u\in L^p_{\delta}$ and $f:=\partial_zu\in L^p_{\delta+1}$ with $0<\delta+(2/p)<1$.
Let $(f_k)_{k\ge 1}$ be a sequence in $C^\infty_c$ such that $f_k\to f$ in $L^p_{\delta+1}$ as $k\to\infty$. 
Denote 
\begin{equation}\label{eq:u_k}
u_k:=\K*f_k\,.
\end{equation}
Since $\K\in S'$ is the fundamental solution of the Cauchy operator $\partial_z$, we have that for any $k\ge 1$,
\begin{equation}\label{eq:cauchy_equation}
\partial_z u_k=f_k\quad\text{\rm and}\quad\partial_{\bar{z}} u_k=(\partial_{\bar{z}}\K)*f_k
\end{equation}
in distributional sense in $S'$.
A direct computation shows that
\begin{equation}\label{eq:K_{z-bar}}
\partial_{\bar{z}}\K=-\text{p.v.}\,\frac{1}{\pi\bar{z}^2}\in S'
\end{equation}
where p.v. denotes the Cauchy principal value.
In fact, for any test function $\varphi \in C^{\infty}_c$ and $R>0$ sufficiently large we have 
\begin{align*}
\big(\partial_{\bar{z}}\K\big)(\varphi)&=-(\K, \partial_{\bar{z}}\varphi) = 
\frac{1}{2\pi i}\int_{\C} \frac{1}{\bar{z}}\,\varphi_{\bar{z}}\,dz\wedge d \bar{z} = 
\frac{1}{2\pi i}\lim_{r\to 0+}\int_{r\le|z|\le R}\frac{1}{\bar{z}}\,\varphi_{\bar{z}}\,dz\wedge d\bar{z}\\
&=\frac{1}{2\pi i}\lim_{r\to 0+}\int_{r\le|z|\le R} \partial_{\bar{z}}\Big(\frac{1}{\bar{z}}\,\varphi\Big)\,dz\wedge d\bar{z} 
+\frac{1}{2\pi i}\lim_{r\to 0+}\int_{r\le|z|\le R}\frac{1}{\bar{z}^2}\,\varphi\,dz\wedge d\bar{z} \\
&=\frac{1}{2 i}\,\text{p.v.}\int_{\C}\frac{1}{\pi\bar{z}^2}\,\varphi\,dz\wedge d\bar{z}.
\end{align*}
Here we used Stokes' theorem to obtain
\begin{align*}
\lim_{r\to 0+}\int_{r\le|z|\le R}\partial_{\bar{z}}\Big(\frac{1}{\bar{z}}\,\varphi\Big)\,dz\wedge d\bar{z}
&=-\lim_{r\to 0+}\int_{r\le|z|\le R}d\Big(\frac{1}{\bar{z}}\,\varphi\,dz\Big)=\lim_{r\to 0+}\oint_{|z|=r}\frac{1}{\bar{z}}\,\varphi\,dz\\
&=i \lim_{r\to 0+} \int_0^{2\pi} e^{2i\theta}\varphi(re^{i\theta})\,d\theta = 0.
\end{align*}
By Lemma 1 in \cite{McOwen1} (cf. Theorem B* in \cite{SteinWeiss} and Lemma 2.4 in \cite{Lockhart}) for $0<\delta+(2/p)<1$,
the convolution with the fundamental solution $\K$ of the Cauchy operator $\partial_z$ extends to a bounded linear map 
$L^p_{\delta+1}\rightarrow L^p_{\delta}$. Moreover, by Theorem 3 in \cite[Ch. II, \S 4]{SteinBook}, Theorem 1 in \cite{Stein}, 
and \eqref{eq:K_{z-bar}}, for $0<\delta+(2/p)<2$, the convolution with $\partial_{\bar{z}}\K$ extends to a bounded linear map 
$L^p_{\delta+1}\to L^p_{\delta+1}$.
(Note that the cancellation condition required in Theorem 3 in \cite[Ch. II,\S 4]{SteinBook} holds, since $1/\bar{z}^2=e^{2i\theta}/r^2$
and $\int_0^{2\pi}e^{2i\theta}\,d\theta=0$.) 
In particular, by passing to the limit in \eqref{eq:u_k} we obtain that
\[
{\tilde u}:=\K* f=\lim_{k\to\infty}\K*f_k\in L^p_\delta
\]
where the limit exists in $L^p_\delta$. Similarly, by passing to the limit in $S'$ in \eqref{eq:cauchy_equation} we obtain that
\[
\partial_{z}{\tilde u}=f\in L^p_{\delta+1}\quad\text{\rm and}\quad\partial_{\bar{z}}{\tilde u}=(\partial_{\bar{z}}\K)*f\in L^p_{\delta+1}.
\]
This implies that ${\tilde u}\in W^{1,p}_\delta$. Since $\partial_{z}{\tilde u}=\partial_{z}u=f$, we obtain from the first
statement of Lemma \ref{lem:injectivity} that $u={\tilde u}$. Hence, $u\in W^{1,p}_\delta$.
This completes the proof of the lemma.
\end{proof}

As a direct consequence of these results we obtain

\begin{proposition}\label{th:isomorphism W->L}
For $0<\delta+(2/p)<1$ the map $\partial_z : W^{1,p}_\delta\to L_{\delta+1}^p$ is an isomorphism of Banach spaces.
\end{proposition}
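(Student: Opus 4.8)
The plan is to assemble the isomorphism out of the three preceding results, so that essentially no new computation is needed. First I would record that $\partial_z : W^{1,p}_\delta \to L^p_{\delta+1}$ is bounded and injective: boundedness follows from Proposition \ref{prop:properties_W-spaces} $(iii)$ with $|\alpha|=1$ (and is in any case asserted in Lemma \ref{lem:injectivity}), while injectivity is precisely Lemma \ref{lem:injectivity}, which applies since $0<\delta+(2/p)<1$ gives in particular $\delta+(2/p)>0$. Thus the whole statement reduces to surjectivity.

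For surjectivity I would argue exactly as in the proof of Lemma \ref{lem:almost_onto}. Given $f\in L^p_{\delta+1}$, choose $f_k\in C^\infty_c$ with $f_k\to f$ in $L^p_{\delta+1}$ and set $u:=\K*f=\lim_k \K*f_k$. By Lemma 1 in \cite{McOwen1}, convolution with the fundamental solution $\K=1/(\pi\bar{z})$ extends to a bounded map $L^p_{\delta+1}\to L^p_\delta$ precisely in the weight window $0<\delta+(2/p)<1$, so the limit exists in $L^p_\delta$ and $u\in L^p_\delta$; passing to the limit in $S'$ in $\partial_z(\K*f_k)=f_k$ yields $\partial_z u=f$. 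Now Lemma \ref{lem:almost_onto} promotes this to $u\in W^{1,p}_\delta$, so $\partial_z$ is onto. Moreover, combining this with the boundedness of convolution with $\partial_{\bar z}\K$ on $L^p_{\delta+1}$ (recorded in the proof of Lemma \ref{lem:almost_onto}) shows that $f\mapsto \K*f$ is in fact a \emph{bounded} linear map $L^p_{\delta+1}\to W^{1,p}_\delta$, and by Lemma \ref{lem:injectivity} it is a genuine two-sided inverse of $\partial_z$. Hence $\partial_z$ is a continuous bijection of Banach spaces with bounded inverse, i.e. an isomorphism (one may invoke the open mapping theorem instead of exhibiting the inverse, but the inverse is already in hand).

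I do not anticipate a real obstacle: every piece has been prepared. The one point that needs care — and which I would flag in the write-up — is keeping track of the weight condition $0<\delta+(2/p)<1$, since this is exactly the range in which the mapping property $\K*{}\colon L^p_{\delta+1}\to L^p_\delta$ and the injectivity of $\partial_z$ hold simultaneously; checking that $\partial_z u = f$ survives the limit in $S'$ and that Lemma \ref{lem:almost_onto} is invoked with the correct weight is then routine bookkeeping.
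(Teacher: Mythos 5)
Your proposal is correct and follows essentially the same route as the paper: define $u:=\K*f$, use the boundedness of convolution with $\K$ on the weight window $0<\delta+(2/p)<1$ and a limiting argument to get $\partial_z u=f$, promote $u$ to $W^{1,p}_\delta$ via Lemma \ref{lem:almost_onto}, and combine with the injectivity from Lemma \ref{lem:injectivity} and the open mapping theorem. The only (harmless) embellishment is your observation that $f\mapsto\K*f$ is itself bounded into $W^{1,p}_\delta$, which makes the open mapping theorem optional.
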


\begin{proof}[Proof of Proposition \ref{th:isomorphism W->L}]
For any $f \in L^p_{\delta+1}$ define $u:=\K*f$. Since for $0<\delta+(2/p)<1$ the convolution with $\K$ extends to 
a bounded map $L^p_{\delta+1}\to L^p_\delta$, we obtain that $u\in L^p_\delta$. A simple continuity argument and the fact
that $\K$ is the fundamental solution of $\partial_z$ then implies that $\partial_zu=f\in L^p_{\delta+1}$.
Hence, by Lemma \ref{lem:almost_onto}, we obtain that $u\in W^{1,p}_\delta$. This, together with $\partial_zu = f$,
finally gives that $\partial_z : W^{1,p}_\delta\to L_{\delta+1}^p$ is onto. 
The injectivity of this map follows from Lemma \ref{lem:injectivity}. The statement of the proposition then follows from 
the open mapping theorem.
\end{proof}

We also have 

\begin{proposition}\label{th:W->L}
For $l+1<\delta+(2/p)< l+2$ and $l\in\Z_{\ge 0}$ the map $\partial_z : W^{1,p}_\delta\to L_{\delta + 1}^p$ is injective 
with closed  in $L^p_{\delta+1}$ image $R_l:=\big\{f\in L^p_{\delta+1}\,\big|\,\big(f,\bar{z}^k\big)=0\,\,\,\text{\rm for}\,\,\,0\le k\le l\big\}$
of (complex) co-dimension $l+1$.
\end{proposition}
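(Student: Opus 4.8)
The plan is to combine the injectivity already recorded in Lemma~\ref{lem:injectivity} with an explicit construction of a preimage built from the modified Cauchy kernel $\widetilde K_l$ of Lemma~\ref{lem:kernel}, and then to identify the image and compute its codimension by a short functional-analytic argument. We will use the following elementary observation throughout: since $\delta+(2/p)>l+1$, for every $0\le k\le l$ one has $\bar z^{\,k}\in L^q_{-\delta-1}$ (because $|z|^k\z^{-\delta-1}\in L^q$ exactly when $k<\delta+(2/p)$, and $k\le l<l+1<\delta+(2/p)$). Hence, by H\"older's inequality, each linear functional $\Lambda_k\colon f\mapsto(f,\bar z^{\,k})$ is bounded on $L^p_{\delta+1}$, all moments $(f,\bar z^{\,k})$ with $0\le k\le l$ are finite for $f\in L^p_{\delta+1}$, and moreover $L^p_{\delta+1}\subseteq L^1$ (as $\z^{-\delta-1}\in L^q$ when $\delta+(2/p)>1$). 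Injectivity of $\partial_z\colon W^{1,p}_\delta\to L^p_{\delta+1}$ is immediate from Lemma~\ref{lem:injectivity}. For the inclusion $\image(\partial_z)\subseteq R_l$, let $u\in W^{1,p}_\delta$ and $f:=\partial_z u\in L^p_{\delta+1}$; applying \eqref{eq:weak_derivative} with $v=\bar z^{\,k}$, $0\le k\le l$, and using $\partial_z\bar z^{\,k}=0$ in $S'$ (Remark~\ref{rem:cauchy_kernel}), we get $0=(\partial_z\bar z^{\,k})(u)=-(\bar z^{\,k},\partial_z u)=-(f,\bar z^{\,k})$, so $f\in R_l$.

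For the reverse inclusion $R_l\subseteq\image(\partial_z)$, fix $f\in R_l$ and set $u:=\K*f$; this converges absolutely since $f\in L^1$, and $\partial_z u=(\partial_z\K)*f=f$ in $S'$ because $\K$ is the fundamental solution of $\partial_z$. To see that $u\in L^p_\delta$, note that for $f\in R_l$ the contribution of $\widetilde K_l$ to the convolution vanishes, namely $\tfrac1\pi\sum_{k=0}^l\Psi(z,\bar z)^{k+1}(f,\bar z^{\,k})=0$, so that
\[
u(z)=\frac{1}{\pi}\int_{\C}\big[K(\bar z,\bar w)-\widetilde K_l(z,\bar z;w,\bar w)\big]\,f(w)\,dA(w).
\]
The kernel estimate \eqref{eq:kernel_estimate} then yields, with $g:=\w^{l+1}|f|$ and $\|g\|_{L^p_{\delta-l}}=\|f\|_{L^p_{\delta+1}}$,
\[
\z^{l+1}\,|u(z)|\le\frac{C(l)}{\pi}\int_{\C}\frac{g(w)}{|z-w|}\,dA(w).
\]
Since $(\delta-l-1)+(2/p)\in(0,1)$ by hypothesis, the weighted $L^p$-boundedness of convolution with the Cauchy kernel (Lemma~1 in \cite{McOwen1}; cf.\ \cite{SteinWeiss,Lockhart} and Proposition~\ref{th:isomorphism W->L}), applied with weight $\delta-l-1$, shows that the right-hand side lies in $L^p_{\delta-l-1}$; equivalently $\z^{l+1}u\in L^p_{\delta-l-1}$, i.e.\ $u\in L^p_\delta$. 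As $\partial_z u=f\in L^p_{\delta+1}$, Lemma~\ref{lem:almost_onto} now gives $u\in W^{1,p}_\delta$, whence $f=\partial_z u\in\image(\partial_z)$. Together with the previous paragraph, $\image(\partial_z)=R_l$.

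It remains to check that $R_l$ is closed of complex codimension $l+1$. As $R_l=\bigcap_{k=0}^{l}\ker\Lambda_k$ with each $\Lambda_k$ bounded, $R_l$ is closed. The functionals $\Lambda_0,\dots,\Lambda_l$ are linearly independent: a relation $\sum_{k=0}^{l}c_k\Lambda_k=0$ would mean $(f,P)=0$ for every $f\in L^p_{\delta+1}$, where $P:=\sum_{k}c_k\bar z^{\,k}$; taking $f=\bar P\varphi$ with $\varphi\in C^\infty_c$, $\varphi\ge0$, $\varphi\not\equiv0$, this forces $\int_{\C}|P|^2\varphi\,dx\,dy=0$, hence $P\equiv0$ and all $c_k=0$. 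Consequently the map $f\mapsto\big((f,1),\dots,(f,\bar z^{\,l})\big)$ is a surjection $L^p_{\delta+1}\to\C^{l+1}$ with kernel $R_l$, so $R_l$ has complex codimension $l+1$, as claimed.

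I expect the one genuinely delicate step to be the $L^p_\delta$-bound on $u$: one has to pair the exact decay gain $\w^{l+1}/\z^{l+1}$ supplied by Lemma~\ref{lem:kernel} with the shifted weight $\delta-l-1$ so as to land precisely in the range $0<(\delta-l-1)+(2/p)<1$, which is where the weighted convolution estimate for the Cauchy kernel is available. This is exactly the arithmetic of the hypothesis $l+1<\delta+(2/p)<l+2$. Everything else is either a direct application of the lemmas of this section or routine functional analysis.
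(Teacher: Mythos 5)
Your proof is correct and follows essentially the same route as the paper: injectivity via Lemma \ref{lem:injectivity}, the inclusion $\image(\partial_z)\subseteq R_l$ by integration by parts against $\bar z^k$, and the reverse inclusion by writing $\K*f$ with the kernel split $K=\widetilde K_l+(K-\widetilde K_l)$, noting the $\widetilde K_l$-contribution vanishes on $R_l$, and using the estimate \eqref{eq:kernel_estimate} together with conjugation by the weight $\z^{l+1}$ to reduce to the range $0<(\delta-l-1)+(2/p)<1$ where the weighted convolution bound applies, then upgrading to $W^{1,p}_\delta$ via Lemma \ref{lem:almost_onto}. The only cosmetic differences are that the paper packages the kernel estimate as a bounded operator $\mathcal{F}_l=\mathcal{M}_{l+1}^{-1}\circ\mathcal{F}\circ\mathcal{M}_{l+1}$ (and defines $\K*f$ by density from $C^\infty_c$ rather than by absolute convergence), while you spell out the linear independence of the moment functionals, which the paper leaves implicit.
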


\begin{proof}[Proof of Proposition \ref{th:W->L}]
For a given $l\ge 0$ and $z,w\in\C$ denote 
\begin{equation*}
F_l(z,\bar{z};w,\bar{w}):=K(\bar{z},\bar{w})-\widetilde{K}_l(z,\bar{z};w,\bar{w}).
\end{equation*}
Note that by Lemma \ref{lem:kernel},
\begin{equation}\label{eq:F_l-estimate}
\big|F_l(z,\bar{z};w,\bar{w})\big|\le C(l)\,\frac{\w^{l+1}}{|z-w|\z^{l+1}}.
\end{equation}
Then the map,
\begin{equation}\label{eq:F-map}
\mathcal{F}_l : C^\infty_c\to C^\infty,\quad 
\mathcal{F}_l\big(u\big)(z,\bar{z}):=-\frac{1}{2\pi i}\int_\C F_l(z,\bar{z};w,\bar{w})u(w,\bar{w})\,dw\wedge d\bar{w},
\end{equation}
associated to the kernel $F_l/\pi$ extends to a bounded linear map $L^p_{\delta+1} \rightarrow L^p_{\delta}$. 
In order to see this, note that \eqref{eq:F-map} can be decomposed as
\begin{equation}\label{eq:decomposition}
\mathcal{F}_l=(\mathcal{M}_{l+1})^{-1}\circ\mathcal{F}\circ\mathcal{M}_{l+1}
\end{equation}
where for any given weight $\mu\in\R$,
\[
\mathcal{M}_{l+1} : L^p_\mu\to L^p_{\mu-(l+1)},\quad f\mapsto\z^{l+1} f,
\]
is a linear isomorphism and $\mathcal{F} : C^\infty_c\to C^\infty$ is of the form \eqref{eq:F-map} with kernel
$F(z,\bar{z};w,\bar{w})$ satisfying, by the estimate \eqref{eq:F_l-estimate}, the inequality
\begin{equation*}
\big|F(z,\bar{z};w,\bar{w})\big|\le\frac{C}{|z-w|}
\end{equation*}
with a constant $C>0$ depending only on $l\ge 0$.
This, together with the estimate
\[
0<\big[\delta-(l+1)\big]+(2/p)<1,
\]
then implies that the map $\mathcal{F} : C^\infty_c\to C^\infty$ extends to a bounded linear map 
$L^p_{[\delta-(l+1)]+1}\to L^p_{\delta-(l+1)}$,
as discussed in the proof of Lemma \ref{lem:almost_onto}. Hence, in view of the decomposition \eqref{eq:decomposition}, the map
\eqref{eq:F-map} extends to a bounded linear map
\begin{equation}\label{eq:F-regularity}
\mathcal{F}_l : L^p_{\delta+1}\to L^p_\delta.
\end{equation}
Then, for any $f\in L^p_{\delta+1}$,
\begin{equation}\label{eq:representation}
\K*f=\widetilde{\K}_l(f)+\mathcal{F}_l(f),
\end{equation}
where $\widetilde{\K}_l : L^p_{\delta+1}\to C^\infty$ is the integral operator with kernel $\widetilde{K}_l/\pi$,
\begin{equation}\label{eq:K-tilde}
\widetilde{\K}_l(f):=\frac{\Psi}{\pi}\sum_{k=0}^l\big(f,\bar{z}^k\big)\Psi^k.
\end{equation}

\begin{remark}\label{rem:asymptotic_operator}
In view of the definition of $\Psi$ we see that $\widetilde{\K}_l(f)\in C^\infty$ and that for $|z|\ge 2$,
\begin{equation}\label{eq:asymptotic_operator}
\widetilde{\K}_l(f)=\frac{1}{\pi}\sum_{k=1}^{l+1}\big(f,\bar{z}^{k-1}\big)\,\frac{1}{\bar{z}^k},
\end{equation}
for any $f\in L^p_{\delta+1}$. This, together with Lemma \ref{lem:break}, then implies that 
$\widetilde{\K}(f)\in\mathcal{Z}^{m,p}_{l+1}$ for any regularity exponent $m\ge 0$. In addition, 
the linear map $\widetilde{\K_l} : L^p_{\delta+1}\to\mathcal{Z}^{m,p}_{l+1}$ is bounded for any $m\ge 0$.
\end{remark}

Note that for $l+1<\delta+(2/p)< l+2$ we have $(\delta+1-l)q>2$ which implies that
\begin{equation}\label{eq:z^k}
\bar{z}^k\in L^q_{-(\delta+1)+(l-k)}\subseteq L^q_{-(\delta+1)}
\end{equation}
for any $0\le k\le l$.
In particular, we see that for any $u\in L^p_{\delta+1}$ and for any $0\le k\le l$ the pairing
$\big(f,\bar{z}^k\big)$ appearing in \eqref{eq:K-tilde} is well-defined and continuous considered as a function of 
its first argument $f\in L^p_{\delta+1}$.
Now, take $f\in L^p_{\delta+1}\cap R_l$. By \eqref{eq:K-tilde} and the fact that $f\in R_l$ we obtain that
$\widetilde{\K}(f)=0$. This, together with \eqref{eq:representation} and \eqref{eq:F-regularity}, then implies that
\[
\K*f=\mathcal{F}_l(f)\in L^p_\delta.
\]
Since $\K$ is the fundamental solution of $\partial_z$ we also have that $\partial_z\big(\K*f\big)=f\in L^p_{\delta+1}$.
Then, by Lemma \ref{lem:almost_onto} we obtain that $\K*f\in W^{1,p}_\delta$. This proves that $R_l$ is contained in
the image of $\partial_z : W^{1,p}_\delta\to L_{\delta + 1}^p$.
On the other side, it follows from \eqref{eq:z^k} that for $u\in W^{1,p}_{\delta}$ and for any $0\le k\le l$,
\begin{equation*}
\big(\partial_zu,\bar{z}^k\big)=-\big(u,\partial_z (\bar{z}^k)\big) = 0.
\end{equation*}
Hence, $R_l$ is the image of $\partial_z : W^{1,p}_\delta\to L_{\delta + 1}^p$.
The injectivity of this map follows from Lemma \ref{lem:injectivity}. This completes the proof of the proposition.
\end{proof}




The last two theorems can be generalized for an arbitrary regularity exponent $m\ge 0$.

\begin{theorem}\label{th:cauchy_m>=0} 
Assume that $m\in\Z_{\ge 0}$. Then
\begin{itemize}
\item[(i)] For $0<\delta+(2/p)<1$ the map $\partial_z : W_{\delta}^{m+1,p}\to W_{\delta +1}^{m,p}$ is an isomorphism
of Banach spaces.
\item[(ii)] For  $l+1<\delta+(2/p)<l+2$, $l\in\Z_{\ge 0}$, and any regularity exponent $m\ge 0$ the map 
$\partial_z : W_{\delta}^{m+1,p}\to W_{\delta +1}^{m,p}$ 
is injective with closed in $W^{m,p}_{\delta+1}$ image 
$R_l:=\big\{ f\in W^{m,p}_{\delta+1}\,\big|\,\big(f,\bar{z}^k\big) = 0\,\,\,\text{\rm for}\,\,\,0\le k\le l\big\}$
of (complex) co-dimension $l+1$.
\end{itemize}
\end{theorem}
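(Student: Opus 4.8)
The plan is to bootstrap from the cases $m=0$, which are exactly Proposition \ref{th:isomorphism W->L} and Proposition \ref{th:W->L}, by an induction on the regularity exponent $m$. The key observation is that $\partial_z$ essentially commutes with the first-order operators $\partial_z$ and $\partial_{\bar z}$ in the following sense: if $u\in W^{1,p}_\delta$ and $\partial_z u=f\in W^{m,p}_{\delta+1}$ with $m\ge 1$, then formally $\partial_z(\partial_z u)=\partial_z f\in W^{m-1,p}_{\delta+2}$ and $\partial_z(\partial_{\bar z}u)=\partial_{\bar z}f\in W^{m-1,p}_{\delta+2}$. So once we know that $f$ lies in the image and has one more derivative than the base case requires, the components $\partial_z u$ and $\partial_{\bar z}u$ are themselves in the image of $\partial_z$ at a lower regularity level, and by the inductive hypothesis applied to the pair $(\partial_z u,\partial_z f)$ and $(\partial_{\bar z}u,\partial_{\bar z}f)$ we gain one derivative on the first-order derivatives of $u$, which is exactly the statement that $u\in W^{m+1,p}_\delta$.

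More precisely, for part $(i)$ I would proceed as follows. Given $f\in W^{m,p}_{\delta+1}$ with $0<\delta+(2/p)<1$, set $u:=\K*f$. By Proposition \ref{th:isomorphism W->L} (applied with regularity exponent $0$) we already know $u\in W^{1,p}_\delta$ and $\partial_z u=f$. Now run the induction on $m$: assuming the result holds for $m-1$, we have $\partial_z u=f\in W^{m,p}_{\delta+1}\subseteq W^{m-1,p}_{\delta+1}$ and also $\partial_{\bar z}u=(\partial_{\bar z}\K)*f$, which lies in $W^{m-1,p}_{\delta+1}$ by the boundedness of convolution with $\partial_{\bar z}\K$ on the relevant weighted spaces (the singular-integral estimate cited from \cite{SteinBook}, \cite{Stein} in the proof of Lemma \ref{lem:almost_onto}, which holds in $W^{m-1,p}$ by differentiating under the convolution). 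Thus $u\in W^{m,p}_\delta$ already. To gain the last derivative, note $\partial_z(\partial_z u)=\partial_z f\in W^{m-1,p}_{\delta+2}$ with the weight now in the window $0<(\delta+1)+(2/p)<2$; applying the inductive statement (case $(i)$ if $\delta+1+2/p<1$, else $(ii)$ with $l=0$) to $\partial_z u$ shows $\partial_z u\in W^{m,p}_{\delta+1}$ — wait, this needs care with the weight, so instead the cleaner route is: we know $\partial_z u,\partial_{\bar z}u\in W^{m-1,p}_{\delta+1}$, and their $\partial_z$-derivatives $\partial_z f,\partial_{\bar z}f$ lie in $W^{m-1,p}_{\delta+2}$; feeding each into Lemma \ref{lem:almost_onto} at regularity $m-1$ (a version of Lemma \ref{lem:almost_onto} with regularity, proved by the same differentiation-under-convolution argument, or by the present induction) upgrades $\partial_z u$ and $\partial_{\bar z}u$ to $W^{m,p}_{\delta+1}$, hence $u\in W^{m+1,p}_\delta$. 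Surjectivity and boundedness of the inverse follow from the open mapping theorem exactly as in Proposition \ref{th:isomorphism W->L}.

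For part $(ii)$ the argument is the same, with two additions. First, injectivity is immediate from Lemma \ref{lem:injectivity} (the kernel is already trivial at regularity $1$). Second, for a given $f\in W^{m,p}_{\delta+1}\cap R_l$ one uses the representation \eqref{eq:representation}: $\K*f=\widetilde{\K}_l(f)+\mathcal{F}_l(f)$, and since $f\in R_l$ all the pairings $(f,\bar z^k)$, $0\le k\le l$, vanish so $\widetilde{\K}_l(f)=0$, leaving $\K*f=\mathcal{F}_l(f)$. The operator $\mathcal{F}_l$ was shown bounded $L^p_{\delta+1}\to L^p_\delta$ via the decomposition \eqref{eq:decomposition} reducing to the window $0<[\delta-(l+1)]+(2/p)<1$; the same decomposition shows $\mathcal{F}_l : W^{m,p}_{\delta+1}\to W^{m,p}_\delta$ is bounded, since $\mathcal{M}_{l+1}$ is an isomorphism in each $W^{m,p}$ scale and $\mathcal{F}$ is a Calderón–Zygmund-type operator bounded on $W^{m,p}$ in the good weight window (differentiating the kernel representation). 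Then $\K*f\in W^{m,p}_\delta$ with $\partial_z(\K*f)=f\in W^{m,p}_{\delta+1}$, and the bootstrap above gives $\K*f\in W^{m+1,p}_\delta$; that $R_l$ is exactly the image follows by testing against $\bar z^k$ as in \eqref{eq:z^k}, and closedness of $R_l$ in $W^{m,p}_{\delta+1}$ is clear since it is the intersection of kernels of the continuous functionals $f\mapsto(f,\bar z^k)$. The main obstacle I anticipate is the careful tracking of weights across the induction — in particular making sure that differentiating $\partial_z u=f$ and reapplying the inverse stays inside the allowed weight windows — together with setting up the regularity-$m$ versions of Lemma \ref{lem:almost_onto} and of the mapping properties of $\partial_{\bar z}\K$ and $\mathcal{F}_l$; these are routine consequences of differentiating under the convolution and the standard weighted $L^p$ singular-integral bounds already invoked, but they must be stated cleanly for the induction to close.
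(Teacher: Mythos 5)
Your proposal is correct and follows essentially the same route as the paper: induction on $m$ from the base cases (Propositions \ref{th:isomorphism W->L} and \ref{th:W->L}), using that $\partial_z$ commutes with $\partial^\alpha$ and then upgrading regularity via Lemma \ref{lem:almost_onto}. The paper's version is slightly leaner at the point where you worry about ``regularity-$m$ versions'' of the convolution bounds: since Lemma \ref{lem:almost_onto} holds for \emph{every} weight $\delta\in\R$, one simply applies it to each derivative $\partial^\alpha u\in L^p_{\delta+|\alpha|}$ with $\partial_z(\partial^\alpha u)=\partial^\alpha f\in L^p_{\delta+|\alpha|+1}$, $|\alpha|\le m$, which yields $u\in W^{m+1,p}_\delta$ directly and makes the re-derivation of the mapping properties of $\mathcal{F}_l$ on $W^{m,p}$ unnecessary.
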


\begin{remark}\label{rem:cauchy_m>=0}
Theorem \ref{th:cauchy_m>=0} also holds with $\partial_z$ replaced by $\partial_{\bar{z}}$ and $R_l$ replaced by
\[
\overline{R_l}:=\big\{ f\in W^{m,p}_{\delta+1}\,\big|\,\big(f,z^k\big) = 0\,\,\,\text{\rm for}\,\,\,0\le k\le l\big\}.
\]
\end{remark}

\begin{proof}[Proof of Theorem \ref{th:cauchy_m>=0}]
We will prove the theorem by induction in the regularity exponent $m\ge 0$.
For $m=0$ the statement follows from Proposition \ref{th:isomorphism W->L}. 
Now, take $m\ge 1$ and assume that the statement holds for $m$ replaced by $m-1\ge 0$.
Take $f\in W^{m,p}_{\delta +1}\subseteq W^{m-1,p}_{\delta+1}\subseteq L^p_{\delta+1}$. 
By the induction hypothesis, there exists $u\in W^{m,p}_\delta$ such that $\partial_z u=f$.
Then for any multi-index $\alpha\in\Z_{\ge 0}^2$ with $|\alpha|\le m$ we have that 
$\partial^\alpha f\in L^p_{\delta+|\alpha|+1}$.
Hence, $\partial^\alpha u\in L^p_{\delta+|\alpha|}$ and
\[
\partial_z\big(\partial^\alpha u\big)=\partial^\alpha\big(\partial_z u\big)=\partial_z^\alpha f\in L^p_{\delta+|\alpha|+1}.
\]
Next, we apply Lemma \ref{lem:almost_onto} to conclude that $\partial^\alpha u\in W^{1,p}_{\delta+|\alpha|}$ 
for any $|\alpha|\le m$. This implies that $u\in W^{m+1,p}_\delta$.
Hence, $\partial_z : W_{\delta}^{m+1,p}\to W_{\delta +1}^{m,p}$ is onto.
The injectivity of this map follows from Lemma \ref{lem:injectivity}.
The first statement of the theorem then follows from the open mapping theorem. 
The same arguments, together with Proposition \ref{th:W->L}, prove the second statement of theorem.
\end{proof}
 
Recall that for $m>2/p$ and $N\ge 0$ the weight $\gamma_N$ of the remainder space $W^{m,p}_{\gamma_N}$ of 
the asymptotic space $\mathcal{Z}^{m,p}_{\gamma_N}$ (cf. \eqref{eq:Z-space}) are taken of the form $\gamma_N=N+\gamma_0$
where $0<\gamma_0+(2/p)<1$. In particular, we see that
\begin{equation}\label{eq:inequality_gamma_N}
N<\gamma_N+(2/p)<N+1.
\end{equation}
Then, by Theorem \ref{th:cauchy_m>=0} above, the map 
$\partial_z : W_{\gamma_N}^{m+1,p}\to W^{m,p}_{\gamma_N+1}$ is injective with closed 
in $W^{m,p}_{\gamma_N+1}$ image
\[
\partial_z\big(W_{\gamma_N}^{m+1,p}\big)=
\big\{f\in W^{m,p}_{\gamma_N+1}\,\big|\,\big(f,\bar{z}^k\big) = 0\,\,\,\text{\rm for}\,\,\,0\le k\le N-1\big\}
\] 
where we assume that for $N=0$ the set on the right hand side is equal to $W^{m,p}_{\gamma_N+1}$, and hence,
the map is an isomorphism.
In fact, our method allows to construct a right inverse of the map 
$\partial_z : W_{\gamma_N}^{m+1,p}\to W^{m,p}_{\gamma_N+1}$
that is defined on the whole of $W^{m,p}_{\gamma_N+1}$ and takes values in the asymptotic space $\mathcal{Z}^{m+1,p}_N$. 
More specifically, we have the following important proposition.

\begin{proposition}\label{prop:asymptotic_operator}
For any $m, N\ge 0$, $m+1>2/p$, there exists a bounded map $\mathcal{L} : W^{m,p}_{\gamma_N+1}\to\mathcal{Z}^{m+1,p}_{1,N}$ 
such that for any $f\in W^{m,p}_{\gamma_N+1}$ we have that $\partial_z\big(\mathcal{L}(f)\big)= f$ and
\begin{equation*}
\mathcal{L}(f)=\frac{\chi}{\pi}\sum_{k=1}^N\big(f,\bar{z}^{k-1}\big)\,\frac{1}{\bar{z}^k}+\mathcal{R}(f)
\end{equation*}
where $\mathcal{R} : W^{m,p}_{\gamma_N+1}\to W^{m+1,p}_{\gamma_N}$ is a bounded linear map and
for $N=0$ we omit the summation term in the formula.
\end{proposition}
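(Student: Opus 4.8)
The plan is to take $\mathcal{L}$ to be convolution with the fundamental solution $\K=1/(\pi\bar z)$ of $\partial_z$ and to read off its asymptotic part from the decomposition already used in the proof of Proposition \ref{th:W->L}. Put $l:=N-1$; by \eqref{eq:inequality_gamma_N} we then have $l+1<\gamma_N+(2/p)<l+2$, so \eqref{eq:representation} and \eqref{eq:F-regularity} give
\[
\K*f=\widetilde{\K}_l(f)+\mathcal{F}_l(f),\qquad f\in L^p_{\gamma_N+1},
\]
with $\mathcal{F}_l:L^p_{\gamma_N+1}\to L^p_{\gamma_N}$ bounded and $\widetilde{\K}_l(f)=\frac{\Psi}{\pi}\sum_{k=0}^{N-1}(f,\bar z^k)\Psi^k$, which by \eqref{eq:asymptotic_operator} equals $\frac{1}{\pi}\sum_{k=1}^{N}(f,\bar z^{k-1})\bar z^{-k}$ on $\{|z|\ge 2\}$. (For $N=0$ this degenerates to $\mathcal{L}:=(\partial_z)^{-1}$ from Theorem \ref{th:cauchy_m>=0}$(i)$, so I would take $N\ge 1$.) Since $\gamma_N+(2/p)>N$ one checks $\bar z^k\in W^{1,q}_{-(\gamma_N+1)}$ for $0\le k\le N-1$, so the coefficients $f\mapsto(f,\bar z^k)$ are bounded functionals on $W^{m,p}_{\gamma_N+1}$ and
\[
c(f):=\widetilde{\K}_l(f)-\frac{\chi}{\pi}\sum_{k=1}^N(f,\bar z^{k-1})\,\frac{1}{\bar z^k}
\]
is, with these coefficients, a linear combination of finitely many fixed functions in $C^\infty_c$ supported in $\overline{B_2}$. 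I would then set $\mathcal{L}(f):=\K*f$ and $\mathcal{R}(f):=c(f)+\mathcal{F}_l(f)$, so that $\mathcal{L}(f)=\frac{\chi}{\pi}\sum_{k=1}^N(f,\bar z^{k-1})\frac{1}{\bar z^k}+\mathcal{R}(f)$ and $\partial_z\mathcal{L}(f)=f$ by the fundamental-solution property of $\K$, exactly as in the proofs of Propositions \ref{th:isomorphism W->L} and \ref{th:W->L}.

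The real work is to show that $\mathcal{R}:W^{m,p}_{\gamma_N+1}\to W^{m+1,p}_{\gamma_N}$ is bounded; since the bound for $c$ is immediate (finite rank, values in $C^\infty_c$), this comes down to improving the regularity of $\mathcal{F}_l(f)$ from $L^p_{\gamma_N}$ to $W^{m+1,p}_{\gamma_N}$. First I would note that $\partial_z\widetilde{\K}_l(f)=\frac{1}{\pi}\sum_{k=0}^{N-1}(f,\bar z^k)\,\partial_z(\Psi^{k+1})$ is supported in $\overline{B_2}$ (since $\Psi^{k+1}=\bar z^{-(k+1)}$ is $\partial_z$-closed for $|z|\ge 2$), hence $g:=\partial_z\mathcal{F}_l(f)=f-\partial_z\widetilde{\K}_l(f)$ lies in $W^{m,p}_{\gamma_N+1}$ and depends boundedly on $f$. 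Next, using $\bar z^k\in W^{1,q}_{-(\gamma_N+1)}$ and \eqref{eq:weak_derivative} applied to $u=\mathcal{F}_l(f)\in L^p_{\gamma_N}$ with $\partial_zu=g\in L^p_{\gamma_N+1}$, one gets $(g,\bar z^k)=-(\partial_z\bar z^k)(\mathcal{F}_l(f))=0$ for $0\le k\le N-1$, i.e.\ $g$ lies in the image $R_{N-1}$ of Theorem \ref{th:cauchy_m>=0}$(ii)$. That theorem then yields a unique $\tilde u\in W^{m+1,p}_{\gamma_N}$ with $\partial_z\tilde u=g$, depending boundedly on $g$ by the open mapping theorem, hence boundedly on $f$. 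Since $\partial_z(\mathcal{F}_l(f)-\tilde u)=0$ and $\mathcal{F}_l(f)-\tilde u\in L^p_{\gamma_N}$ with $\gamma_N+(2/p)>0$, Lemma \ref{lem:injectivity} forces $\mathcal{F}_l(f)=\tilde u$, which gives the required bound.

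With $\mathcal{R}$ controlled, the representation above shows $\mathcal{L}(f)\in\mathcal{Z}^{m+1,p}_{1,N}$ and
\[
\norm{\mathcal{L}(f)}_{\mathcal{Z}^{m+1,p}_{1,N}}=\frac{1}{\pi}\sum_{k=1}^N\abnorm{(f,\bar z^{k-1})}+\norm{\mathcal{R}(f)}_{W^{m+1,p}_{\gamma_N}}\le C\,\norm{f}_{W^{m,p}_{\gamma_N+1}},
\]
so $\mathcal{L}$ is the desired bounded operator. The main obstacle, and the only nonroutine point, is the middle step: verifying the orthogonality relations $(g,\bar z^k)=0$ that place $g$ in $R_{N-1}$ (so that the sharp regularity statement Theorem \ref{th:cauchy_m>=0}$(ii)$ is available) and then running the elliptic bootstrap; all the weight bookkeeping is driven by the single inequality $N<\gamma_N+(2/p)<N+1$.
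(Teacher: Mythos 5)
Your argument is correct, and it reaches the same operator by a genuinely different construction. The paper never touches the convolution $\K*f$ in this proof: it corrects $f$ on the source side, setting $\mathcal{P}(f)=f-\frac{1}{\pi}\sum_{k=1}^N(f,\bar z^{k-1})\chi_z/\bar z^k$, checks via the Stokes identity $(\chi_z,1/\bar z^k)=\pi\delta_{k1}$ that $\mathcal{P}(f)$ lies in the closed range $R_{N-1}$ of $\partial_z:W^{m+1,p}_{\gamma_N}\to W^{m,p}_{\gamma_N+1}$, and then defines $\mathcal{R}=\imath\circ\mathcal{P}$ with $\imath$ the bounded inverse on that range; since $\partial_z(\chi/\bar z^k)=\chi_z/\bar z^k$, adding back $\frac{\chi}{\pi}\sum(f,\bar z^{k-1})\bar z^{-k}$ produces $\mathcal{L}$. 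You instead work on the target side: you take $\mathcal{L}(f)=\K*f$, read the asymptotic coefficients off the kernel decomposition $\K*f=\widetilde{\K}_{N-1}(f)+\mathcal{F}_{N-1}(f)$ from Lemma~\ref{lem:kernel}/Proposition~\ref{th:W->L}, and then upgrade $\mathcal{F}_{N-1}(f)$ from $L^p_{\gamma_N}$ to $W^{m+1,p}_{\gamma_N}$ by verifying the moment conditions $(\partial_z\mathcal{F}_{N-1}(f),\bar z^k)=0$ and invoking Theorem~\ref{th:cauchy_m>=0}$(ii)$ together with Lemma~\ref{lem:injectivity} to identify $\mathcal{F}_{N-1}(f)$ with the regular solution $\tilde u$. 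Both proofs ultimately rest on the same two ingredients (the orthogonality relations against $\bar z^k$ and the open mapping theorem applied to Theorem~\ref{th:cauchy_m>=0}$(ii)$), and by injectivity of $\partial_z$ on $\mathcal{Z}^{m+1,p}_{1,N}$ the two operators coincide. The paper's version is shorter and avoids re-justifying $\partial_z(\K*f)=f$ for $f$ merely in a weighted space; yours has the advantage of exhibiting $\mathcal{L}$ explicitly as convolution with the fundamental solution, at the cost of the extra regularity bootstrap for $\mathcal{F}_{N-1}(f)$, which you carry out correctly.
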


\begin{proof}[Proof of Proposition \ref{prop:asymptotic_operator}]
The case when $N=0$ is trivial since the map $\partial_z : W_{\gamma_N}^{m+1,p}\to W^{m,p}_{\gamma_N+1}$
is an isomorphism. For $N\ge 1$ consider the bounded linear map $\mathcal{P} : W^{m,p}_{\gamma_N+1}\to W^{m,p}_{\gamma_N+1}$,
\begin{equation}\label{eq:P}
\mathcal{P}(f):=f-\frac{1}{\pi}\sum_{k=1}^N\big(f,\bar{z}^{k-1}\big)\,\frac{\chi_z}{\bar{z}^k},
\end{equation}
where $\chi_z\equiv\partial_z\chi\in C^\infty_c$. Note that for any $k\in\Z$ we have by Stokes' theorem that
\begin{equation}\label{eq:orthogonality}
\big(\chi_z,1/\bar{z}^k\big)=-\frac{1}{2i}\int_{|z|\le R}d\Big(\frac{\chi}{\bar{z}^k}\,d\bar{z}\Big)\\
=-\frac{1}{2i}\oint_{|z|=R}\frac{d\bar{z}}{\bar{z}^k}=\pi\delta_{k1}
\end{equation}
where $R>0$ is chosen sufficiently large. This, together with \eqref{eq:P}, then implies that 
$\big(\mathcal{P}(f),\bar{z}^k\big)=0$ for any $0\le k\le N-1$.
Hence, by Theorem \ref{th:cauchy_m>=0},
\[
\mathcal{P}\big(W^{m,p}_{\gamma_N+1}\big)=\partial_z\big(W_{\gamma_N}^{m+1,p}\big).
\]
Since the map $\partial_z : W_{\gamma_N}^{m+1,p}\to W^{m,p}_{\gamma_N+1}$ is injective
with closed image, we obtain from the open mapping theorem that there is a bounded linear map
$\imath : \partial_z\big(W_{\gamma_N}^{m+1,p}\big)\to W_{\gamma_N}^{m+1,p}$
such that $\partial_z\big(\imath(f)\big)=f$ for any $f\in\partial_z\big(W_{\gamma_N}^{m+1,p}\big)$.
In particular, we have that $\partial_z\big(\big(\imath\circ\mathcal{P}\big)(f)\big)=\mathcal{P}(f)$ for any 
$f\in W_{\gamma_N+1}^{m,p}$. This, together with \eqref{eq:P} then gives that
\[
\partial_z\left(\big(\imath\circ\mathcal{P}\big)(f)+\frac{\chi}{\pi}\sum_{k=1}^N\big(f,\bar{z}^{k-1}\big)\,\frac{1}{\bar{z}^k}\right)=f
\]
for any $f\in W_{\gamma_N+1}^{m,p}$. By setting $\mathcal{R}(f):=\big(\imath\circ\mathcal{P}\big)(f)$ and
\[
\mathcal{L}(f):=\frac{\chi}{\pi}\sum_{k=1}^N\big(f,\bar{z}^{k-1}\big)\,\frac{1}{\bar{z}^k}+\big(\imath\circ\mathcal{P}\big)(f),\quad
f\in W_{\gamma_N+1}^{m,p},
\]
we conclude the proof of the proposition.
\end{proof}

\medskip

Now, we are ready to study the action of the Cauchy operator $\partial_z$ on the scale of asymptotic
spaces $\mathcal{Z}_N^{m,p}$. To this end, recall that
\[
\widetilde{\mathcal{Z}}_N^{m,p}\equiv \Big\{\frac{\chi}{z^2} 
\sum_{0\le k+l\le N-2}\frac{a_{kl}}{z^k\bar{z}^l}+f\,\Big|\,
f\in W_{\gamma_N}^{m,p}\,\,\text{\rm and}\,\,a_{kl}\in\C\Big\}
\]
and 
\begin{equation}
\mathcal{Z}_{n,N}^{m,p}\equiv\Big\{\chi\!\!\!\sum_{n\le k+l\le N}\frac{a_{kl}}{z^k\bar{z}^l}+f
\,\Big|\,f \in W_{\gamma_N}^{m,p}\,\,\text{\rm and}\,\,a_{kl}\in\C\Big\}
\end{equation}
where $0\le n\le N+1$ and we set that $\mathcal{Z}_{N+1,N}^{m,p}\equiv W^{m,p}_{\gamma_N}$ and 
$\widetilde{\mathcal{Z}}_1^{m,p}\equiv W^{m,p}_{\gamma_1}$. 
Then, we have

\begin{theorem}\label{th:Z->Z-tilde}
For any $m>2/p$ and  $N\ge 0$ we have that 
$\partial_z : \mathcal{Z}^{m+1,p}_{1,N}\to\widetilde{\mathcal{Z}}^{m,p}_{N+1}$ is 
an isomorphism of Banach spaces.
\end{theorem}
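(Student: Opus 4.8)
The plan is to compute $\partial_z$ explicitly on a general element of $\mathcal{Z}^{m+1,p}_{1,N}$, read off that the image lands inside $\widetilde{\mathcal{Z}}^{m,p}_{N+1}$, and then establish bijectivity: injectivity from the weighted-space uniqueness Lemma~\ref{lem:injectivity}, and surjectivity by reducing to the right inverse constructed in Proposition~\ref{prop:asymptotic_operator}. The isomorphism statement will then follow from the open mapping theorem.

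First I would take $u=\chi\sum_{1\le k+l\le N}\frac{a_{kl}}{z^k\bar{z}^l}+f$ with $f\in W^{m+1,p}_{\gamma_N}$ and, using that $\gamma_{N+1}=\gamma_N+1$, compute
\[
\partial_z u=\chi_z\sum_{1\le k+l\le N}\frac{a_{kl}}{z^k\bar{z}^l}
-\chi\sum_{\substack{k\ge 1\\ k+l\le N}}\frac{k\,a_{kl}}{z^{k+1}\bar{z}^l}+\partial_z f.
\]
Here $\chi_z\in C^\infty_c$ is supported away from the origin, so the first sum is smooth with compact support, hence in $W^{m,p}_{\gamma_{N+1}}$; the term $\partial_z f$ lies in $W^{m,p}_{\gamma_N+1}$ by Proposition~\ref{prop:properties_W-spaces}(iii); and the middle sum ranges exactly over the monomials $z^{-j}\bar{z}^{-l}$ with $j\ge 2$ and $j+l\le N+1$ — the factor $k$ removes what would have been the $k=0$ contributions — which are precisely the asymptotic generators of $\widetilde{\mathcal{Z}}^{m,p}_{N+1}$. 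Thus $\partial_z u\in\widetilde{\mathcal{Z}}^{m,p}_{N+1}$, and boundedness of the map follows from the same computation together with Proposition~\ref{prop:properties_W-spaces}(ii),(iii). For injectivity I would note that the asymptotic part of $u$ is $O(1/|z|)$, hence lies in $L^p_{\gamma_0}$ because $\gamma_0+(2/p)<1$, while $f\in W^{m+1,p}_{\gamma_N}\hookrightarrow L^p_{\gamma_0}$ by Proposition~\ref{prop:properties_W-spaces}(i); so $u\in L^p_{\gamma_0}$ with $\gamma_0+(2/p)>0$, and $\partial_z u=0$ forces $u=0$ by Lemma~\ref{lem:injectivity}.

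For surjectivity, given $g=\chi\sum_{j\ge 2,\;j+l\le N+1}\frac{b_{jl}}{z^j\bar{z}^l}+h$ with $h\in W^{m,p}_{\gamma_N+1}$, I would first subtract off the asymptotic part: put $u_1:=-\chi\sum_{k\ge 1,\;k+l\le N}\frac{b_{k+1,l}}{k\,z^k\bar{z}^l}\in\mathcal{Z}^{m+1,p}_{1,N}$, which by the displayed identity satisfies $\partial_z u_1=g_{\mathrm{asy}}+e$, where $g_{\mathrm{asy}}$ is the asymptotic part of $g$ and $e\in C^\infty_c\subseteq W^{m,p}_{\gamma_N+1}$. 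Then $g-\partial_z u_1=h-e\in W^{m,p}_{\gamma_N+1}$, so Proposition~\ref{prop:asymptotic_operator} yields $u_2:=\mathcal{L}(h-e)\in\mathcal{Z}^{m+1,p}_{1,N}$ with $\partial_z u_2=h-e$; hence $u:=u_1+u_2\in\mathcal{Z}^{m+1,p}_{1,N}$ and $\partial_z u=g$. (For the degenerate case $N=0$ both sides are just weighted Sobolev spaces and the statement is Theorem~\ref{th:cauchy_m>=0}(i).)

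The hard part is the surjectivity step: on the plain weighted Sobolev scale $\partial_z$ is only injective Fredholm, its cokernel being detected by the pairings $(\cdot,\bar{z}^k)$, and the whole point of Proposition~\ref{prop:asymptotic_operator} is that this defect is absorbed exactly by the $\chi/\bar{z}^k$ asymptotic terms that $\mathcal{Z}^{m+1,p}_{1,N}$ is permitted to carry. Everything else is index bookkeeping, driven by the structural fact that $\partial_z(z^{-k}\bar{z}^{-l})=-k\,z^{-k-1}\bar{z}^{-l}$ simultaneously raises the $z$-order by one and annihilates the pure $\bar{z}$-power terms — which is precisely what converts ``asymptotic order $\ge 1$'' into ``asymptotic order $\ge 2$ in $z$'', i.e.\ into $\widetilde{\mathcal{Z}}^{m,p}_{N+1}$.
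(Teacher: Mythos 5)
Your proposal is correct and follows essentially the same route as the paper: the explicit identity $\partial_z(\chi z^{-k}\bar z^{-l})=-k\,\chi z^{-(k+1)}\bar z^{-l}+(\text{compactly supported error})$ to see that the image lands in $\widetilde{\mathcal{Z}}^{m,p}_{N+1}$, injectivity from Lemma \ref{lem:injectivity} via the embedding into $L^p_{\gamma_0}$, surjectivity by inverting the asymptotic part term by term and applying Proposition \ref{prop:asymptotic_operator} (the map $\mathcal{L}$) to the remaining $W^{m,p}_{\gamma_N+1}$ piece, and finally the open mapping theorem. No gaps.
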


\begin{remark}\label{rem:Z->Z-tilde}
Similar statement holds for the Cauchy operator $\partial_{\bar{z}}$ if we replace $\widetilde{\mathcal{Z}}^{m,p}_{N+1}$
by the space 
\[
\overline{\Big(\widetilde{\mathcal{Z}}^{m,p}_{N+1}\Big)}=
\Big\{\frac{\chi}{\bar{z}^2}\sum_{0\le k+l\le N-1}\frac{a_{kl}}{z^k\bar{z}^l}+f\,\Big|\,
f\in W_{\gamma_N+1}^{m,p}\,\,\text{\rm and}\,\,a_{kl}\in\C\Big\}.
\]
Moreover, one easily sees from Theorem \ref{th:Z->Z-tilde} and the equality $\partial_{\bar{z}}=\tau\circ\partial_z\circ\tau$
where $\tau : u\mapsto\bar{u}$ is the operation of taking the complex conjugate of a function that 
$\partial_{\bar{z}}^{-1}=\tau\circ\partial_z^{-1}\circ\tau$, and similarly 
$\partial_z^{-1}=\tau\circ\partial_{\bar{z}}^{-1}\circ\tau$. Finally, note that $\partial_z^{-1}$ and $\partial_{\bar{z}}^{-1}$ 
commute, since the Cauchy operators $\partial_z$ and $\partial_{\bar{z}}$ commute. 
\end{remark}

\begin{proof}[Proof of Theorem \ref{th:Z->Z-tilde}]
Let us first show that the map is well defined. For any $k\ge 1$ and $l\ge 0$ we have
\begin{equation}\label{eq:derivative1}
\partial_z\Big(\frac{\chi}{z^k\bar{z}^l}\Big)=-k\,\frac{\chi}{z^2}\,\frac{1}{z^{k-1}\bar{z}^l}+g
\end{equation}
where $g:=\chi_z/z^k\bar{z}^l\in C^\infty_c\subseteq W^{m,p}_{\gamma_N+1}$. 
Similarly, $\partial_z\big(\chi/\bar{z}^l\big)=\chi_z/\bar{z}^l\in C^\infty_c\subseteq W^{m,p}_{\gamma_N+1}$.
This, together with the fact that $\partial_z : W^{m+1,p}_{\gamma_N}\to W^{m,p}_{\gamma_N+1}$, shows
that $\partial_z\big(\mathcal{Z}^{m+1,p}_{1,N}\big)\subseteq\widetilde{\mathcal{Z}}^{m,p}_{N+1}$. Hence, the map 
\begin{equation}\label{eq:Z->Z-tilde}
\partial_z : \mathcal{Z}^{m+1,p}_{1,N}\to\widetilde{\mathcal{Z}}^{m,p}_{N+1}
\end{equation}
is well defined. Further, note that $\mathcal{Z}^{m+1,p}_{1,N}\subseteq W^{1,p}_{\gamma_0}$ where $\gamma_0+(2/p)>0$.
Hence, the map \eqref{eq:Z->Z-tilde} is injective by Lemma \ref{lem:injectivity}. Let us now prove that the map is onto.
Take $u\in\widetilde{\mathcal{Z}}^{m,p}_{N+1}$,
\[
u=\frac{\chi}{z^2}\sum_{0\le k+l\le N-1}\frac{a_{kl}}{z^k\bar{z}^l}+f,\quad f\in W_{\gamma_N+1}^{m,p}.
\]
Then, we obtain from \eqref{eq:derivative1} that
\begin{equation*}
u=\partial_z\left(\sum_{0\le k+l\le N-1}\Big(-\frac{a_{kl}}{k+1}\Big)\frac{\chi}{z^{k+1}\bar{z}^l}\right)+
\big(f-\tilde{g}\big)=\partial_z\left(\sum_{0\le k+l\le N-1}\Big(-\frac{a_{kl}}{k+1}\Big)\frac{\chi}{z^{k+1}\bar{z}^l}+
\mathcal{L}\big(f-\tilde{g}\big)\right)
\end{equation*}
where $\tilde{g}\in C^\infty_c\subseteq W^{m,p}_{\gamma_N+1}$ and
$\mathcal{L} : W^{m,p}_{\gamma_N+1}\to\mathcal{Z}^{m+1,p}_{1,N}$ is the map in Proposition \ref{prop:asymptotic_operator}. 
Note that $\sum_{0\le k+l\le N-1}\Big(-\frac{a_{kl}}{k+1}\Big)\frac{\chi}{z^{k+1}\bar{z}^l}\in\mathcal{Z}^{m+1,p}_{1,N}$.
This shows that \eqref{eq:Z->Z-tilde} is a bijective map onto its image. The statement of the theorem then follows from the open mapping theorem.
\end{proof}

\medskip

The following result can be considered as a (global in time) existence and uniqueness theorem for spatial asymptotic (ODE) solutions of
an asymptotic vector field of class $\mathcal{Z}^{m,p}_N$ on $\R^2$. In particular, the result implies that the group 
$\mathcal{ZD}^{m,p}_N$ has a well defined Lie group exponential map 
$\mathop{\rm Exp}_{LG} : \mathcal{ZD}^{m,p}_N\to\mathcal{ZD}^{m,p}_N$.

\begin{lemma}\label{lem:ode}
Let $u\in C\left( [0,T],\mathcal{Z}^{m,p}_N \right)$ for some $T>0$ and $m>2+(2/p)$. 
Then there exists a unique solution $\varphi\in C^1\left([0,T],\mathcal{ZD}^{m,p}_N\right)$ of the equation
\begin{equation}\label{equtflow}
\dt{\varphi}=u\circ\varphi,\quad\varphi\big|_{t =0}=\idmap. 
\end{equation}
\end{lemma}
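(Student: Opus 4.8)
The plan is to follow the classical two--step scheme: first construct the solution one derivative lower, in $\mathcal{ZD}^{m-1,p}_N$, by a genuine Banach--space ODE argument, and then bootstrap it up to $\mathcal{ZD}^{m,p}_N$. Writing $\varphi=\idmap+w$ with $w$ in the chart $\mathcal{O}\subseteq\mathcal{Z}^{m-1,p}_N$ of $\mathcal{ZD}^{m-1,p}_N$, equation \eqref{equtflow} becomes $\dt w=F(t,w)$ with $F(t,w):=u(t)\circ(\idmap+w)$. Since $m-1>1+(2/p)$ and $u(t)\in\mathcal{Z}^{m,p}_N=\mathcal{Z}^{(m-1)+1,p}_N$, Remark \ref{rem:composition} applied with $m$ replaced by $m-1$ shows that $F$ is continuous on $[0,T]\times\mathcal{O}$ and $C^1$ (hence locally Lipschitz, uniformly for $t$ in compacts) in $w$ as a map into $\mathcal{Z}^{m-1,p}_N$; the Picard--Lindel\"of theorem then gives a unique maximal solution $\varphi\in C^1\big([0,\tau),\mathcal{ZD}^{m-1,p}_N\big)$ for some $\tau\in(0,T]$. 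Uniqueness in $C^1\big([0,T],\mathcal{ZD}^{m,p}_N\big)$ follows at once from the bounded inclusion $\mathcal{Z}^{m,p}_N\hookrightarrow\mathcal{Z}^{m-1,p}_N$ (Proposition \ref{prop:properties_Z-spaces}(i)). To see that $\tau=T$, I would use Liouville's formula $\det d\varphi(t,x)=\exp\int_0^t(\divop u)\big(s,\varphi(s,x)\big)\,ds$ together with $\divop u\in C\big([0,T],\mathcal{Z}^{m-1,p}_N\big)\hookrightarrow C\big([0,T],L^\infty\big)$ to keep $\det(I+dw(t))$ uniformly bounded away from $0$ and $\infty$, so that $\varphi(t)$ stays inside $\mathcal{O}$; and a Gronwall estimate on the variational equations $\partial_t(\partial^\alpha\varphi)=(du\circ\varphi)\,\partial^\alpha\varphi+(\text{lower order in }\partial^\beta\varphi,\ |\beta|<|\alpha|)$, $|\alpha|\le m-1$, with $\|du\circ\varphi\|_{L^\infty}=\|du\|_{L^\infty}\le C\sup_{[0,T]}\|u\|_{\mathcal{Z}^{m,p}_N}$, to bound $\|w(t)\|_{\mathcal{Z}^{m-1,p}_N}$ on all of $[0,T]$.

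The remaining task --- and the main obstacle --- is to upgrade $\varphi$ to a $C^1$ curve in $\mathcal{ZD}^{m,p}_N$, i.e.\ to propagate the top--order ($m$-th) spatial regularity of the flow, the ``no loss of derivatives'' phenomenon; note that Lemma \ref{lem:composition} only yields $u(t)\circ\varphi(t)\in\mathcal{Z}^{m-1,p}_N$ when $\varphi(t)\in\mathcal{ZD}^{m-1,p}_N$, so the integral equation alone does not raise the regularity. I would argue by approximation. Choose (by spatial mollification) a sequence $u_k\to u$ in $C\big([0,T],\mathcal{Z}^{m,p}_N\big)$ with $u_k\in C\big([0,T],\mathcal{Z}^{m+1,p}_N\big)$ and $\sup_k\sup_{[0,T]}\|u_k\|_{\mathcal{Z}^{m,p}_N}<\infty$. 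For each $k$, Remark \ref{rem:composition} (now with $m+1$ in the first slot) makes $F_k(t,w)=u_k(t)\circ(\idmap+w)$ a $C^1$ map into $\mathcal{Z}^{m,p}_N$ on the chart of $\mathcal{ZD}^{m,p}_N$, so the previous paragraph carried out at level $m$ produces $\varphi_k\in C^1\big([0,T],\mathcal{ZD}^{m,p}_N\big)$; moreover a tame composition estimate makes the $\mathcal{Z}^{m,p}_N$--Gronwall constant depend only on $T$ and $\sup_k\sup_{[0,T]}\|u_k\|_{\mathcal{Z}^{m,p}_N}$, so $\sup_k\sup_{[0,T]}\|\varphi_k-\idmap\|_{\mathcal{Z}^{m,p}_N}<\infty$. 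By uniqueness and continuous dependence at level $m-1$ we get $\varphi_k\to\varphi$ in $C\big([0,T],\mathcal{ZD}^{m-1,p}_N\big)$. Since $\mathcal{Z}^{m,p}_N$ is reflexive (its remainder space $W^{m,p}_{\gamma_N}$ is reflexive for $1<p<\infty$ and the asymptotic part is finite dimensional), closed balls of $\mathcal{Z}^{m,p}_N$ are weakly compact and weakly closed in $\mathcal{Z}^{m-1,p}_N$, hence $\varphi(t)-\idmap\in\mathcal{Z}^{m,p}_N$ with the same uniform bound, i.e.\ $\varphi(t)\in\mathcal{ZD}^{m,p}_N$ for every $t\in[0,T]$.

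Finally, with $\varphi(t)\in\mathcal{ZD}^{m,p}_N$ in hand, Lemma \ref{lem:composition} gives $\dt\varphi(t)=u(t)\circ\varphi(t)\in\mathcal{Z}^{m,p}_N$, and strong continuity $t\mapsto\dt\varphi(t)\in\mathcal{Z}^{m,p}_N$ --- hence $\varphi\in C^1\big([0,T],\mathcal{ZD}^{m,p}_N\big)$ --- would be obtained from a Bona--Smith--type estimate showing $\sup_{[0,T]}\|\varphi_k(t)-\varphi(t)\|_{\mathcal{Z}^{m,p}_N}\to0$, or, equivalently, by separating off the asymptotic coefficients (which already evolve continuously as a polynomial flow on the finite--dimensional group $\hA$, cf.\ \eqref{eq:hrho_*-representation}) and checking that the $W^{m,p}_{\gamma_N}$--remainder equation $\dt f=g$ has right--hand side $g\in C\big([0,T],W^{m,p}_{\gamma_N}\big)$. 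An alternative to the approximation step is to propagate the top--order derivatives directly: for $|\alpha|=m$ the function $\partial^\alpha\varphi$ solves a linear inhomogeneous ODE $\partial_t(\partial^\alpha\varphi)=(du\circ\varphi)\,\partial^\alpha\varphi+\Phi_\alpha$ in the appropriate weighted $L^p$--space, with forcing $\Phi_\alpha$ a polynomial in $\{\partial^\beta(u\circ\varphi):|\beta|\le m\}$ (controlled via $u\in\mathcal{Z}^{m,p}_N$ and the composition/change--of--variables estimates) and in the already controlled lower--order derivatives of $\varphi$; solving this linear equation and matching with $\partial^\alpha\varphi$ by uniqueness supplies the missing regularity. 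Throughout, the delicate point is precisely this propagation of $m$-th order spatial regularity of the flow, which is exactly what the composition estimates of Section \ref{sec:spaces} are built to support.
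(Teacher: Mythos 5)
Your first step (solving the ODE one derivative lower, in $\mathcal{ZD}^{m-1,p}_N$) matches the paper, but the paper globalizes the local solution differently and more cheaply: instead of Liouville's formula plus a Gronwall estimate on variational equations, it observes that the right translation $R_{\psi_0}$ is affine linear, so by uniqueness the solution through any $\psi_0$ is $\varphi\circ\psi_0$ and the local existence time $\varepsilon_0$ is \emph{uniform} over initial data; the solution therefore never blows up on $[0,T]$. Your Gronwall route is plausible but rests on ``tame composition estimates'' that are asserted rather than available in the paper's toolbox.

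The real issue is the regularity upgrade to $\mathcal{ZD}^{m,p}_N$, which you correctly identify as the crux but do not close. Your main route (mollify $u$, get uniform $\mathcal{Z}^{m,p}_N$-bounds on $\varphi_k$, pass to weak limits, then a Bona--Smith argument for strong continuity) leaves precisely the hard step --- strong continuity of $t\mapsto\varphi(t)$ in $\mathcal{Z}^{m,p}_N$ --- as an unproved ``would be obtained''; moreover, naive spatial mollification does not land in $W^{m+1,p}_{\gamma_N}$ (the $(m{+}1)$-st derivative of $f*\rho_\epsilon$ only inherits the weight $\gamma_N+m$, not $\gamma_N+m+1$), so even the approximating sequence needs a different construction (e.g.\ density of $C^\infty_c$ in the remainder space). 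The paper's argument is your ``alternative'' done right, and it is worth internalizing: differentiate the flow equation once to get $(d\varphi)^{\bolddot}=\big(du\circ\varphi\big)\,d\varphi$, and note that $A(t):=d(u(t))\circ\varphi(t)$ is continuous into the \emph{Banach algebra} $\mathcal{Z}^{m-1,p}_{N+1}$ because $du\in\mathcal{Z}^{m-1,p}_{1,N+1}$ needs only $\varphi\in\mathcal{ZD}^{m-1,p}_N$ for the composition (Lemma \ref{lem:composition}). The linear equation $\dt{Y}=A(t)Y$, $Y|_{t=0}=I$, is then globally solvable in that algebra, so $d\varphi\in C\big([0,T],\mathcal{Z}^{m-1,p}_{N+1}\big)$ with no forcing terms and no induction over $|\alpha|=m$; finally $w=\varphi-\idmap$ is recovered at the top regularity from $w_z\in C\big([0,T],\widetilde{\mathcal{Z}}^{m-1,p}_{N+1}\big)$ via the isomorphism $\partial_z:\mathcal{Z}^{m,p}_{1,N}\to\widetilde{\mathcal{Z}}^{m-1,p}_{N+1}$ of Theorem \ref{th:Z->Z-tilde}, and the $C^1$-in-time statement follows from the continuity of composition in Remark \ref{rem:composition}. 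This one linear ODE in a higher-regularity Banach algebra replaces your entire approximation/compactness/Bona--Smith machinery.
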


\begin{proof}[Proof of Lemma \ref{lem:ode}]
For any $t\in[0,T]$ and $\varphi\in\mathcal{ZD}^{m-1,p}_N$ denote $F(t,\varphi): = u(t)\circ\varphi$.
By Theorem \ref{th:ZD},  the map $F : [0,T]\times\mathcal{ZD}^{m-1,p}_N\to\mathcal{Z}^{m-1,p}_N$ and 
its partial derivative with respect to the second argument,
\begin{equation*}
D_2F: [0,T]\times \mathcal{ZD}^{m-1,p}_N\to\mathcal{L}(\mathcal{Z}^{m-1,p}_N,\mathcal{Z}^{m-1,p}_N) ,
\end{equation*}
are continuous. In particular, $F$ is locally Lipschitz and, by the existence and uniqueness theorems for solutions of 
ordinary differential equations (ODEs) in Banach spaces (e.g. see \cite{LangDiffGeo}), we obtain that for any $t_0\in[0,T]$
there exists $\varepsilon_0 >0$ such that there is a unique solution  
$\varphi\in C^1\big([t_0-\varepsilon_0,t_0+\varepsilon_0],\mathcal{ZD}^{m-1,p}_N\big)$ 
of \eqref{equtflow}. Take an arbitrary $\psi_0\in\mathcal{ZD}^{m-1,p}_N$. Since the right translation 
$R_{\psi_0} : \mathcal{ZD}^{m-1,p}_N\to\mathcal{ZD}^{m-1,p}_N$, $R_{\psi_0}(\varphi):=\varphi\circ\psi_0$,
is a $C^\infty$-smooth map\footnote{This follows from Proposition \ref{prop:composition} and the fact that the right translation
is an affine linear map.}, we obtain from the uniqueness theorem for solutions of ODEs in Banach spaces that the curve 
$\psi:=\varphi\circ\psi_0\in C^1\big([t_0-\varepsilon_0,t_0+\varepsilon_0],\mathcal{ZD}^{m-1,p}_N\big)$ 
is the unique solution of
\begin{equation*}
\dt{\psi}=u\circ\psi,\quad\psi\big |_{t=t_0}=\psi_0,
\end{equation*}
on the interval $[t_0-\varepsilon_0,t_0+\varepsilon_0]$.
This implies that equation \eqref{equtflow} has a unique solution
\begin{equation}\label{eq:varphi}
\varphi\in C^1\big([0,T],\mathcal{ZD}^{m-1,p}_N \big).
\end{equation}
It remains to show that $\varphi(t)\in\mathcal{ZD}^{m,p}_N$. By applying the pointwise derivative to equation 
(\ref{equtflow}) we obtain that
\begin{equation*}
(d\varphi)^\bolddot=du\circ\varphi\cdot d\varphi.
\end{equation*}
Consider the linear system $\dt{Y}=A(t)\,Y$, $Y\big|_{t=0}=I$, where $A(t):=d(u(t))\circ\varphi$. 
Since $A : [0,T]\to\mathcal{Z}^{m-1,p}_{N+1}$ is continuous, this linear system has a unique solution 
$Y\in C\big([0,T],\mathcal{Z}^{m-1,p}_{N+1})$. This implies that $d\varphi\in C\big([0,T],\mathcal{Z}^{m-1,p}_{N+1})$.
Since
\[
d\varphi=
\begin{pmatrix}
1+w_z&w_{\bar{z}}\\
\overline{w}_z&1+\overline{w}_{\bar{z}}
\end{pmatrix}
\]
where $\varphi=z+w$ and $w\in\mathcal{Z}^{m-1,p}_N$, we conclude that
$w_z\in C\big([0,T],\mathcal{Z}^{m-1,p}_{N+1}\big)$, and hence
\begin{equation}\label{eq:w_z}
\partial_zw\equiv w_z\in C\big([0,T],\widetilde{\mathcal{Z}}^{m-1,p}_{N+1}\big).
\end{equation}
Finally, by using that by Theorem \ref{th:Z->Z-tilde} the map 
$\partial_z : \mathcal{Z}^{m,p}_{1,N}\to\widetilde{\mathcal{Z}}^{m-1,p}_{N+1}$
is a linear isomorphism of Banach spaces, we obtain from \eqref{eq:varphi} and \eqref{eq:w_z}
that $w\in C\big([0,T],\mathcal{Z}^{m,p}_N\big)$. Hence, $\varphi\in C\big([0,T],\mathcal{ZD}^{m,p}_N\big)$.
This completes the proof of the Lemma.
\end{proof}

\medskip

The following two propositions are used in the proof of Theorem \ref{th:main_introduction} that was stated
in the Introduction.

\begin{lemma}\label{lem:cauchy-conjugate}
For $m>3+(2/p)$ the map 
\begin{equation}\label{eq:F}
F : (\varphi,v)\mapsto\big(R_\varphi\circ\partial_z\circ R_{\varphi^{-1}}\big)(v),\quad
\mathcal{ZD}^{m,p}_N\times\mathcal{Z}^{m,p}_N\to\widetilde{\mathcal{Z}}^{m-1,p}_{N+1},
\end{equation}
is real analytic (see Remark \ref{rem:real_analyticity}).
\end{lemma}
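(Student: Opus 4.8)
The plan is to exploit the fact that, although $F$ is written through the inversion $\varphi\mapsto\varphi^{-1}$ --- an operation that is only $C^1$ and loses one derivative (Theorem~\ref{th:ZD}) --- the map $F$ admits a closed algebraic expression in which no composition with $\varphi^{-1}$ survives. Concretely, I claim that, writing $\varphi=\idmap+u$ with $u\in\mathcal{Z}^{m,p}_N$,
\[
F(\varphi,v)=\frac{(1+\overline{u_z})\,v_z-\overline{u_{\bar{z}}}\,v_{\bar{z}}}{|1+u_z|^2-|u_{\bar{z}}|^2}.
\]
To obtain this identity, set $w:=v\circ\varphi^{-1}$; by Proposition~\ref{prop:inverse}, $\varphi^{-1}\in\mathcal{ZD}^{m,p}_N$, hence $w\in\mathcal{Z}^{m,p}_N$ by Lemma~\ref{lem:composition}, and $w\circ\varphi=v$. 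Since $m>3+(2/p)$, all functions involved are at least $C^1$, so the pointwise complex chain rule applied to $v=w\circ\varphi$ yields $v_z=(w_z\circ\varphi)(1+u_z)+(w_{\bar{z}}\circ\varphi)\,\overline{u_{\bar{z}}}$ and $v_{\bar{z}}=(w_z\circ\varphi)\,u_{\bar{z}}+(w_{\bar{z}}\circ\varphi)(1+\overline{u_z})$. The determinant of this $2\times2$ linear system in the unknowns $w_z\circ\varphi$, $w_{\bar{z}}\circ\varphi$ equals $\det(d\varphi)=|1+u_z|^2-|u_{\bar{z}}|^2$, which is nowhere zero and bounded away from zero because $\varphi$ is an orientation preserving $C^1$-diffeomorphism (cf.~\eqref{eq:open_condition}). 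Solving for $w_z\circ\varphi$ and using $F(\varphi,v)=(\partial_z w)\circ\varphi=w_z\circ\varphi$ gives the displayed formula.

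Next I would read off real analyticity from the Banach-algebra calculus, invoking the standard closure properties of real analytic maps between Banach spaces (in the sense of Remark~\ref{rem:real_analyticity}). The assignment $\varphi\mapsto u=\varphi-\idmap$ is affine; the Cauchy operators $\partial_z,\partial_{\bar{z}}:\mathcal{Z}^{m,p}_N\to\mathcal{Z}^{m-1,p}_{1,N+1}\hookrightarrow\mathcal{Z}^{m-1,p}_{N+1}$ are bounded linear by Proposition~\ref{prop:properties_Z-spaces}; complex conjugation is bounded $\R$-linear on $\mathcal{Z}^{m-1,p}_{N+1}$; and pointwise multiplication is bounded bilinear on the Banach algebra $\mathcal{Z}^{m-1,p}_{N+1}$. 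Hence $(\varphi,v)\mapsto(1+\overline{u_z})v_z-\overline{u_{\bar{z}}}v_{\bar{z}}$ and $\varphi\mapsto\det(d\varphi)=1+\big(u_z+\overline{u_z}+|u_z|^2-|u_{\bar{z}}|^2\big)$ are real analytic with values in $\mathcal{Z}^{m-1,p}_{N+1}$. Since $\det(d\varphi)$ has leading asymptotic coefficient $1$, is nowhere zero and bounded below, Lemma~\ref{lem:division} applies at every $\varphi_0\in\mathcal{ZD}^{m,p}_N$: the map $h\mapsto1/(\det(d\varphi_0)+h)$ is analytic near $h=0$ in $\mathcal{Z}^{m-1,p}_{N+1}$, and precomposing it with the real analytic map $\varphi\mapsto\det(d\varphi)-\det(d\varphi_0)$, which vanishes at $\varphi_0$, shows that $\varphi\mapsto1/\det(d\varphi)$ is real analytic $\mathcal{ZD}^{m,p}_N\to\mathcal{Z}^{m-1,p}_{N+1}$. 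Multiplying the two real analytic factors makes $F:\mathcal{ZD}^{m,p}_N\times\mathcal{Z}^{m,p}_N\to\mathcal{Z}^{m-1,p}_{N+1}$ real analytic.

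Finally I would upgrade the target from $\mathcal{Z}^{m-1,p}_{N+1}$ to $\widetilde{\mathcal{Z}}^{m-1,p}_{N+1}$, which is a \emph{closed ideal} of $\mathcal{Z}^{m-1,p}_{N+1}$. Differentiating $\chi/(z^k\bar{z}^l)$ in $z$ produces only asymptotic terms of the form $c/(z^{k'}\bar{z}^{l'})$ with $k'\ge 2$ together with compactly supported functions, so $v_z=\partial_z v$ lies in $\widetilde{\mathcal{Z}}^{m-1,p}_{N+1}$ (cf.~Theorem~\ref{th:Z->Z-tilde}); symmetrically, $\partial_{\bar{z}}u$ lies in the conjugate ideal $\overline{(\widetilde{\mathcal{Z}}^{m-1,p}_{N+1})}$ (Remark~\ref{rem:Z->Z-tilde}), so $\overline{u_{\bar{z}}}=\overline{\partial_{\bar{z}}u}\in\widetilde{\mathcal{Z}}^{m-1,p}_{N+1}$. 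As the remaining factors $1+\overline{u_z}$ and $v_{\bar{z}}$ lie in $\mathcal{Z}^{m-1,p}_{N+1}$, the numerator of the formula for $F$ lies in the ideal $\widetilde{\mathcal{Z}}^{m-1,p}_{N+1}$, and so does the quotient after multiplication by $1/\det(d\varphi)\in\mathcal{Z}^{m-1,p}_{N+1}$. Thus $F$ takes all its values in the closed subspace $\widetilde{\mathcal{Z}}^{m-1,p}_{N+1}$, and a map that is real analytic into a Banach space and whose values lie in a closed subspace is real analytic into that subspace --- its homogeneous Taylor polynomials, being limits of difference quotients of $F$, lie in the subspace --- which is the assertion. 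I expect the one genuinely delicate point to be the derivation of the algebraic identity for $F$: one must make sure that $v\circ\varphi^{-1}$ really belongs to the algebra $\mathcal{Z}^{m,p}_N$, so that the chain rule and the inversion of the linear system are legitimate, and that the apparent loss of regularity coming from $\varphi\mapsto\varphi^{-1}$ is entirely cancelled; thereafter everything reduces to the Banach algebra and closed ideal calculus already developed.
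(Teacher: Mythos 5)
Your proposal is correct and follows essentially the same route as the paper's proof: both derive the closed algebraic identity $F(\varphi,v)=\big(v_z\overline{\varphi}_{\bar z}-v_{\bar z}\overline{\varphi}_z\big)/\det(d\varphi)$ by inverting the Jacobian relation coming from the chain rule, and then conclude real analyticity from the Banach algebra structure of $\mathcal{Z}^{m-1,p}_{N+1}$ together with Lemma \ref{lem:division} applied to $1/\det(d\varphi)$. The only (harmless) difference is that you verify membership of the values in $\widetilde{\mathcal{Z}}^{m-1,p}_{N+1}$ directly from the formula via the closed-ideal property, whereas the paper cites Theorem \ref{th:Z->Z-tilde} and Lemma \ref{lem:composition} for this point.
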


\begin{remark}\label{rem:real_analyticity}
Note that the asymptotic space $\mathcal{Z}^{m,p}_N$ is a complex Banach space
and $\mathcal{ZD}^{m,p}_N$ is a complex manifold modeled on $\mathcal{Z}^{m,p}_N$.
The map \eqref{eq:F} is {\em not} real analytic with respect to the complex structure on 
$\mathcal{Z}^{m,p}_N$ and $\mathcal{ZD}^{m,p}_N$ but it becomes a real analytic map 
if we ignore the complex structure on $\mathcal{Z}^{m,p}_N$ and $\mathcal{ZD}^{m,p}_N$
and consider them as real Banach manifolds. In what follows, real analyticity is understood in this sense.
\end{remark}

\begin{proof}[Proof of Lemma \ref{lem:cauchy-conjugate}]
Take $\varphi\in\mathcal{ZD}^{m,p}_N$ such that $\varphi=z+u$ with $u\in\mathcal{Z}^{m,p}_N$ and let $v\in\mathcal{Z}^{m,p}_N$. 
The fact that 
\begin{equation}\label{eq:well-defined}
\big(R_\varphi\circ\partial_z\circ R_{\varphi^{-1}}\big)(v)\in\widetilde{\mathcal{Z}}^{m-1,p}_{N+1}
\end{equation}
follows from Theorem \ref{th:ZD}, Theorem \ref{th:Z->Z-tilde} and the second statement of Lemma \ref{lem:composition}.
Then, $\varphi$ and $v$ are $C^1$-smooth maps and by the chain rule 
$\partial_z\big(R_{\varphi^{-1}}(v)\big)=v_z\circ\psi\cdot\psi_z+v_{\bar{z}}\circ\psi\cdot \overline{\psi}_z$
where we set $\psi:=\varphi^{-1}$. This implies that
\begin{equation*}
R_{\varphi}\Big(\partial_z\big(R_{\varphi^{-1}}(v)\big)\Big)=v_z\cdot\psi_z\circ\varphi+v_{\bar{z}}\cdot\overline{\psi}_z\circ\varphi.
\end{equation*}
By taking the derivative in the equality $\psi\circ\varphi=\idmap$, we obtain that
\[
\begin{pmatrix}
\varphi_z&\overline{\varphi}_z\\
\varphi_{\bar{z}}&\overline{\varphi}_{\bar{z}}
\end{pmatrix}
\begin{pmatrix}
\psi_z\circ\varphi\\
\psi_{\bar{z}}\circ\varphi\\
\end{pmatrix}
=
\begin{pmatrix}
1\\
0
\end{pmatrix}.
\]
This implies that $\psi_z\circ\varphi=\overline{\varphi}_{\bar{z}}/{\rm D}$ and
$\overline{\psi}_z\circ\varphi=-\overline{\varphi}_z/{\rm D}$
where 
\begin{equation}\label{eq:D}
{\rm D}\equiv{\rm D}(\varphi):=\varphi_z\bar{\varphi}_{\bar{z}}-\varphi_{\bar{z}}\bar{\varphi}_z
=1+\big[(u_z+\bar{u}_{\bar{z}})+(u_z\bar{u}_{\bar{z}}+u_{\bar{z}}\bar{u}_z)\big]
\end{equation}
is the determinant of the Jacobian matrix of the diffeomorphism $\varphi$.
This implies that
\begin{equation}\label{eq:conjugation}
\big(R_{\varphi}\circ\partial_z\circ R_{\varphi^{-1}}\big)(v)=
\frac{v_z\overline{\varphi}_{\bar{z}}-v_{\bar{z}}\overline{\varphi}_z}{{\rm D}(\varphi)}
=\frac{v_z+v_z\bar{u}_{\bar{z}}-v_{\bar{z}}\bar{u}_z}{{\rm D}(\varphi)}.
\end{equation}
Note that ${\rm D}$ does not vanish on $\C$ since $\varphi$ is a diffeomorphism.
Moreover, by \eqref{eq:D}, the determinant ${\rm D}\in\mathcal{Z}^{m-1,p}_{N+1}$ has 
constant asymptotic term equal to one. This and the Banach algebra property in $\mathcal{Z}^{m-1,p}_{N+1}$ allow us
to conclude from Lemma \ref{lem:division} that the map 
\begin{equation}\label{eq:denominator}
\varphi\mapsto 1/{\rm D}(\varphi),\quad\mathcal{ZD}^{m,p}_N\to\mathcal{Z}^{m-1,p}_{N+1},
\end{equation}
is real analytic. By the Banach algebra property in $\mathcal{Z}^{m-1,p}_{N+1}$ the map
\begin{equation}\label{eq:numerator}
(\varphi,v)\mapsto v_z+v_z\bar{u}_{\bar{z}}-v_{\bar{z}}\bar{u}_z,\quad
\mathcal{ZD}^{m,p}_N\times\mathcal{Z}^{m,p}_N\to\mathcal{Z}^{m-1,p}_{N+1},
\end{equation}
is real analytic. Then, by combining this with \eqref{eq:conjugation}, \eqref{eq:denominator}, \eqref{eq:numerator}, and 
\eqref{eq:well-defined}, we conclude the proof of the proposition.
\end{proof}

As a corollary we obtain

\begin{proposition}\label{prop:cauchy-conjugate}
For $m>3+(2/p)$ the map
\begin{equation}\label{eq:congugate_antiderivative}
(\varphi,v)\mapsto\big(R_\varphi\circ\partial_z^{-1}\circ R_{\varphi^{-1}}\big)(v),\quad
\mathcal{ZD}^{m,p}_N\times\widetilde{\mathcal{Z}}^{m-1,p}_{N+1}\to\mathcal{Z}^{m,p}_{1,N},
\end{equation}
is real analytic. Here $\partial_z^{-1}$ denotes the inverse of the Cauchy operator given be Theorem \ref{th:Z->Z-tilde}.
\end{proposition}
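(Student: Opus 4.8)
The plan is to exhibit the map \eqref{eq:congugate_antiderivative} as the evaluation of the real analytic family of bounded operators obtained by \emph{inverting} the family supplied by Lemma \ref{lem:cauchy-conjugate}. Two standard facts about Banach spaces $X,Y$ will be used: the inversion map $A\mapsto A^{-1}$ is real analytic on the open subset of $\mathcal{L}(X,Y)$ consisting of toplinear isomorphisms (locally it is a Neumann series), and the evaluation map $\mathcal{L}(X,Y)\times X\to Y$, $(A,w)\mapsto Aw$, is bounded bilinear and hence real analytic. Here $\mathcal{L}(\cdot,\cdot)$ denotes the Banach space of bounded linear operators, and, as in Remark \ref{rem:real_analyticity}, real analyticity is understood with respect to the underlying real Banach structure.

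First I would convert Lemma \ref{lem:cauchy-conjugate} into an operator-valued statement. Since the map $F$ there is linear in its second argument, $F(\varphi,v)=\big(D_2F(\varphi,0)\big)v$, and since the partial differential of a real analytic map depends real analytically on the base point, the assignment $\varphi\mapsto D_2F(\varphi,0)$ is real analytic from $\mathcal{ZD}^{m,p}_N$ into $\mathcal{L}\big(\mathcal{Z}^{m,p}_N,\widetilde{\mathcal{Z}}^{m-1,p}_{N+1}\big)$. Precomposing each of these operators with the bounded inclusion $\mathcal{Z}^{m,p}_{1,N}\hookrightarrow\mathcal{Z}^{m,p}_N$ (Proposition \ref{prop:properties_Z-spaces} $(i)$) we obtain a real analytic map
\[
\Phi : \mathcal{ZD}^{m,p}_N\to\mathcal{L}\big(\mathcal{Z}^{m,p}_{1,N},\widetilde{\mathcal{Z}}^{m-1,p}_{N+1}\big),\qquad
\Phi(\varphi):=\big(R_\varphi\circ\partial_z\circ R_{\varphi^{-1}}\big)\big|_{\mathcal{Z}^{m,p}_{1,N}}.
\]

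Next I would show that $\Phi(\varphi)$ is a toplinear isomorphism for every $\varphi\in\mathcal{ZD}^{m,p}_N$, with inverse precisely the map in \eqref{eq:congugate_antiderivative}. For this one observes that $R_{\varphi^{-1}}$ maps $\mathcal{Z}^{m,p}_{1,N}$ boundedly into itself, that $\partial_z$ is a toplinear isomorphism of $\mathcal{Z}^{m,p}_{1,N}$ onto $\widetilde{\mathcal{Z}}^{m-1,p}_{N+1}$ (Theorem \ref{th:Z->Z-tilde}, applied with regularity exponent $m-1$), and that $R_\varphi$ maps $\widetilde{\mathcal{Z}}^{m-1,p}_{N+1}$ boundedly into itself; since $\varphi\circ\varphi^{-1}=\varphi^{-1}\circ\varphi=\idmap$ (Theorem \ref{th:ZD}), the operators $R_\varphi$ and $R_{\varphi^{-1}}$ are mutually inverse on each of these two spaces. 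Therefore $\Phi(\varphi)=R_\varphi\circ\partial_z\circ R_{\varphi^{-1}}$ is a toplinear isomorphism with $\Phi(\varphi)^{-1}=R_\varphi\circ\partial_z^{-1}\circ R_{\varphi^{-1}}$, which is exactly the map in \eqref{eq:congugate_antiderivative}; in particular this also re-derives that \eqref{eq:congugate_antiderivative} takes values in $\mathcal{Z}^{m,p}_{1,N}$. The one delicate point here is that right translation by $\varphi\in\mathcal{ZD}^{m,p}_N$ preserves, boundedly, both the subspace $\mathcal{Z}^{m,p}_{1,N}$ and the ideal $\widetilde{\mathcal{Z}}^{m-1,p}_{N+1}$; this follows from Lemma \ref{lem:composition}, Remark \ref{rem:improved_composition} and the inclusions of Proposition \ref{prop:properties_Z-spaces}, but carefully matching the asymptotic orders is the main (purely technical) obstacle I anticipate.

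Finally I would assemble the pieces. The map $\Phi$ is real analytic and takes values in the open subset of $\mathcal{L}\big(\mathcal{Z}^{m,p}_{1,N},\widetilde{\mathcal{Z}}^{m-1,p}_{N+1}\big)$ consisting of toplinear isomorphisms, so by the real analyticity of operator inversion the map $\varphi\mapsto\Phi(\varphi)^{-1}$ is real analytic from $\mathcal{ZD}^{m,p}_N$ into $\mathcal{L}\big(\widetilde{\mathcal{Z}}^{m-1,p}_{N+1},\mathcal{Z}^{m,p}_{1,N}\big)$. Composing $\varphi\mapsto\Phi(\varphi)^{-1}$ with the bounded bilinear evaluation map then shows that $(\varphi,v)\mapsto\Phi(\varphi)^{-1}(v)=\big(R_\varphi\circ\partial_z^{-1}\circ R_{\varphi^{-1}}\big)(v)$, regarded as a map $\mathcal{ZD}^{m,p}_N\times\widetilde{\mathcal{Z}}^{m-1,p}_{N+1}\to\mathcal{Z}^{m,p}_{1,N}$, is real analytic, which is the assertion of the proposition.
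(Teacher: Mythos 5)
Your proposal is correct and follows essentially the same route as the paper: both pass to the operator-valued map $\varphi\mapsto R_\varphi\circ\partial_z\circ R_{\varphi^{-1}}$ (obtained as $D_2F$ of the real analytic map from Lemma \ref{lem:cauchy-conjugate}), verify via Theorem \ref{th:Z->Z-tilde} that it lands in the isomorphisms $GL\big(\mathcal{Z}^{m,p}_{1,N},\widetilde{\mathcal{Z}}^{m-1,p}_{N+1}\big)$, and conclude by the real analyticity of operator inversion together with evaluation. Your explicit precomposition with the inclusion $\mathcal{Z}^{m,p}_{1,N}\hookrightarrow\mathcal{Z}^{m,p}_N$ and the check that $R_\varphi$, $R_{\varphi^{-1}}$ preserve the relevant spaces are details the paper leaves implicit, but the argument is the same.
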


\begin{proof}[Proof of Proposition \ref{prop:cauchy-conjugate}]
First, note that by Theorem \ref{th:ZD}, Theorem \ref{th:Z->Z-tilde} and Lemma \ref{lem:composition} the map 
$v\mapsto\big(R_\varphi\circ\partial_z\circ R_{\varphi^{-1}}\big)(v)$,
$\mathcal{Z}^{m,p}_{1,N}\to\widetilde{\mathcal{Z}}^{m-1,p}_{N+1}$, is a bounded linear map. 
By Theorem \ref{th:Z->Z-tilde}, this map is a linear isomorphism with inverse
\begin{equation}\label{eq:the_inverse}
\big(R_\varphi\circ\partial_z\circ R_{\varphi^{-1}}\big)^{-1}=R_\varphi\circ\partial_z^{-1}\circ R_{\varphi^{-1}}.
\end{equation}
In particular, 
$R_\varphi\circ\partial_z\circ R_{\varphi^{-1}}\in GL\big(\mathcal{Z}^{m,p}_{1,N},\widetilde{\mathcal{Z}}^{m-1,p}_{N+1}\big)$
where $GL(X,Y)$ denotes the Banach manifold of linear isomorphisms between two Banach spaces $X$ and $Y$.
On the other side, since the map \eqref{eq:F} in Lemma \ref{lem:cauchy-conjugate} is real analytic, we obtain that
its first partial derivative $G\equiv D_2F$ with respect to the second argument
\begin{equation}\label{eq:G}
G : \mathcal{ZD}^{m,p}_N\to GL\big(\mathcal{Z}^{m,p}_{1,N},\widetilde{\mathcal{Z}}^{m-1,p}_{N+1}\big),\quad
\varphi\mapsto R_\varphi\circ\partial_z\circ R_{\varphi^{-1}},
\end{equation}
is also real analytic. Note in addition that the map
\[
\imath : GL\big(\mathcal{Z}^{m,p}_{1,N},\widetilde{\mathcal{Z}}^{m-1,p}_{N+1}\big)\to 
GL\big(\widetilde{\mathcal{Z}}^{m-1,p}_{N+1},\mathcal{Z}^{m,p}_{1,N}\big),\quad L\mapsto L^{-1},
\]
is analytic. It then follows from \eqref{eq:the_inverse} and \eqref{eq:G} that the map 
\[
\imath\circ G : \varphi\mapsto R_\varphi\circ\partial_z^{-1}\circ R_{\varphi^{-1}},\quad
\mathcal{ZD}^{m,p}_N\to GL\big(\widetilde{\mathcal{Z}}^{m-1,p}_{N+1},\mathcal{Z}^{m,p}_{1,N}\big),
\]
is real analytic as it is a composition of real analytic maps. This implies that the map \eqref{eq:congugate_antiderivative} is
also real analytic.
\end{proof}

We will see in Section \ref{sec:euler_equation} that the non-linear map
\[
Q : u\mapsto Q(u):=(u_z)^2+u_{\bar{z}}\bar{u}_z,\quad\mathcal{Z}^{m,p}_N\to\widetilde{\mathcal{Z}}^{m-1,p}_{N+1},
\]
plays an important role in studying the well-posedness of the Euler equation.
The fact that $Q(u)\in\widetilde{\mathcal{Z}}^{m-1,p}_{N+1}$ for $u\in\mathcal{Z}^{m,p}_N$ follows 
easily from the fact that $\widetilde{\mathcal{Z}}^{m-1,p}_{N+1}$ is an ideal in ${\mathcal{Z}}^{m-1,p}_{N+1}$ that is closed
under complex conjugation. By arguing as in the proof of Lemma \ref{lem:cauchy-conjugate} we obtain the following

\begin{proposition}\label{prop:Q-conjugate}
For $m>3+(2/p)$ the map 
\[
(\varphi, v)\mapsto\big(R_{\varphi}\circ Q\circ R_{\varphi^{-1}}\big)(v),\quad 
\mathcal{ZD}^{m,p}_N\times\mathcal{Z}^{m,p}_N\to\widetilde{\mathcal{Z}}^{m-1,p}_{N+1},
\] 
is real analytic.
\end{proposition}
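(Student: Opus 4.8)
The plan is to follow the proof of Lemma \ref{lem:cauchy-conjugate} step by step. First I would verify that the map is well defined: for $\varphi\in\mathcal{ZD}^{m,p}_N$ Theorem \ref{th:ZD} gives $\varphi^{-1}\in\mathcal{ZD}^{m,p}_N$, so $R_{\varphi^{-1}}(v)=v\circ\varphi^{-1}\in\mathcal{Z}^{m,p}_N$ by Lemma \ref{lem:composition}; then $Q\big(R_{\varphi^{-1}}(v)\big)\in\widetilde{\mathcal{Z}}^{m-1,p}_{N+1}$ as noted just before the proposition; and $R_\varphi$ maps $\widetilde{\mathcal{Z}}^{m-1,p}_{N+1}$ into itself by the second statement of Lemma \ref{lem:composition}. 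Hence $\big(R_\varphi\circ Q\circ R_{\varphi^{-1}}\big)(v)\in\widetilde{\mathcal{Z}}^{m-1,p}_{N+1}$.

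The key step is to rewrite $R_\varphi\circ Q\circ R_{\varphi^{-1}}$ in terms of the conjugated Cauchy operator already handled in Lemma \ref{lem:cauchy-conjugate}. Write $\psi:=\varphi^{-1}$ and let $F$ be the real analytic map of that lemma, $F(\varphi,v)=\big(R_\varphi\circ\partial_z\circ R_{\varphi^{-1}}\big)(v)$; set $A:=F(\varphi,v)$ and $C:=F(\varphi,\bar v)$, which makes sense since $\mathcal{Z}^{m,p}_N$ is closed under complex conjugation. Using $\bar u_z=\overline{u_{\bar z}}$ one has $Q(u)=(u_z)^2+u_{\bar z}\,\overline{u_{\bar z}}$, so composing with $\varphi$,
\[
\big(R_\varphi\circ Q\circ R_{\varphi^{-1}}\big)(v)=\big([\partial_z(v\circ\psi)]\circ\varphi\big)^2+
[\partial_{\bar z}(v\circ\psi)]\circ\varphi\cdot\overline{[\partial_{\bar z}(v\circ\psi)]\circ\varphi}.
\]
Since $\varphi$ and $\psi$ are real maps of $\R^2$ one has $\overline{g\circ\varphi}=\bar g\circ\varphi$, $\overline{v\circ\psi}=\bar v\circ\psi$, and $\overline{\partial_z h}=\partial_{\bar z}\bar h$; chaining these identities gives $[\partial_{\bar z}(v\circ\psi)]\circ\varphi=\overline{[\partial_z(\bar v\circ\psi)]\circ\varphi}=\overline{C}$, so that
\[
\big(R_\varphi\circ Q\circ R_{\varphi^{-1}}\big)(v)=A^2+\overline{C}\,C.
\]
(If one prefers, the same chain-rule computation as in Lemma \ref{lem:cauchy-conjugate} expresses this as an explicit rational expression in $u,v$ and their first derivatives with denominator ${\rm D}(\varphi)^2$, but the form above suffices.)

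It remains to read off real analyticity. By Lemma \ref{lem:cauchy-conjugate} the map $(\varphi,v)\mapsto A$ is real analytic $\mathcal{ZD}^{m,p}_N\times\mathcal{Z}^{m,p}_N\to\widetilde{\mathcal{Z}}^{m-1,p}_{N+1}$, and so is $(\varphi,v)\mapsto C$, being $F$ precomposed with the bounded real-linear (hence real analytic) conjugation $v\mapsto\bar v$ on $\mathcal{Z}^{m,p}_N$. Since $\widetilde{\mathcal{Z}}^{m-1,p}_{N+1}$ is a closed ideal in the Banach algebra $\mathcal{Z}^{m-1,p}_{N+1}$, pointwise multiplication restricts to bounded bilinear (hence real analytic) maps $\widetilde{\mathcal{Z}}^{m-1,p}_{N+1}\times\widetilde{\mathcal{Z}}^{m-1,p}_{N+1}\to\widetilde{\mathcal{Z}}^{m-1,p}_{N+1}$ and $\mathcal{Z}^{m-1,p}_{N+1}\times\widetilde{\mathcal{Z}}^{m-1,p}_{N+1}\to\widetilde{\mathcal{Z}}^{m-1,p}_{N+1}$, and conjugation is bounded $\widetilde{\mathcal{Z}}^{m-1,p}_{N+1}\to\mathcal{Z}^{m-1,p}_{N+1}$; hence $(\varphi,v)\mapsto A^2+\overline{C}\,C$ is real analytic into $\widetilde{\mathcal{Z}}^{m-1,p}_{N+1}$, which proves the proposition. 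The only point requiring real care is the bookkeeping of the asymptotic spaces under complex conjugation — $\widetilde{\mathcal{Z}}^{m-1,p}_{N+1}$ is not conjugation invariant — which is exactly why the $\partial_{\bar z}$-term must be routed through the identity $[\partial_{\bar z}(v\circ\psi)]\circ\varphi=\overline{[\partial_z(\bar v\circ\psi)]\circ\varphi}$ instead of being analyzed directly.
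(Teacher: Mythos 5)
Your proposal is correct and follows essentially the same route as the paper: both decompose $\big(R_\varphi\circ Q\circ R_{\varphi^{-1}}\big)(v)$ into $\left(\big(R_\varphi\circ\partial_z\circ R_{\varphi^{-1}}\big)(v)\right)^2$ plus $\overline{\big(R_\varphi\circ\partial_z\circ R_{\varphi^{-1}}\big)(\bar v)}\cdot\big(R_\varphi\circ\partial_z\circ R_{\varphi^{-1}}\big)(\bar v)$ and then invoke Lemma \ref{lem:cauchy-conjugate} together with the Banach algebra property of $\mathcal{Z}^{m-1,p}_{N+1}$. Your added remarks on well-definedness and on routing the $\partial_{\bar z}$-term through complex conjugation are accurate elaborations of what the paper leaves implicit.
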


\begin{proof}[Proof of Proposition \ref{prop:Q-conjugate}]
Take $\varphi\in\mathcal{ZD}^{m,p}_N$ and $v\in\mathcal{Z}^{m,p}_N$.
By a direct differentiation we obtain that $\big(R_{\varphi}\circ Q\circ R_{\varphi^{-1}}\big)(v)$ is a sum of two terms:
$\left(\big(R_{\varphi}\circ\partial_z\circ R_{\varphi^{-1}}\big)(v)\right)^2$ and
\begin{equation*}
R_{\varphi}\left(\left(\partial_{\bar{z}}(v\circ\varphi^{-1})\right)\cdot\left(\partial_z (\bar{v}\circ\varphi^{-1})\right)\right)=  
\overline{\left(\big(R_{\varphi}\circ\partial_z\circ R_{\varphi^{-1}}\big)(\bar{v}) \right)}\cdot
\left(\big((R_{\varphi}\circ\partial_z\circ R_{\varphi^{-1}}\big)(\bar{v})\right).
\end{equation*}
The statement of the proposition then follows from Lemma \ref{lem:cauchy-conjugate} and the Banach algebra property in
$\mathcal{Z}^{m-1,p}_{N+1}$.
\end{proof}

\section{Local existence and uniqueness of solutions in $\mathcal{Z}^{m,p}_N$}\label{sec:euler_equation}
In this section we prove a local version of Theorem \ref{th:main_introduction} stated in the Introduction.
As shown in Appendix \ref{appendix:complex_form}, the 2d Euler's equation \eqref{eq:euler} 
represented in complex form is
\begin{equation}\label{EQeulerCauchyop}
\left\{
\begin{array}{l}
u_t+(u u_z+{\bar u} u_{\bar z})=-2\p_{\bar z},\quad\divop u\equiv u_z+{\bar u}_{\bar z}=0,\\
u|_{t=0}=u_0,
\end{array}
\right.
\end{equation}
where $\p: \R^2 \rightarrow \R$ is the scalar pressure. 
In order to obtain a {\em unique} solution of \eqref{EQeulerCauchyop} in $\mathcal{Z}^{m,p}_N$ we will require that 
$\p\in\mathcal{Z}^{m+1,p}_{1,N}$. We will first prove the following proposition.

\begin{proposition}\label{PropEqEulerDynSys}
Assume that $m>2+(2/p)$ and 
$u\in C\big([0,T],\mathcal{Z}^{m,p}_N\big)\cap C^1\big([0,T],\mathcal{Z}^{m-1,p}_N\big)$. 
Then $u$ satisfies (\ref{EQeulerCauchyop}) for some real valued $\p\equiv\p(t)\in\mathcal{Z}^{m+1,p}_{1,N}$, $t\in[0,T]$, 
if and only if $u$ satisfies
\begin{equation}\label{EQEulerwoDiv}
\dt{u}+(uu_z+\bar{u}u_{\bar{z}})=\partial_z^{-1}\big((u_z)^2+|u_{\bar{z}}|^2\big) 
\end{equation}
with divergence free initial data $u_0=u|_{t=0}$, $\divop u_0=0$. 
Here $\partial_z^{-1} : \widetilde{\mathcal{Z}}^{m-1,p}_{N+3}\to\mathcal{Z}^{m,p}_{1,N+2}$
denotes the operator constructed in Theorem \ref{th:Z->Z-tilde}.
\end{proposition}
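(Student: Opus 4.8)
The plan is to get the equivalence by eliminating, respectively reconstructing, the pressure through the Cauchy operator, using the mapping properties of $\partial_z$ and $\partial_{\bar z}$ from Section~\ref{sec:cauchy_operator}. The computational backbone is the pointwise identity obtained by one differentiation,
\[
\partial_z\big(uu_z+\bar uu_{\bar z}\big)=\big((u_z)^2+|u_{\bar z}|^2\big)+\big(u\partial_z+\bar u\partial_{\bar z}\big)u_z ,
\]
whose sum with its complex conjugate gives, for \emph{any} $u$,
\[
\divop\big(uu_z+\bar uu_{\bar z}\big)=2\,\mathrm{Re}\big((u_z)^2+|u_{\bar z}|^2\big)+\big(u\partial_z+\bar u\partial_{\bar z}\big)\big(\divop u\big),
\]
together with the remarks that $\divop\big(\partial_z^{-1}g\big)=g+\bar g$ whenever $g$ lies in the range of $\partial_z^{-1}$, and that if $\divop u=0$ then $u_z=-\bar u_{\bar z}$ is purely imaginary, so $(u_z)^2+|u_{\bar z}|^2$ is real.

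For the forward implication, assume $u$ solves \eqref{EQeulerCauchyop} with some real $\p(t)\in\mathcal{Z}^{m+1,p}_{1,N}$. Then $\divop u(t)\equiv0$, and in particular $\divop u_0=0$. Taking $\divop$ of the momentum equation and using the second identity together with $\divop\p_{\bar z}=2\p_{z\bar z}=\tfrac12\Delta\p$ yields $\Delta\p=-2\big((u_z)^2+|u_{\bar z}|^2\big)$; hence, differentiating $u_t+uu_z+\bar uu_{\bar z}=-2\p_{\bar z}$ by $\partial_z$,
\[
\partial_z\big(u_t+uu_z+\bar uu_{\bar z}\big)=-2\p_{z\bar z}=(u_z)^2+|u_{\bar z}|^2 .
\]
Therefore $\partial_z\big[(u_t+uu_z+\bar uu_{\bar z})-\partial_z^{-1}\big((u_z)^2+|u_{\bar z}|^2\big)\big]=0$, and since $-2\p_{\bar z}$ and $\partial_z^{-1}\big((u_z)^2+|u_{\bar z}|^2\big)$ both lie in asymptotic spaces with vanishing leading term, hence in some $L^p_\delta$ with $\delta+(2/p)>0$, Lemma~\ref{lem:injectivity} forces the bracketed term to vanish; this is \eqref{EQEulerwoDiv}.

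For the converse, assume \eqref{EQEulerwoDiv} holds with $\divop u_0=0$. Applying $\divop$ to \eqref{EQEulerwoDiv}, the term $2\,\mathrm{Re}\big((u_z)^2+|u_{\bar z}|^2\big)$ cancels against $\divop\partial_z^{-1}\big((u_z)^2+|u_{\bar z}|^2\big)$, so $d:=\divop u$ solves the homogeneous linear transport equation $\dt{d}+\big(u\partial_z+\bar u\partial_{\bar z}\big)d=0$ with $d|_{t=0}=0$. Composing $d$ with the flow $\varphi\in C^1\big([0,T],\mathcal{ZD}^{m,p}_N\big)$ of $u$ furnished by Lemma~\ref{lem:ode} gives $\tfrac{d}{dt}(d\circ\varphi)=0$, so $d\circ\varphi\equiv0$ and hence $\divop u(t)\equiv0$ on $[0,T]$ (the regularity needed at the threshold $m>2+(2/p)$ is routine once the composition properties of $\mathcal{ZD}^{m,p}_N$ from Section~\ref{sec:spaces} are in hand, or one argues by a direct uniqueness estimate for the transport equation).

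It remains to produce the pressure. Now that $\divop u\equiv0$, the function $(u_z)^2+|u_{\bar z}|^2=\partial_z\big(u_t+uu_z+\bar uu_{\bar z}\big)$ is real, so regarding $w:=u_t+uu_z+\bar uu_{\bar z}$ as a plane vector field its curl $2\,\mathrm{Im}(\partial_z w)$ vanishes, whence $w=-\nabla\p$ for a real scalar potential $\p$ on the simply connected plane, i.e.\ $-2\p_{\bar z}=w$; together with $\divop u\equiv0$ this is \eqref{EQeulerCauchyop}. To pin $\p$ down in the asymptotic class $\mathcal{Z}^{m+1,p}_{1,N}$ one realizes it as $\p=-\tfrac12\,\partial_{\bar z}^{-1}\big(u_t+uu_z+\bar uu_{\bar z}\big)$, with $\partial_{\bar z}^{-1}$ the inverse Cauchy operator of Theorem~\ref{th:Z->Z-tilde} and Remark~\ref{rem:Z->Z-tilde} --- the reality of $(u_z)^2+|u_{\bar z}|^2$ is precisely what places $w$ in the domain of $\partial_{\bar z}^{-1}$; that $\p$ is real then follows from $\partial_{\bar z}^{-1}=\tau\,\partial_z^{-1}\,\tau$ (with $\tau$ complex conjugation) and the commutativity of $\partial_z^{-1},\partial_{\bar z}^{-1}$, since $\overline{\p}=-\tfrac12\,\partial_z^{-1}\overline{w}$ satisfies $\partial_z\overline{\p}=-\tfrac12\overline{(u_z)^2+|u_{\bar z}|^2}=\partial_z\p$ with $\p,\overline{\p}$ both decaying, so $\p=\overline{\p}$ by Lemma~\ref{lem:injectivity}. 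I expect the one genuinely delicate point to be this last step: tracking the asymptotic orders and the weights through the products defining $(u_z)^2+|u_{\bar z}|^2$ and through $\partial_z^{-1}$, $\partial_{\bar z}^{-1}$, so that $\p$ lands exactly in $\mathcal{Z}^{m+1,p}_{1,N}$ --- whereas the transport argument for $\divop u$ and the elimination of $\p$ via Lemma~\ref{lem:injectivity} are routine.
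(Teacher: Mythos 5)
Your forward implication and your transport argument for $\divop u\equiv 0$ in the converse are correct and essentially identical to the paper's proof: the paper likewise takes the divergence of the momentum equation, uses $\divop(uu_z+\bar uu_{\bar z})=2Q(u)+(u\partial_z+\bar u\partial_{\bar z})(\divop u)$ with $Q(u)=(u_z)^2+|u_{\bar z}|^2$, identifies $-2\p_{\bar z}$ with $\partial_z^{-1}Q(u)$ via injectivity of $\partial_z$ on decaying functions, and in the converse composes $\divop u$ with the flow from Lemma \ref{lem:ode}.

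The gap is in the reconstruction of the pressure. You assert that ``the reality of $(u_z)^2+|u_{\bar z}|^2$ is precisely what places $w=\partial_z^{-1}Q(u)$ in the domain of $\partial_{\bar z}^{-1}$,'' and you defer the rest to weight bookkeeping. Reality of $Q(u)$ is not the mechanism, and the missing step is not mere bookkeeping. By Proposition \ref{prop:asymptotic_operator}, $\partial_z^{-1}Q(u)$ a priori carries asymptotic terms $\frac{1}{\pi}\big(Q(u),1\big)\frac{\chi}{\bar z}+\frac{1}{\pi}\big(Q(u),\bar z\big)\frac{\chi}{\bar z^2}$, whereas the image of $\partial_{\bar z}$ on $\mathcal{Z}^{m+1,p}_{1,N}$ is $\overline{\big(\widetilde{\mathcal{Z}}^{m,p}_{N+1}\big)}$, whose asymptotic terms all have $\bar z$-degree at least two with a nontrivial $z$-factor; a surviving $c\,\chi/\bar z$ term would force a logarithmic antiderivative and push $\p$ out of every $\mathcal{Z}$-space (this is exactly the log-generation phenomenon the spaces $\mathcal{Z}^{m,p}_N$ are designed to exclude, and reality of $Q(u)$ says nothing about whether $\big(Q(u),1\big)=\int Q(u)\,dxdy$ vanishes). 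The paper closes this with Lemma \ref{lem:vanishing_momenta}: a separate Stokes-theorem computation using the divergence-free identity $uu_z+\bar uu_{\bar z}=(u^2)_z+(u\bar u)_{\bar z}$ shows $\big(Q(u),1\big)=\big(Q(u),\bar z\big)=0$, and only then does Lemma \ref{LemmaPressureTermAsymp} place $\partial_z^{-1}Q(u)$ in $\overline{\big(\widetilde{\mathcal{Z}}^{m,p}_{N+2}\big)}$ and produce $\p=-\tfrac12\partial_{\bar z}^{-1}\partial_z^{-1}Q(u)\in\mathcal{Z}^{m+1,p}_{1,N}$. (The curl-free/simply-connected argument only yields \emph{some} potential, which by Remark \ref{rem:uniqueness} is not enough: membership of $\p$ in the asymptotic class is exactly the hypothesis that makes the equivalence, and hence uniqueness, work.) Your proof of reality of $\p$ at the end is fine once the construction is in place.
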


\begin{remark}
Note that for $u\in\mathcal{Z}^{m,p}_N$ with $m>2+(2/p)$ we have that 
$u_z,\bar{u}_z\in\widetilde{\mathcal{Z}}^{m-1,p}_{N+1}\subseteq\mathcal{Z}^{m-1,p}_{2,N+1}$ and 
$u_{\bar{z}}\in\mathcal{Z}^{m-1,p}_{2,N+1}$.
Since $\widetilde{\mathcal{Z}}^{m-1,p}_{N+1}$ is an ideal in the Banach algebra $\mathcal{Z}^{m-1,p}_{N+1}$ 
(cf. Proposition \ref{prop:properties_W-spaces} and Proposition \ref{prop:properties_Z-spaces}) we conclude that
\[
Q(u)\equiv u_z u_z + \bar{u}_zu_{\bar{z}}\in\widetilde{\mathcal{Z}}^{m-1,p}_{N+1}.
\]
This shows that the term $\partial_z^{-1}\big((u_z)^2+|u_{\bar{z}}|^2\big)$ on the right hand side of 
\eqref{EQEulerwoDiv} is well defined and belongs to $\mathcal{Z}^{m,p}_{1,N}$.
\end{remark}

\begin{proof}[Proof of Proposition \ref{PropEqEulerDynSys}]
Assume $u\in C\big([0,T],\mathcal{Z}^{m,p}_N\big)\cap C^1\big([0,T],\mathcal{Z}^{m-1,p}_N\big)$ satisfies equation 
\eqref{EQeulerCauchyop} for some {\em real} valued $\p\in\mathcal{Z}^{m+1,p}_{1,N}$. 
Then, by applying the divergence operator to the both hand sides of \eqref{EQeulerCauchyop} and then using that
$\divop\dt{u}=(\divop u)^\bolddot=0$ we conclude that $\divop(uu_z+\bar{u}u_{\bar{z}})=-2\divop(\p_{\bar{z}})$.
We have
\begin{equation*}
\divop(\p_{\bar{z}})=\partial_z(\p_{\bar{z}})+\partial_{\bar{z}}\overline{(\p_{\bar{z}})}= 
2\partial_z \partial_{\bar{z}}\p.
\end{equation*}
The computation in \eqref{eq:Q} implies that for $m>2+(2/p)$ we have
\begin{equation}\label{eq:Q-divergence}
\divop(uu_z+\bar{u}u_{\bar{z}})=2\big((u_z)^2+|u_{\bar{z}}|^2\big)=2Q(u).
\end{equation}
Hence,
\begin{equation}\label{eq:Delta p=Q}
-2\partial_z(\p_{\bar{z}})=(u_z)^2+|u_{\bar{z}}|^2.
\end{equation}
By using the condition that $\p\in\mathcal{Z}^{m+1,p}_{1,N}$ we conclude that 
$\p_{\bar{z}}\in\mathcal{Z}^{m,p}_{2,N+1}\subseteq\mathcal{Z}^{m,p}_{1,N+1}$ and hence
by Theorem \ref{th:Z->Z-tilde} we obtain
\[
-2\p_{\bar{z}}=\partial_z^{-1}\big((u_z)^2+|u_{\bar{z}}|^2\big).
\]
This, together with \eqref{EQeulerCauchyop}, then proves the direct statement of the proposition.
Let us now prove the converse statement. 
Assume that $u\in C\big([0,T],\mathcal{Z}^{m,p}_N\big)\cap C^1\big([0,T],\mathcal{Z}^{m-1,p}_N\big)$ satisfies
equation \eqref{EQEulerwoDiv} with divergence free initial data $\divop u_0=0$.
By applying the divergence operator to the both hand sides of \eqref{EQEulerwoDiv} we obtain, after a simplification, that
\begin{equation}\label{eq:t-derivative}
(\divop{u})^\bolddot+u\,\partial_z(\divop{u})+\bar{u}\,\partial_{\bar{z}}(\divop{u})=0.
\end{equation}
Let $\varphi\in C^1\big([0,T],\mathcal{ZD}^{m,p}_N\big)$ be the solution of the equation
$\dt\varphi=u\circ\varphi$, $\varphi|_{t=0}=\idmap$ that exists by Lemma \ref{lem:ode}.
Since $\divop u\in C\big([0,T],\mathcal{Z}^{m-1,p}_N\big)\cap C^1\big([0,T],\mathcal{Z}^{m-2,p}_N\big)$ and 
$\varphi\in C^1\big([0,T],\mathcal{ZD}^{m,p}_N\big)$ with $m>2+(2/p)$ we conclude that 
$\divop u\in C^1\big([0,T]\times\C,\R\big)$ and $\varphi\in C^1(\C,\C)$. This, together with \eqref{eq:t-derivative},
implies that for any given $(x,y)\in\R^2$ and for any $t\in[0,T]$,
\begin{equation}\label{eq:pointwise}
\partial_t\big((\divop u)\circ\varphi\big)=0,
\end{equation}
where $\partial_t$ denotes the pointwise partial derivative in the direction of the variable $t$.
Hence, $\divop u(t)=\divop u_0=0$ for any $t\in[0,T]$.
By Lemma \ref{LemmaPressureTermAsymp} below there exists $\p\in C\big([0,T],\mathcal{Z}^{m+1,p}_{1,N}\big)$
such that $-2\p_{\bar{z}}=\partial_z^{-1}\big((u_z)^2+|u_{\bar{z}}|^2\big)$ for any $t\in[0,T]$. 
This completes the proof of the proposition.
\end{proof}

In the proof of Proposition \ref{PropEqEulerDynSys} we used the following lemma.

\begin{lemma}\label{LemmaPressureTermAsymp}
Assume that $m>2+(2/p)$. Then, for any $u\in\mathcal{Z}^{m,p}_N$ divergence free there exists 
a real valued $\p\in\mathcal{Z}^{m+1,p}_{1,N}$ such that $-2\p_{z\bar{z}} =(u_z)^2 + |u_{\bar{z}}|^2$.
Moreover, we have that $-2\p_{\bar{z}}=\partial_z^{-1}\big((u_z)^2+|u_{\bar{z}}|^2\big)$
and $-2\p=\partial_{\bar{z}}^{-1}\partial_z^{-1}\big((u_z)^2+|u_{\bar{z}}|^2\big)$.
\end{lemma}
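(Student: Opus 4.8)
The plan is to obtain $\p$ by inverting the two Cauchy operators of Theorem \ref{th:Z->Z-tilde} (and Remark \ref{rem:Z->Z-tilde}) in succession, applied to $Q:=(u_z)^2+|u_{\bar z}|^2$, and to use the hypothesis $\divop u=0$ twice: once to see that $Q$ is \emph{real-valued}, and once to see that the moment $(Q,1)$ vanishes.

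First I would record where $Q$ lives. Since $u_z,\bar u_z\in\widetilde{\mathcal Z}^{m-1,p}_{N+1}$ and $u_{\bar z}\in\mathcal Z^{m-1,p}_{2,N+1}$, and since $\widetilde{\mathcal Z}^{m-1,p}_{N+1}$ is an ideal in the Banach algebra $\mathcal Z^{m-1,p}_{N+1}$, the products $(u_z)^2$ and $u_{\bar z}\bar u_z$ both lie in $\widetilde{\mathcal Z}^{m-1,p}_{N+3}$, the extra orders coming from the multiplicativity in Proposition \ref{prop:properties_Z-spaces}$(iii)$. Next, the divergence-free condition $u_z+\bar u_{\bar z}=0$ together with the computation \eqref{eq:Q-divergence} from Appendix \ref{appendix:complex_form} gives $2Q=\divop(uu_z+\bar u u_{\bar z})$, i.e.\ $2Q$ is the divergence of the complex representative of the real vector field $v\cdot\nabla v$; hence $Q$ is real. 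Therefore $Q=\bar Q\in\overline{\widetilde{\mathcal Z}^{m-1,p}_{N+3}}$ as well, so $Q\in\widetilde{\mathcal Z}^{m-1,p}_{N+3}\cap\overline{\widetilde{\mathcal Z}^{m-1,p}_{N+3}}$: all asymptotic coefficients of $Q$ in front of $1/z^k$ and $1/(z^k\bar z)$ vanish, the asymptotic part of $Q$ is $O(1/r^4)$, and $2Q\in L^1(\R^2)$.

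Set $g:=\partial_z^{-1}Q$. By Theorem \ref{th:Z->Z-tilde} we have $g\in\mathcal Z^{m,p}_{1,N+2}\subseteq\mathcal Z^{m,p}_{1,N+1}$ and $\partial_z g=Q$. The crux is to show $g\in\overline{\widetilde{\mathcal Z}^{m,p}_{N+1}}$, the domain of $\partial_{\bar z}^{-1}$ by Remark \ref{rem:Z->Z-tilde}; equivalently, that the asymptotic coefficients of $g$ in front of $1/z^k$ ($k\ge1$), $1/(z^k\bar z)$ ($k\ge1$) and $1/\bar z$ all vanish. From the explicit form of $\partial_z^{-1}$ in the proof of Theorem \ref{th:Z->Z-tilde} — built from the operator $\mathcal L$ of Proposition \ref{prop:asymptotic_operator}, whose output is $\frac{\chi}{\pi}\sum_k(\cdot,\bar z^{k-1})\frac{1}{\bar z^k}+\mathcal R(\cdot)$ — each coefficient of $g$ in front of $1/z^k$ or $1/(z^k\bar z)$ is a nonzero constant times an asymptotic coefficient of $Q$ with $\bar z$-power $\le1$, hence zero by the previous paragraph; and the coefficient of $g$ in front of $1/\bar z$ is a nonzero constant times the moment $(Q,1)$. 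Finally $(Q,1)=\tfrac12\bigl(\divop(uu_z+\bar u u_{\bar z}),1\bigr)=0$: indeed $uu_z+\bar u u_{\bar z}=o(1/r)$ as $r\to\infty$ while $2Q=\divop(uu_z+\bar u u_{\bar z})\in L^1$, so Stokes' theorem on $B_R$ together with $R\to\infty$ gives $\int_{\R^2}\divop(uu_z+\bar u u_{\bar z})=0$. Hence $g\in\overline{\widetilde{\mathcal Z}^{m,p}_{N+1}}$, and we may define $-2\p:=\partial_{\bar z}^{-1}g\in\mathcal Z^{m+1,p}_{1,N}$.

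It then remains to verify the three assertions. By construction $-2\p=\partial_{\bar z}^{-1}\partial_z^{-1}\bigl((u_z)^2+|u_{\bar z}|^2\bigr)$; applying $\partial_{\bar z}$ gives $-2\p_{\bar z}=g=\partial_z^{-1}\bigl((u_z)^2+|u_{\bar z}|^2\bigr)$, and a further $\partial_z$ gives $-2\p_{z\bar z}=\partial_z g=(u_z)^2+|u_{\bar z}|^2$. That $\p$ is real follows since $Q$ is real and, by Remark \ref{rem:Z->Z-tilde}, $\partial_{\bar z}^{-1}=\tau\circ\partial_z^{-1}\circ\tau$ with $\partial_z^{-1}$ and $\partial_{\bar z}^{-1}$ commuting, so $\overline{-2\p}=\overline{\partial_{\bar z}^{-1}\partial_z^{-1}Q}=\partial_z^{-1}\partial_{\bar z}^{-1}\bar Q=\partial_{\bar z}^{-1}\partial_z^{-1}Q=-2\p$. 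The main obstacle is exactly the crux above: threading $g=\partial_z^{-1}Q$ into the domain of $\partial_{\bar z}^{-1}$ with the correct truncation index and weight so that $\p$ lands precisely in $\mathcal Z^{m+1,p}_{1,N}$; the reality of $Q$ eliminates all but one obstruction to this and the vanishing moment $(Q,1)=0$ eliminates the last, both uses resting on $\divop u=0$.
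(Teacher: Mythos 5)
Your strategy coincides with the paper's: use $\divop u=0$ to place $Q:=(u_z)^2+|u_{\bar z}|^2$ in a space of the form $\widetilde{\mathcal Z}\cap\overline{\widetilde{\mathcal Z}}$, invert $\partial_z$, kill the obstructing asymptotic coefficients of $\partial_z^{-1}Q$ with $\bar z$-power $\le 1$ (the structural ones because every asymptotic term of $Q$ carries $\bar z^{-2}$, the $1/\bar z$ one because $(Q,1)=0$), and then invert $\partial_{\bar z}$. For $N\ge 1$ your argument is correct and even slightly leaner than the paper's: the paper also proves $(Q,\bar z)=0$ and uses it to pin down the full expansion \eqref{eq:integral of Q} (needed later for Proposition \ref{prop:integrals}), whereas, as you note, for the present lemma only $(Q,1)=0$ is required since a $1/\bar z^2$ term is already admissible in the domain of $\partial_{\bar z}^{-1}$.

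There is, however, a genuine gap at $N=0$. Your opening claim that $(u_z)^2$ and $u_{\bar z}\bar u_z$ lie in $\widetilde{\mathcal Z}^{m-1,p}_{N+3}$, i.e.\ with remainder in $W^{m-1,p}_{\gamma_{N+3}}$, fails when $N=0$: there $u=a_{00}+f$ with $f\in W^{m,p}_{\gamma_0}$, so $u_z=f_z$ is a pure remainder in $W^{m-1,p}_{\gamma_1}$, and Proposition \ref{prop:properties_W-spaces}~(iv) only yields $(u_z)^2\in W^{m-1,p}_{2\gamma_1+(2/p)}$; since $2\gamma_1+(2/p)=2+2\gamma_0+(2/p)<3+\gamma_0=\gamma_3$ (because $\gamma_0+(2/p)<1$), this is strictly weaker than membership in $W^{m-1,p}_{\gamma_3}$, which indeed does not hold in general. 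Consequently Theorem \ref{th:Z->Z-tilde} cannot be invoked as you do to define $g=\partial_z^{-1}Q\in\mathcal Z^{m,p}_{1,N+2}$ when $N=0$. This is precisely why the paper splits off that case: for $N=0$ one only has $Q\in W^{m-1,p}_{\gamma_2}$, one gets $\partial_z^{-1}Q\in W^{m,p}_{\gamma_1}$ from Proposition \ref{prop:asymptotic_operator} together with $(Q,1)=0$, and then $\p\in W^{m+1,p}_{\gamma_0}\equiv\mathcal Z^{m+1,p}_{1,0}$ from Theorem \ref{th:cauchy_m>=0}~(i). With that case added (the rest of your argument, including the reality of $\p$ and the identification of the $1/\bar z$ coefficient of $\partial_z^{-1}Q$ with $\frac{1}{\pi}(Q,1)$, is sound), the proof is complete.
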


\begin{proof}[Proof of Lemma \ref{LemmaPressureTermAsymp}]
Take $u\in\mathcal{Z}^{m,p}_N$ such that $\divop u=u_z+\bar{u}_{\bar{z}}=0$.
Then we have
\begin{equation}\label{eq:Q-formula}
Q(u)\equiv(u_z)^2+u_{\bar{z}}\bar{u}_z=
-u_z\overline{(u_z)}+\bar{u}_z\overline{(\bar{u}_z)}.
\end{equation}
We will first consider the case when $N\ge 1$. 
Since $u_z,\bar{u}_{z}\in\widetilde{\mathcal{Z}}^{m-1,p}_{N+1}$ we obtain from 
Proposition \ref{prop:properties_W-spaces}, Proposition \ref{prop:properties_Z-spaces} and \eqref{eq:Q-formula} that $
Q(u)\in\mathcal{Z}^{m-1,p}_{4,N+3}$ and
\begin{equation}\label{eq:QinZ}
Q(u)=\frac{\chi}{z^2\bar{z}^2}\sum_{0\le k+l\le N-1}\frac{a_{kl}}{z^k\bar{z}^l}+f,\quad f\in W^{m-1,p}_{\gamma_N+3}.
\end{equation}
In particular, we see that $Q(u)\in W^{m-1,p}_{\gamma_3}$. Then, by Proposition \ref{prop:asymptotic_operator},
$\partial_z^{-1}Q(u)\in\mathcal{Z}^{m,p}_{1,2}$ and
\[
\partial_z^{-1}Q(u)=\frac{1}{\pi}\Big[\big(Q(u),1\big)\,\frac{\chi}{\bar{z}}+\big(Q(u),\bar{z}\big)\,\frac{\chi}{\bar{z}^2}\Big]+g,\quad
g\in W^{m,p}_{\gamma_2}.
\]
Since $\big(Q(u),1\big)=\big(Q(u),\bar{z}\big)=0$ by Lemma \ref{lem:vanishing_momenta} below, we conclude that
\begin{equation}\label{eq:integral of Q'}
\partial_z^{-1}Q(u)\in W^{m,p}_{\gamma_2}.
\end{equation}
On the other side, by Theorem \ref{th:Z->Z-tilde} and the fact that  $Q(u)\in\mathcal{Z}^{m-1,p}_{4,N+3}$ we have that
$\partial_z^{-1}Q(u)\in\mathcal{Z}^{m,p}_{1,N+2}$. Moreover, it follows from \eqref{eq:QinZ} and 
Proposition \ref{prop:asymptotic_operator} (cf. the proof of Theorem \ref{th:Z->Z-tilde}) that
\begin{equation*}
\partial_z^{-1}Q(u)=\chi\,\frac{b_1}{\bar{z}}+\chi\,\frac{b_2}{\bar{z}^2}
+\left(\frac{\chi}{z\bar{z}^2}\sum_{0\le k+l\le N-1}\frac{b_{kl}}{z^k\bar{z}^l}
+\frac{\chi}{\bar{z}^3}\sum_{3\le k\le N+2}\frac{b_k}{\bar{z}^{k-3}}\right)
+h,\quad h\in W^{m,p}_{\gamma_N+2}
\end{equation*}
where $b_k:=\frac{1}{\pi}\big(f,\bar{z}^{k-1}\big)$.
By comparing this with \eqref{eq:integral of Q'}, we obtain that $b_1=b_2=0$, and hence
\begin{align}\label{eq:integral of Q}
\partial_z^{-1}Q(u)=\left(\frac{\chi}{z\bar{z}^2}\sum_{0\le k+l\le N-1}\frac{b_{kl}}{z^k\bar{z}^l}
+\frac{\chi}{\bar{z}^3}\sum_{3\le k\le N+2}\frac{b_k}{\bar{z}^{k-3}}\right)
+h,\quad h\in W^{m,p}_{\gamma_N+2}.
\end{align}
In particular, this implies that
\[
\partial_z^{-1}Q(u)\in\overline{\Big(\widetilde{\mathcal{Z}}^{m,p}_{N+2}\Big)}.
\]
Hence, by Theorem \ref{th:Z->Z-tilde} and Remark \ref{rem:Z->Z-tilde}, we obtain that
\[
\p:=-\frac{1}{2}\,\partial_{\bar{z}}^{-1}\partial_z^{-1}Q(u)\in\mathcal{Z}^{m+1,p}_{1,N+1}\subseteq\mathcal{Z}^{m+1,p}_{1,N}.
\]
Let us now consider the case when $N=0$. Then, $u\in\mathcal{Z}^{m,p}_0$ and $u=a_{00}+f$, $f\in W^{m,p}_{\gamma_0}$.
This implies that $u_z,\bar{u}_z\in W^{m-1,p}_{\gamma_1}$ and hence by Proposition \ref{prop:properties_W-spaces} and 
\eqref{eq:Q-formula} we obtain that
\[
Q(u)\in W^{m-1,p}_{2\gamma_1+(2/p)}\subseteq W^{m-1,p}_{\gamma_2}
\]
where we used that $1<\gamma_1+(2/p)<2$. This, together with Proposition \ref{prop:asymptotic_operator} and 
Lemma \ref{lem:vanishing_momenta} below, implies that
\[
\partial_z^{-1}Q(u)\in W^{m,p}_{\gamma_1}.
\]
Finally, by applying Theorem \ref{th:cauchy_m>=0} (i) we obtain that
\[
\p:=-\frac{1}{2}\,\partial_{\bar{z}}^{-1}\partial_z^{-1}Q(u)\in W^{m+1,p}_{\gamma_0}\equiv\mathcal{Z}^{m+1,p}_{1,0}.
\]
Let us now prove that $\p$ is real valued. Recall that $\tau : u\mapsto\bar{u}$ is the operation of taking the complex
conjugate of a function. By Remark \ref{rem:Z->Z-tilde} we then have
\begin{eqnarray*}
-2\overline{\p}&=&\tau\circ\big(\partial_{\bar{z}}^{-1}\partial_z^{-1}Q(u)\big)=
\big(\tau\circ\partial_{\bar{z}}^{-1}\circ\tau\big)\circ\big(\tau\circ\partial_z^{-1}\tau\big)\circ\tau Q(u)\\
&=&\partial_{\bar{z}}^{-1}\partial_z^{-1}Q(u)=-2\p
\end{eqnarray*}
where we used that $Q(u)$ is real valued by \eqref{eq:Q-formula} and that 
$\partial_z^{-1}=\tau\circ\partial_{\bar{z}}^{-1}\circ\tau$ and similarly
$\partial_{\bar{z}}^{-1}=\tau\circ\partial_z^{-1}\circ\tau$.
This completes the proof of the proposition.
\end{proof}



\begin{lemma}\label{lem:vanishing_momenta}
Assume that $m>1+(2/p)$ and $N\ge 0$. If $u\in\mathcal{Z}^{m,p}_N$ is divergence free then 
$\big(Q(u),1\big)=0$ and $\big(Q(u),\bar{z}\big)=0$.
\end{lemma}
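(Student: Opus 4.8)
\textit{Proof sketch.} The plan is to turn both moments into contour integrals over the circles $\{|z|=R\}$ and let $R\to\infty$. The algebraic starting point is the pointwise identity, valid whenever $u$ is divergence free (so that $u_z$ is purely imaginary, $(u_z)^2=-|u_z|^2$, and $u_{\bar z}\bar u_z=|u_{\bar z}|^2$ since $\bar u_z=\overline{u_{\bar z}}$),
\[
Q(u)=(u_z)^2+u_{\bar z}\bar u_z=|u_{\bar z}|^2-|u_z|^2=\partial_z\big(\bar u\,u_{\bar z}\big)-\partial_{\bar z}\big(\bar u\,u_z\big),
\]
obtained by a direct differentiation (it is nothing but $\tfrac12\divop(uu_z+\bar u u_{\bar z})$, cf. \eqref{eq:Q-divergence}). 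Put $P:=\bar u\,u_{\bar z}$ and $S:=\bar u\,u_z$. By Remark \ref{rem:divergence_free_asymptotics} the coefficients $a_{10},a_{20},a_{11}$ vanish, so Proposition \ref{prop:properties_Z-spaces} and the computation in \eqref{eq:derivative1} give, near infinity, $u_{\bar z}=-a_{01}/\bar z^{2}+(\text{terms decaying faster})$ while $u_z$ decays strictly faster than $1/|z|^{2}$; hence $P=-\bar a_{00}a_{01}/\bar z^{2}+(\text{faster})$, and $S$, $\partial_z P$, $Q(u)$ and $\bar z\,Q(u)$ are integrable on $\C$, so that $(Q(u),1)$ and $(Q(u),\bar z)$ are absolutely convergent integrals (for $N=0$ there is no coefficient $a_{01}$, $Q(u)\in W^{m-1,p}_{\gamma_2}$, and only the first identity is needed in Lemma \ref{LemmaPressureTermAsymp}).

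Before computing I record a constraint reflecting global, not merely asymptotic, incompressibility: $\mathrm{Re}\,a_{01}=0$. Indeed $\divop u\equiv 0$ forces $\int_{B_R}\divop u\,dx\,dy=0$ for every $R$; on the other hand the divergence theorem rewrites this integral as $\tfrac{i}{2}\oint_{|z|=R}\big(u\,d\bar z-\bar u\,dz\big)$, and inserting $u=a_{00}+a_{01}/\bar z+O(1/|z|^{2})$ (recall $a_{10}=0$) this equals $2\pi\,\mathrm{Re}\,a_{01}+O(1/R)$. Letting $R\to\infty$ yields $\mathrm{Re}\,a_{01}=0$.

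Now the moments. From $\partial_z F\,dz\wedge d\bar z=d(F\,d\bar z)$ and $\partial_{\bar z}F\,dz\wedge d\bar z=-d(F\,dz)$, Stokes on $B_R$ gives $\int_{B_R}\partial_z F\,dx\,dy=-\tfrac1{2i}\oint_{|z|=R}F\,d\bar z$ and $\int_{B_R}\partial_{\bar z}F\,dx\,dy=\tfrac1{2i}\oint_{|z|=R}F\,dz$. Since $(Q(u),1)=\int_\C\partial_z P\,dx\,dy-\int_\C\partial_{\bar z}S\,dx\,dy$ and $|P|,|S|=O(1/|z|^{2})$, both circle integrals are $O(1/R)\to 0$, so $(Q(u),1)=0$. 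For the second moment use $\bar z\,\partial_z P=\partial_z(\bar z P)$ and $\bar z\,\partial_{\bar z}S=\partial_{\bar z}(\bar z S)-S$ to write
\[
(Q(u),\bar z)=\int_\C\partial_z(\bar z P)\,dx\,dy-\int_\C\partial_{\bar z}(\bar z S)\,dx\,dy+\int_\C S\,dx\,dy .
\]
The middle term vanishes because $\bar z S$ decays faster than $1/|z|$; using $\bar z P=-\bar a_{00}a_{01}/\bar z+(\text{faster})$ and $\oint_{|z|=R}d\bar z/\bar z=-2\pi i$, the first term equals $-\pi\,\bar a_{00}a_{01}$; and writing $S=\bar u u_z=-\bar u\bar u_{\bar z}=-\tfrac12\partial_{\bar z}(\bar u^{2})$ with $\bar u^{2}=\bar a_{00}^{2}+2\bar a_{00}\bar a_{01}/z+(\text{faster})$ and $\oint_{|z|=R}dz/z=2\pi i$, the third term equals $-\pi\,\bar a_{00}\bar a_{01}$. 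Collecting, $(Q(u),\bar z)=-\pi\bar a_{00}a_{01}-\pi\bar a_{00}\bar a_{01}=-2\pi\,\bar a_{00}\,\mathrm{Re}\,a_{01}=0$ by the constraint above. The one genuine obstacle is that $P$ decays exactly like $1/|z|^{2}$, the borderline rate: the vanishing of $(Q(u),\bar z)$ does \emph{not} follow from a soft ``all boundary terms vanish'' argument, one really obtains the residual $-\pi\bar a_{00}a_{01}$, which is cancelled by $\int_\C S$ only once global incompressibility is exploited through $\mathrm{Re}\,a_{01}=0$.
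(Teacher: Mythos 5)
Your proof is correct for $N\ge 1$, and it takes a genuinely different route from the paper's precisely at the delicate point. The paper first observes that $Q$ involves only derivatives of $u$, so replacing $u$ by $u-a_{00}$ changes neither $Q(u)$ nor the divergence-free condition; after this reduction to $a_{00}=0$ one has $f:=uu_z+\bar u u_{\bar z}=(u^2)_z+(u\bar u)_{\bar z}$ with $u^2,u\bar u=O(1/|z|^2)$ and $f=O(1/|z|^3)$, so \emph{every} boundary term in the Stokes computation vanishes for soft decay reasons. You keep $a_{00}$, use instead the exact-derivative form $Q(u)=\partial_z(\bar u u_{\bar z})-\partial_{\bar z}(\bar u u_z)$, and therefore must confront two borderline boundary contributions, $-\pi\bar a_{00}a_{01}$ and $-\pi\bar a_{00}\bar a_{01}$, which do not vanish individually; their sum is killed by the flux constraint $\mathop{\rm Re}a_{01}=0$, which you correctly extract from $\int_{B_R}\divop u\,dx\,dy=0$. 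Both mechanisms are sound (your contour arithmetic and sign conventions check out), and each buys something: the paper's reduction is shorter and never needs the constraint on $a_{01}$, while your computation is more explicit and records a true fact not stated elsewhere in the paper, namely that global incompressibility forces the coefficient of $1/\bar z$ to be purely imaginary.

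Two caveats. First, the absolute convergence of the separate integrals $\int_\C\partial_z(\bar z P)$, $\int_\C\partial_{\bar z}(\bar z S)$, $\int_\C S$ should be justified from membership in the weighted spaces (e.g.\ $\bar u\,u_{z\bar z}\in L^p_{\gamma_N+2}\subseteq L^1$ and $\bar z\,\partial_zP\in L^1$ because $\gamma_N+1+(2/p)>2$ when $N\ge 1$) rather than from pointwise $o(1/|z|^2)$ bounds, which are borderline for integrability in two dimensions. Second, you establish $(Q(u),\bar z)=0$ only for $N\ge 1$: the lemma asserts it for $N=0$ as well. You are right that this case is never used downstream (Lemma \ref{LemmaPressureTermAsymp} with $N=0$ invokes only $(Q(u),1)=0$), and for $N=0$ the pairing $(Q(u),\bar z)$ is at best conditionally convergent, so the omission is defensible — but strictly speaking your argument does not cover the full statement; the paper disposes of the remaining cases by a density argument.
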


\begin{proof}[Proof of Lemma \ref{lem:vanishing_momenta}]
First, assume that $m>2+(2/p)$ and $N\ge 1$. Since $Q(u)$ does not involve the leading asymptotic term of 
$u\in\mathcal{Z}^{m,p}_N$ we will assume without loss of generality that $u\in\mathcal{Z}^{m,p}_{1,N}$.
Then, the lemma follows easily from the Stokes' theorem and the relation \eqref{eq:Q-divergence}. 
In fact, with $f:=u u_z+\bar{u}u_{\bar{z}}$ we obtain from \eqref{eq:Q-divergence} that
\begin{equation*}
2 \big(Q(u),1\big)=
-\frac{1}{2i}\int_{\C}\divop f\,dz\wedge d\bar{z}
=-\mathop{\rm Im}\left(\int_{\C}f_z\,dz\wedge d\bar{z}\right)
=-\mathop{\rm Im}\left(\lim_{R\to\infty}\oint_{|z|=R}f\,d\bar{z}\right)=0
\end{equation*}
where we used that $f=O\big(1/|z|^3\big)$ and $Q(u)=O\big(1/|z|^4\big)$ as $|z|\to\infty$.
Similarly,
\begin{equation}\label{eq:z-bar momenta}
2 \big(Q(u),\bar{z}\big)=
-\frac{1}{2i}\int_{\C}\bar{z}\divop f\,dz\wedge d\bar{z}
=-\frac{1}{2i}\int_{\C}\bar{z}(f_z+\bar{f}_{\bar{z}})\,dz\wedge d\bar{z}
=\frac{1}{2i}\int_{\C}\bar{f}\,dz\wedge d\bar{z}
\end{equation}
where we used the Stokes' theorem to conclude that
\[
\int_{\C}\bar{z}f_z\,dz\wedge d\bar{z}=\int_{\C}\big(\bar{z}f\big)_z\,dz\wedge d\bar{z}=0
\]
and
\[
\int_{\C}\bar{z}\bar{f}_{\bar{z}}\,dz\wedge d\bar{z}=
\int_{\C}\big((\bar{z}\bar{f})_{\bar{z}}-\bar{f}\big)\,dz\wedge d\bar{z}=-\int_{\C}\bar{f}\,dz\wedge d\bar{z}.
\]
On the other side, since $\divop u=u_z+\bar{u}_{\bar{z}}=0$, we have
\[
f=u u_z+\bar{u}u_{\bar{z}}=\frac{1}{2}(u^2)_z+(u\bar{u})_{\bar{z}}-u\bar{u}_{\bar{z}}
=\frac{1}{2}(u^2)_z+(u\bar{u})_{\bar{z}}+uu_z=(u^2)_z+(u\bar{u})_{\bar{z}}.
\]
Since $u^2,u\bar{u}=O\big(1/|z|^2\big)$ as $|z|\to\infty$ we then conclude again from Stokes' theorem that
$\int_{\C}\bar{f}\,dz\wedge d\bar{z}=0$.
This, together with \eqref{eq:z-bar momenta}, then implies that $\big(Q(u),\bar{z}\big)=0$.
Finally, the case when $1+(2/p)<m\le 2+(2/p)$ and $N=0$ follows by continuity since 
$\mathcal{Z}^{m+1,p}_{N+1}$ is densely embedded in $\mathcal{Z}^{m,p}_N$ and since the map
$Q : \mathcal{Z}^{m,p}_N\to\mathcal{Z}^{m-1,p}_{4,N+3}$ and the inclusions
$\mathcal{Z}^{m-1,p}_{4,N+3}\subseteq W^{m-1,p}_{\gamma_3}\subseteq L^1_1$ are bounded.
\end{proof}

\begin{proposition}\label{PropBijecDynSysDivFreeEulerEq}
Let $m>3+(2/p)$ and  $u_0\in\mathcal{Z}^{m,p}_N$. There is a bijection between solutions of the dynamical system 
\begin{equation}\label{EQDynSys}
\left\{
\begin{array}{l}
(\dt{\varphi},\dt{v})=\Big(v,\big(R_{\varphi}\circ \partial_z^{-1} \circ Q \circ R_{\varphi^{-1}}\big)(v)\Big)\\
(\varphi,v)|_{t=0}=(\idmap, u_0)
\end{array}
\right.
\end{equation}
on $\mathcal{ZD}^{m,p}_N\times\mathcal{Z}^{m,p}_N$ and solutions of \eqref{EQEulerwoDiv}. More specifically,
$(\varphi,v)\in C^1\big([0,T],\mathcal{ZD}^{m,p}_N\times\mathcal{Z}^{m,p}_N\big)$ is a solution of \eqref{EQDynSys}
if and only if 
$u:=v\circ\varphi^{-1}\in C\big([0,T],\mathcal{Z}^{m,p}_N\big)\cap C^1\big([0,T],\mathcal{Z}^{m-1,p}_N\big)$ 
is a solution of \eqref{EQEulerwoDiv}.
\end{proposition}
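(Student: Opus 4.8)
The plan is to implement the change of variables $v=u\circ\varphi$, equivalently $u=v\circ\varphi^{-1}$, and to verify that it conjugates the ODE system \eqref{EQDynSys} into the Euler equation \eqref{EQEulerwoDiv} and back. All the analytic input is already available: the group regularity of $\mathcal{ZD}^{m,p}_N$ (Theorem \ref{th:ZD}) together with the continuity and $C^1$-smoothness of the composition maps (Remark \ref{rem:composition}, Lemma \ref{lem:composition}), the existence and uniqueness of the Lagrangian flow (Lemma \ref{lem:ode}), and the mapping properties of $\partial_z^{-1}$ and $Q$ (Theorem \ref{th:Z->Z-tilde}, Propositions \ref{prop:Q-conjugate}, \ref{prop:cauchy-conjugate}, and \ref{prop:properties_Z-spaces}). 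Throughout one uses the pointwise identity $(du)\cdot u=uu_z+\bar uu_{\bar z}$ (cf. \eqref{eq:euler_complex}), the equality $Q(u)=(u_z)^2+u_{\bar z}\bar u_z=(u_z)^2+|u_{\bar z}|^2$, and the fact that, since $\varphi(t)$ is a diffeomorphism, $\big(\partial_z^{-1}Q(u)\big)\circ\varphi=\big(R_\varphi\circ\partial_z^{-1}\circ Q\circ R_{\varphi^{-1}}\big)(v)$ whenever $u=v\circ\varphi^{-1}$.

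First I would prove the direction ``$\Rightarrow$''. Let $(\varphi,v)\in C^1\big([0,T],\mathcal{ZD}^{m,p}_N\times\mathcal{Z}^{m,p}_N\big)$ solve \eqref{EQDynSys} and set $u:=v\circ\varphi^{-1}$. By Theorem \ref{th:ZD}, $\varphi^{-1}\in C\big([0,T],\mathcal{ZD}^{m,p}_N\big)\cap C^1\big([0,T],\mathcal{ZD}^{m-1,p}_N\big)$, so by Remark \ref{rem:composition} (continuity of composition in the top space, $C^1$-smoothness after dropping one derivative) we get $u\in C\big([0,T],\mathcal{Z}^{m,p}_N\big)\cap C^1\big([0,T],\mathcal{Z}^{m-1,p}_N\big)$ with $u|_{t=0}=u_0$. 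Since $v=u\circ\varphi$, the first equation of \eqref{EQDynSys} reads $\dt\varphi=u\circ\varphi$. Because $m>3+(2/p)$, the composition map $\mathcal{Z}^{m-1,p}_N\times\mathcal{ZD}^{m-2,p}_N\to\mathcal{Z}^{m-2,p}_N$ is $C^1$ (Remark \ref{rem:composition}), so differentiating $v=u\circ\varphi$ and substituting $\dt\varphi=u\circ\varphi$ gives, in $\mathcal{Z}^{m-2,p}_N$,
\[
\dt v=\dt u\circ\varphi+(du\circ\varphi)\cdot(u\circ\varphi)=\big(\dt u+uu_z+\bar uu_{\bar z}\big)\circ\varphi .
\]
The second equation of \eqref{EQDynSys} says $\dt v=\big(\partial_z^{-1}Q(u)\big)\circ\varphi$; comparing the two expressions and composing with $\varphi^{-1}$ yields $\dt u+uu_z+\bar uu_{\bar z}=\partial_z^{-1}\big((u_z)^2+|u_{\bar z}|^2\big)$, i.e.\ \eqref{EQEulerwoDiv} (every term lies in $\mathcal{Z}^{m-1,p}_N$, so the identity holds there), with divergence-free-agnostic initial data $u|_{t=0}=u_0$.

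For ``$\Leftarrow$'', let $u\in C\big([0,T],\mathcal{Z}^{m,p}_N\big)\cap C^1\big([0,T],\mathcal{Z}^{m-1,p}_N\big)$ solve \eqref{EQEulerwoDiv}, let $\varphi\in C^1\big([0,T],\mathcal{ZD}^{m,p}_N\big)$ be the flow $\dt\varphi=u\circ\varphi$, $\varphi|_{t=0}=\idmap$ provided by Lemma \ref{lem:ode}, and put $v:=u\circ\varphi$. The same chain-rule computation gives $v\in C^1\big([0,T],\mathcal{Z}^{m-2,p}_N\big)$ with $\dt v=\big(\dt u+uu_z+\bar uu_{\bar z}\big)\circ\varphi$, and \eqref{EQEulerwoDiv} rewrites this as $\dt v=\big(\partial_z^{-1}Q(u)\big)\circ\varphi$. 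Now $t\mapsto\partial_z^{-1}Q(u(t))$ is continuous into $\mathcal{Z}^{m,p}_{1,N}\subseteq\mathcal{Z}^{m,p}_N$ by Proposition \ref{prop:properties_Z-spaces} and Theorem \ref{th:Z->Z-tilde}, so by Remark \ref{rem:composition} the curve $\dt v=\big(\partial_z^{-1}Q(u)\big)\circ\varphi$ is continuous into $\mathcal{Z}^{m,p}_N$; since also $v=u\circ\varphi$ is continuous into $\mathcal{Z}^{m,p}_N$, the identity $v(t)=v(0)+\int_0^t\dt v(s)\,ds$ has a $\mathcal{Z}^{m,p}_N$-valued continuous integrand, whence $v\in C^1\big([0,T],\mathcal{Z}^{m,p}_N\big)$. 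Finally $\dt\varphi=u\circ\varphi=v$ and, using $u=v\circ\varphi^{-1}$, $\dt v=\big(\partial_z^{-1}Q(u)\big)\circ\varphi=\big(R_\varphi\circ\partial_z^{-1}\circ Q\circ R_{\varphi^{-1}}\big)(v)$, so with $\varphi|_{t=0}=\idmap$, $v|_{t=0}=u_0$ the pair $(\varphi,v)$ solves \eqref{EQDynSys}. The two constructions are mutually inverse: if $(\varphi,v)$ solves \eqref{EQDynSys} then $\dt\varphi=v=u\circ\varphi$ with $\varphi|_{t=0}=\idmap$ identifies $\varphi$ with the flow of $u:=v\circ\varphi^{-1}$ via the uniqueness part of Lemma \ref{lem:ode}, hence $v=u\circ\varphi$; and passing from $u$ to $(\varphi,u\circ\varphi)$ and back returns $u$.

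The one step I expect to be genuinely delicate is the regularity bootstrap for $v=u\circ\varphi$ in the ``$\Leftarrow$'' direction: the straightforward chain rule produces only $v\in C^1\big([0,T],\mathcal{Z}^{m-2,p}_N\big)$, because $u$ is a priori merely $C^1$ into $\mathcal{Z}^{m-1,p}_N$, and recovering the full $\mathcal{Z}^{m,p}_N$-regularity of $\dt v$ relies on reading $\dt v$ off the Euler equation itself and then exploiting that $\partial_z^{-1}Q(u)$ lands in $\mathcal{Z}^{m,p}_{1,N}\subseteq\mathcal{Z}^{m,p}_N$, on which post-composition with $\varphi\in\mathcal{ZD}^{m,p}_N$ still acts continuously (Lemma \ref{lem:composition}, Remark \ref{rem:composition}). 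Everything else is bookkeeping with the mapping properties recorded in Theorem \ref{th:ZD}, Remark \ref{rem:composition}, Theorem \ref{th:Z->Z-tilde} and Proposition \ref{prop:properties_Z-spaces}.
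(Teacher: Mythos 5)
Your proof is correct and follows essentially the same route as the paper: differentiate the conjugation $v=u\circ\varphi$ to identify $\dt{v}$ with $\big(\dt{u}+uu_z+\bar{u}u_{\bar z}\big)\circ\varphi$, and in the converse direction recover the full $\mathcal{Z}^{m,p}_N$-regularity of $v$ from the integral representation with a continuous $\mathcal{Z}^{m,p}_N$-valued integrand. The only cosmetic difference is that the paper performs the $t$-differentiation pointwise (using the embedding $\mathcal{Z}^{m-1,p}_N\subseteq C^1$) and invokes the real analyticity of $(\varphi,v)\mapsto\big(R_\varphi\circ\partial_z^{-1}\circ Q\circ R_{\varphi^{-1}}\big)(v)$ from Remark \ref{rem:smooth_vector_field} to justify the continuity of the integrand, whereas you use the Banach-space chain rule at lowered regularity together with the continuity of composition.
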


\begin{remark}\label{rem:smooth_vector_field}
Since $\big(R_{\varphi}\circ\partial_z^{-1}\circ Q\circ R_{\varphi^{-1}}\big)(v)=
\big(R_{\varphi}\circ\partial_z^{-1}\circ R_{\varphi^{-1}}\big)\circ
\big(R_{\varphi}\circ Q\circ R_{\varphi^{-1}}\big)(v)$ for $(\varphi,v)\in\mathcal{ZD}^{m,p}_N\times\mathcal{Z}^{m,p}_N$,
we obtain from Proposition \ref{prop:cauchy-conjugate} and Proposition \ref{prop:Q-conjugate}
that the right hand side of \eqref{EQDynSys} is well defined real analytic vector field on 
$\mathcal{ZD}^{m,p}_N\times\mathcal{Z}^{m,p}_N$  (cf. \cite{EM}).
\end{remark}

\begin{proof}[Proof of Proposition \ref{PropBijecDynSysDivFreeEulerEq}]
First, assume that 
$(\varphi,v)\in C^1\big([0,T],\mathcal{ZD}^{m,p}_N\times\mathcal{Z}^{m,p}_N\big)$ 
is a solution of the dynamical system (\ref{EQDynSys}). Then, by Theorem \ref{th:ZD},
$u:=v\circ\varphi^{-1}\in C\big([0,T],\mathcal{Z}^{m,p}_N\big)\cap C^1\big([0,T],\mathcal{Z}^{m-1,p}_N\big)$. 
We have
\begin{equation*}
\dt{v}=(u\circ\varphi)^\bolddot=\dt{u}\circ\varphi+u_z\circ\varphi\cdot\dt{\varphi}+
u_{\bar{z}}\circ\varphi\cdot\dt{\overline{\varphi}}=\big(\dt{u}+u_z u+u_{\bar{z}}\bar{u}\big)\circ\varphi. 
\end{equation*}
This gives
\[
\dt{u}+u_z u+u_{\bar{z}}\bar{u}=R_{\varphi^{-1}}(\dt{v})=  
R_{\varphi^{-1}}\Big(\big(R_{\varphi}\circ\partial_z^{-1}\circ Q\circ R_{\varphi^{-1}}\big)(v)\Big)=\partial_z^{-1}Q(u).
\]
Hence, $u$ solves equation \eqref{EQEulerwoDiv}.
Conversely, assume that $u\in C\big([0,T],\mathcal{Z}^{m,p}_N\big)\cap C^1\big([0,T],\mathcal{Z}^{m-1,p}_N\big)$ 
solves equation \eqref{EQEulerwoDiv}. Let $\varphi\in C^1([0,T],\mathcal{ZD}^{m,p}_N)$ be the solution of
$\dt\varphi=u\circ\varphi$, $\varphi|_{t=0}=\idmap$, given by Proposition \ref{lem:ode}.
By the Sobolev embedding $\mathcal{Z}^{m-1,p}_N \subseteq C^1$, we obtain that
$u,\varphi\in C^1\big([0,T]\times\R^2,\R^2\big)$. Then, a pointwise differentiation in $t$ of 
$v:=u\circ\varphi$ gives
\begin{equation*}
\dt{v}=\big(\dt{u}+u_z u+u_{\bar{z}} \bar{u}\big)\circ\varphi=R_{\varphi}\big(\partial_z^{-1}Q (u)\big) =  
\big(R_{\varphi}\circ\partial_z^{-1}\circ Q\circ R_{\varphi^{-1}}\big)(v).
\end{equation*}
By a pointwise integration we then obtain
\begin{equation}\label{eq:integral_relation}
v(t;z,\bar{z})=u_0(z,\bar{z})+\int_0^t \mathcal{E}_2(\varphi, v)\big|_{(s;z,\bar{z})}\,ds
\end{equation}
where 
\[
\mathcal{E}_2(\varphi,v):=\big(R_{\varphi}\circ\partial_z^{-1}\circ Q\circ R_{\varphi^{-1}}\big)(v).
\]
By Remark \ref{rem:smooth_vector_field} the map
\[
\mathcal{E}_2 : (\varphi,v)\mapsto \big(R_{\varphi}\circ\partial_z^{-1}\circ Q\circ R_{\varphi^{-1}}\big)(v),\quad
\mathcal{ZD}^{m,p}\times\mathcal{Z}^{m,p}_N\to \mathcal{Z}^{m,p}_N,
\]
is real analytic. Hence the curve $s\mapsto\mathcal{E}_2(\varphi(s),v(s))$,  $[0,T]\to\mathcal{Z}^{m,p}_{N+1}$,
is continuous and the integral in \eqref{eq:integral_relation} converges in $\mathcal{Z}^{m,p}_N$. 
This shows that $v\in C^1\big([0,T],\mathcal{Z}^{m,p}_N\big)$ satisfies $\dt{v}=\mathcal{E}_2(\varphi,v)$, where
$\varphi\in C^1\big([0,T],\mathcal{ZD}^{m,p}_N\big)$ satisfies $\dt\varphi=u\circ\varphi=v$, $\varphi|_{t=0}=\idmap$. 
This shows that the curve $(\varphi,v)\in C^1\big([0,T],\mathcal{ZD}^{m,p}_N\times\mathcal{Z}^{m,p}_N\big)$
satisfies \eqref{EQDynSys}.
\end{proof}

For any $\rho>0$ denote by $B_{\mathcal{Z}^{m,p}_N}(\rho)$ the ball of radius $\rho$ in 
$\mathcal{Z}^{m,p}_N$ centered at zero. Finally, we prove

\begin{theorem}\label{th:main}
Assume that $m>3+(2/p)$ where $1<p<\infty$ and $N\ge 0$. Then for any $\rho>0$ there exists $T>0$ such that for any 
$u_0\in B_{\mathcal{Z}^{m,p}_N}(\rho)$ the 2d Euler equation has a unique solution 
$u\in C\big([0,T], \mathcal{Z}^{m,p}_N\big)\cap C^1\big([0,T],\mathcal{Z}^{m-1,p}_N\big)$ such that 
$\p\equiv\p(t)\in\mathcal{Z}^{m+1,p}_{1,N}$ for any $t\in[0,T]$. This solution depends continuously on the initial data 
$u_0\in B_{\mathcal{Z}^{m,p}_N}(\rho)$ in the sense that the data-to-solution map
$u_0\mapsto u$, $B_{\mathcal{Z}^{m,p}_N}(\rho)\to C\big([0,T],\mathcal{Z}^{m,p}_N\big)\cap C^1\big([0,T],\mathcal{Z}^{m-1,p}_N\big)$,
is continuous. In addition, the coefficients $a_{kl} : [0,T]\to\C$, $0\le k+l\le N$, in the asymptotic expansion of the solution
\begin{equation*}
u(t)=\chi\sum_{0\le k+l\le N}\frac{a_{kl}(t)}{z^k\bar{z}^l}+f(t),\quad f(t)\in W^{m,p}_{\gamma_N},
\end{equation*}
are {\em holomorphic} functions of $t$ in an open neighborhood of $[0,\infty)$ in $\C$.
\end{theorem}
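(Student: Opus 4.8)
The plan is to reduce Theorem \ref{th:main} to the abstract machinery already assembled: Proposition \ref{PropBijecDynSysDivFreeEulerEq} turns the 2d Euler equation (in the form \eqref{EQEulerwoDiv}, which by Proposition \ref{PropEqEulerDynSys} is equivalent to \eqref{EQeulerCauchyop} with pressure in $\mathcal{Z}^{m+1,p}_{1,N}$) into the dynamical system \eqref{EQDynSys} on the Banach manifold $\mathcal{ZD}^{m,p}_N\times\mathcal{Z}^{m,p}_N$. By Remark \ref{rem:smooth_vector_field}, the right-hand side of \eqref{EQDynSys} is a real analytic vector field on $\mathcal{ZD}^{m,p}_N\times\mathcal{Z}^{m,p}_N$. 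The first step is therefore to invoke the standard existence/uniqueness theorem for ODEs with an analytic (in particular locally Lipschitz) right-hand side in a Banach space: for any $\rho>0$ the vector field is Lipschitz on a bounded neighborhood of $\{\idmap\}\times B_{\mathcal{Z}^{m,p}_N}(\rho)$, so there is a uniform $T>0$ and a unique $C^1$ integral curve $(\varphi,v)\in C^1\big([0,T],\mathcal{ZD}^{m,p}_N\times\mathcal{Z}^{m,p}_N\big)$ through $(\idmap,u_0)$ for every $u_0\in B_{\mathcal{Z}^{m,p}_N}(\rho)$, depending continuously (indeed analytically) on $u_0$. Transporting back through $u:=v\circ\varphi^{-1}$ using Theorem \ref{th:ZD} and Proposition \ref{PropBijecDynSysDivFreeEulerEq} yields the solution $u\in C\big([0,T],\mathcal{Z}^{m,p}_N\big)\cap C^1\big([0,T],\mathcal{Z}^{m-1,p}_N\big)$ of \eqref{EQEulerwoDiv}, and uniqueness of $u$ follows from the bijection in Proposition \ref{PropBijecDynSysDivFreeEulerEq} together with ODE uniqueness. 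The pressure is recovered as $-2\p=\partial_{\bar z}^{-1}\partial_z^{-1}\big((u_z)^2+|u_{\bar z}|^2\big)\in\mathcal{Z}^{m+1,p}_{1,N}$ via Lemma \ref{LemmaPressureTermAsymp}, which also gives the equivalence with \eqref{EQeulerCauchyop}.

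Next I would establish continuous dependence on $u_0$ in the stated norm. The data-to-solution map for \eqref{EQDynSys}, $u_0\mapsto(\varphi,v)$, $B_{\mathcal{Z}^{m,p}_N}(\rho)\to C\big([0,T],\mathcal{ZD}^{m,p}_N\times\mathcal{Z}^{m,p}_N\big)$, is continuous by the analytic (hence $C^1$) dependence of solutions of Banach-space ODEs on initial conditions. Composing with the continuous maps $\varphi\mapsto\varphi^{-1}$ (continuity part of Theorem \ref{th:ZD}) and $(v,\psi)\mapsto v\circ\psi$ (Remark \ref{rem:composition}) shows $u_0\mapsto u$ is continuous into $C\big([0,T],\mathcal{Z}^{m,p}_N\big)$. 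To get continuity into $C^1\big([0,T],\mathcal{Z}^{m-1,p}_N\big)$ one differentiates the relation $u=v\circ\varphi^{-1}$ in $t$ at the level of $\mathcal{Z}^{m-1,p}_N$, using equation \eqref{EQEulerwoDiv} itself ($\dt u=-uu_z-\bar uu_{\bar z}+\partial_z^{-1}Q(u)$, whose right-hand side depends continuously on $u\in\mathcal{Z}^{m,p}_N$ by the Banach algebra property and Theorem \ref{th:Z->Z-tilde}), so $\dt u$ also depends continuously on $u_0$.

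The analyticity in $t$ of the asymptotic coefficients $a_{kl}(t)$ is the substantive new point and where I expect the main work. The idea is to push the dynamical system \eqref{EQDynSys} through the finite-dimensional homomorphism $\hrho$ of Proposition \ref{prop:hA-group}. Differentiating \eqref{eq:hrho_*-representation}, the time-derivative of the asymptotic part $\hrho(\varphi(t))\in\hA_N\subseteq\R^{2M}$ is $\hrho_*(\dt\varphi)=\hrho_*(v)=d_eR_{\hrho(\varphi)}\big(\hrho_*(v\circ\varphi^{-1})\big)$, and since the asymptotic part of $u=v\circ\varphi^{-1}$, the quantity $\hrho_*(u)$, is a polynomial function of $\hrho(\varphi)$ and the asymptotic part of $v$ (by the algebraic structure of $\partial_z^{-1}$, $Q$, and the $\star$-action — Theorem \ref{th:Z->Z-tilde}, Proposition \ref{prop:asymptotic_operator}, and the fact that $R_\varphi,R_{\varphi^{-1}}$ act on asymptotic parts via the polynomial $\star$-product), the pair $\big(\hrho(\varphi(t)),\hrho_*(v(t))\big)\in\R^{2M}\times\R^{2M}$ satisfies a closed, finite-dimensional ODE with real analytic (in fact polynomial/rational with non-vanishing denominator $\mathrm D$) right-hand side. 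Here one must check carefully that the right-hand side of \eqref{EQDynSys}, when projected to asymptotic parts, depends only on asymptotic parts — this is the delicate bookkeeping step, since $\partial_z^{-1}$ in Proposition \ref{prop:asymptotic_operator} involves the moments $(f,\bar z^{k-1})$ of the remainder, not just its asymptotic coefficients; the point is that in \eqref{EQDynSys} the quantity being inverted is $R_\varphi Q R_{\varphi^{-1}}(v)$ whose leading $\chi/(z^2\bar z^2)\cdots$ asymptotic coefficients are polynomial in the data, so $\partial_z^{-1}$ contributes asymptotic coefficients that are polynomial in those, while the moment contributions land in the remainder space. Once this closed finite-dimensional analytic ODE is in hand, the Cauchy–Kovalevskaya/analytic-ODE theorem gives that its solution extends to a holomorphic function of $t$ in a complex neighborhood of $[0,T]$; since $a_{kl}(t)$ are precisely the components of this solution, they are holomorphic in $t$. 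The main obstacle, then, is making rigorous the claim that the asymptotic coefficients evolve by a \emph{closed} system decoupled from the remainder — i.e. verifying that every operation appearing on the right-hand side of \eqref{EQDynSys} acts on the asymptotic part through the polynomial $\star$-structure of $\hA_N$ and the explicit formula for $\partial_z^{-1}$ — together with checking that the resulting finite-dimensional vector field is genuinely real analytic (the denominator $\mathrm D$ has constant leading term $1$ and is non-vanishing, so $1/\mathrm D$ contributes an analytic — indeed rational — dependence, by Lemma \ref{lem:division} applied at the level of the finite-dimensional group).
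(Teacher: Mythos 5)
Your reduction to the dynamical system \eqref{EQDynSys}, the existence/uniqueness/continuous-dependence part, and the recovery of the pressure via Lemma \ref{LemmaPressureTermAsymp} follow the paper's argument. One small difference: rather than arguing that the vector field is Lipschitz uniformly over a neighborhood of $\{\idmap\}\times B_{\mathcal{Z}^{m,p}_N}(\rho)$ (which requires some care, since bounded sets are not compact and the vector field degenerates near the boundary of $\mathcal{ZD}^{m,p}_N$), the paper gets $\rho_0,T_0$ from the local theorem and then reaches arbitrary $\rho$ by the scaling symmetry $u_c(t):=c\,u(ct)$ with $c=\rho/\rho_0$; this is cleaner and you may want to adopt it.

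The holomorphy argument, however, has a genuine gap. Your plan hinges on the claim that $\big(\hrho(\varphi(t)),\hrho_*(v(t))\big)$ satisfies a \emph{closed} finite-dimensional analytic ODE, i.e.\ that the asymptotic part of the right-hand side of \eqref{EQDynSys} depends only on the asymptotic parts of $(\varphi,v)$. This is false: by Proposition \ref{prop:asymptotic_operator}, the operator $\partial_z^{-1}$ contributes asymptotic coefficients equal to the moments $\big(f,\bar{z}^{k-1}\big)$ of its argument, which are integrals over all of $\R^2$ and are not determined by the asymptotic coefficients of $f$. Concretely, Proposition \ref{prop:evolution_coefficients}~(ii) shows that along the flow $a_{0k}(t)=\frac{1}{\pi}\big(\omega_0,\overline{\varphi}(t)^{k-1}\big)$, a quantity depending on the full diffeomorphism $\varphi(t)$ (indeed on the compactly supported part of the vorticity), not on $\hrho(\varphi(t))$ alone; Lemma \ref{lem:a_kl-derivatives} even produces nonzero $\dt a_{k0}(0)$ from initial data with \emph{zero} asymptotic part. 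So the system you want to close does not close, and the "moment contributions land in the remainder space" dismissal is exactly backwards: the moments land in the asymptotic coefficients. The correct (and simpler) route, which the paper takes, is to note that since the vector field of \eqref{EQDynSys} is real analytic on the Banach manifold $\mathcal{ZD}^{m,p}_N\times\mathcal{Z}^{m,p}_N$ (Remark \ref{rem:smooth_vector_field}), its integral curve $t\mapsto\big(\varphi(t),\dt\varphi(t)\big)$ is a real analytic \emph{curve} in that Banach manifold; one then obtains $\mathop{\rm Asymp}\big(u(t)\big)=d_eR_{\hrho(\varphi(t))^{-1}}\big(\mathop{\rm Asymp}(\dt\varphi(t))\big)$ from \eqref{eq:hrho_*-representation}, and real analyticity in $t$ (hence holomorphic extension to a complex neighborhood of the time interval) follows by composing the analytic curve with the real analytic maps $\hrho$, $\mathop{\rm Asymp}$, and the group operations of $\hA$ (Proposition \ref{prop:hA-group}). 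No closed finite-dimensional equation is needed.
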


\begin{proof}[Proof of Theorem \ref{th:main}]
By Proposition \ref{PropEqEulerDynSys} and Proposition \ref{PropBijecDynSysDivFreeEulerEq} there is a bijection between solutions of 
the Euler equation and the real analytic dynamical system \eqref{EQDynSys}. 
By the existence and uniqueness theorem for solutions of an ODE in a Banach space (\cite{LangDiffGeo}) there exist $\rho_0>0$ and 
$T_0 > 0$ such that for any $u_0 \in B_{\mathcal{Z}^{m,p}_N}(\rho_0)$ there exists a unique solution 
$(\varphi, v)\in C^1\big([0,T_0], \mathcal{ZD}^{m,p}_N\times\mathcal{Z}^{m,p}_N\big)$, and therefore by 
Proposition \ref{PropBijecDynSysDivFreeEulerEq} 
$u=v\circ\varphi^{-1}\in C\big([0,T_0],\mathcal{Z}^{m,p}_N\big)\cap C^1\big([0,T_0],\mathcal{Z}^{m-1,p}_N\big)$. 
Since for any $c>0$ the curve $u_c(t) := c u(ct)$, $u_c : [0,T_0/c]\to\mathcal{Z}^{m,p}_N$, 
is a solution of \eqref{EQEulerwoDiv} (and \eqref{EQeulerCauchyop}) with initial data $u_c(0)=c u_0\in B_{\mathcal{Z}^{m,p}_N}(c\rho_0)$, 
we prove the first two statements of the theorem by taking $c=\rho/\rho_0$.

In order to prove the last statement of the theorem, we argue as follows: In view of Remark \ref{rem:smooth_vector_field}, the solution 
$t\mapsto\big(\varphi(t),\dt\varphi(t)\big)$, $[0,T]\to\mathcal{ZD}^{m,p}_N\times\mathcal{ZD}^{m,p}_N$, of the dynamical system
\eqref{EQDynSys} is a real analytic curve. Let $\hrho : \mathcal{ZD}^{m,p}_N\to\hA$ be the homomorphism of the
asymptotic group $\mathcal{ZD}^{m,p}_N$ that assigns to the elements of $\mathcal{ZD}^{m,p}_N$ their asymptotic part --
see Proposition \ref{prop:hA-group}. Denote by $\mathop{\rm Asymp}(w)$ the asymptotic part of an element $w\in\mathcal{Z}^{m,p}_N$.
Then we have from \eqref{eq:hrho_*-representation} that
\begin{align}\label{eq:asymptotics_of_u(t)}
\mathop{\rm Asymp}\big(u(t)\big)&=\mathop{\rm Asymp}\big(\dt\varphi(t)\circ\varphi(t)^{-1}\big)=
d_eR_{\hrho(\varphi(t)^{-1})}\big(\mathop{\rm Asymp}\big(\dt\varphi(t)\big)\big)\nonumber\\
&=d_eR_{\hrho(\varphi(t))^{-1}}\big(\mathop{\rm Asymp}\big(\dt\varphi(t)\big)\big)
\end{align}
where $d_eR_a : \R^{2M}\to\R^{2M}$ for $a\in\hA$ denotes the differential of the right translation 
$R_a : \hA\to\hA$ at the identity element $e\in\hA$ and where the inverse of $\hrho\big(\varphi(t)\big)\in\hA$ is taken in the Lie group $\hA$.
Now the real analyticity of the asymptotic coefficients of the solution $u(t)$ follows from Proposition \ref{prop:hA-group}
and formula \eqref{eq:asymptotics_of_u(t)}.
\end{proof}

In the remainder of this section we prove Proposition \ref{prop:integrals} and Proposition \ref{prop:evolution_coefficients}
stated in the Introduction.  We will prove these propositions for the local in time solutions of the 2d Euler equation 
given by Theorem \ref{th:main}. The general statements then follow from the global in time existence proved in 
Section \ref{sec:global_solutions} (see Theorem \ref{th:main_global}).

\begin{proof}[Proof of Proposition \ref{prop:integrals}]
The proposition follows by comparing the asymptotic terms in \eqref{EQeulerCauchyop}.
In fact, since $\p\in\mathcal{Z}^{m+1,p}_{1,N}$, we obtain that 
$\p_{\bar{z}}\in\overline{\big(\widetilde{\mathcal{Z}}^{m-1,p}_{N}\big)}$.
On the other side, since $u$ is divergence free,
$u_zu+u_{\bar{z}}\bar{u}=-\bar{u}_{\bar{z}}u+u_{\bar{z}}\bar{u}\in\overline{\big(\widetilde{\mathcal{Z}}^{m-1,p}_{N}\big)}$.
Then, by \eqref{EQeulerCauchyop}, 
\[
\dt{u}=-2\p_{\bar{z}}-(u_zu+u_{\bar{z}}\bar{u})\in\overline{\big(\widetilde{\mathcal{Z}}^{m-1,p}_{N}\big)}
\]
where $\widetilde{\mathcal{Z}}^{m-1,p}_{N}\subseteq\mathcal{Z}^{m-1,p}_{2,N}$ for $N\ge 2$ and 
$\widetilde{\mathcal{Z}}^{m-1,p}_{1}\equiv W^{m-1,p}_{\gamma_1}$.
This proves the first statement of the proposition. The second statement follows from \eqref{eq:integral of Q} by comparing 
the asymptotic coefficients in the equation \eqref{EQEulerwoDiv}.
\end{proof}

\begin{proof}[Proof of Proposition \ref{prop:evolution_coefficients}]
Item (i) follows directly from Proposition \ref{prop:integrals}. 
Item (iii) follows from Lemma \ref{lem:a_kl-derivatives} below (see the proof of Corollary \ref{coro:asymptotics_pop-up} below).
Let us prove (ii). By Theorem \ref{th:main} for any $u_0\in\mathcal{Z}^{m,p}_{n,N}$, $3<n\le N+1$, there exists
a unique solution $u\in C\big([0,T], \mathcal{Z}^{m,p}_N\big)\cap C^1\big([0,T],\mathcal{Z}^{m-1,p}_N\big)$
of the 2d Euler equation \eqref{EQeulerCauchyop} (and \eqref{EQEulerwoDiv}) such that $\p\in\mathcal{Z}^{m+1,p}_{1,N}$.
Since $u$ is divergence free, we have that $\divop u=u_z+\bar{u}_{\bar{z}}=0$ and by formula \eqref{eq:curl} in 
Appendix \ref{appendix:complex_form},
\begin{equation}\label{eq:complex_vorticity}
\omega:=i \curlop u\equiv\frac{1}{2}\big(u_z-\bar{u}_{\bar{z}}\big)=u_z.
\end{equation}
By applying the Cauchy operator $\partial_z$ to \eqref{EQEulerwoDiv} we obtain that
$\dt\omega=-\big(uu_z+\bar{u}u_{\bar{z}}\big)_z+\big((u_z)^2+u_{\bar{z}}\bar{u}_z\big)=
-\big(u\,\omega_z+\bar{u}\,\omega_{\bar{z}}\big)$ 
that is the 2d Euler equation in vorticity form,
\begin{equation}\label{eq:euler_in_vorticity_form}
\dt\omega+u\,\omega_z+\bar{u\,}\omega_{\bar{z}}=0.
\end{equation}
This and the arguments used to prove the analogous formula \eqref{eq:pointwise} then imply that 
$\partial_t\big(\omega(t)\circ\varphi(t)\big)=0$ for any $t\in[0,T]$.
In particular, for any $t\in[0,T]$ we have that 
\begin{equation}\label{eq:preservation_of_vorticity_in_Z}
\omega(t)=\omega_0\circ\psi(t)
\end{equation}
where $\psi(t):=\varphi(t)^{-1}$, $\varphi\in C^1\big([0,T],\mathcal{ZD}^{m,p}_N\big)$, and 
$\omega_0=\partial_zu_0\in\widetilde{\mathcal{Z}}^{m-1,p}_{N+1}\cap\mathcal{Z}^{m-1,p}_{n+1,N+1}$,
\begin{equation}\label{eq:specific_form}
\omega_0=\frac{\chi}{z^2}\sum_{n-1\le k+l\le N-1}\frac{b_{kl}^0}{z^k\bar{z}^l}+f_0,\quad f_0\in W^{m-1,p}_{\gamma_{N+1}}.
\end{equation}
It follows from \eqref{eq:improved_composition}, \eqref{eq:specific_form}, and the fact that
$\mathcal{ZD}^{m,p}_N$ is a group that
\[
\partial_zu(t)=\omega_0\circ\psi(t)=
\frac{\chi}{z^2}\sum_{n-1\le k+l\le N-1}\frac{b_{kl}(t)}{z^k\bar{z}^l}+f(t),\quad f(t)\in W^{m-1,p}_{\gamma_{N+1}},
\]
for any $t\in[0,T]$. This, together with Proposition \ref{prop:asymptotic_operator} and Lemma \ref{lem:vanishing_momenta}, then implies that
\begin{equation}\label{eq:new_asymptotics}
u(t)={\chi}\sum_{3\le k\le n-1}\frac{a_{0k}(t)}{\bar{z}^k}+g(t),\quad g(t)\in\mathcal{Z}^{m,p}_{n,N}
\end{equation}
where
\begin{equation*}
a_{0k}(t):=\frac{1}{\pi}\big(\omega_0\circ\varphi(t)^{-1},\bar{z}^{k-1}\big)
=\frac{1}{\pi}\int_{\R^2}\omega_0\,\big(\overline{\varphi}(t)\big)^{k-1}\det\big(d\varphi(t)\big)\,dx dy
=\frac{1}{\pi}\int_{\R^2}\omega_0\,\big(\overline{\varphi}(t)\big)^{k-1}\,dx dy,
\end{equation*}
since $\varphi(t)$ is a volume preserving diffeomorphism.
The last statement follows easily from the fact that $\dt\varphi(t)=u(t)\circ\varphi(t)$ since $u(t)$ is divergence free.
This completes the proof of the proposition.
\end{proof}

Finally, we prove the following generalization of Example 2 in \cite[Appendix B]{McOTo3}.

\begin{lemma}\label{lem:a_kl-derivatives}
Assume that $u_0\in \mathcal{Z}^{m,p}_{n,N}$ where $3<n\le N+1$ and $m>3+(2/p)$.
Then, the solution $u\in C\big([0,\infty),\mathcal{Z}^{m,p}_N\big)\cap C^1\big([0,\infty),\mathcal{Z}^{m-1,p}_N\big)$
of the 2d Euler equation given by Theorem \ref{th:main_introduction} is of the form \eqref{eq:new_asymptotics}
where
\begin{equation}\label{eq:a_kl-derivatives}
{\dt a}_{k0}(0)=\frac{(k-1)(k-2)}{2\pi}\int_{\R^2}{\bar u}_0^2\,\bar{z}^{k-3}\,dx dy,\quad 3\le k\le n.
\end{equation}
In addition, for any $\epsilon>0$ and $k\ge 3$ there exists a divergence free $u_0\in C^\infty_c$ with support in 
the annulus $\epsilon\le|z|\le 2\epsilon$ in $\C$ such that $\dt a_{k0}(0)\ne 0$.
\end{lemma}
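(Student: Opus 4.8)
The plan is to compute $\dt a_{k0}(0)$ directly from the integral formula for the asymptotic coefficients obtained in the proof of Proposition \ref{prop:evolution_coefficients}. Recall that there we showed $a_{0k}(t)=\frac{1}{\pi}\int_{\R^2}\omega_0\,\big(\overline{\varphi}(t)\big)^{k-1}\,dx\,dy$, and by the conjugation symmetry (taking complex conjugates, using that $\tau$ intertwines $\partial_z$ and $\partial_{\bar z}$) the corresponding formula for $a_{k0}(t)$ reads $a_{k0}(t)=\frac{1}{\pi}\big(\bar\omega_0,\varphi(t)^{k-1}\big)=\frac{1}{\pi}\int_{\R^2}\bar\omega_0\,\varphi(t)^{k-1}\,dx\,dy$, where $\omega_0=\partial_zu_0=i\curlop u_0$ is the initial complex vorticity. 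First I would differentiate this in $t$ at $t=0$, using $\dt\varphi|_{t=0}=u_0$ and $\varphi(0)=\idmap$ (so $\varphi(0)=z$), which gives
\begin{equation*}
\dt a_{k0}(0)=\frac{k-1}{\pi}\int_{\R^2}\bar\omega_0\,z^{k-2}\,u_0\,dx\,dy.
\end{equation*}
Now substitute $\bar\omega_0=\overline{\partial_zu_0}=\partial_{\bar z}\bar u_0$ and integrate by parts in $\bar z$ (Stokes' theorem, using the decay of $u_0\in\mathcal{Z}^{m,p}_{n,N}$ with $n>3$, so $u_0^2\,z^{k-3}$ decays fast enough to kill the boundary term): $\partial_{\bar z}(z^{k-2})=0$, so the derivative lands on $u_0$, producing $-\frac{k-1}{\pi}\int \bar u_0\,z^{k-2}\,\partial_{\bar z}u_0\,dx\,dy$. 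Since $u_0$ is divergence free, $\partial_{\bar z}u_0=-\overline{\partial_zu_0}=-\partial_{\bar z}\bar u_0$ — wait, more usefully $\divop u_0=u_{0,z}+\bar u_{0,\bar z}=0$ gives $\overline{u_{0,z}}=-u_{0,\bar z}$ is not quite it; rather one uses $\partial_{\bar z}u_0=-\partial_{\bar z}\overline{?}$. The cleaner route: write everything in terms of $\bar u_0$ from the start. Since $\divop u_0=0$ means $\partial_z u_0=-\partial_{\bar z}\bar u_0$, we have $\bar\omega_0=\overline{\partial_z u_0}=-\overline{\partial_{\bar z}\bar u_0}=-\partial_z u_0$... this is circular, so instead I would simply use $\bar\omega_0=\partial_{\bar z}\bar u_0$ directly and, after the first integration by parts bringing $\partial_{\bar z}$ onto $u_0$, use $\partial_{\bar z}u_0=-\overline{\partial_z u_0}$ is false; the correct identity from $\overline{\divop u_0}=0$ is $\bar u_{0,\bar z}+u_{0,z}=0$, i.e. $u_{0,z}=-\overline{u_{0,\bar z}}$. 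Hmm — the point is that after integrating by parts once, $\partial_{\bar z}u_0$ appears, and then integrating by parts a second time (now moving $\partial_z$ off $\bar u_0^2$ after recognizing $\bar u_0\,\partial_{\bar z}u_0$ relates to a total derivative via the divergence-free condition) collapses everything to $\frac{(k-1)(k-2)}{2\pi}\int \bar u_0^2\,z^{k-3}\,dx\,dy$; taking one more conjugation (the formula in the statement has $\bar z^{k-3}$, consistent with $a_{k0}$ versus $a_{0k}$ bookkeeping) yields \eqref{eq:a_kl-derivatives}. I would present the two integrations by parts carefully, each justified by Stokes' theorem on $|z|\le R$ with $R\to\infty$ and the decay $u_0=O(1/|z|^{n})$, $n\ge 4$.

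For the second assertion — existence of a compactly supported divergence free $u_0$ in the annulus $\epsilon\le|z|\le 2\epsilon$ with $\dt a_{k0}(0)\ne 0$ — I would first reduce to $\epsilon=1$ by the scaling $u_0(z)\mapsto u_0(z/\epsilon)$, which rescales the integral in \eqref{eq:a_kl-derivatives} by a nonzero power of $\epsilon$ and preserves both the support condition (up to dilation) and the divergence-free condition. Then it suffices to exhibit one divergence free $u_0\in C^\infty_c$ supported in $1\le|z|\le 2$ with $\int_{\R^2}\bar u_0^2\,\bar z^{k-3}\,dx\,dy\ne 0$. Any divergence free $u_0$ comes from a stream function: in complex notation, divergence free fields are exactly $u_0=2i\,\partial_{\bar z}\Phi$ for real $\Phi\in C^\infty_c$ — more precisely $u_0=-2i\psi_{\bar z}$ for a real scalar $\psi$, giving $\curlop u_0=\Delta\psi$. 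So I would pick a real $\psi\in C^\infty_c$ supported in $1\le|z|\le 2$, set $u_0:=-2i\psi_{\bar z}$ (automatically divergence free and compactly supported in the annulus), and compute $\int \bar u_0^2\,\bar z^{k-3}\,dx\,dy=-4\int\overline{\psi_{\bar z}}^{\,2}\,\bar z^{k-3}\,dx\,dy=-4\int\psi_z^2\,\bar z^{k-3}\,dx\,dy$. This last integral is a (conjugate-)holomorphic-weighted quadratic functional of $\psi$ that is manifestly not identically zero on $C^\infty_c$ of the annulus — e.g. one can argue by contradiction: if it vanished for every such $\psi$, then polarizing would force $\int \psi_z\,\eta_z\,\bar z^{k-3}=0$ for all $\psi,\eta$, hence $\partial_z(\bar z^{k-3}\,\partial_z\psi)=0$ on the annulus for all $\psi$, which is absurd. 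Alternatively, and more concretely, I would just take $\psi$ a real function whose Fourier mode in $\vartheta$ and radial profile are chosen so that the angular integral $\int_0^{2\pi}e^{i(\ldots)\vartheta}\,d\vartheta$ survives; writing $\psi=g(r)\cos(j\vartheta)$ for a suitable integer $j$ and bump profile $g$ supported in $[1,2]$, the integral reduces to a nonzero radial integral times a nonzero trigonometric integral.

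The main obstacle is the bookkeeping in the two integrations by parts: one must carefully track which of $u_0,\bar u_0$ and $z^k,\bar z^k$ appear, verify the boundary terms vanish (this needs $n>3$, i.e. $u_0$ decaying faster than $1/|z|^3$, which holds since $u_0\in\mathcal{Z}^{m,p}_{n,N}$ with $3<n$), and correctly invoke the divergence-free condition $\divop u_0=0$ to convert the mixed term $\bar u_0\,\partial_{\bar z}u_0$ (or its conjugate) into a pure $\partial_z(\bar u_0^2)$-type total derivative. A secondary, minor point is making the scaling reduction rigorous and confirming the scaling exponent is nonzero so that nonvanishing is preserved. Everything else — Stokes' theorem, the stream-function representation, and the non-degeneracy of the resulting quadratic functional — is routine.
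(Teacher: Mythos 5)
Your overall route is the paper's: differentiate the integral formula $\frac{1}{\pi}\big(\omega_0,\overline{\varphi}(t)^{k-1}\big)$ for the asymptotic coefficient at $t=0$ using $\dt\varphi|_{t=0}=u_0$, reduce to the stated quadratic expression via the divergence-free condition and Stokes' theorem, and produce the nonvanishing example from a compactly supported stream function. Two steps need repair, however. First, the integration-by-parts bookkeeping as you describe it does not close: after moving $\partial_{\bar z}$ onto $u_0$ you are left with $\bar u_0\,\partial_{\bar z}u_0$, which is not a perfect derivative and which the divergence-free condition does not convert into one (your text visibly stalls at exactly this point, and your intermediate answer $\int \bar u_0^2\, z^{k-3}$ mixes conjugates inconsistently). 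The clean order is the reverse of yours: use $\divop u_0=0$ \emph{first} to write $\omega_0=\partial_z u_0=-\partial_{\bar z}\bar u_0$, so that $\omega_0\bar u_0=-\tfrac12\partial_{\bar z}(\bar u_0^2)$ is already a perfect $\bar z$-derivative, and then integrate by parts once against $\bar z^{k-2}$; this yields \eqref{eq:a_kl-derivatives} directly. Relatedly, there is no separate ``conjugate formula'' to set up: the coefficient in the lemma is the one multiplying $1/\bar z^k$ in \eqref{eq:new_asymptotics} (the coefficients of $1/z^k$ vanish identically for divergence-free data by Remark \ref{rem:divergence_free_asymptotics}); the index ordering is simply inconsistent between the proposition and the lemma.

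Second, your ``more concrete'' choice $\psi=g(r)\cos(j\vartheta)$ fails in general: in polar coordinates the integrand $\psi_z^2\,\bar z^{k-3}$ carries only the angular factors $e^{i(\pm 2j-(k-1))\vartheta}$ and $e^{-i(k-1)\vartheta}$, so the angular integral survives only if $2j=k-1$, which is impossible for even $k$; for a single angular mode and even $k$ the integral is exactly zero. This is precisely why the paper's Hamiltonian $H_\ell=(z^\ell+\bar z^\ell)a(z\bar z)+b(z\bar z)$ superposes the mode $\ell=k-1$ with a radial mode, so that the \emph{cross} term survives the angular integration. Your polarization argument, on the other hand, is correct and suffices on its own: if $\int\psi_z\eta_z\,\bar z^{k-3}\,dx\,dy=0$ for all real $\psi,\eta\in C^\infty_c$ of the annulus, then $\partial_z\big(\bar z^{k-3}\partial_z\psi\big)\equiv 0$ there, which already fails for a radial bump $\psi=h(z\bar z)$ since then $\partial_z\big(\bar z^{k-3}\partial_z\psi\big)=h''(z\bar z)\,\bar z^{k-1}$. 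The scaling reduction to $\epsilon=1$ is fine: the integral picks up the nonzero factor $\epsilon^{k-1}$ and the divergence-free and support conditions are preserved.
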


\begin{proof}[Proof of Lemma \ref{lem:a_kl-derivatives}]
By Proposition \ref{prop:evolution_coefficients} the solution of the 2d Euler equation has the form \eqref{eq:new_asymptotics}
where $a_{k0}(t)=\frac{1}{\pi}\int_{\R^2}\omega_0\,\big(\overline{\varphi}(t)\big)^{k-1}\,dx dy$.
Since $u\in C\big([0,\infty),\mathcal{Z}^{m,p}_N\big)\cap C^1\big([0,\infty),\mathcal{Z}^{m-1,p}_N\big)$, we can differentiate
under the integral sign to obtain that for $3\le k\le n$,
\begin{align*}
{\dt a}_{k0}(0)&=\frac{k-1}{\pi}\int_{\R^2}\omega_0\bar{u}_0\,\bar{z}^{k-2}\,dx dy
=-\frac{k-1}{\pi}\int_{\R^2}\big(\partial_{\bar{z}}\bar{u}_0\big)\,\bar{u}_0\,\bar{z}^{k-2}\,dx dy\\
&=\frac{(k-1)(k-2)}{2\pi}\int_{\R^2}{\bar u}_0^2\,\bar{z}^{k-3}\,dx dy,
\end{align*}
where we used that $\dt\varphi=u(t)\circ\varphi(t)$, $\partial_z u_0=-\partial_{\bar{z}}\bar{u}_0$,
and the Stokes' theorem. Let us now prove the last statement of the lemma.
To this end, for any $k\ge 3$ we will construct a real valued Hamiltonian $H=H(x,y)$ with compact support on $\R^2$ so that
its Hamiltonian vector field $X_H\equiv u_0\,\partial_z+\bar{u}_0\,\partial_{\bar{z}}$ corresponding to the canonical form 
$dx\wedge dy$ on $\R^2$ satisfies 
\begin{equation}\label{eq:momenta}
\int_{\R^2}u_0^2\,z^{k-3}\,dx dy\ne 0.
\end{equation}
In fact, choose $a,b\in C^\infty_c(\R,\R)$ that are not identically equal to zero with support in $\epsilon\le\rho\le 2\epsilon$.
For any $\ell\ge 2$ consider the Hamiltonian
\begin{equation}\label{eq:Hamiltonian}
H\equiv H_\ell(z,\bar{z}):=\big(z^\ell+\bar{z}^\ell\big)a(z\bar{z})+b(z\bar{z}).
\end{equation}
Then, $X_H=2i (\partial_{\bar{z}}H)\,\partial_z-2i(\partial_z H)\,\partial_{\bar{z}}$, and hence, $u_0=2i(\partial_{\bar{z}}H)$.
Note that this vector field is automatically divergence free.
We have,
\[
(\partial_{\bar{z}}H_\ell)=a'z^{\ell+1}+\big(\ell a+r^2a'\big)\bar{z}^{\ell-1}+b' z
\]
where $a\equiv a(z\bar{z})$, $a'\equiv a'(z\bar{z})$, similarly for $b$ and $b'$, and $(\cdot)'$ denotes the first derivative
with respect to the variable $\rho$. This implies that for $\ell\ge 2$,
\[
(\partial_{\bar{z}}H_\ell)^2=\big(\ell a+r^2a'\big)^2\bar{z}^{2(\ell-1)}
+2b'\big(\ell a+r^2a'\big)r^2\bar{z}^{\ell-2}
+...\,,
\]
where $...$ stands for terms of the form $c(r) z^n$ where $n\ge 1$ and $c$ is an expression
depending only on the radius $r\equiv|z|$. Then, by passing to polar coordinates in the double integral, 
we obtain that for $\ell\ge 2$,
\begin{align}
\int_{\R^2}(\partial_{\bar{z}}H_\ell)^2\, z^{\ell-2}\,dx dy
&=2\pi\int_0^\infty b'(r^2)\big(\ell a(r^2)+r^2 a'(r^2)\big)r^{2\ell-1}\,dr
=\pi\int_0^\infty b'\big(\ell a+\rho a'\big)\rho^{\ell-1}\,d\rho\nonumber\\
&=\pi\int_0^\infty b'\big(a\rho^\ell\big)'\,d\rho,\label{eq:ell-momentum}
\end{align}
where $\rho=r^2$. By taking $b:=a\rho^\ell$ we then obtain from \eqref{eq:ell-momentum}
that $\int_{\R^2}(\partial_{\bar{z}}H_\ell)^2\, z^{\ell-2}\,dx dy>0$ for $\ell\ge 2$.
By combining this with \eqref{eq:momenta} we conclude the proof of the lemma.
\end{proof}

Let us now prove Corollary \ref{coro:asymptotics_pop-up}.

\begin{proof}[Proof of Corollary \ref{coro:asymptotics_pop-up}]
For any $k\ge 3$ and a divergence free vector field $w\in\accentset{\circ}{S}$ denote 
\[
B_k(w):=\int_{\R^2}w^2 z^{k-3}\,dx dy.
\]
Note that $B_k : \accentset{\circ}{S}\times\accentset{\circ}{S}\to\C$ is a bounded quadratic functional on $\C$.
This together with Lemma \ref{lem:a_kl-derivatives} implies that for any $k\ge 3$ the set 
\[
\mathcal{Z}_k:=\big\{w\in\accentset{\circ}{S}\,\big|\, B_k(w)=0\big\}
\]
is a closed nowhere dense set in $\accentset{\circ}{S}$. Since $\accentset{\circ}{S}$ is a closed set inside the 
complete metric space $S$, we conclude from the Baire category theorem that the set
\[
\mathcal{N}:=\accentset{\circ}{S}\setminus\big(\bigcup_{k\ge 3}\mathcal{Z}_k\big)
\]
is dense in $\accentset{\circ}{S}$. It now follows from \eqref{eq:a_kl-derivatives} and the definition of the set
$\mathcal{N}$ that for any $u_0\in\mathcal{N}$ and for any $k\ge 3$ we have that
\[
{\dt a}_{0k}(0)\ne 0.
\]
Since the functions $a_{0k} : \R\to\C$ extend to holomorphic functions, we then conclude that 
$a_{0k}(t)\ne 0$ for almost any $t>0$. This completes the proof of Corollary \ref{coro:asymptotics_pop-up}.
\end{proof}

\section{Global existence in $\mathcal{Z}^{m,p}_N$}\label{sec:global_solutions}
In this section we prove that the solutions of the 2d Euler equation given by Theorem \ref{th:main} exist for all time $t\ge 0$. 
To this end, we first recall that for $m\ge 0$ the little H\"older space $c^{m,\gamma}_b$ is the closure of 
$C^{\infty}_b$ in the H\"older spaces $C^{m,\gamma}_b\equiv C^{m,\gamma}_b(\R^2,\R^2)$. The space $C^{m,\gamma}_b(\R^2,\R)$ consists 
of functions in $f\in C^m_b(\R^2,\R)$ with finite H\"older semi-norms $[D^\alpha f]_\gamma$ for any multi-index $\alpha\in\Z_{\ge 0}^2$ 
such that $|\alpha|=m$. The H\"older semi-norm is defined as
\[
[f]_\gamma:=\sup_{{\rm x}\ne{\rm y}}\frac{\big|f({\rm x})-f({\rm y})\big|}{|{\rm x}-{\rm y}|^\gamma}
\]
and the norm in $C^{m,\gamma}_b(\R^2,\R)$ is $|f|_{m,\alpha}:=|f|_m+\max_{|\alpha|=m}[D^\alpha f]_\gamma$ where
$|f|_m$ is the norm in the space $C^m_b(\R^2,\R)$ of $C^m$-smooth functions on $\R^2$ with bounded derivatives of order $\le m$.
(For $\gamma=0$ we have that $C^m_b\equiv C^{m,0}_b=c^{m,0}_b$.)
For $m\ge 1$ consider the group of diffeomorphism of $\R^2$ of class $c^{m,\gamma}$,
\begin{align*}
\diff^{m,\gamma}(\R^2):=\big\{\varphi=\idmap+u\,
\big|\, u\in c^{m,\gamma}_b\,\text{\rm and }\,\exists\,\varepsilon>0\,\,\text{\rm such that}\,\det(I+du)>\varepsilon\big\}.
\end{align*}
By Theorem 3.1 in \cite{ST1} (cf. \cite{MY}) the set $\diff^{m,\gamma}(\R^2)$ is a topological group with additional regularity properties 
of the composition and the inversion of diffeomorphisms similar to the ones in Theorem \ref{th:ZD}.

\begin{remark}
Note that if in the definition of $\diff^{m,\gamma}(\R^2)$ with $0<\gamma<1$ above we replace 
the  little H\"older space $c^{m,\gamma}_b$ by the H\"older space $C^{m,\gamma}_b$ then the composition
of diffeomorphisms in the resulting group will {\em not} be continuous (see e.g. Remark 3.1 in \cite{ST1}).
This is the reason for the ill posedness of the Euler equation in the H\"older space $C^{m,\gamma}_b$ (see \cite{MY}).
\end{remark}

\medskip

The following lemma follows from Remark \ref{rem:improved_composition} (see formula \eqref{eq:improved_composition}), 
Proposition \ref{prop:properties_Z-spaces} (iii), and Lemma 6.5 in \cite{McOTo2}.

\begin{lemma}\label{lem:longer_expansion}
Assume that $m>1+(2/p)$ with $1<p<\infty$. Then, for any $u\in\mathcal{Z}^{m,p}_{N+2}$ and for any $\varphi\in\mathcal{ZD}^{m,p}_N$ we 
have that $u\circ\varphi\in\mathcal{Z}^{m,p}_{N+2}$. Moreover, the composition map $(u,\varphi)\mapsto u\circ\varphi$, 
$\mathcal{Z}^{m,p}_{N+2}\times\mathcal{ZD}^{m,p}_N\to\mathcal{Z}^{m,p}_{N+2}$, is continuous.
If, in addition, $u\in\widetilde{\mathcal{Z}}^{m,p}_{N+2}$ then $u\circ\varphi\in\widetilde{\mathcal{Z}}^{m,p}_{N+2}$.
\end{lemma}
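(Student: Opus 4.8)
The plan is to reduce the statement to results already established for the shorter expansion and to invoke the cited results from \cite{McOTo2}. First I would write $u\in\mathcal{Z}^{m,p}_{N+2}$ as
\[
u=\chi\!\!\sum_{0\le k+l\le N+2}\frac{a_{kl}}{z^k\bar{z}^l}+f,\quad f\in W^{m,p}_{\gamma_{N+2}},
\]
and treat the monomial terms $\chi/(z^k\bar{z}^l)$ and the remainder $f$ separately, exploiting that $\mathcal{Z}^{m,p}_{N+2}$ is a Banach algebra (Proposition \ref{prop:properties_Z-spaces} (iii)) so that it suffices to handle $u=\chi/z$, $u=\chi/\bar{z}$, and $u\in W^{m,p}_{\gamma_{N+2}}$. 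For $u=\chi/z$ and $\varphi=z+v\in\mathcal{ZD}^{m,p}_N$, formula \eqref{eq:improved_composition} from Remark \ref{rem:improved_composition} already gives $u\circ\varphi=\frac{\chi}{z}(1+g)$ with $g\in\mathcal{Z}^{m,p}_{1,N+1}$; since $\mathcal{Z}^{m,p}_{1,N+1}\subseteq\mathcal{Z}^{m,p}_{1,N+2}$ by Proposition \ref{prop:properties_Z-spaces} (i), the algebra property yields $u\circ\varphi\in\mathcal{Z}^{m,p}_{N+2}$. The case $u=\chi/\bar{z}$ is symmetric, and then by the algebra property the monomials $\chi/(z^k\bar{z}^l)$ with $k+l\le N+2$ compose into $\mathcal{Z}^{m,p}_{N+2}$. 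For the remainder $f\in W^{m,p}_{\gamma_{N+2}}$, the statement $f\circ\varphi\in W^{m,p}_{\gamma_{N+2}}$ follows from Lemma 6.5 in \cite{McOTo2} (the analogue of Corollary 6.1 (b) used in the proof of Lemma \ref{lem:composition}), using $m>1+(2/p)$ and the fact that $\varphi=\idmap+v$ with $v\in\mathcal{Z}^{m,p}_N$.

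For the continuity of $(u,\varphi)\mapsto u\circ\varphi$, $\mathcal{Z}^{m,p}_{N+2}\times\mathcal{ZD}^{m,p}_N\to\mathcal{Z}^{m,p}_{N+2}$, I would combine linearity in $u$ (so that continuity in $u$ is immediate from the boundedness of $u\mapsto u\circ\varphi$ obtained above, uniformly on a neighborhood of a fixed $\varphi$) with the continuity in $\varphi$ coming from the corresponding statements in \cite{McOTo2}: the map $\varphi\mapsto 1/(1+v/z)$ in \eqref{eq:expansion1} depends continuously on $\varphi$ by the argument in Lemma \ref{lem:division} applied uniformly on $B^c_R$, the cut-off composition $\chi\circ\varphi$ depends continuously on $\varphi$ by Corollary 6.1 (b) in \cite{McOTo2}, and $f\circ\varphi$ depends continuously on the pair $(f,\varphi)$ by Lemma 6.5 in \cite{McOTo2}. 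Assembling these pieces through the Banach algebra multiplication and the finite sum over $0\le k+l\le N+2$ gives joint continuity.

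Finally, the case $u\in\widetilde{\mathcal{Z}}^{m,p}_{N+2}$ follows by the same argument: such a $u$ is a finite sum of terms of the form $\frac{\chi}{z^2}\cdot\frac{1}{z^k\bar{z}^l}$ with $0\le k+l\le N$ plus a remainder in $W^{m,p}_{\gamma_{N+2}}$, and since $\widetilde{\mathcal{Z}}^{m,p}_{N+2}$ is a closed ideal in $\mathcal{Z}^{m,p}_{N+2}$, the improved composition formula \eqref{eq:improved_composition} keeps the leading factor $\chi/z^2$ intact up to an element of $\mathcal{Z}^{m,p}_{1,N+3}$, so $u\circ\varphi$ lands back in $\widetilde{\mathcal{Z}}^{m,p}_{N+2}$. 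The main obstacle I anticipate is bookkeeping the weight and order indices consistently when passing from the monomials to their images under \eqref{eq:improved_composition}—i.e.\ checking that the perturbative factors $g\in\mathcal{Z}^{m,p}_{1,N+1}$ are of high enough order that the products stay within $\mathcal{Z}^{m,p}_{N+2}$ rather than a coarser space—but this is exactly the content of Proposition \ref{prop:properties_Z-spaces} (iii) together with Lemma 6.5 in \cite{McOTo2}, so no genuinely new estimate is needed.
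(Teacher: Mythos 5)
Your proposal is correct and follows exactly the route the paper intends: the paper gives no written proof of this lemma beyond stating that it follows from Remark \ref{rem:improved_composition} (formula \eqref{eq:improved_composition}), Proposition \ref{prop:properties_Z-spaces} (iii), and Lemma 6.5 in \cite{McOTo2}, which are precisely the three ingredients you assemble. Your index bookkeeping (multiplying the monomials of order $k+l\ge 1$ by the correction in $\mathcal{Z}^{m,p}_{1,N+1}$ to land in $\mathcal{Z}^{m,p}_{N+2}$, and retaining the $\chi/z^2$ factor for the $\widetilde{\mathcal{Z}}^{m,p}_{N+2}$ case) is consistent with Proposition \ref{prop:properties_Z-spaces} (iii).
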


\begin{remark}\label{rem:longer_expansion}
The lemma also holds with $\mathcal{Z}^{m,p}_{N+2}$ replaced by $\mathcal{Z}^{m,p}_{N+1}$ and
$\widetilde{\mathcal{Z}}^{m,p}_{N+2}$ replaced by $\widetilde{\mathcal{Z}}^{m,p}_{N+1}$.
\end{remark}

Recall that for $m\ge 0$, $1<p<\infty$, and any weight $\delta\in\R$,
\[
H^{m,p}_\delta(\R^2,\R):=\big\{f\in L^p_{loc}(\R^2,\R)\,\big|\,
\x^\delta D^\alpha f\in L^p\,\text{\rm for }\,|\alpha|\le m \big\},\quad
D^\alpha\equiv\partial_{x_1}^{\alpha_1}\partial_{x_2}^{\alpha_2}.
\]
Note that for $m>2/p$ and non-negative weights $\delta\ge 0$ the space $H^{m,p}_\delta(\R^2,\R)$ is a Banach algebra
with respect to pointwise multiplication of functions (see e.g. Proposition 2.1 in \cite{McOTo2}). 
We have the following lemma.

\begin{lemma}\label{lem:sobolev-holder}
For $0\le\gamma<1$ and any weight $\delta\in\R$ and $1<p<\infty$ we have that the composition map 
\[
(u,\varphi)\mapsto u\circ\varphi,\quad L^p_\delta\times\diff^{1,\gamma}(\R^2)\to L^p_{\delta},
\] 
is continuous. More generally, for any integer $m\ge 1$ and $0\le k\le m$ we have that
the composition map $(u,\varphi)\mapsto u\circ\varphi$, 
$H^{k,p}_\delta\times\diff^{m,\gamma}(\R^2)\to H^{k,p}_{\delta}$, 
is continuous.
\end{lemma}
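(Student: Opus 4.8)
The plan is to argue by induction on the regularity exponent $k$, with the case $k=0$ --- continuity of the composition on the weighted Lebesgue space $L^p_\delta$ --- as the analytic core, and the general case obtained by differentiating and invoking the module structure of the weighted Sobolev spaces over $C^{k-1}_b$.

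For $k=0$, write $\varphi=\idmap+w$ with $w\in c^{1,\gamma}_b$ and $\det(I+dw)>\varepsilon$ on $\R^2$. Since $|w|$ is bounded, $\x$ and $\langle\varphi({\rm x})\rangle$ are comparable with constants depending only on $\|w\|_\infty$, hence $\x^{\delta}\le C_0^{|\delta|}\langle\varphi({\rm x})\rangle^{\delta}$ for every real $\delta$. The change of variables $y=\varphi({\rm x})$ --- legitimate because the elements of $\diff^{1,\gamma}(\R^2)$ are $C^1$-diffeomorphisms --- together with $d{\rm x}=|\det d\varphi^{-1}(y)|\,dy\le\varepsilon^{-1}dy$ gives
\[
\|u\circ\varphi\|_{L^p_\delta}^p=\int_{\R^2}\x^{\delta p}\,|u(\varphi({\rm x}))|^p\,d{\rm x}\le C_0^{|\delta|p}\,\varepsilon^{-1}\int_{\R^2}\langle y\rangle^{\delta p}\,|u(y)|^p\,dy=C_0^{|\delta|p}\,\varepsilon^{-1}\,\|u\|_{L^p_\delta}^p ,
\]
so $u\mapsto u\circ\varphi$ is bounded on $L^p_\delta$ with a norm depending only on $\|w\|_\infty$ and $\varepsilon$, and this bound is \emph{locally uniform} in $\varphi$ (if $\varphi_n\to\varphi$ in $\diff^{1,\gamma}$ then $\|w_n\|_\infty$ stays bounded and, since $d\varphi_n\to d\varphi$ uniformly, $\inf_{\R^2}\det d\varphi_n\ge\varepsilon/2$ for large $n$). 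Joint continuity then follows by the standard scheme: continuity in $u$ for fixed $\varphi$ is immediate from linearity and the bound; for continuity in $\varphi$ one approximates $u$ in $L^p_\delta$ by $g\in C^\infty_c$, controls $(u-g)\circ\varphi_n$ and $(u-g)\circ\varphi$ by the locally uniform bound, and observes that $g\circ\varphi_n\to g\circ\varphi$ uniformly with supports in a fixed compact set (as $\varphi_n\to\varphi$ uniformly and $g$ is uniformly continuous); a $3\epsilon$-argument closes the case.

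For the inductive step, assume the statement for $k-1$ and all admissible $m'\ge k-1$, and let $u\in H^{k,p}_\delta$ and $\varphi\in\diff^{m,\gamma}(\R^2)$ with $m\ge k$. By the chain rule $\partial_{x_i}(u\circ\varphi)=\sum_{j}\big((\partial_{y_j}u)\circ\varphi\big)\,\partial_{x_i}\varphi_j$. With the present weight convention (the same weight $\x^\delta$ is attached to every derivative) one has $\partial_{y_j}u\in H^{k-1,p}_\delta$, and since $m-1\ge k-1$ the induction hypothesis yields $(\partial_{y_j}u)\circ\varphi\in H^{k-1,p}_\delta$ with continuous dependence on $(u,\varphi)$; moreover $\partial_{x_i}\varphi_j=\delta_{ij}+\partial_{x_i}w_j\in C^{k-1}_b$ depends continuously on $\varphi$, since convergence in $c^{m,\gamma}_b$ implies convergence in $C^m_b$. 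As $H^{k-1,p}_\delta$ is a module over $C^{k-1}_b$ under pointwise multiplication --- clear from the Leibniz rule, each term $\x^\delta D^\beta g\cdot D^{\alpha-\beta}h$ lying in $L^p$ when $g\in H^{k-1,p}_\delta$, $h\in C^{k-1}_b$, $|\alpha|\le k-1$ --- and this multiplication is bounded bilinear, each summand lies in $H^{k-1,p}_\delta$ and depends continuously on $(u,\varphi)$. Together with $u\circ\varphi\in L^p_\delta$ from the base case and the norm equivalence $\|f\|_{H^{k,p}_\delta}\asymp\|f\|_{L^p_\delta}+\sum_{i}\|\partial_{x_i}f\|_{H^{k-1,p}_\delta}$, this gives $u\circ\varphi\in H^{k,p}_\delta$ with continuous dependence on $(u,\varphi)$, completing the induction.

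The main obstacle is the continuity in the diffeomorphism variable at the level $k=0$: since $u$ is merely an $L^p_\delta$ function it carries no pointwise information, so $u\circ\varphi_n-u\circ\varphi$ cannot be estimated directly, and one must route through the density of $C^\infty_c$ in $L^p_\delta$ while keeping the change-of-variables constants locally uniform --- which is exactly what the uniform $C^1$-convergence and the uniform positive lower bound on the Jacobians available in $\diff^{m,\gamma}(\R^2)$ provide. The inductive step, by contrast, is purely algebraic once the module property of $H^{k-1,p}_\delta$ over $C^{k-1}_b$ is recorded.
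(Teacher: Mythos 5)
Your proposal is correct and follows essentially the same strategy as the paper: establish a locally uniform operator bound on $L^p_\delta$ by change of variables, handle continuity in $\varphi$ via density of $C^\infty_c$ and a $3\epsilon$-argument, and reduce the higher-order case to $k=0$ by the chain/product rule. The only (harmless) deviation is in the smooth case: where you use uniform continuity of $g\in C^\infty_c$ together with the fact that the supports of $g\circ\varphi_n$ stay in a fixed compact set, the paper instead derives a quantitative mean-value-theorem estimate $\|g\circ\varphi_2-g\circ\varphi_1\|_{L^p_\delta}\le C\,\|g\|_{H^{1,p}_\delta}\,|\varphi_2-\varphi_1|_1$; both close the argument.
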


\begin{remark}\label{rem:sobolev-holder}
Lemma \ref{lem:sobolev-holder} also holds when $p=\infty$.
\end{remark}

\begin{proof}[Proof of Lemma \ref{lem:sobolev-holder}]
First, note that it is enough to prove the lemma only for $\gamma=0$. The general statement then follows, since
$\diff^{m,\gamma}(\R^2)$ is continuously embedded in $\diff^m(\R^2)\equiv\diff^{m,0}(\R^2)$.
Since the second statement of the lemma follows from the first one and the product rule,
we concentrate our attention only on proving its first statement.
Take $u_1, u_2\in L^p_\delta\equiv L^p_\delta(\R^2,\R)$ and $\varphi\in\diff^1(\R^2)$. 
By changing the variables in the corresponding integral we obtain that
\begin{equation}\label{eq:locally_uniformly1}
\norm{u_2\circ\varphi-u_1\circ\varphi}_{L^p_{\delta}}=
\norm{(u_2-u_1)\circ \varphi}_{L^p_{\delta}}
\le C\,\norm{u_2-u_1}_{L^p_{\delta}}
\end{equation}
where $C\equiv C(\varphi)>0$ depends on $|\varphi^{-1}|_1$. Since $\varphi\in\diff^1(\R^2)$ is a topological group
(see e.g Theorem 3.1 in \cite{ST1}), the constant $C>0$ in \eqref{eq:locally_uniformly1} can be chosen {\em locally uniformly} in 
$\varphi\in\diff^1(\R^2)$.
Now, choose $u\in C^\infty_c$ and $\varphi_1,\varphi_2\in\diff^1(\R^2)$. Then, we obtain from the mean-value theorem that
for any ${\rm x}\in\R^2$,
\begin{equation*}
(u\circ\varphi_2)({\rm x})-(u\circ\varphi_1)({\rm x})=
\Big(\int_0^1(du)|_{\zeta_t({\rm x})}\,dt\Big)
\cdot\big(\varphi_2({\rm x})-\varphi_1({\rm x})\big)
\end{equation*}
where $\zeta_t(x):=\varphi_1({\rm x})+t(\varphi_2({\rm x})-\varphi_1({\rm x}))$. By applying first the Cauchy-Schwartz inequality 
and then Jensen's inequality, we obtain
\begin{equation*}
\big|(u\circ\varphi_2)({\rm x})-(u\circ\varphi_1)({\rm x})\big|^p
\le C\,\big|\varphi_2-\varphi_1\big|_1^p\int_0^1\Big|(du)|_{\zeta_t({\rm x})}\Big|_{\ell^p}^p\,dt
\end{equation*}
where $|\cdot|_{\ell^p}$ denotes the $\ell^p$-norm of the components of a matrix and the constant $C>0$ depends only on 
the choice of $1<p<\infty$. By multiplying this inequality by $\x^{p\delta}$ and then by integrating it over ${\rm x}\in\R^2$, we obtain that
\begin{equation}\label{eq:locally_uniformly2'}
\big\|u\circ\varphi_2-u\circ\varphi_1\big\|_{L^p_{\delta}}
\le C\,\Big(\int_0^1\big\|(du)\circ\zeta_t\big\|_{L^p_{\delta}}^p\,dt\Big)^{1/p}\big|\varphi_2-\varphi_1\big|_1
\end{equation}
with a constant $C>0$ that depends only on the choice of $1<p<\infty$.
It follows from \eqref{eq:locally_uniformly1} that for $\varphi_2$ chosen in an open neighborhood $U(\varphi_1)$ of $\varphi_1$ 
in $\diff^1(\R^2)$ we have that 
\[
\big\|(du)\circ\zeta_t\big\|_{L^p_{\delta}}\le C\,\|du\|_{L^p_{\delta}}.
\]
By combining this with \eqref{eq:locally_uniformly1} we obtain that for any $\varphi_1\in\diff^1(\R^2)$ there exists 
an open neighborhood $U(\varphi_1)$ of $\varphi_1$ in $\diff^1(\R^2)$ and a constant $C\equiv C(\varphi_1)>0$ such that
for any $\varphi_2\in U(\varphi_1)$ and for any $u\in C^\infty_c$ we have that
\begin{equation}\label{eq:locally_uniformly2}
\big\|u\circ\varphi_2-u\circ\varphi_1\big\|_{L^p_{\delta}}\le C\,\|u\|_{H^{1,p}_{\delta}}\big|\varphi_2-\varphi_1\big|_1.
\end{equation}
Finally, it follows from \eqref{eq:locally_uniformly1} and \eqref{eq:locally_uniformly2} that there exists an open neighborhood 
$U(\varphi_1)$ of $\varphi_1\in\diff^1(\R^2)$ and constants $C_1>0$ and $C_2>0$ such that for any 
$\varphi_2\in U(\varphi_1)$ and for any $u_1,u_2\in L^p_\delta$ and $\widetilde{u}\in C^\infty_c$ we have
\begin{eqnarray*}
\big\|u_2\circ\varphi_2-u_1\circ\varphi_1\big\|_{L^p_\delta}&\le&
\big\|u_2\circ\varphi_2-\widetilde{u}\circ\varphi_2\big\|_{L^p_\delta}
+\big\|\widetilde{u}\circ\varphi_2-\widetilde{u}\circ\varphi_1\big\|_{L^p_\delta}
+\big\|\widetilde{u}\circ\varphi_1-u_1\circ\varphi_1\big\|_{L^p_\delta}\\
&\le& C_1\|u_2-\widetilde{u}\|_{L^p_\delta}
+C_2\,\|\widetilde{u}\|_{H^{1,p}_\delta}\big|\varphi_2-\varphi_1\big|_1
+C_1\|\widetilde{u}-u_1\|_{L^p_\delta}.
\end{eqnarray*}
Now, take $\epsilon>0$ and then choose $u_2\in L^p_\delta$ and $\widetilde{u}\in C^\infty_c$, $\widetilde{u}\not\equiv 0$, 
inside the open ball in $L^p_\delta$ of radius $\rho>0$ centered at $u_1\in L^p_\delta$. The inequality above then implies that
\[
\big\|u_2\circ\varphi_2-u_1\circ\varphi_1\big\|_{L^p_\delta}
\le 3C_1\rho+C_2\|\widetilde{u}\|_{H^{1,p}_\delta}\big|\varphi_2-\varphi_1\big|_1.
\]
Finally, by choosing $0<\rho<\epsilon/(6C_1)$ and $\varphi_2\in U(\varphi_1)$ such that
$|\varphi_2-\varphi_1\big|_1<\epsilon/\big(2C_2\,\|\widetilde{u}\|_{H^{1,p}_\delta}\big)$
we conclude that $\big\|u_2\circ\varphi_2-u_1\circ\varphi_1\big\|_{L^p_\delta}<\epsilon$.
This completes the proof of the lemma.
\end{proof}

As a consequence of Lemma \ref{lem:longer_expansion} we obtain the following proposition.

\begin{proposition}\label{prop:longer_expansion}
Assume that $m>3+(2/p)$ with $1<p<\infty$. Assume in addition that for some $T>0$,
$u\in C\big([0,T],\mathcal{Z}^{m,p}_N\big)\cap C^1\big([0,T],\mathcal{Z}^{m-1,p}_N\big)$ 
be a solution of the 2d Euler equation \eqref{EQeulerCauchyop} with $u_0\in\mathcal{Z}^{m,p}_{N+1}$.
Then, $u\in C\big([0,T],\mathcal{Z}^{m,p}_{N+1}\big)\cap C^1\big([0,T],\mathcal{Z}^{m-1,p}_{N+1}\big)$.
\end{proposition}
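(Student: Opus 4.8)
The plan is to reconstruct $u(t)$ from its vorticity, exploiting that the vorticity has one more order of spatial decay than in the generic case and that it is transported by the flow. First I would recall, as in the proof of Proposition~\ref{prop:evolution_coefficients}, that since $u$ is divergence free the complex vorticity $\omega:=u_z$ (cf.\ \eqref{eq:complex_vorticity}) satisfies the transport equation $\dt\omega+u\,\omega_z+\bar u\,\omega_{\bar z}=0$, so that $\omega(t)=\omega_0\circ\psi(t)$, where $\psi(t):=\varphi(t)^{-1}$ and $\varphi\in C^1\big([0,T],\mathcal{ZD}^{m,p}_N\big)$ is the flow of $u$ furnished by Lemma~\ref{lem:ode}. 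The gain of decay comes from the initial data: writing $u_0=a_{00}\chi+w_0$ with $w_0\in\mathcal{Z}^{m,p}_{1,N+1}$, Theorem~\ref{th:Z->Z-tilde} gives $\partial_z w_0\in\widetilde{\mathcal{Z}}^{m-1,p}_{N+2}$, while $\partial_z(a_{00}\chi)\in C^\infty_c\subseteq\widetilde{\mathcal{Z}}^{m-1,p}_{N+2}$; hence $\omega_0=\partial_z u_0\in\widetilde{\mathcal{Z}}^{m-1,p}_{N+2}$.

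Next I would propagate this along the flow. By Proposition~\ref{prop:inverse} and the continuity of inversion (Theorem~\ref{th:ZD}) we have $\psi\in C\big([0,T],\mathcal{ZD}^{m,p}_N\big)\subseteq C\big([0,T],\mathcal{ZD}^{m-1,p}_N\big)$, and since $m-1>2+(2/p)$ Lemma~\ref{lem:longer_expansion} applied at regularity $m-1$ gives $\omega(t)=\omega_0\circ\psi(t)\in\widetilde{\mathcal{Z}}^{m-1,p}_{N+2}$ for every $t$, with $\omega\in C\big([0,T],\widetilde{\mathcal{Z}}^{m-1,p}_{N+2}\big)$ by the continuity of the composition map in that lemma.

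Now I would reconstruct $u$. Let $\partial_z^{-1}:\widetilde{\mathcal{Z}}^{m-1,p}_{N+2}\to\mathcal{Z}^{m,p}_{1,N+1}$ be the bounded inverse of the isomorphism of Theorem~\ref{th:Z->Z-tilde}, and set $h(t):=u(t)-\partial_z^{-1}\omega(t)$. Then $\partial_z h(t)=\omega(t)-\omega(t)=0$, so (by the Sobolev embedding $\mathcal{Z}^{m,p}_N\subseteq C^1$) $h(t)$ is an entire anti-holomorphic function; since $h(t)\in\mathcal{Z}^{m,p}_N\subseteq L^\infty$ — here using $\mathcal{Z}^{m,p}_{1,N+1}\subseteq\mathcal{Z}^{m,p}_N$ from Proposition~\ref{prop:properties_Z-spaces}(i) — Liouville's theorem forces $h(t)=c(t)\in\C$. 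Comparing leading asymptotic coefficients identifies $c(t)$ with $a_{00}(t)$, which is continuous in $t$ (indeed constant, by Proposition~\ref{prop:integrals}). Consequently $u(t)=c(t)+\partial_z^{-1}\omega(t)\in\mathcal{Z}^{m,p}_{0,N+1}=\mathcal{Z}^{m,p}_{N+1}$ for every $t$, and $u\in C\big([0,T],\mathcal{Z}^{m,p}_{N+1}\big)$ because $\partial_z^{-1}$ is bounded linear and $\omega\in C\big([0,T],\widetilde{\mathcal{Z}}^{m-1,p}_{N+2}\big)$.

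Finally, for the time regularity I would substitute $u\in C\big([0,T],\mathcal{Z}^{m,p}_{N+1}\big)$ back into \eqref{EQEulerwoDiv}: the algebra and differentiation properties of the $\mathcal{Z}$-scale (Proposition~\ref{prop:properties_Z-spaces}) together with Theorem~\ref{th:Z->Z-tilde} show that $u\mapsto-(uu_z+\bar u u_{\bar z})+\partial_z^{-1}\big((u_z)^2+|u_{\bar z}|^2\big)$ is a continuous map $\mathcal{Z}^{m,p}_{N+1}\to\mathcal{Z}^{m-1,p}_{N+1}$; since the right-hand side equals $\dt u$, this gives $\dt u\in C\big([0,T],\mathcal{Z}^{m-1,p}_{N+1}\big)$, and integrating $u(t)=u_0+\int_0^t\dt u(s)\,ds$ with $u_0\in\mathcal{Z}^{m,p}_{N+1}$ yields $u\in C^1\big([0,T],\mathcal{Z}^{m-1,p}_{N+1}\big)$. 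The step I expect to require the most care is the propagation of the extra decay along the flow: the vorticity has only regularity $m-1$ whereas $\varphi$ has regularity $m$, so Lemma~\ref{lem:longer_expansion} must be applied at the lowered regularity $m-1$, and one must check that $\psi=\varphi^{-1}$ is still of class $\mathcal{ZD}^{m-1,p}_N$ — which is exactly where the hypothesis $m>3+(2/p)$ enters.
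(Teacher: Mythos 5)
Your argument is correct, and its core is the same as the paper's: pass to the vorticity $\omega=\partial_z u$, use $\omega(t)=\omega_0\circ\psi(t)$ with $\omega_0\in\widetilde{\mathcal{Z}}^{m-1,p}_{N+2}$, propagate the extra decay via Lemma \ref{lem:longer_expansion} (correctly applied at the lowered regularity $m-1$, since $\omega_0$ only has $m-1$ derivatives), and recover $u$ through the isomorphism of Theorem \ref{th:Z->Z-tilde}; your explicit Liouville step for the kernel of $\partial_z$ just spells out what the paper leaves implicit. The only genuine divergence is the last step: to upgrade to $u\in C^1\big([0,T],\mathcal{Z}^{m-1,p}_{N+1}\big)$ the paper invokes the local well-posedness of Theorem \ref{th:main} in $\mathcal{Z}^{m,p}_{N+1}$ together with a no-blow-up continuation argument (the $\mathcal{Z}^{m,p}_{N+1}$-norm stays finite on $[0,T]$, so the maximal interval of existence of the local solution in the smaller space contains $[0,T]$), whereas you differentiate the reformulated equation \eqref{EQEulerwoDiv} directly, check that its right-hand side maps $\mathcal{Z}^{m,p}_{N+1}$ continuously into $\mathcal{Z}^{m-1,p}_{N+1}$, and integrate. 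Both are valid; yours is slightly more self-contained (no appeal to uniqueness in the smaller scale), while the paper's continuation argument is the one it reuses verbatim in the global existence proof, which is presumably why it is phrased that way.
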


\begin{proof}[Proof of Proposition \ref{prop:longer_expansion}]
Assume that $u\in C\big([0,T],\mathcal{Z}^{m,p}_N\big)\cap C^1\big([0,T],\mathcal{Z}^{m-1,p}_N\big)$ be a solution
of the 2d Euler equation \eqref{EQeulerCauchyop} such that $u_0\in\mathcal{Z}^{m,p}_{N+1}$.
Then, for any $t\in[0,T]$ we have that 
$\omega(t):=i \curlop u\equiv\frac{1}{2}(u_z-\bar{u}_{\bar{z}})=\partial_zu(t)$ and by the preservation of vorticity 
(see \eqref{eq:euler_in_vorticity_form}),
\begin{equation}\label{eq:preservation_of_vorticity}
\omega(t)=\omega_0\circ\psi(t),\quad\omega_0\in\widetilde{\mathcal{Z}}^{m-1,p}_{N+2},
\end{equation}
where $\psi(t):=\varphi(t)^{-1}$ and $\varphi\in C^1\big([0,T],\mathcal{ZD}^{m,p}_N\big)\big)$ is the Lagrangian coordinate
of the solution $u$. By Theorem \ref{th:ZD}, 
$\psi\in C\big([0,T],\mathcal{ZD}^{m,p}_N\big)\cap C^1\big([0,T],\mathcal{ZD}^{m-1,p}_N\big)$. 
This, together with \eqref{eq:preservation_of_vorticity} and Lemma \ref{lem:longer_expansion}, 
implies that for any $t\in[0,T]$ we have that
\[
\partial_zu(t)=\omega(t)=\omega_0\circ\psi(t)\in\widetilde{\mathcal{Z}}^{m-1,p}_{N+2}
\quad\text{\rm and}\quad 
\partial_zu\in C\big([0,T],\widetilde{\mathcal{Z}}^{m-1,p}_{N+2}\big).
\]
Hence, we conclude from Theorem \ref{th:Z->Z-tilde} that $u\in C\big([0,T],\mathcal{Z}^{m,p}_{N+1}\big)$.
This implies that the local solution of the 2d Euler equation in $\mathcal{Z}^{m,p}_{N+1}$ 
with initial data $u_0\in\mathcal{Z}^{m,p}_{N+1}$ given by Theorem \ref{th:main} 
has a finite $\mathcal{Z}^{m,p}_{N+1}$-norm on the intersection of its maximal interval of existence with $[0,T]$.
Thus, the maximal interval of existence of this solution contains $[0,T]$ and
$u\in C\big([0,T],\mathcal{Z}^{m,p}_{N+1}\big)\cap C^1\big([0,T],\mathcal{Z}^{m-1,p}_{N+1}\big)$.
This completes the proof of the proposition.
\end{proof}

\begin{remark}\label{rem:Z->holder}
By Proposition \ref{prop:main_technical} (iii) in Appendix \ref{appendix:technical} for $m>2/p$, $\delta+(2/p)>0$, 
and $0<\gamma<1$ chosen sufficiently small, we have the continuous embedding $W^{m,p}_\delta\subseteq C^{k,\gamma}_b$ 
for any integer $0\le k<m-2/p$. Since the elements of $W^{m,p}_\delta$ are approximated by functions in $C^\infty_c$,
we then conclude that $W^{m,p}_\delta$ is continuously embedded in the little H\"older space $c^{m,\gamma}_b$. 
The same arguments applied to the asymptotic coefficients and the definition of the norm
in the asymptotic space $\mathcal{Z}^{m,p}_N$ show that $\mathcal{Z}^{m,p}_N$ is continuously embedded in $c^{k,\gamma}_b$. 
Since $1<p<\infty$, we conclude that $0<2/p<2$, and hence for $m\ge 3$ we have the continuous embedding
$\mathcal{Z}^{m,p}_N\subseteq c^{m-2,\gamma}_b$. In what follows we will assume that $0<\gamma<1$ is chosen so that 
these embeddings hold.
\end{remark}

For $m\ge 1$ and $0<\gamma<1$ we say that $\varphi\in C^1\big([0,T],\diff^{m,\gamma}(\R^2)\big)$ with 
$T>0$ is a {\em solution of the 2d Euler equation in Lagrangian coordinates} on $\diff^{m,\gamma}(\R^2)$ if 
$\varphi\in C^2\big([0,T],\diff^{m,\gamma}(\R^2)\big)$ and it satisfies the following second order in time evolution equation 
(cf. \eqref{EQDynSys})
\begin{equation}\label{eq:ode_on_the_group}
\left\{
\begin{array}{l}
\dtt\varphi=\big(R_\varphi\circ\partial_z^{-1}\circ Q\circ R_{\varphi^{-1}}\big)(\dt\varphi)\\
\varphi|_{t=0}=\idmap,\,\dt\varphi|_{t=0}=u_0
\end{array}
\right. 
\end{equation}
where the Cauchy operator is considered as an operator on tempered  distributions $\partial_z : S'\to S'$ (see Remark \ref{rem:cauchy_kernel})
with an inverse $\partial_z^{-1}$ that is an extension of the operator $\partial_z^{-1} : L^p_{\gamma_0+1}\to W^{1,p}_{\gamma_0}$ 
given by Proposition \ref{th:isomorphism W->L}. 
If $\varphi\in C\big([0,T],\diff^{m,\gamma}(\R^2)\big)$ is a solution of the 2d Euler equation in Lagrangian coordinates then 
one sees from Theorem 3.1 in \cite{ST1} that 
\begin{equation}\label{eq:u-translated_holder}
u:=\dt\varphi\circ\varphi^{-1}\in C\big([0,T],c^{m,\gamma}_b\big)\cap C^1\big([0,T],c^{m-1,\gamma}_b\big). 
\end{equation}
By the arguments used in the proof of Proposition \ref{PropBijecDynSysDivFreeEulerEq} we then conclude that $u$ satisfies
equation \eqref{EQEulerwoDiv}. Then, the derivation of \eqref{eq:euler_in_vorticity_form} implies that $u$ and its vorticity 
$\omega\equiv\partial_z u$ satisfy the 2d Euler equation in vorticity form $\dt\omega+u\omega_z+\bar{u}\omega_{\bar{z}}=0$. 
As a consequence, we then conclude (see the derivation of \eqref{eq:preservation_of_vorticity_in_Z}) that
\[
\omega(t)=\omega_0\circ\varphi(t)^{-1},\quad t\in[0,T].
\]
We have the following proposition.

\begin{proposition}\label{prop:more_decay}
Assume that for $u_0\in\mathcal{Z}^{5,p}_0\subseteq c^{3,\gamma}_b$  with $1<p<\infty$ there exists a unique solution of 
the 2d Euler equation in Lagrangian coordinates $\varphi\in C^1\big([0,T],\diff^{3,\gamma}(\R^2)\big)$ for some $T>0$. Then, 
the solution of the 2d Euler equation given by Theorem \ref{th:main} extends to the whole interval $[0,T]$ and 
$u\in C\big([0,T],\mathcal{Z}^{5,p}_0\big)\cap C^1\big([0,T],\mathcal{Z}^{4,p}_0\big)$.
\end{proposition}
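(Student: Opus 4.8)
The plan is to show that the maximal time of existence $T^{*}$ of the local $\mathcal{Z}^{5,p}_{0}$-solution furnished by Theorem \ref{th:main} satisfies $T^{*}>T$; restricting to $[0,T]$ and using \eqref{EQeulerCauchyop} for the time regularity then gives the assertion. By Proposition \ref{PropEqEulerDynSys} and Proposition \ref{PropBijecDynSysDivFreeEulerEq} this solution corresponds to a solution $(\varphi,\dt\varphi)\in C^{1}\big([0,T^{*}),\mathcal{ZD}^{5,p}_{0}\times\mathcal{Z}^{5,p}_{0}\big)$ of the real analytic system \eqref{EQDynSys}, with $u=\dt\varphi\circ\varphi^{-1}$. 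Since $u$ is divergence free the flow is volume preserving, so $\det\!\big(d\varphi(t)\big)\equiv1$ on $\R^{2}$; as $w\mapsto\min_{\R^{2}}\det(I+dw)$ is a locally Lipschitz function on $\mathcal{Z}^{5,p}_{0}$ equal to $1$ along the trajectory, the curve $\varphi(t)-\idmap$ stays at a positive distance from the boundary of the chart defining $\mathcal{ZD}^{5,p}_{0}$ as long as it remains bounded in $\mathcal{Z}^{5,p}_{0}$. Hence by the standard continuation principle for ODEs in Banach spaces (\cite{LangDiffGeo}) it suffices to bound $\|u(t)\|_{\mathcal{Z}^{5,p}_{0}}$ on $[0,\min(T,T^{*}))$, since then $\|\dt\varphi(t)\|_{\mathcal{Z}^{5,p}_{0}}=\|u(t)\circ\varphi(t)\|_{\mathcal{Z}^{5,p}_{0}}$ is controlled by the composition estimates together with $\|\varphi(t)-\idmap\|_{\mathcal{Z}^{5,p}_{0}}\le\int_{0}^{t}\|\dt\varphi\|_{\mathcal{Z}^{5,p}_{0}}$ and a Gronwall argument.

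Next I would compare the $\mathcal{Z}^{5,p}_{0}$-solution with the hypothesized Hölder solution. By Remark \ref{rem:Z->holder} there are continuous embeddings $\mathcal{Z}^{5,p}_{0}\subseteq c^{3,\gamma}_{b}$ and $\mathcal{ZD}^{5,p}_{0}\subseteq\diff^{3,\gamma}(\R^{2})$, so $\varphi$ is also a solution of the $2$d Euler equation in Lagrangian coordinates on $\diff^{3,\gamma}(\R^{2})$, and by the uniqueness assumed in the statement it coincides on $[0,\min(T,T^{*}))$ with the restriction of the given curve in $C^{1}\big([0,T],\diff^{3,\gamma}(\R^{2})\big)$. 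By \eqref{eq:u-translated_holder}, the compactness of $[0,T]$, and the continuity of composition and inversion in $\diff^{3,\gamma}(\R^{2})$, the constant $C_{T}:=\sup_{t\in[0,T]}\|u(t)\|_{c^{3,\gamma}_{b}}$ is finite; in particular $u$ and its spatial derivatives up to order three are bounded, uniformly on $[0,\min(T,T^{*}))\times\R^{2}$, by $C_{T}$. Moreover the complex vorticity $\omega=\partial_{z}u=u_{z}$ (cf. \eqref{eq:complex_vorticity}) is transported, $\dt\omega+u\,\omega_{z}+\bar u\,\omega_{\bar z}=0$ (cf. \eqref{eq:euler_in_vorticity_form}), so $\|\omega(t)\|_{L^{\infty}}=\|\omega_{0}\|_{L^{\infty}}$ with $\omega_{0}=\partial_{z}u_{0}\in W^{4,p}_{\gamma_{1}}$. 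Finally, by Proposition \ref{prop:integrals} (with $N=0$) the leading coefficient is conserved, $a_{00}(t)=a_{00}(0)$, so $u(t)=a_{00}(0)\chi+f(t)$ with $f(t)\in W^{5,p}_{\gamma_{0}}$ and $\partial_{z}f(t)=\omega(t)-a_{00}(0)\chi_{z}$; since $\partial_{z}\colon W^{5,p}_{\gamma_{0}}\to W^{4,p}_{\gamma_{1}}$ is an isomorphism by Theorem \ref{th:cauchy_m>=0}~(i) (recall $0<\gamma_{0}+(2/p)<1$, $\gamma_{1}=\gamma_{0}+1$), it suffices to bound $\|\omega(t)\|_{W^{4,p}_{\gamma_{1}}}$ on $[0,\min(T,T^{*}))$.

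To this end I would run a weighted energy estimate for $\omega$. For each multi-index $\alpha$ with $|\alpha|\le4$, differentiating the transport equation, pairing the equation for $\partial^{\alpha}\omega$ with $\z^{p(\gamma_{1}+|\alpha|)}\,|\partial^{\alpha}\omega|^{p-2}\,\overline{\partial^{\alpha}\omega}$, integrating over $\C$ and summing over $|\alpha|\le4$ (the differentiations being justified by a Friedrichs mollification and a passage to the limit, or by working with the representation $\omega(t)=\omega_{0}\circ\varphi(t)^{-1}$ directly), one finds that the transport term contributes only quantities bounded by $C_{T}\,\|\omega(t)\|_{W^{4,p}_{\gamma_{1}}}$ — here $\divop u=0$ kills the top-order piece and the $\z$-weight derivatives are lower order since they decay like $1/\z$ — while the remaining commutator terms obey a weighted Moser-type estimate bounded by $C\big(C_{T}+\|\omega_{0}\|_{L^{\infty}}\big)\|\omega(t)\|_{W^{4,p}_{\gamma_{1}}}$, using that passing from $u$ to $\omega$ gains one derivative while shifting the weight by $1$ (Theorem \ref{th:Z->Z-tilde}, Theorem \ref{th:cauchy_m>=0}~(i)), so that $\|u(t)-a_{00}(0)\chi\|_{W^{5,p}_{\gamma_{0}}}$ is comparable to $\|\omega(t)\|_{W^{4,p}_{\gamma_{1}}}$, together with the $L^{\infty}$-bounds on $u$ and its first three derivatives from the previous paragraph and the product estimates of Proposition \ref{prop:properties_W-spaces}. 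This yields the differential inequality
\[
\frac{d}{dt}\,\|\omega(t)\|_{W^{4,p}_{\gamma_{1}}}\ \le\ C\big(1+C_{T}+\|\omega_{0}\|_{L^{\infty}}\big)\,\|\omega(t)\|_{W^{4,p}_{\gamma_{1}}},\qquad t\in[0,\min(T,T^{*})),
\]
whence by Gronwall $\|\omega(t)\|_{W^{4,p}_{\gamma_{1}}}\le\|\omega_{0}\|_{W^{4,p}_{\gamma_{1}}}\,e^{C(1+C_{T}+\|\omega_{0}\|_{L^{\infty}})T}$ on that interval. Combining this with the first two paragraphs bounds $\|u(t)\|_{\mathcal{Z}^{5,p}_{0}}$, hence $\|\dt\varphi(t)\|_{\mathcal{Z}^{5,p}_{0}}$ and $\|\varphi(t)-\idmap\|_{\mathcal{Z}^{5,p}_{0}}$, on $[0,\min(T,T^{*}))$; by the continuation principle this forces $T^{*}>T$, and therefore $u\in C\big([0,T],\mathcal{Z}^{5,p}_{0}\big)\cap C^{1}\big([0,T],\mathcal{Z}^{4,p}_{0}\big)$.

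The hard part is precisely the weighted commutator/Moser estimate used in the third paragraph: one must verify that in the Leibniz expansion of $\partial^{\alpha}(u\,\omega_{z}+\bar u\,\omega_{\bar z})$, after the divergence-free integration by parts, every remaining term is controlled — in the weighted $L^{p}$-norm with weight $\gamma_{1}+|\alpha|$ — by a constant times $\|\omega(t)\|_{W^{4,p}_{\gamma_{1}}}$, with the constant depending only on the $L^{\infty}$-norms of $u$ and of its derivatives up to order three and on $\|\omega_{0}\|_{L^{\infty}}$. This requires tracking simultaneously the Sobolev embeddings $W^{k,p}_{\delta}\hookrightarrow L^{\infty}$ valid for $k>2/p$, the gain of one derivative with weight-shift $1$ in passing between $u$ and $\omega$, and the exact decay rates of the various $\partial^{\beta}u$, so that all weights match and no genuinely quadratic term in $\|\omega\|_{W^{4,p}_{\gamma_{1}}}$ survives; the role of the hypothesized $\diff^{3,\gamma}(\R^{2})$-solution is exactly to supply the uniform $L^{\infty}$-coefficients that make this possible. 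A secondary technical point is the rigorous mollification argument, since $\partial^{\alpha}\omega$ for $|\alpha|=4$ is only continuous, not $C^{1}$, in $t$ with values in $L^{p}$.
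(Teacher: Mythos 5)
Your overall skeleton — argue by the blow-up alternative for the local $\mathcal{Z}^{5,p}_0$-solution, identify it with the hypothesized H\"older solution by uniqueness, and use the latter to supply uniform $C^{3}$-bounds on $[0,T]$ — agrees with the paper. But the central step is different and, as written, has a genuine gap. The paper does \emph{not} run an energy estimate: it propagates the decay of the data through the Lagrangian flow, writing $\tomega(t)=\omega_0\circ\tpsi(t)$ on all of $[0,T]$ and bootstrapping, via the explicit chain-rule formulas \eqref{eq:higher_order_differentials}, the composition Lemma \ref{lem:sobolev-holder}, and linear ODEs for $d\tvarphi$ in the weighted Banach algebras $H^{3,p}_{\gamma_0+1}$ and $B^{3,p}_{\gamma_0+1}$, until $\tomega\in C([0,T],W^{4,p}_{\gamma_0+1})$; there all decay comes from the fixed datum $\omega_0$ composed with a bi-Lipschitz flow, so no superlinear terms can appear. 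Your substitute for this — the claim that the weighted commutator terms obey a Moser-type bound $C(C_T+\|\omega_0\|_{L^\infty})\|\omega\|_{W^{4,p}_{\gamma_1}}$ — is precisely the point you flag as ``the hard part,'' and I do not believe it holds in the form you need.

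Concretely, consider in the Leibniz expansion of $\partial^\alpha(u\,\omega_z+\bar u\,\omega_{\bar z})$, $|\alpha|=4$, a term $\partial^\beta u\cdot\partial^{\alpha-\beta}\partial_z\omega$ with $2\le|\beta|\le 4$. Measured in $L^p_{\gamma_1+4}$, the factor $\partial^{\alpha-\beta}\partial_z\omega$ is a derivative of $\omega$ of order $5-|\beta|\le 4$ whose natural weight in the $W^{4,p}_{\gamma_1}$-norm is $\gamma_1+5-|\beta|$, so the remaining weight $\z^{\,|\beta|-1}$ must be absorbed by $\partial^\beta u$: you need $\|\z^{\,|\beta|-1}\partial^\beta u\|_{L^\infty}$ controlled by admissible constants. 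For $|\beta|=1$ this is $\|du\|_{L^\infty}\le C_T$ and the term is fine, but for $|\beta|\ge 2$ the H\"older solution gives only $\|\partial^\beta u\|_{L^\infty}\le C_T$ with \emph{no} spatial decay, and the only quantitative source of the required decay $\partial^\beta u=O(\z^{-(|\beta|-1)})$ is the embedding $W^{5-|\beta|,p}_{\gamma_0+|\beta|}\subseteq L^\infty_{\gamma_0+|\beta|+2/p}$ applied to $\partial^\beta(u-a_{00}\chi)$, whose norm is comparable to $\|\omega\|_{W^{4,p}_{\gamma_1}}$ itself. Hence these terms are genuinely quadratic in the quantity you are estimating (interpolating between the $L^\infty$-bound and the weighted bound only lowers the power to $1+\theta$ with $\theta>0$, which is still not Gronwall-closable up to a prescribed time $T$). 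The same mismatch occurs for the top term $(\partial^\alpha u)\,\omega_z$: putting the natural weight $\gamma_0+4=\gamma_1+3$ on $\partial^\alpha u$ leaves a factor $\z\,|\omega_z|$, i.e.\ a quantitative $1/\z$-decay of $\omega_z$ that again costs a factor of $\|\omega\|_{W^{4,p}_{\gamma_1}}$. So the differential inequality you write down is not justified, and I see no way to obtain it with constants depending only on $C_T$ and $\|\omega_0\|_{L^\infty}$; this structural obstruction is exactly what the paper's Lagrangian bootstrap is designed to avoid, and to repair your argument you would essentially have to replace the energy estimate by that construction (or by some other mechanism that derives the decay of $\omega(t)$ and its derivatives from $\omega_0$ and the flow rather than from $u(t)$).
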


\begin{proof}[Proof of Proposition \ref{prop:more_decay}]
Take $u_0\in\mathcal{Z}^{5,p}_0$ so that $u_0=c+f_0$ where $c$ is a real constant and $f_0\in W^{5,p}_{\gamma_0}$.
Assume that there exists $T>0$ so that $\tvarphi\in C^1\big([0,T],\diff^{3,\gamma}(\R^2)\big)$
be the unique solution of the 2d Euler equation in Lagrangian coordinates (cf. \eqref{eq:ode_on_the_group}) and
assume that the statement of the proposition does {\em not} hold. Hence, there exists $0<\tau<T$ such that
$[0,\tau)$ is the maximal interval of existence of the solution
$u\in C\big([0,\tau),\mathcal{Z}^{5,p}_0\big)\cap C^1\big([0,\tau),\mathcal{Z}^{4,p}_0\big)$
given by Theorem \ref{th:main}. Then, it follows easily from Theorem \ref{th:main} that
\begin{equation}\label{eq:blow-up_in _norm}
\|u(t)\|_{\mathcal{Z}^{5,p}_0}\to\infty\quad\text{\rm as}\quad t\to\tau-0.
\end{equation}
Now, consider the curve 
\begin{equation}\label{eq:tomega}
\widetilde\omega(t):=\omega_0\circ\tpsi(t),\quad t\in[0,T],
\end{equation}
where $\tpsi(t):=\tvarphi(t)^{-1}$ and $\omega_0:=\partial_z u_0\in W^{4,p}_{\gamma_0+1}\subseteq H^{4,p}_{\gamma_0+1}$.
It follows from Lemma \ref{lem:sobolev-holder}, $\omega_0\in H^{3,p}_{\gamma_0+1}$, and the fact that
$\diff^{3,\gamma}(\R^2)$ is a topological group, that
\begin{equation}\label{eq:tomega_preliminary}
\tomega\in C\big([0,T],H^{3,p}_{\gamma_0+1}\big).
\end{equation}
The uniqueness of solutions and the conservation of the vorticity $\omega\equiv\partial_zu$ on $[0,\tau)$ 
(see \eqref{eq:preservation_of_vorticity_in_Z} in the proof of Proposition \ref{prop:evolution_coefficients})
imply that
\[
\omega(t)=\tomega(t),\quad t\in[0,\tau).
\]
This together with the fact that the leading asymptotic coefficient of the solution $u$ is independent of $t\in[0,\tau)$ 
(see Proposition \ref{prop:integrals}) then implies that
\begin{equation}\label{eq:on_[0,tau)}
u(t)=c+\partial_z^{-1}\tomega(t),\quad t\in[0,\tau),
\end{equation}
where $c$ is the leading asymptotic coefficient of the initial data $u_0\in\mathcal{Z}^{5,p}_0$.
For $t\in[0,T]$ consider the curve
\begin{equation}\label{eq:u-tilde}
\tu(t):=c+\tr(t)\quad\text{where}\quad\tr(t):=\partial_z^{-1}\tomega(t)
\end{equation}
and $\partial_z^{-1} : L^p_{\gamma_0+1}\to W^{1,p}_{\gamma_0}$ is given by Theorem \ref{th:cauchy_m>=0} $(i)$.
Note that by \eqref{eq:tomega_preliminary} we have that (cf. Lemma \ref{lem:commutator})
\[
\partial^\alpha\tomega\in C\big( [0,T], L^p_{\gamma_0+1}\big),\quad|\alpha|\le 3.
\]
Since $0<\gamma_0+(2/p)<1$, we obtain from Theorem \ref{th:cauchy_m>=0} $(i)$ and Remark \ref{rem:Z->Z-tilde} that
for any $|\alpha|\le 3$ we have that $\partial^{\alpha}\tr\in C\big([0,T],W^{1,p}_{\gamma_0}\big)$, $|\alpha|\le 3$.
This, together with \eqref{eq:u-tilde}, then implies that
\begin{equation}\label{eq:tu_step_one}
\tu-c\in C\big([0,T],W^{1,p}_{\gamma_0}\big)\quad\text{\rm and}\quad d\tu\in C\big([0,T],H^{3,p}_{\gamma_0+1}\big).
\end{equation}
Note that, by \eqref{eq:u-translated_holder}, $\tvarphi\in C^1\big([0,T],\diff^{3,\gamma}(\R^2)\big)$ satisfies the equation
\begin{equation}\label{eq:ode}
\dt\tvarphi=\tu\circ\tvarphi,\quad\tvarphi|_{t=0}=\idmap.
\end{equation}
This implies that $Y:=d\tvarphi=I+df$, $f\in c^{3,\gamma}_b$, satisfies the equation $\dt Y=A(t)Y$, $Y|_{t=0}=I$, where 
$A:=d\tu\circ\tvarphi\in C\big([0,T],H^{3,p}_{\gamma_0+1}\big)$ by Lemma \ref{lem:sobolev-holder}.
This equation can be re-written as 
\begin{equation}\label{eq:jacobian_evolution}
(df)^\bolddot=A(t)(df)+A(t),\quad df|_{t=0}=0,
\end{equation}
and considered, by the Banach algebra property in $H^{3,p}_{\gamma_0+1}$, as an ODE on $df$ in the space
of $2\times 2$ matrices with elements in $H^{3,p}_{\gamma_0+1}$. Hence, by the existence and uniqueness of solutions of ODEs
in Banach spaces, we obtain that
\begin{equation}\label{eq:tphi-smoother}
d\tvarphi-I\in C\big([0,T],H^{3,p}_{\gamma_0+1}\big).
\end{equation}
Since
\begin{equation}\label{eq:tpsi-formula}
d\tpsi=\big[\!\mathop{\rm Adj}(d\tvarphi)/\det(d\tvarphi)\big]\circ\tpsi,
\end{equation}
where $\mathop{\rm Adj}(d\tvarphi)$ is the transpose of the cofactor matrix of $d\tvarphi$,
we conclude from the Banach algebra property in $H^{3,p}_{\gamma_0+1}$, the arguments used to prove Lemma \ref{lem:division},
and Lemma \ref{lem:sobolev-holder}, that
\begin{equation}\label{eq:tpsi-smoother}
d\tpsi-I\in C\big([0,T],H^{3,p}_{\gamma_0+1}\big).
\end{equation}

\begin{remark}\label{rem:H-group}
In particular, \eqref{eq:tpsi-smoother} implies that $\partial_z\big(\tpsi-z\big)\in H^{3,p}_{\gamma_0+1}\subseteq L^p_{\gamma_0+1}$.
Hence, by Theorem \ref{th:cauchy_m>=0} $(i)$, we obtain that $\tpsi-z-c_0\in W^{1,p}_{\gamma_0}$ for some constant $c_0\in\C$, 
and therefore $\tpsi-z-c_0\in H^{4,p}_{\gamma_0}$. This and similar arguments for $\tvarphi$ involving \eqref{eq:tphi-smoother} imply that
\begin{equation}\label{eq:tpsi-smoother'}
\tvarphi, \tpsi\in C\big([0,T],\mathbb{A}\mathcal{D}^{4,p}_0\big)
\end{equation}
where $\mathbb{A}\mathcal{D}^{m,p}_N$ with $N\ge 0$ denotes the set of orientation preserving $C^1$-smooth diffeomorphisms 
associated to the weighted Sobolev space $H^{4,p}_{\gamma_N}$ and defined in a similar fashion as $\mathcal{ZD}^{m,p}_N$ 
in \eqref{eq:Z-group} (see Definition 5.1 in \cite[Section 5]{McOTo2} for more detail). By Corollary 6.1 in \cite{McOTo2}, the composition map
\begin{equation}\label{eq:H-composition}
(f,\nu)\mapsto f\circ\nu,\quad H^{4,p}_{\gamma_0+1}\times\mathbb{A}\mathcal{D}^{4,p}_0\to H^{4,p}_{\gamma_0+1},
\end{equation}
is continuous.
\end{remark}

This remark allows us to conclude from \eqref{eq:tomega} that
\begin{equation}\label{eq:tomega-smoother}
\tomega\in C\big([0,T],H^{4,p}_{\gamma_0+1}\big)
\end{equation}
(and hence, by \eqref{eq:u-tilde}, $\tu-c\in C\big([0,T],H^{5,p}_{\gamma_0}\big)$).
A simple approximation argument involving \eqref{eq:tpsi-smoother'}, \eqref{eq:tomega-smoother}, and the continuity of the
composition \eqref{eq:H-composition} then implies that we can differentiate equality \eqref{eq:tomega} four times and 
apply the product rule to obtain the following expressions for the differentials of $\tomega$ of higher order,
\begin{align}\label{eq:higher_order_differentials}
d\tomega&=\big[(d\omega_0)\circ\tpsi\big](d\tpsi),\nonumber\\
d^2\tomega&=\big[(d^2\omega_0)\circ\tpsi\big](d\tpsi,d\tpsi)+\big[(d\omega_0)\circ\tpsi\big](d^2\tpsi),\nonumber\\
d^3\tomega&=\big[(d^3\omega_0)\circ\tpsi\big](d\tpsi,d\tpsi,d\tpsi)+3\big[(d^2\omega_0)\circ\tpsi\big](d^2\tpsi,d\tpsi)
+\big[(d\omega_0)\circ\tpsi\big](d^3\tpsi),\\
d^4\tomega&=\big[(d^4\omega_0)\circ\tpsi\big](d\tpsi,d\tpsi,d\tpsi,d\tpsi)+6\big[(d^3\omega_0)\circ\tpsi\big](d^2\tpsi,d\tpsi,d\tpsi)
+4\big[(d^2\omega_0)\circ\tpsi\big](d^3\tpsi,d\tpsi)\nonumber\\
&+3\big[(d^2\omega_0)\circ\tpsi\big](d^2\tpsi,d^2\tpsi)+\big[(d\omega_0)\circ\tpsi\big](d^4\tpsi).\nonumber
\end{align}
These formulas together with the fact that $\omega_0\in W^{4,p}_{\gamma_0+1}$, $\tpsi\in C\big([0,T],\diff^{3,\gamma}(\R^2)\big)$, 
as well as Lemma \ref{lem:sobolev-holder}, formula \eqref{eq:tpsi-smoother}, \eqref{eq:tpsi-smoother'}, and 
Proposition \ref{prop:main_technical} (ii) with Remark \ref{rem:R^d}, imply that
\begin{equation}\label{eq:tomega_step_one}
\tomega\in C\big([0,T],W^{2,p}_{\gamma_0+1}\big)\quad\text{\rm and}\quad
d^3\tomega, d^4\tomega\in C\big([0,T], L^p_{\gamma_0+3}\big).
\end{equation}
In fact, since $d\omega_0\in W^{3,p}_{\gamma_0+2}\subseteq L^p_{\gamma_0+2}$ and
$\tpsi\in C\big([0,T],\diff^{3,\gamma}(\R^2)\big)$, we obtain from Lemma \ref{lem:sobolev-holder} that
$(d\omega_0)\circ\tpsi\in C\big([0,T], L^p_{\gamma_0+2}\big)$. By combining this with the fact that
$d\tpsi\in C\big([0,T], L^\infty\big)$, we conclude from the first equality in \eqref{eq:higher_order_differentials} that
\begin{equation}\label{eq:1-1}
d\tomega\in C\big([0,T], L^p_{\gamma_0+2}\big).
\end{equation}
Now, consider the second equality in \eqref{eq:higher_order_differentials}. 
Since $d^2\omega_0\in W^{2,p}_{\gamma_0+3}\subseteq L^p_{\gamma_0+3}$ and
$\tpsi\in C\big([0,T],\diff^{3,\gamma}(\R^2)\big)$, we conclude from Lemma \ref{lem:sobolev-holder} that
$(d^2\omega_0)\circ\tpsi\in C\big([0,T], L^p_{\gamma_0+3}\big)$. By combining this with the fact that
$d\tpsi\in C\big([0,T], L^\infty\big)$ we obtain that 
\begin{equation}\label{eq:2-1}
\big[(d^2\omega_0)\circ\tpsi\big](d\tpsi,d\tpsi)\in C\big([0,T], L^p_{\gamma_0+3}\big).
\end{equation}
Let us now consider the term $\big[(d\omega_0)\circ\tpsi\big](d^2\tpsi)$ in the second equality in \eqref{eq:higher_order_differentials}. 
By Proposition \ref{prop:main_technical} (ii) and Remark \ref{rem:R^d} we have that
$d\omega_0\in W^{3,p}_{\gamma_0+2}\subseteq L^\infty_{\gamma_0+2+(2/p)}$.
This and Remark \ref{rem:sobolev-holder} implies that 
$(d\omega_0)\circ\tpsi\in C\big([0,T],L^\infty_{\gamma_0+2+(2/p)}\big)$.
Since $d^2\tpsi\in C\big([0,T],L^\infty\big)$ and, by \eqref{eq:tpsi-smoother}, $d^2\tpsi\in C\big([0,T],L^p_{\gamma_0+1}\big)$, 
we then conclude from $0<\gamma_0+(2/p)<1$ that
\begin{equation}\label{eq:2-2}
\big[(d\omega_0)\circ\tpsi\big](d^2\tpsi)\in C\big([0,T], L^p_{\gamma_0+3}\big).
\end{equation}
By combining \eqref{eq:2-1} and \eqref{eq:2-2} we obtain from \eqref{eq:higher_order_differentials} that
\[
d^2\tomega\in C\big([0,T], L^p_{\gamma_0+3}\big).
\]
This, together with \eqref{eq:tomega_preliminary} and \eqref{eq:1-1}, then implies the first relation in \eqref{eq:tomega_step_one}.
Further, we argue as follows.
Since $d\omega_0\in W^{3,p}_{\gamma_0+2}\subseteq H^{3,p}_{\gamma_0+2}$ and
$\tpsi\in C\big([0,T],\diff^{3,\gamma}(\R^2)\big)$, we conclude from Lemma \ref{lem:sobolev-holder} that 
$(d\omega_0)\circ\tpsi\in C\big([0,T], H^{3,p}_{\gamma_0+2}\big)$
and $(d^2\omega_0)\circ\tpsi\in C\big([0,T], H^{2,p}_{\gamma_0+3}\big)$.
Then, by using that $d\big[(d\omega_0)\circ\tpsi\big]=\big[(d^2\omega_0)\circ\tpsi\big](d\tpsi)$
and $d\tpsi\in C\big([0,T], L^\infty\big)$, we obtain that 
$d\big[(d\omega_0)\circ\tpsi\big]\in C\big([0,T],L^p_{\gamma_0+3}\big)$.
Hence, $(d\omega_0)\circ\tpsi\in C\big([0,T], W^{1,p}_{\gamma_0+2}\big)$.
By arguing in the same way we obtain the more general relation
\begin{equation}\label{eq:omega_0-derivatives_transport_general}
(d^k\omega_0)\circ\tpsi\in C\big([0,T], W^{1,p}_{\gamma_0+k+1}\big),\quad 0\le k\le 3.
\end{equation}
This allows us to prove the second relation in \eqref{eq:tomega_step_one}.
In fact, the argument used to prove that $\big[(d\omega_0)\circ\tpsi\big](d^2\tpsi)\in C\big([0,T], L^p_{\gamma_0+3}\big)$
and formula \eqref{eq:tpsi-smoother} show that $\big[(d\omega_0)\circ\tpsi\big](d^4\tpsi)\in C\big([0,T], L^p_{\gamma_0+3}\big)$.
In addition, we conclude from \eqref{eq:omega_0-derivatives_transport_general} and $\tpsi\in C\big([0,T],\diff^{3,\gamma}(\R^2)\big)$,
that the other terms appearing on the right hand sides of the last two equalities in \eqref{eq:higher_order_differentials}
also belong to $C\big([0,T], L^p_{\gamma_0+3}\big)$. This completes the proof of \eqref{eq:tomega_step_one}.


Further, we argue as follows. Note that the condition \eqref{eq:tomega_step_one} is equivalent to
$\partial^\alpha\tomega\in C\big([0,T],W^{2,p}_{\gamma_0+1}\big)$, $0\le|\alpha|\le 2$.
By combining this with Theorem \ref{th:cauchy_m>=0} $(i)$, Remark \ref{rem:Z->Z-tilde}, 
and \eqref{eq:u-tilde} we obtain that
\begin{equation}\label{eq:tu_step_two}
\tu-c\in C\big([0,T],W^{3,p}_{\gamma_0}\big)\quad\text{and}\quad d^4\tu, d^5\tu\in C\big([0,T],L^p_{\gamma_0+3}\big).
\end{equation}
This, together with Remark \ref{rem:H-group}, allows us to differentiate $A\equiv(d\tu)\circ\tvarphi$ three times and by using the product rule
obtain the following formulas for the differentials of $A$ of higher order
\begin{align}\label{eq:higher_order_differentials_for_A}
dA&=\big[(d^2\tu)\circ\tvarphi\big](d\tvarphi),\nonumber\\
d^2A&=\big[(d^3\tu)\circ\tvarphi\big](d\tvarphi,d\tvarphi)+\big[(d^2\tu)\circ\tvarphi\big](d^2\tvarphi),\\
d^3A&=\big[(d^4\tu)\circ\tvarphi\big](d\tvarphi,d\tvarphi,d\tvarphi)+3\big[(d^3\tu)\circ\tvarphi\big](d^2\tvarphi,d\tvarphi)
+\big[(d^2\tu)\circ\tvarphi\big](d^3\tvarphi).\nonumber
\end{align}
It follows from \eqref{eq:higher_order_differentials_for_A}, \eqref{eq:tu_step_two}, \eqref{eq:tphi-smoother},
Proposition \ref{prop:main_technical} (ii), Proposition \ref{prop:W_R}, and Remark \ref{rem:R^d} that
\begin{equation}
A\in C\big([0,T],B^{3,p}_{\gamma_0+1}\big)
\end{equation}
where 
\[
B^{3,p}_{\gamma_0+1}:=
\big\{f\in H^{3,p}_{loc}(\C,\C)\,\big|\,f\in L^p_{\gamma_0+1},\,\partial^\alpha f\in L^p_{\gamma_0+2},\,\,1\le|\alpha|\le 3\big\}.
\]
The space $B^{3,p}_{\gamma_0+1}$ is a Banach algebra. This can be easily seen from the product rule, Proposition \ref{prop:W_R} and Remark \ref{rem:R^d},
and the fact that
$B^{3,p}_{\gamma_0+1}=\big\{f\in H^{3,p}_{loc}(\C,\C)\,\big|\,\partial^\alpha f\in W^{1,p}_{\gamma_0+1}\cap W^{2,p}_{\gamma_0},\,
0\le|\alpha|\le 1\big\}$.
This allows us to conclude from equation \eqref{eq:jacobian_evolution}, considered as an ODE in the Banach algebra $B^{3,p}_{\gamma_0+1}$,
that $d\tvarphi-I\in C\big([0,T],B^{3,p}_{\gamma_0+1}\big)$. By combining this with \eqref{eq:tpsi-formula} and then
using that\footnote{This follows easily from Lemma \ref{lem:sobolev-holder}.}
\[
(f,\psi)\mapsto f\circ\psi,\quad B^{3,p}_{\gamma_0+1}\times\diff^{3,\gamma}(\R^2)\to B^{3,p}_{\gamma_0+1},
\]
and the Banach algebra property of $B^{3,p}_{\gamma_0+1}$ (cf. the proof of Lemma \ref{lem:division}) we obtain that
\begin{equation*}
d\tpsi-I\in C\big([0,T],B^{3,p}_{\gamma_0+1}\big).
\end{equation*}
With this additional spatial decay of $d\tpsi-I$ and its derivatives we return to the system of equations
\eqref{eq:higher_order_differentials} to conclude that $\tomega\in C\big([0,T],W^{3,p}_{\gamma_0+1}\big)$, and hence
\begin{equation*}
\tu-c\in C\big([0,T],W^{4,p}_{\gamma_0}\big).
\end{equation*}
Now, we have enough regularity to conclude from \eqref{eq:ode} and Lemma \ref{lem:ode} that
\begin{equation}\label{eq:step_three}
\tvarphi,\tpsi\in C^1\big([0,T],\mathcal{ZD}^{4,p}_0).
\end{equation}
(Note that for any given $t\in(0,T]$ the diffeomorphism $\tpsi(t)$ is equal to $\eta(t)$ where $\eta\in C^1\big([0,t],\mathcal{ZD}^{4,p}_0\big)$
is the solution of the equation $\dt\eta(s)=-u(t-s)\circ\eta(s)$, $\eta|_{s=0}=\idmap$.)
By applying the bootstrapping arguments above one more time we now easily obtain from \eqref{eq:higher_order_differentials}
and \eqref{eq:step_three} that $\tomega\in C\big([0,T],W^{4,p}_{\gamma_0+1}\big)$, and hence $\tu-c\in C\big([0,T],W^{5,p}_{\gamma_0}\big)$.
By \eqref{eq:on_[0,tau)}, this contradicts the blow-up of the $\mathcal{Z}^{5,p}_0$-norm in \eqref{eq:blow-up_in _norm}.
Therefore, $\tau>T$. This completes the proof of the theorem.
\end{proof}

\begin{remark}\label{rem:more_decay}
By arguing as in the proof of Proposition \ref{prop:more_decay} we also see that if for $T>0$ and
$u_0\in\mathcal{Z}^{4,p}_0\subseteq c^{2,\gamma}_b$ with $p>2$ there exists a solution of the 2d Euler equation in Lagrangian coordinates 
$\varphi\in C^1\big([0,T],\diff^{2,\gamma}(\R^2)\big)$ then the solution of the 2d Euler equation given by Theorem \ref{th:main} extends 
to the whole interval $[0,T]$ and $u\in C\big([0,T],\mathcal{Z}^{4,p}_0\big)\cap C^1\big([0,T],\mathcal{Z}^{3,p}_0\big)$.
\end{remark}

Now, we are ready to prove the main result of this section.

\begin{theorem}\label{th:main_global}
For any $u_0\in\mathcal{Z}^{m,p}_N$ with $m>3+(2/p)$ the Euler equation (\ref{EQeulerCauchyop}) has a unique solution 
$u\in C\big([0,\infty),\mathcal{Z}^{m,p}_N\big)\cap C^1\big([0,\infty),\mathcal{Z}^{m-1,p}_N\big)$ such that the pressure  
$\p(t)$ lies in $\mathcal{Z}^{m+1,p}_N$ for any $t\ge 0$. This solution depends continuously on the initial data 
$u_0\in B_{\mathcal{Z}^{m,p}_N}(\rho)$ in the sense that for any $T>0$ the map
$u_0\mapsto u$, $\mathcal{Z}^{m,p}_N\to C\big([0,T],\mathcal{Z}^{m,p}_N\big) \cap C^1\big([0,T],\mathcal{Z}^{m-1,p}_N\big)$,
is continuous.
\end{theorem}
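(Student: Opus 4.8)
The plan is to rule out finite-time blow-up of the local solution from Theorem \ref{th:main} by comparing it with a globally defined solution of the 2d Euler equation in Lagrangian coordinates on a Hölder diffeomorphism group, and then bootstrapping back up the asymptotic scale. Fix $u_0\in\mathcal{Z}^{m,p}_N$ with $m>3+(2/p)$. By Proposition \ref{PropEqEulerDynSys}, Proposition \ref{PropBijecDynSysDivFreeEulerEq} and Remark \ref{rem:smooth_vector_field}, solutions of the Euler equation \eqref{EQeulerCauchyop} with $\p\in\mathcal{Z}^{m+1,p}_{1,N}$ are in bijection with integral curves of the real-analytic vector field on $\mathcal{ZD}^{m,p}_N\times\mathcal{Z}^{m,p}_N$ appearing in \eqref{EQDynSys}; hence there is a unique maximal solution $u\in C\big([0,\tau),\mathcal{Z}^{m,p}_N\big)\cap C^1\big([0,\tau),\mathcal{Z}^{m-1,p}_N\big)$ with Lagrangian flow $\varphi\in C^1\big([0,\tau),\mathcal{ZD}^{m,p}_N\big)$, and (as in the proof of Proposition \ref{prop:more_decay}) the blow-up alternative forces $\|u(t)\|_{\mathcal{Z}^{m,p}_N}\to\infty$ as $t\to\tau-0$ whenever $\tau<\infty$. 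The goal is to show $\tau=\infty$, for which it suffices to show that $\sup_{t\in[0,\tau)}\|u(t)\|_{\mathcal{Z}^{m,p}_N}<\infty$ whenever $\tau<\infty$.

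I would reduce to $N=0$ by induction on $N$ using Proposition \ref{prop:longer_expansion}: if global existence in $\mathcal{Z}^{m,p}_N$ is known for all data in $\mathcal{Z}^{m,p}_N$, then for $u_0\in\mathcal{Z}^{m,p}_{N+1}\subseteq\mathcal{Z}^{m,p}_N$ the solution exists on every $[0,T]$ in $\mathcal{Z}^{m,p}_N$, whence by Proposition \ref{prop:longer_expansion} it lies in $C\big([0,T],\mathcal{Z}^{m,p}_{N+1}\big)\cap C^1\big([0,T],\mathcal{Z}^{m-1,p}_{N+1}\big)$ for every $T$; by uniqueness this is the maximal $\mathcal{Z}^{m,p}_{N+1}$-solution, and taking $T$ equal to its maximal time together with the blow-up alternative forces it to be global. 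So it remains to treat $N=0$. Since $\mathcal{Z}^{m,p}_0\subseteq c^{m-2,\gamma}_b$ for suitable $0<\gamma<1$ (Remark \ref{rem:Z->holder}) and $m-2>1+(2/p)\ge 1$, the discussion preceding Proposition \ref{prop:more_decay} shows that $\varphi$, viewed in $\diff^{m-2,\gamma}(\R^2)$, is a solution of the 2d Euler equation in Lagrangian coordinates on $\diff^{m-2,\gamma}(\R^2)$. The classical global well-posedness of the 2d Euler equation in (little) Hölder spaces — which rests on the conservation of $\|\omega(t)\|_{L^\infty}$, $\omega=i\curlop u=\partial_z u$, along the volume-preserving flow together with the log-Lipschitz/Beale--Kato--Majda a priori bound precluding finite-time blow-up of Hölder norms — then provides a global Lagrangian solution $\tilde\varphi\in C^1\big([0,\infty),\diff^{m-2,\gamma}(\R^2)\big)$ extending $\varphi$.

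With this global Hölder flow in hand, the remaining step is a bootstrapping argument on an arbitrary finite interval $[0,T]$, carried out exactly as in the proofs of Proposition \ref{prop:more_decay} and Remark \ref{rem:more_decay} (which dispose of $m=5$, and of $m=4$ with $p>2$) but with the number of iterations adapted to $m$. Writing $\tilde\psi:=\tilde\varphi^{-1}$ and $\omega_0:=\partial_z u_0\in W^{m-1,p}_{\gamma_0+1}$, one transports the vorticity $\tilde\omega(t):=\omega_0\circ\tilde\psi(t)$, which initially lies in a weighted Sobolev space of regularity $m-3$ by Lemma \ref{lem:sobolev-holder}; then, using that $d\tilde\varphi-I$ and $d\tilde\psi-I$ solve the linear ODEs \eqref{eq:jacobian_evolution}, \eqref{eq:tpsi-formula} in appropriate weighted Banach algebras, the decay estimates of Proposition \ref{prop:main_technical}(ii), Proposition \ref{prop:W_R} and Remark \ref{rem:R^d}, the higher-order composition formulas \eqref{eq:higher_order_differentials}, \eqref{eq:higher_order_differentials_for_A}, and Theorem \ref{th:cauchy_m>=0}(i), one successively recovers $\tilde\omega\in C\big([0,T],W^{m-1,p}_{\gamma_0+1}\big)$ and $\tilde\varphi,\tilde\psi\in C^1\big([0,T],\mathcal{ZD}^{m,p}_0\big)$. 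Since the leading asymptotic coefficient $a_{00}$ of $u$ is constant in $t$ (Proposition \ref{prop:integrals}) and $\omega(t)=\tilde\omega(t)$ on $[0,\tau)$, Theorem \ref{th:cauchy_m>=0}(i) gives $u(t)=a_{00}+\partial_z^{-1}\tilde\omega(t)\in\mathcal{Z}^{m,p}_0$ with $\sup_{t\in[0,\tau)}\|u(t)\|_{\mathcal{Z}^{m,p}_0}<\infty$ whenever $\tau\le T$, contradicting the blow-up alternative; hence $\tau=\infty$. The hard part is precisely this bootstrapping — keeping track of the exact weights and of the interplay between the Hölder flow, the weighted-Sobolev remainder, and the holomorphic-in-$t$ asymptotic coefficients — which is where all the technical lemmas of Sections \ref{sec:spaces}--\ref{sec:euler_equation} are used; the global Hölder input itself is classical.

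Finally, continuous dependence and the pressure statement follow with little extra work. Global existence of the integral curves of the real-analytic (hence locally Lipschitz, and by Remark \ref{rem:smooth_vector_field} loss-free) vector field \eqref{EQDynSys} together with Gronwall's inequality yields that $u_0\mapsto(\varphi,v)$ is continuous from $\mathcal{Z}^{m,p}_N$ into $C\big([0,T],\mathcal{ZD}^{m,p}_N\times\mathcal{Z}^{m,p}_N\big)$ for every $T>0$; composing with the continuous map $(\varphi,v)\mapsto v\circ\varphi^{-1}$ (Theorem \ref{th:ZD}, Proposition \ref{prop:inverse}) gives continuity of $u_0\mapsto u$ into $C\big([0,T],\mathcal{Z}^{m,p}_N\big)\cap C^1\big([0,T],\mathcal{Z}^{m-1,p}_N\big)$. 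For the pressure, Lemma \ref{LemmaPressureTermAsymp} applied pointwise in $t$ produces $\p(t)=-\tfrac12\,\partial_{\bar z}^{-1}\partial_z^{-1}\big((u_z)^2+|u_{\bar z}|^2\big)\in\mathcal{Z}^{m+1,p}_{1,N}\subseteq\mathcal{Z}^{m+1,p}_N$, and the continuity of $Q$, $\partial_z^{-1}$, $\partial_{\bar z}^{-1}$ on the relevant scales upgrades this to $\p\in C\big([0,\infty),\mathcal{Z}^{m+1,p}_{1,N}\big)$; by Proposition \ref{PropEqEulerDynSys} this $u$ together with $\p$ solves \eqref{EQeulerCauchyop}, and uniqueness is inherited from uniqueness for \eqref{EQDynSys} via Proposition \ref{PropBijecDynSysDivFreeEulerEq}.
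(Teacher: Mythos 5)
Your proposal is correct and follows essentially the same route as the paper's proof: reduction to $N=0$ by induction via Proposition \ref{prop:longer_expansion}, the globally defined H\"older Lagrangian solution (Serfati, Misiolek--Yoneda) as the external input, the weighted bootstrapping of Proposition \ref{prop:more_decay} to rule out blow-up of the $\mathcal{Z}$-norm, and the ODE reformulation \eqref{EQDynSys} together with Lemma \ref{LemmaPressureTermAsymp} for continuous dependence and the pressure. The one point where the paper is lighter than your plan is the induction on $m$: instead of rerunning the Proposition \ref{prop:more_decay} bootstrapping ``with the number of iterations adapted to $m$'' (which would require extending the explicit fourth-order chain-rule formulas \eqref{eq:higher_order_differentials} to arbitrary order), the paper performs that bootstrapping only at the base regularity $m=5$ (resp.\ $m=4$, $p>2$) and then raises $m$ step by step using the transport of vorticity $\omega(t)=\omega_0\circ\varphi(t)^{-1}$ with the flow already known to lie in $\mathcal{ZD}^{5,p}_0$, Remark \ref{rem:longer_expansion}, and the inversion of $\partial_z$ from Theorem \ref{th:Z->Z-tilde}.
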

\begin{proof}[Proof of Theorem \ref{th:main_global}]
Assume that $m>3+(2/p)$ and take $u_0\in\mathcal{Z}^{m,p}_N$. First, consider the case when $m\ge 5$ and $1<p<\infty$.
Then $u_0\in\mathcal{Z}^{m,p}_N\subseteq\mathcal{Z}^{5,p}_0$. By Remark \ref{rem:Z->holder}, the space $\mathcal{Z}^{5,p}_0$ is continuously 
embedded into the little H\"older space $c^{3,\gamma}_b$. Then, it follows from the results in \cite[Theorem 2]{Serfati} and \cite{Serfati1,Serfati2,MY} that there 
exists a solution $\varphi\in C^1\big([0,\infty),\diff^{3,\gamma}(\R^2)\big)$ of 2d Euler equation in Lagrangian coordinates.

\begin{remark}\label{rem:solutions_in_the_little_holder_space}
It follows from the results in \cite[Theorem 2]{Serfati} (see also \cite[Theorem 2]{Serfati1},\cite{MY}) that for any $T>0$ and $m\ge 1$ the 
2d Euler equation has a unique solution $\varphi\in C^1\big([0,T],\Diff^{m,\gamma}(\R^2)\big)$ in Lagrangian coordinates on the group of 
orientation preserving diffeomorphisms of $\R^2$ of H\"older class $C^{m,\gamma}_b$. Since these solutions depend continuously on the initial data 
on $\Diff^{m,\gamma}(\R^2)\times C^{m,\gamma}_b$, one obtains, by approximating the initial data in the little H\"older space 
$c^{m,\gamma}_b$ by functions in $C^\infty_b$, that $\varphi(t)\in\diff^{m,\gamma}(\R^2)$ for any $u_0\in c^{m,\gamma}$ and 
for any $t\ge 0$. Hence $\varphi\in C^1\big([0,T],\diff^{m,\gamma}(\R^2)\big)$ is a solution of the 2d Euler 
equation in Lagrangian coordinates on $\diff^{m,\gamma}(\R^2)$.
\end{remark}

By Proposition \ref{prop:more_decay} we then conclude that the local solution of the 2d Euler equation in $\mathcal{Z}^{5,p}_0$ given by 
Theorem \ref{th:main} extends to a global solution
\[
u\in C\big([0,\infty), \mathcal{Z}^{5,p}_0\big)\cap C^1\big([0,\infty),\mathcal{Z}^{4,p}_0\big).
\] 
In the case when $m>5$ we have that $u_0\in\mathcal{Z}^{6,p}_0$ and  $\omega_0=\partial_z u\in\widetilde{\mathcal{Z}}^{5,p}_1$.
Then, by Lemma \ref{lem:ode} and Remark \ref{rem:longer_expansion},
\[
\omega(t)=\omega_0\circ\varphi^{-1}\in C\big([0,\infty),\widetilde{\mathcal{Z}}^{5,p}_1\big).
\]
This implies that $u:=\partial_z^{-1}\omega\in C\big([0,\infty),\mathcal{Z}^{6,p}_0\big)$.
Since there is no blow-up in the $\mathcal{Z}^{6,p}_0$-norm of the curve $u$, we conclude from
Theorem \ref{th:main} that 
$u\in C\big([0,\infty),\mathcal{Z}^{6,p}_0\big)\cap C^1\big([0,\infty),\mathcal{Z}^{5,p}_0\big)$
is a solution of the 2d Euler equation in $\mathcal{Z}^{6,p}_0$.
By repeating this process inductively, we obtain that 
\[
u\in C\big([0,\infty),\mathcal{Z}^{m,p}_0\big)\cap C^1\big([0,\infty),\mathcal{Z}^{m-1,p}_0\big)
\]
is a solution of the 2d Euler equation in $\mathcal{Z}^{m,p}_0$. 
Similarly, by applying Proposition \ref{prop:longer_expansion} inductively, we increase from $0$ to $N$ the order of 
the asymptotic expansion of $u$ and obtain that 
$u\in C\big([0,\infty),\mathcal{Z}^{m,p}_N\big)\cap C^1\big([0,\infty),\mathcal{Z}^{m-1,p}_N\big)$
is a solution of the 2d Euler equation in $\mathcal{Z}^{m,p}_N$.
This solution is unique and depends continuously on the initial data by Theorem \ref{th:main}.
The analyticity in time of the asymptotic coefficients of the solution follows from the last statement of Theorem \ref{th:main}.
The case when $m=4$ and $p>2$ follows from Remark \ref{rem:more_decay} and 
Remark \ref{rem:solutions_in_the_little_holder_space}.
\end{proof}

\appendix 
\section{Appendix: Auxiliary results} \label{appendix:technical}
In this Appendix we prove a number of technical results used in the main body of the paper.
Our first task is to study the properties of the auxiliary asymptotic space $\mathcal{Z}^{m,p}_N(B_R^c)$.
To this end we first study the properties of its reminder space $W^{m,p}_\delta(B_R^c)$ where 
$B_R^c\equiv\big\{{\rm x}\in\R^d\,\big|\,|{\rm x}|>R\big\}$ with $R>0$. For a non-negative real number $\beta\in\R$ denote by
$\left\lfloor{\beta}\right\rfloor$ its integer part, i.e. the largest integer number that is less or equal than $\beta$. 
Similarly, let $\{\beta\}:=\beta-\left\lfloor{\beta}\right\rfloor$ be the fractional part of $\beta\ge 0$. We have the following weighted version of 
the Sobolev embedding theorem.

\begin{proposition}\label{prop:main_technical}
Assume that $1<p<\infty$, $\delta\in\R$, and $R\ge 1$. Then
\begin{itemize}
\item[(i)] If $0\le m<d/p$ then for any $q\in\big[p,d/(\frac{d}{p}-m)\big]$ one has that
$W_{\delta-(d/p)}^{m,p}(B_R^c)\subseteq L_{\delta -(d/p)}^q(B_R^c)$. Moreover, there exists a constant
$C\equiv C(d,p,q,m,\delta)>0$ independent of the choice of $R\ge 1$ such that for any $f \in W_{\delta-(d/p)}^{m,p}(B_R^c)$,
\begin{equation*}
\norm{f}_{L_{\delta-(d/p)}^q(B_R^c)} \le C\norm{f}_{W^{m,p}_{\delta -(d/p)}(B_R^c)}.
\end{equation*}
If $m=d/p$ then the above statement holds for any $q\in\left[p,\infty\right)$.
\item[(ii)] For $m>d/p$ and for any integer $0\le k<m-(d/p)$ one has that 
$W_{\delta-(d/p)}^{m,p}(B_R^c)\subseteq C^k(B_R^c)$ and for any $|\alpha|\le k$,
\begin{equation*}
\sup_{{\rm x} \in B_R^c}\left(\x^{\delta+|\alpha|}\abnorm{D^{\alpha}f({\rm x})}\right) \le 
C\norm{f}_{W^{m,p}_{\delta-(d/p)}(B_R^c)},\quad D^\alpha\equiv\partial_{x_1}^{\alpha_1}\cdots\partial_{x_d}^{\alpha_d},
\end{equation*}
with a constant $C\equiv C(d,p,m,\delta)>0$ {\em independent} of the choice of $R\ge 1$.
\item[(iii)] Assume that $m>d/p$, $0\le k<m-(d/p)$, and $\delta+(d/p)>0$. 
Choose a real number $\gamma$ such that $0<\gamma<\big\{m-k-(d/p)\big\}$ for
$\big\{m-k-(d/p)\big\}\ne 0$ or $0<\gamma<1$ for $\big\{m-k-(d/p)\big\}=0$. Then, 
\[
W_\delta^{m,p}(\R^d)\subseteq C^{k,\gamma}_b(\R^d)
\]
and the embedding is bounded.
\end{itemize}
\end{proposition}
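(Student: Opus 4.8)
The plan is to deduce (iii) from parts (i) and (ii) together with the \emph{unweighted} Sobolev--Morrey embedding on a fixed bounded domain; the one delicate point is a scaling argument that is needed near spatial infinity when the weight $\delta$ is negative.

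First I would take care of the $C^k_b$ part for free. Applying part (ii) with the weight there taken equal to $\delta$ (so that the hypothesis $\delta+(d/p)>0$ is exactly what is needed), one gets, for every $|\alpha|\le k$,
\[
\sup_{x\in\R^d}\langle x\rangle^{\delta+(d/p)+|\alpha|}\,|D^\alpha f(x)|\le C\,\|f\|_{W^{m,p}_\delta},
\]
and since $\delta+(d/p)+|\alpha|>0$ this already gives $f\in C^k_b(\R^d)$ with $\|D^\alpha f\|_{L^\infty}\le C\|f\|_{W^{m,p}_\delta}$ for $|\alpha|\le k$. So the whole content left to prove is the uniform bound $[D^\alpha f]_\gamma\le C\|f\|_{W^{m,p}_\delta}$ for $|\alpha|=k$.

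For that seminorm I would split on $|x-y|$. If $|x-y|\ge1$ the estimate is immediate from $\|D^\alpha f\|_{L^\infty}$ and $\gamma\le1$. If $|x-y|<1$ and $x$ lies in a fixed large ball, the weight $\langle\cdot\rangle$ is bounded above and below there, so $W^{m,p}_\delta$ agrees locally with $W^{m,p}$ and the classical embedding $W^{m,p}\subseteq C^{k,\gamma}$ on that ball settles it (this is where the restriction on $\gamma$ in the statement, namely $\gamma<\{m-k-(d/p)\}$ when $\{m-k-(d/p)\}\ne0$ and $\gamma<1$ otherwise, is precisely the admissible Sobolev--Morrey range, and it rests on part (i)). The substantive case is $|x-y|<1$ with $R:=|x|$ large. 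Here the idea is to rescale: put $g(w):=f(Rw)$ on the \emph{fixed} annulus $A=\{\tfrac12\le|w|\le\tfrac32\}$, which contains both $x/R$ and $y/R$. Since $\langle Rw\rangle$ is comparable to $R$ for $w\in A$, the key computation is that for every $|\beta|\le m$
\[
\|D^\beta g\|_{L^p(A)}=R^{|\beta|-(d/p)}\,\|D^\beta f\|_{L^p(\{R/2\le|x|\le3R/2\})}\le C\,R^{-(\delta+(d/p))}\,\big\|\langle x\rangle^{\delta+|\beta|}D^\beta f\big\|_{L^p}\le C\,R^{-(\delta+(d/p))}\|f\|_{W^{m,p}_\delta},
\]
so that $\|g\|_{W^{m,p}(A)}\le C\,R^{-(\delta+(d/p))}\|f\|_{W^{m,p}_\delta}$. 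Applying the Sobolev--Morrey inequality on the fixed domain $A$ and then undoing the dilation (using $D^\alpha_w g=R^{k}(D^\alpha f)(R\,\cdot)$ for $|\alpha|=k$) yields
\[
|D^\alpha f(x)-D^\alpha f(y)|\le C\,R^{-(\delta+(d/p)+k+\gamma)}\,\|f\|_{W^{m,p}_\delta}\,|x-y|^\gamma,
\]
and because $R\ge1$ and $\delta+(d/p)+k+\gamma>0$ the $R$-prefactor is $\le1$, giving the desired uniform Hölder bound; combining the three regimes completes the proof.

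The main obstacle is exactly the scaling in that last step. The naive attempt — restrict to a unit ball $B_1(x_0)$ and invoke Morrey's inequality there — produces a constant growing like $\langle x_0\rangle^{-(\delta+(d/p)+\text{lowest order of derivative used})}$, which blows up when $-(d/p)<\delta<0$ (precisely the range allowed by the hypotheses). The remedy, and the one structural observation the argument needs, is that after dilating the dyadic shell $\{|x|\sim R\}$ to unit size the weights produce the \emph{same} power $R^{-(\delta+(d/p))}$ for every derivative order — this is the scale invariance built into the weighted norm — so all the $R$-dependence collapses into the single harmless factor $R^{-(\delta+(d/p))}\le1$. Everything else (the chain $W^{m,p}\hookrightarrow W^{1,q}\hookrightarrow C^{0,\gamma}$ on $A$, and the local equivalence of $W^{m,p}_\delta$ with $W^{m,p}$) is standard.
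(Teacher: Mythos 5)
Your argument is only a proof of item (iii), taken conditionally on items (i) and (ii). Those two items are the bulk of the proposition and are not free: their entire content is the \emph{uniformity in $R\ge 1$} of the constants, which the paper obtains by exactly the rescaling you use later --- dilating each dyadic annulus $A_{2^kR}$ to the fixed annulus $A_1$, where the weighted norm picks up the single factor $R^{-(\delta+(d/p))}$ independently of the derivative order --- followed by summing over the shells $B_R^c=\bigcup_{k\ge0}\bar A_{2^kR}$ using the elementary inclusion $\ell^p\subseteq\ell^q$ for $p\le q$ (Lemma \ref{lem:inequality}). So as a proof of the proposition as stated the proposal is incomplete, although the one structural observation you isolate (scale invariance of the weighted norm on dyadic shells) is precisely the mechanism that drives (i) and (ii) as well; writing those parts out would require the additional $\ell^p\hookrightarrow\ell^q$ summation step for (i).

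For item (iii) itself your argument is correct and is essentially the paper's: both rest on rescaling the shell $\{|{\rm x}|\sim R\}$ to a fixed annulus, applying the unweighted Sobolev--Morrey inequality there, and noting that the resulting prefactor $R^{-\mu}$ with $\mu=(\delta+(d/p))+\gamma+|\alpha|>0$ is at most $1$. The difference is organizational. You split on $|{\rm x}-{\rm y}|\ge 1$ versus $|{\rm x}-{\rm y}|<1$, so that in the nontrivial regime both points sit in a single comparable shell and one application of Morrey suffices (do record that for $|{\rm x}|=R$ large and $|{\rm x}-{\rm y}|<1$ one has $|{\rm y}|/R\in\big((R-1)/R,(R+1)/R\big)\subseteq[1/2,3/2]$, and that in the bounded regime ${\rm y}$ stays in a slightly enlarged ball). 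The paper instead estimates $[D^\alpha f]_{\gamma;\bar A_{2^k}}\le C\,2^{-\mu k}\|f\|_{W^{m,p}_\delta}$ on every annulus, decomposes the segment $[{\rm x},{\rm y}]$ into its intersections with $\bar B_1$ and all the $\bar A_{2^k}$, and sums the geometric series $\sum_k 2^{-\mu k}$. Your reduction avoids the summation and is arguably cleaner; the paper's treats all pairs of points uniformly without the $|{\rm x}-{\rm y}|\ge1$ case. Either route gives the stated range of $\gamma$, which is exactly what the unweighted Morrey embedding on the fixed annulus permits.
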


\begin{remark}
An important technical part of Proposition \ref{prop:main_technical} is that
the constants $C>0$ appearing in item (i) and (ii) are independent of the choice of $R\ge 1$.
\end{remark}

\begin{proof}[Proof of Proposition \ref{prop:main_technical}]
Let us first prove item $(i)$. Assume that $0\le m<d/p$. For a given open set $\mathcal{U}\subseteq\R^d$ denote
\begin{align*}
\vertiii{f}_{m,p,\delta;\mathcal{U}} := 
\left(\sum_{0 \le\alpha \le m } \int_{\mathcal{U}} \abnorm{\abnorm{\rm x}^{\delta+|\alpha|}D^{\alpha}f}^p\,d{\rm x}\right)^{1/p}.
\end{align*}
This norm is equivalent to 
$\|f\|_{W^{m,p}_\delta(\mathcal{U})}:=
\sum_{0 \le\alpha \le m }\big\|\abnorm{\rm x}^{\delta + |\alpha|}D^{\alpha}f\big\|_{L^p(\mathcal{U})}$.
For $R\ge 1$ consider the annulus $A_R := B_{2R}\setminus{\bar{B}}_R$ where $B_R$ denotes the open ball of radius $R$ 
in $\R^d$ centered at zero and ${\bar{B}}_R$ is its closure. By changing the variables in the corresponding integrals one easily
sees that for any $f\in C^\infty({\bar{A}}_R)$,
\begin{align}\label{eq:scaling_weighted_sobolev}
\vertiii{f_R}_{m,p,\delta;A_1}=R^{-\delta-(d/p)}\vertiii{f}_{m,p,\delta;A_R},
\end{align}
where $f_R({\rm x}) := f(R{\rm x})$ for ${\rm x}\in\R^d$. By using that $R\le|{\rm x}|\le 2R$ on $A_R$ we also see that
\begin{align*}
\sum_{0 \le\alpha\le m}\int_{A_R}\left(|{\rm x}|^{\delta -(d/p)+ |\alpha|} R^{\frac{d}{p}}|D^{\alpha}f|\right)^p\,d{\rm x} &
\le \vertiii{f}^p_{m,p,\delta;A_R}\\ &\le\sum_{0 \le \alpha \le m} 
\int_{A_R}\left(|{\rm x}|^{\delta -(d/p)+ |\alpha|} (2R)^{\frac{d}{p}}|D^{\alpha}f|\right)^p\,d{\rm x},
\end{align*}
which implies
\begin{align*}
R^{\frac{d}{p}}\vertiii{f}_{m,p,\delta-(d/p);A_R} \le \vertiii{f}_{m,p,\delta;A_R}\le
(2R)^{\frac{d}{p}}\vertiii{f}_{m,p,\delta-(d/p);A_R}. 
\end{align*}
This, together with \eqref{eq:scaling_weighted_sobolev}, then shows that for any $0\le m<d/p$ there exist constants $0<M_1<M_2$ 
independent of $R\ge 1$ such that for any $f\in C^\infty_c(A_R)$,
\begin{align}\label{WSpacePropEq3}
M_1R^{-\delta}\vertiii{f}_{m,p,\delta-(d/p);A_R}\le \vertiii{f_R}_{m,p,\delta;A_1}\le
M_2R^{-\delta}\vertiii{f}_{m,p,\delta-(d/p);A_R}. 
\end{align}
By the Sobolev embedding theorem for any given $q\in\big[p,d/(\frac{d}{p}-m)\big]$ there exists a constant 
$\widetilde{C}\equiv C(d,q)>0$ such that for any $g\in W^{m,p}(A_1)$,
\begin{align}\label{WSpacePropEq4}
\norm{g}_{L^q(A_1)}\le\widetilde{C}\norm{g}_{W^{m,p}(A_1)}. 
\end{align}
By applying \eqref{WSpacePropEq4} to $|{\rm x}|^{\delta}g({\rm x})$ with $g\in C^{\infty}_c(A_1)$ and then using that
$\abnorm{D^{\alpha}|{\rm x}|^{\delta}}\le C_{\alpha,\delta}|{\rm x}|^{\delta - |\alpha|}$ we obtain
\begin{align*}
\norm{g}_{L^q_\delta(A_1)}&\equiv\norm{\abnorm{\rm x}^{\delta}g}_{L^q(A_1)}
\le\widetilde{C}\sum_{|\alpha|\le m}\norm{D^{\alpha}(|{\rm x}|^{\delta}g({\rm x}))}_{L^p(A_1)} 
\le C_1\sum_{|\alpha|\le m}\sum_{\beta\le\alpha}\norm{\big(D^{\alpha-\beta}|{\rm x}|^{\delta}\big)
D^{\beta}g}_{L^p(A_1)}\\
&\le C_2\sum_{|\alpha|\le m}\sum_{\beta\le\alpha}\norm{|{\rm x}|^{\delta-|\alpha|+|\beta|}D^{\beta}g}_{L^p(A_1)}  
\le C_2\sum_{|\alpha|\le m}\sum_{\beta\le\alpha} \norm{|{\rm x}|^{\delta+|\beta|}D^{\beta}g}_{L^p(A_1)}\\
&\le C_3\sum_{|\beta|\le m}\norm{|{\rm x}|^{\delta+|\beta|}D^{\beta}g}_{L^p(A_1)} \le C_4 \vertiii{g}_{m,p,\delta;A_1}.
\end{align*}
By taking $g = f_R$ with $f\in C^\infty_c(A_R)$ we get 
$\vertiii{f_R}_{L^q_\delta(A_1)} \le C_4 \vertiii{f_R}_{p,m,\delta; A_1}$ 
which together with \eqref{WSpacePropEq3} and the fact that
$\vertiii{\cdot}_{L^q_\delta(\mathcal{U})}\equiv\vertiii{\cdot}_{0,q,\delta;\mathcal{U}}$ for any $\delta\in\R$ and an open set
$\mathcal{U}$ in $\R^d$ implies,
\begin{align*}
R^{-\delta}\vertiii{f}_{L^q_{\delta-\frac{d}{q}}(A_R)}\le C_5\vertiii{f_R}_{L^q_\delta(A_1)}
\le C_6 \vertiii{f_R}_{m,p,\delta;A_1} \le C_7 R^{-\delta}\vertiii{f}_{m,p,\delta-(d/p);A_R}.
\end{align*}
Hence, for any $f\in C^\infty_c(A_R)$,
\begin{align}\label{WSpacePropEq5}
\vertiii{f}_{L^q_{\delta-\frac{d}{q}}(A_R)} \le C \vertiii{f}_{m,p,\delta-(d/p);A_R},
\end{align}
with $C>0$ independent of $R\ge 1$.
Finally, we write $B_R^c=\bigcup_{k\ge 0}{\bar{A}}_{2^kR}$ and by using \eqref{WSpacePropEq5}
obtain that for any $f\in C^\infty_c(B_R^c)$,
\begin{align*}
\norm{f}_{L^q_{\delta - \frac{d}{q}}(B_R^c)} &= \left(\sum_{k=0}^{\infty} \int_{A_{2^kR}} 
\left(|{\rm x}|^{\delta-\frac{d}{q}}|f|\right)^q\right)^{\frac{1}{q}} = \left(\sum_{k=0}^{\infty}  
\vertiii{f}^q_{L^{q}_{\delta-\frac{d}{q}}(A_{2^kR})}\right)^{\frac{1}{q}}\\
&\le C\; \left(\sum_{k=0}^{\infty} \vertiii{f}^q_{m,p,\delta-(d/p);A_{2^kR}}\right)^{\frac{1}{q}} \le 
C\; \left(\sum_{k=0}^{\infty} \vertiii{f}^p_{m,p,\delta-(d/p);A_{2^kR}}\right)^{\frac{1}{p}} \\
&= C \; \vertiii{f}_{W^{m,p}_{\delta-(d/p)}(B_R^c)}
\end{align*}
where we used Lemma \ref{lem:inequality} below. The case when $m=d/p$ is treated in the same way. This proves item $(i)$.

In order to prove item $(ii)$, assume that $m>d/p$.
By the Sobolev embedding theorem and the inequality
$\abnorm{D^{\beta}|{\rm x}|^{\delta + |\alpha|}} \le C_ {\alpha,\delta} |{\rm x}|^{\delta +|\alpha|-|\beta|}$ we get that
for any $g\in C^\infty_c(A_1)$, an integer $0\le k<m-(d/p)$, and $|\alpha|\le k$, 
\begin{align}
\sup_{{\rm x}\in{A_1}}\Big(\abnorm{|{\rm x}|^{\delta+|\alpha|}D^{\alpha}g({\rm x})}\Big) 
&\le C_1\sum_{|\beta|\le m-|\alpha|}\norm{D^{\beta}\big(|{\rm x}|^{\delta+|\alpha|}D^{\alpha}g\big)}_{L^p(A_1)}\nonumber\\
&\le C_2\sum_{|\beta|\le m-|\alpha|}\sum_{\gamma\le\beta}\norm{|{\rm x}|^{\delta+|\alpha|+|\gamma|-|\beta|}
D^{\alpha+\gamma}g}_{L^p(A_1)}\label{eq:useful_chunk}\\
&\le C_3\sum_{|\alpha+\gamma|\le m}\norm{|{\rm x}|^{\delta+|\alpha+\gamma|}
D^{\alpha+\gamma}g}_{L^p(A_1)}\le C_4\vertiii{g}_{m,p,\delta;A_1}.\nonumber
\end{align}
By taking $g=f_R$ with $f\in C^\infty_c(A_R)$ in this inequality and then using that 
$D^{\alpha}f_R = R^{|\alpha|}(D^{\alpha}f)_R$ we obtain
\begin{align*}
R^{-\delta} \sup_{y \in A_R}\big(|{\rm y}|^{\delta + |\alpha|}\abnorm{D^{\alpha}f({\rm y})}\big)
\le C_3\vertiii{f_R}_{m,p,\delta-(d/p);A_1}=C_3 R^{-\delta}\vertiii{f}_{m,p,\delta-(d/p);A_R}
\end{align*}
where we used \eqref{eq:scaling_weighted_sobolev} in the last step.
Hence,
\begin{align*}
\sup_{{\rm y} \in A_R}\big(|{\rm y}|^{\delta + |\alpha|}|D^{\alpha}f({\rm y})|\big) \le 
C_3 \vertiii{f}_{m,p,\delta -(d/p);A_R}\le 
C_3 \vertiii{f}_{m,p,\delta -(d/p)}
\end{align*}
with a constant $C_3>0$ independent of the choice of $R\ge 1$.
By taking $\sup$ over $B_R^c$ we then conclude the proof of item $(ii)$ and
Proposition \ref{prop:main_technical}.

Let us now prove item (iii). Assume that $m>d/p$, $0\le k+(d/p)<m$, $\delta+(d/p)>0$, and 
choose $\gamma\in\R$ as described in the proposition.
It follows from item (ii) that $W^{m,p}_{\delta}(B_1^c)\subseteq C^k(B_1^c)$ and that for any $|\alpha|\le k$ and 
$f\in W^{m,p}_{\delta}(B_1^c)$ we have
\begin{equation*}
\sup_{{\rm x} \in B_1^c}\left(\x^{(\delta+(d/p))+|\alpha|}\abnorm{D^{\alpha}f({\rm x})}\right) \le 
C\norm{f}_{W^{m,p}_{\delta}(B_1^c)}<\infty.
\end{equation*}
Since $\delta+(d/p)>0$ we then obtain that $W^{m,p}_{\delta}(\R^d)\subseteq C^k_b(\R^d)$.
Let us now estimate the H\"older semi-norm of $D^\alpha f$ with $|\alpha|\le k$.
For $R\ge 1$ consider the open annulus $A_R$ in $\R^d$ defined above and let $|\cdot|_{0,\gamma;A_R}$ and 
$[\cdot]_{\gamma;{\bar{A}}_R}$ be the H\"older norm and semi-norm in the closure ${\bar{A}}_R$ of $A_R$ in $\R^d$.
For any $\delta\in\R$ and for any $g\in C^k({\bar{A}}_1)$ we have
\begin{equation}\label{eq:preparation1}
[g]_{\gamma;{\bar{A}}_1}=\big[|{\rm x}|^{-\delta}\big(|{\rm x}|^\delta g\big)\big]_{\gamma;{\bar{A}}_1}
\le 2^{|\delta|}\big[|{\rm x}|^\delta g\big]_{\gamma;{\bar{A}}_1}+K_\delta\big||{\rm x}|^\delta g\big|_{L^\infty({\bar{A}}_1)}
\le C_\delta\big| |{\rm x}|^\delta g\big|_{\gamma;{\bar{A}}_1}
\end{equation}
with constants depending only on the choice of $\delta\in\R$.
Here we used that for any non-empty $U\subseteq\R^d$ we have that
$[fg]_\gamma\le |f|_{L^\infty(U)}[g]_{\gamma;U}+|g|_{L^\infty(U)}[f]_{\gamma;U}$
for any $f,g\in C^{0,\gamma}(U)$. By the Sobolev embedding theorem in the domain $A_1$ and by the assumptions on the regularity exponents 
$m$, $k$, and $\gamma$, we obtain that for any $|\alpha|\le k$ there exists a constant $C>0$, as well as constants $C_1, C_2, C_3>0$, 
such that for any $g\in H^{m,p}({\bar{A}}_1)$ we have that
\begin{align}
\big| |{\rm x}|^\delta D^\alpha g\big|_{\gamma;{\bar{A}}_1}
&\le C\sum_{|\beta|\le m-|\alpha|}\norm{D^{\beta}\big(|{\rm x}|^\delta D^{\alpha}g\big)}_{L^p(A_1)}
\le C_1\sum_{|\beta|\le m-|\alpha|}\sum_{\gamma\le\beta}\norm{|{\rm x}|^{\delta+|\gamma|-|\beta|}
D^{\alpha+\gamma}g}_{L^p(A_1)}\nonumber\\
&\le C_2\sum_{|\alpha+\gamma|\le m}\norm{|{\rm x}|^{\delta+|\alpha+\gamma|}
D^{\alpha+\gamma}g}_{L^p(A_1)}\le C_3\vertiii{g}_{m,p,\delta;A_1}\label{eq:preparation2}
\end{align}
where we argued as in \eqref{eq:useful_chunk}.
It follows from \eqref{eq:preparation1} and \eqref{eq:preparation2} that there exists a constant $C>0$ such that
for any $g\in H^{m,p}({\bar{A}}_1)$ and for any $|\alpha|\le k$ we have that
\begin{equation}\label{eq:basic_holder}
[D^\alpha g]_{\gamma;{\bar{A}}_1}\le C\,\vertiii{g}_{m,p,\delta;A_1}.
\end{equation}
Now take $R\ge 1$ and let $f\in H^{m,p}({\bar{A}}_R)$. Then we have 
\begin{align}
[D^\alpha f_R]_{\gamma;{\bar{A}}_1}
&=\sup_{{\rm x},{\rm y}\in{\bar{A}}_1;{\rm x}\ne{\rm y}}
\frac{\big|(D^\alpha f_R)({\rm x})-(D^\alpha f_R)({\rm y})\big|}{|{\rm x}-{\rm y}|^\gamma}\nonumber\\
&=R^{|\alpha|}\sup_{{\rm x},{\rm y}\in{\bar{A}}_1;{\rm x}\ne{\rm y}}
\frac{\big|(D^\alpha f)(R{\rm x})-(D^\alpha f)(R{\rm y})\big|}{|{\rm x}-{\rm y}|^\gamma}\nonumber\\
&=R^{|\alpha|+\gamma}[D^\alpha f]_{\gamma;{\bar{A}}_R}.\label{eq:scaling_holder}
\end{align}
By combining \eqref{eq:scaling_weighted_sobolev} and \eqref{eq:scaling_holder} with \eqref{eq:basic_holder} we obtain that
there exists a constant $C>0$ such that for any $R\ge 1$, $|\alpha|\le k$, and for any $f\in H^{m,p}({\bar{A}}_R)$,
\begin{equation}\label{eq:basic_holder_R}
[D^\alpha f]_{\gamma;{\bar{A}}_R}\le 
\frac{C}{R^{\mu}}\,\|f\|_{W^{m,p}_\delta(A_R)}
\end{equation}
where $\mu:=\Big(\delta+(d/p)\Big)+\gamma+|\alpha|>0$. In addition, by the Sobolev embedding theorem in 
the unit ball $B_1$ we also have that there exists a constant, denoted again by $C>0$, such that 
for any $f\in H^{m,p}({\bar{B}}_1)$,
\begin{equation}\label{eq:basic_holder_1}
[D^\alpha f]_{\gamma;{\bar{B}}_1}\le C\,\|f\|_{W^{m,p}(B_1)}
\end{equation}
where ${\bar{B}}_1$ denotes the closure of $B_1$ in $\R^d$.
Note that
\[
\R^d={\bar{B}}_1\cup\Big(\bigcup_{k\ge 0}{\bar{A}}_{2^k}\Big).
\]
Now, take $f\in C^\infty_b(\R^d)$ and two points ${\rm x},{\rm y}\in\R^d$ such that ${\rm x}\ne{\rm y}$. 
The closed segment $[{\rm x},{\rm y}]:=\big\{{\rm x}+s({\rm y}-{\rm x})\,\big|\,0\le s\le 1\big\}$ intersects 
any given annulus ${\bar{A}}_{2^k}$ with $k\ge 0$ as well as the closed ball ${\bar{B}}_1$ at no more than 
two straight segments:
\[
{\bar{A}}_{2^k}\cap[{\rm x},{\rm y}]=I_k'\cup I_k'',\quad{\bar{B}}_1\cap[{\rm x},{\rm y}]=I_{-1},
\]
where $I_k'$ and $I_k''$ with $k\ge 0$, and $I_{-1}$, can be empty. If such a segment is not empty we set
\[
I_k'=[{\rm x}_k',{\rm y}_k'],\quad I_k''=[{\rm x}_k',{\rm y}_k'],\quad I_{-1}=[{\rm x}_{-1},{\rm y}_{-1}].
\]
Then,
\[
[{\rm x},{\rm y}]=I_{-1}\cup\bigcup_{k\ge 0}\big(I_k'\cup I_k''\big).
\]
This, together with \eqref{eq:basic_holder_R} and \eqref{eq:basic_holder_1}, then implies that
for any $f\in W^{m,p}_\delta(\R^d)$ we have that
\begin{align*}
\frac{\big|(D^\alpha f)({\rm x})-(D^\alpha f)({\rm y})\big|}{|{\rm x}-{\rm y}|^\gamma}&\le
\frac{\big|(D^\alpha f)({\rm x}_{-1})-(D^\alpha f)({\rm y}_{-1})\big|}{|{\rm x}_{-1}-{\rm y}_{-1}|^\gamma}+
\sum_{k\ge 0}\frac{\big|(D^\alpha f)({\rm x}_k')-(D^\alpha f)({\rm y}_k')\big|}{|{\rm x}_k'-{\rm y}_k'|^\gamma}\\
&+\sum_{k\ge 0}\frac{\big|(D^\alpha f)({\rm x}_k'')-(D^\alpha f)({\rm y}_k'')\big|}{|{\rm x}_k''-{\rm y}_k''|^\gamma}
\le[D^\alpha f]_{\gamma;{\bar{B}}_1}+2\sum_{k\ge 0}[D^\alpha f]_{\gamma;{\bar{A}}_{2^k}}\\
&\le C\Big(1+2\sum_{k\ge 0}\frac{1}{2^{\mu k}}\Big)\|f\|_{W^{m,p}_{\delta}}
\end{align*}
where we omit the terms corresponding to empty intervals in the second estimate above and use that for any 
$k\ge 0$ we have that $\|f\|_{W^{m,p}_{\delta}(A_{2^k})}\le\|f\|_{W^{m,p}_\delta(\R^d)}$ and a similar inequality for the ball $B_1$.
Hence, there exists a positive constant $C\equiv C_\mu<\infty$ such that for any $f\in W^{m,p}_\delta(\R^d)$ and for 
any $|\alpha|\le k$ we have that $[D^\alpha f]_\gamma\le C\,\|f\|_{W^{m,p}_{\delta}}$. Since we already proved that 
$W^{m,p}_\delta(\R^d)\subseteq C^k_b(\R^d)$, this completes the proof of item (iii).
\end{proof}

Denote by $\ell^p$, $p\ge 1$, the Banach space of complex-valued sequences $a=(a_l)_{l\ge 1}$ with finite $\ell^p$-norm
$\|a\|_{\ell^p}:=\big(\sum_{l\ge 1}|a_l|^p\big)^{1/p}$.
In the proof of Proposition \ref{prop:main_technical} we use the following simple lemma.

\begin{lemma}\label{lem:inequality}
For $1\le p\le q$ one has that $\ell^p\subseteq\ell^q$ so that for any $a\in\ell^p$ we have
$\|a\|_{\ell^q}\le\|a\|_{\ell^p}$.
\end{lemma}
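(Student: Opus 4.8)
The plan is to reduce the statement to a pointwise comparison of the summands, exploiting the fact that an $\ell^p$ sequence is automatically bounded by its own $\ell^p$-norm. First I would dispose of the trivial case $a=0$, in which both norms vanish. For $a\neq 0$ I would set $M:=\|a\|_{\ell^p}\in(0,\infty)$ and observe that, since $|a_l|^p\le\sum_{k\ge 1}|a_k|^p=M^p$ for every index $l\ge 1$, one has the pointwise bound $|a_l|\le M$ for all $l\ge 1$.

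The key step is then the elementary inequality
\[
|a_l|^q=|a_l|^p\,|a_l|^{q-p}\le |a_l|^p\,M^{q-p},\qquad l\ge 1,
\]
which uses $q\ge p$ together with $|a_l|\le M$ (and reduces to an equality when $q=p$, with the convention $M^{0}=1$). Summing over $l\ge 1$ gives
\[
\sum_{l\ge 1}|a_l|^q\le M^{q-p}\sum_{l\ge 1}|a_l|^p=M^{q-p}\cdot M^p=M^q<\infty,
\]
so in particular $a\in\ell^q$, and extracting $q$-th roots yields $\|a\|_{\ell^q}\le M=\|a\|_{\ell^p}$, which is the claim.

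There is no real obstacle here; the only point that deserves a word of care is that the finiteness of $\|a\|_{\ell^q}$ should not be presumed in advance but is delivered by the displayed estimate itself, after which the monotonicity $t\mapsto t^{1/q}$ gives the norm inequality. The same computation simultaneously covers the case $p=q$, where the asserted inequality is in fact an equality, so no separate treatment is required.
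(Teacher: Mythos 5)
Your argument is correct and is essentially the paper's own proof: the paper simply normalizes to $\|a\|_{\ell^p}=1$ first, so that the pointwise bound $|a_l|\le 1$ gives $|a_l|^q\le|a_l|^p$ directly, whereas you carry the constant $M=\|a\|_{\ell^p}$ through and factor out $M^{q-p}$ — the same idea up to normalization. No gaps; the remark about finiteness of the $\ell^q$-norm being a conclusion rather than a hypothesis is a fair point of care and is handled correctly.
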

\begin{proof}[Proof of Lemma \ref{lem:inequality}]
Assume that $1\le p\le q$. Without loss of generality we can assume that $\|a\|_{\ell^p}=1$. 
Then, $|a_l|\le 1$, and hence $|a_l|^q\le|a_l|^p$ for any $l\ge 1$.
We have,
\[
\|a\|_{\ell^q}^q=\sum_{l\ge 1}|a_l|^q\le \sum_{l\ge 1}|a_l|^p=\|a\|_{\ell^p}^p=1.
\]
This completes the proof of the lemma.
\end{proof}

Proposition \ref{prop:main_technical} allows us to prove the following multiplicative property of 
the weighted Sobolev space $W^{m,p}_\delta(B_R^c)$.

\begin{proposition}\label{prop:W_R}
For any real $\delta_1,\delta_2\in\R$ and integers $0 \le k \le l \le m$ with $m+l-k>d/p$ there exists a constant 
$C\equiv C(d,p,l,k,m,\delta_1,\delta_2)>0$ {\em independent} of the choice of $R\ge 1$ such that for any 
$f\in W^{m,p}_{\delta_1 -(d/p)}(B_R^c)$ and for any $g\in W^{l,p}_{\delta_2 -(d/p)}(B_R^c)$ we have 
that $fg\in W^{k,p}_{\delta_1 + \delta_2 -(d/p)}(B_R^c)$ and
\begin{align}\label{eq:W-product}
\norm{fg}_{W^{k,p}_{\delta_1 + \delta_2 -(d/p)}(B_R^c)} \le
C \; \norm{f}_{W^{m,p}_{\delta_1 -(d/p)}(B_R^c)}  \,  \norm{g}_{W^{l,p}_{\delta_2 -(d/p)}(B_R^c)}.
\end{align}
In particular, for $m>d/p$ and $\delta\in\R$ the weighted Sobolev space $W^{m,p}_\delta(B_R^c)$ is a Banach algebra.
\end{proposition}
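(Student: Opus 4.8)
The plan is to imitate, essentially verbatim, the scaling‑and‑dyadic‑decomposition argument that appears in the proof of Proposition \ref{prop:main_technical}, reducing \eqref{eq:W-product} to the classical (unweighted) Sobolev multiplication theorem on the \emph{fixed} smooth bounded domain $A_1:=B_2\setminus{\bar B}_1$. Write $\mu_1:=\delta_1-(d/p)$, $\mu_2:=\delta_2-(d/p)$ and $\mu:=\delta_1+\delta_2-(d/p)=\mu_1+\mu_2+(d/p)$, and recall from the proof of Proposition \ref{prop:main_technical} the norm $\vertiii{\cdot}_{s,p,\sigma;\mathcal U}$, which is equivalent to $\|\cdot\|_{W^{s,p}_\sigma(\mathcal U)}$ with constants depending only on $s,p,d$ (and not on the open set $\mathcal U$), together with the scaling identity \eqref{eq:scaling_weighted_sobolev}; applied with $\sigma$ in place of $\delta$ it reads $\vertiii{f_\rho}_{s,p,\sigma;A_1}=\rho^{-\sigma-(d/p)}\vertiii{f}_{s,p,\sigma;A_\rho}$, where $f_\rho(x):=f(\rho x)$ and $A_\rho:=B_{2\rho}\setminus{\bar B}_\rho$.

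First I would prove the weighted estimate on the unit annulus: there is a constant $C_0=C_0(d,p,l,k,m,\delta_1,\delta_2)>0$ with
\begin{equation}\label{eq:W_R-unit}
\vertiii{fg}_{k,p,\mu;A_1}\le C_0\,\vertiii{f}_{m,p,\mu_1;A_1}\,\vertiii{g}_{l,p,\mu_2;A_1}
\end{equation}
for all $f\in H^{m,p}({\bar A}_1)$, $g\in H^{l,p}({\bar A}_1)$. On the bounded smooth domain $A_1$ the pointwise product is bounded $W^{m,p}(A_1)\times W^{l,p}(A_1)\to W^{k,p}(A_1)$ whenever $0\le k\le l\le m$ and $m+l-k>d/p$ — this is the classical Sobolev multiplication theorem (for $k=l=m$ it is the analogue on $A_1$ of the $\R^d$ statement of Proposition \ref{prop:properties_W-spaces}(iv), and the general case follows from the Sobolev embeddings of $A_1$ and the Leibniz rule). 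Applying it to $|x|^{\mu_1}f$ and $|x|^{\mu_2}g$, writing $|x|^{\mu}fg=(|x|^{\mu_1}f)(|x|^{\mu_2}g)\,|x|^{d/p}$ and absorbing the smooth boundedly‑differentiable multiplier $|x|^{d/p}$ on $A_1$, and then converting between $\big\|\,|x|^{\sigma}(\cdot)\,\big\|_{W^{s,p}(A_1)}$ and $\vertiii{\cdot}_{s,p,\sigma;A_1}$ via the Leibniz rule and the bounds $\bigl|D^\beta|x|^{\sigma}\bigr|\le C_{\beta,\sigma}$ on $A_1$ — exactly the weight manipulations already carried out in the proof of Proposition \ref{prop:main_technical} — yields \eqref{eq:W_R-unit}. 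Next I would transport \eqref{eq:W_R-unit} to $A_\rho$ for arbitrary $\rho\ge 1$: given $f,g$ on ${\bar A}_\rho$, apply \eqref{eq:W_R-unit} to $f_\rho,g_\rho$ and use that $f_\rho g_\rho=(fg)_\rho$ together with the scaling identity three times, with $\sigma=\mu_1$ on $f$, $\sigma=\mu_2$ on $g$, and $\sigma=\mu$ on $fg$. Since $\mu_1+(d/p)=\delta_1$, $\mu_2+(d/p)=\delta_2$ and $\mu+(d/p)=\delta_1+\delta_2$, the prefactors $\rho^{-\delta_1}$, $\rho^{-\delta_2}$ and $\rho^{-(\delta_1+\delta_2)}$ cancel, leaving
\[
\vertiii{fg}_{k,p,\mu;A_\rho}\le C_0\,\vertiii{f}_{m,p,\mu_1;A_\rho}\,\vertiii{g}_{l,p,\mu_2;A_\rho}
\]
with the \emph{same} $C_0$ for every $\rho\ge 1$; in particular for $\rho=2^jR$, $j\ge 0$.

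Then comes the summation. Writing $B_R^c=\bigcup_{j\ge 0}A_{2^jR}$ as an essentially disjoint union, one has $\vertiii{h}_{s,p,\sigma;B_R^c}^p=\sum_{j\ge 0}\vertiii{h}_{s,p,\sigma;A_{2^jR}}^p$ for every $h,s,\sigma$. Raising the $A_{2^jR}$‑estimate to the power $p$, summing in $j$, and using $\|(b_j)\|_{\ell^\infty}\le\|(b_j)\|_{\ell^p}$ (Lemma \ref{lem:inequality}) to pull out $\bigl(\sup_j\vertiii{g}_{l,p,\mu_2;A_{2^jR}}\bigr)^p\le\vertiii{g}_{l,p,\mu_2;B_R^c}^p$ gives
\[
\vertiii{fg}_{k,p,\mu;B_R^c}^p\le C_0^p\,\vertiii{g}_{l,p,\mu_2;B_R^c}^p\sum_{j\ge 0}\vertiii{f}_{m,p,\mu_1;A_{2^jR}}^p=C_0^p\,\vertiii{f}_{m,p,\mu_1;B_R^c}^p\,\vertiii{g}_{l,p,\mu_2;B_R^c}^p .
\]
Converting back to $W$‑norms (equivalence with $R$‑independent constants) gives \eqref{eq:W-product} for $f,g$ smooth; for general $f\in W^{m,p}_{\mu_1}(B_R^c)$, $g\in W^{l,p}_{\mu_2}(B_R^c)$ I would approximate by smooth functions, the a priori bound applied to differences showing that the product sequence is Cauchy in $W^{k,p}_{\mu}(B_R^c)$ with limit $fg$, which also establishes $fg\in W^{k,p}_{\delta_1+\delta_2-(d/p)}(B_R^c)$. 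Finally, the Banach algebra assertion follows by taking $k=l=m$ (so $m+l-k=m>d/p$) and $\delta_1=\delta_2=\delta+(d/p)$ in \eqref{eq:W-product}, which reads $\|fg\|_{W^{m,p}_{2\delta+(d/p)}(B_R^c)}\le C\,\|f\|_{W^{m,p}_\delta(B_R^c)}\|g\|_{W^{m,p}_\delta(B_R^c)}$, combined with the bounded embedding $W^{m,p}_{2\delta+(d/p)}(B_R^c)\hookrightarrow W^{m,p}_\delta(B_R^c)$ of Proposition \ref{prop:properties_W-spaces}(i) (for $\delta\ge 0$).

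The part I expect to be the main obstacle is not deep but delicate: securing independence of the constant on $R\ge 1$. Applying a multiplication theorem directly on each exterior domain $B_R^c$ could in principle produce constants that degenerate as $R\to\infty$; uniformity is forced precisely by reducing to the fixed domain $A_1$, and this works only because the $-(d/p)$ normalization built into the hypotheses makes the $L^p$‑part of the weighted norm scale invariant, so the scaling prefactors cancel exactly in the step transporting \eqref{eq:W_R-unit} to $A_\rho$. The rest — the weight‑versus‑derivative Leibniz estimates on $A_1$, the exact dyadic tiling of $B_R^c$, and the density/approximation argument — is routine, but that is where the actual writing lies.
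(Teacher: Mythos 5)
Your proposal is correct, but it follows a genuinely different route from the paper's. The paper stays on the exterior domain throughout: it first proves the $L^p$ estimate for the product (inequality \eqref{eq:pre-inequality}, valid whenever $m+l>d/p$) by combining the generalized H\"older inequality with weights (Lemma \ref{lem:holder_with_weights}) with the weighted Sobolev embeddings of Proposition \ref{prop:main_technical} $(i)$--$(ii)$, whose constants are already known to be uniform in $R\ge1$, splitting into the cases $m<d/p$, $m=d/p$, $m>d/p$ to choose the H\"older exponents $p_1,p_2$; it then handles general $k$ by the Leibniz rule, noting that each term $\big(|{\rm x}|^{|\beta|}D^{\beta}f\big)\big(|{\rm x}|^{|\alpha-\beta|}D^{\alpha-\beta}g\big)$ falls under the $L^p$ case because $(m-|\beta|)+(l-|\alpha-\beta|)=m+l-|\alpha|\ge m+l-k>d/p$. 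You instead re-run the scaling machinery of Proposition \ref{prop:main_technical} for the product itself: prove the full $W^{k,p}$ estimate on the fixed annulus $A_1$ (where it reduces to the classical multiplication theorem on a bounded domain), transport it to $A_\rho$ via \eqref{eq:scaling_weighted_sobolev} --- the $-(d/p)$ normalization making the three prefactors cancel exactly --- and sum over the dyadic tiling of $B_R^c$ using Lemma \ref{lem:inequality}. Your version makes the $R$-independence manifest and dispenses with Lemma \ref{lem:holder_with_weights}, at the price of taking the bounded-domain multiplication theorem as a black box; the paper's version keeps everything on $B_R^c$ and recycles its uniform embedding constants. One further point in your favour: the final Banach-algebra assertion as literally stated (``for $\delta\in\R$'') in fact requires $\delta+(d/p)\ge0$ for the inclusion $W^{m,p}_{2\delta+(d/p)}(B_R^c)\subseteq W^{m,p}_{\delta}(B_R^c)$ to hold, and your parenthetical restriction correctly flags this.
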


\begin{remark}\label{rem:R^d}
Note that inequality \eqref{eq:W-product} as well as items (i) and (ii) of Proposition \ref{prop:main_technical} hold with 
$B_R^c$ replaced by $\R^d$ (see Lemma 1.2 in \cite{McOTo2}).
This also easily follows from Proposition \ref{prop:main_technical} (i), (ii), Proposition \ref{prop:W_R}, and Lemma \ref{lem:break}. 
\end{remark}

\begin{proof}[Proof of Proposition \ref{prop:W_R}]
Choose the parameters $\delta_1,\delta_2,k,l,m$ and $R\ge 1$ as in the statement of the proposition.
By the generalized H\"older inequality with weights (see Lemma \ref{lem:holder_with_weights} below) we have that
\begin{align}\label{eq:holder_with_weights}
\norm{fg}_{L^p_{\delta_1+\delta_2-(d/p)}(B_R^c)} \le \norm{f}_{L^{p_1}_{\delta_1-\frac{d}{p_1}}(B_R^c)} \, 
\norm{g}_{L^{p_2}_{\delta_2-\frac{d}{p_2}}(B_R^c)}
\end{align}
for any $f\in L^{p_1}_{\delta_1-\frac{d}{p_1}}(B_R^c)$ and $g\in L^{p_2}_{\delta_1-\frac{d}{p_2}}(B_R^c)$
with $p=\frac{1}{p_1}+\frac{1}{p_2}$, $1<p\le p_1, p_2\le\infty$.
First consider the case when $0\le m<d/p$. Then by Proposition \ref{prop:main_technical} we have the continuous embedding
$W^{m,p}_{\delta -(d/p)}(B_R^c) \subseteq L^q_{\delta-\frac{d}{q}}(B_R^c)$ for any given
$q\in\big[p,d/(\frac{d}{p}-m)\big]$. Now choose $p_1\in\big[p,d/(\frac{d}{p}-m)\big]$ and 
$p_2\in\big[p,d/(\frac{d}{p}-l)\big]$ such that $\frac{1}{p} = \frac{1}{p_1} + \frac{1}{p_2}$.
This can be done, since 
\begin{align*}
\frac{1}{d/(\frac{d}{p}-m)}+\frac{1}{d/(\frac{d}{p}-l)}\le\frac{1}{p},
\end{align*}
which amounts to $m+l>d/p$. (This holds in view of the assumption that $m+l-k>d/p$ with $k\ge 0$.)
Hence, by Proposition \ref{prop:main_technical} $(i)$, we have that
\[
\norm{f}_{L^{p_1}_{\delta_1-\frac{d}{p_1}}(B_R^c)}\le C_1\norm{f}_{W^{m,p}_{\delta_1-(d/p)}(B_R^c)}
\quad\text{\rm and}\quad
\norm{g}_{L^{p_2}_{\delta_2-\frac{d}{p_2}}(B_R^c)}\le C_1\norm{g}_{W^{m,p}_{\delta_2-(d/p)}(B_R^c)}
\]
for any $f\in W^{m,p}_{\delta_1 -(d/p)}(B_R^c)$ and $g\in W^{l,p}_{\delta_2 -(d/p)}(B_R^c)$.
By combining this with \eqref{eq:holder_with_weights}, we then conclude that for $m+l>d/p$ one has that
\begin{align}\label{eq:pre-inequality}
\norm{fg}_{L^p_{\delta_1+\delta_2-(d/p)}(B_R^c)} \le C_2 \norm{f}_{W^{m,p}_{\delta_1-(d/p)}(B_R^c)} \, 
\norm{g}_{W^{l,p}_{\delta_2-(d/p)}(B_R^c)}.
\end{align}
with a constant $C_2>0$ independent of the choice of $R\ge 1$.
In the case when $m=d/p$ we prove \eqref{eq:pre-inequality} by arguing in the same way and by selecting 
$p_1, p_2\in[p,\infty)$ such that $\frac{1}{p} = \frac{1}{p_1} + \frac{1}{p_2}$.
In the case when $m>d/p$ we choose $p_1=p$ and $p_2=\infty$ to conclude from \eqref{eq:holder_with_weights} that
\[
\norm{fg}_{L^p_{\delta_1+\delta_2-(d/p)}(B_R^c)} \le 
\norm{f}_{L^\infty_{\delta_1}(B_R^c)} 
\norm{g}_{L^{p_2}_{\delta_2-(d/p)}(B_R^c)}.
\]
By combining this with Proposition \ref{prop:main_technical} $(ii)$ we prove \eqref{eq:pre-inequality}. 
Hence, the inequality \eqref{eq:pre-inequality} holds if $m+l>d/p$.

In order to complete the proof of the proposition, take $0\le k\le l$ and functions 
$f\in W^{m,p}_{\delta_1 -(d/p)}(B_R^c)$ and 
$g\in W^{l,p}_{\delta_2 -(d/p)}(B_R^c)$. For any $|\alpha|\le k$ we have 
\begin{align*}
|{\rm x}|^{|\alpha|}D^{\alpha}(fg) = \sum_{\beta\le\alpha}\binom{\alpha}{\beta} \left(|{\rm x}|^{|\beta|}D^{\beta}f\right) \, 
\left(|{\rm x}|^{|\alpha  - \beta|}D^{\alpha - \beta}g\right)
\end{align*}
where $|{\rm x}|^{|\beta|}D^{\beta}f \in W^{m-|\beta|,p}_{\delta_1 + |\beta|  -(d/p)}(B_R^c)$ and 
$|{\rm x}|^{|\alpha-\beta|}D^{\alpha-\beta} \in W^{l-|\alpha-\beta|,p}_{\delta_2 + |\alpha-\beta|  -(d/p)}(B_R^c)$.
Note that $(m-|\beta|)+(l-|\alpha-\beta|)=m+l-|\alpha|\ge m+l-k>d/p$.
This, together with \eqref{eq:pre-inequality}, then implies that
\[
|{\rm x}|^{|\alpha|}D^{\alpha}(fg)\in L^p_{\delta_1 + \delta_2 -(d/p)}(B^c_R)
\]
for any $|\alpha|\le k$. It also shows that \eqref{eq:W-product} holds with a constant $C>0$ independent of 
the choice of $R\ge 1$.
\end{proof}

In the proof of Proposition \ref{prop:W_R} we used the following generalized H\"older inequality with weights.

\begin{lemma}\label{lem:holder_with_weights}
For any real $\delta_1,\delta_2\in\R$, $1<p\le p_1,p_2\le\infty$ with $\frac{1}{p}=\frac{1}{p_1}+\frac{1}{p_2}$, and $R>0$ 
we have that
\[
\|fg\|_{L^p_{\delta_1+\delta_2-(d/p)}(B_R^c)}\le
\|f\|_{L^{p_1}_{\delta_1-\frac{d}{p_1}}(B_R^c)}\|g\|_{L^{p_1}_{\delta_1-\frac{d}{p_2}}(B_R^c)}
\]
for any $f\in L^{p_1}_{\delta_1-\frac{d}{p_1}}(B_R^c)$ and $g\in L^{p_2}_{\delta_1-\frac{d}{p_2}}(B_R^c)$.
\end{lemma}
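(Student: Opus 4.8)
The plan is to reduce the weighted H\"older inequality to the classical (unweighted) generalized H\"older inequality by absorbing the weights into the functions. First I would write out the left-hand side explicitly: since the weight $|{\rm x}|^{\delta_1+\delta_2-(d/p)}$ splits as
\[
|{\rm x}|^{\delta_1+\delta_2-(d/p)}=|{\rm x}|^{\delta_1-(d/p_1)}\cdot|{\rm x}|^{\delta_2-(d/p_2)},
\]
which holds precisely because $\tfrac{d}{p}=\tfrac{d}{p_1}+\tfrac{d}{p_2}$, I can write the integrand of $\|fg\|_{L^p_{\delta_1+\delta_2-(d/p)}(B_R^c)}^p$ as the product $\big(|{\rm x}|^{\delta_1-(d/p_1)}|f|\big)^p\big(|{\rm x}|^{\delta_2-(d/p_2)}|g|\big)^p$. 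Setting $F:=|{\rm x}|^{\delta_1-(d/p_1)}f$ and $G:=|{\rm x}|^{\delta_2-(d/p_2)}g$, the claim becomes $\|FG\|_{L^p(B_R^c)}\le\|F\|_{L^{p_1}(B_R^c)}\|G\|_{L^{p_2}(B_R^c)}$, which is exactly the classical generalized H\"older inequality on the measure space $B_R^c$ with $\tfrac1p=\tfrac1{p_1}+\tfrac1{p_2}$.

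Concretely I would carry out the steps as follows. Step one: observe that $F\in L^{p_1}(B_R^c)$ and $G\in L^{p_2}(B_R^c)$ with $\|F\|_{L^{p_1}(B_R^c)}=\|f\|_{L^{p_1}_{\delta_1-(d/p_1)}(B_R^c)}$ and $\|G\|_{L^{p_2}(B_R^c)}=\|g\|_{L^{p_2}_{\delta_2-(d/p_2)}(B_R^c)}$, directly from the definitions of the weighted norms. Step two: apply the standard generalized H\"older inequality (for two exponents $p_1,p_2$ with $\tfrac1{p_1}+\tfrac1{p_2}=\tfrac1p$), which follows from the usual two-factor H\"older inequality applied to $|F|^p\in L^{p_1/p}$ and $|G|^p\in L^{p_2/p}$ with conjugate exponents $p_1/p$ and $p_2/p$ (and the obvious modification when one of $p_1,p_2$ equals $\infty$, in which case one simply bounds one factor pointwise by its $L^\infty$ norm). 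Step three: unravel the substitution, using $|FG|=|{\rm x}|^{\delta_1+\delta_2-(d/p)}|fg|$, to read off the asserted estimate. This also shows $fg\in L^p_{\delta_1+\delta_2-(d/p)}(B_R^c)$.

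There is essentially no obstacle here; the only point requiring a line of care is the exponent bookkeeping, namely checking that the hypothesis $\tfrac1p=\tfrac1{p_1}+\tfrac1{p_2}$ makes the weight split correctly and makes $p_1/p,p_2/p$ a conjugate pair, and handling the boundary case $p_1=\infty$ or $p_2=\infty$ separately (then the $L^\infty$ factor comes out of the integral and the remaining factor is estimated by plain H\"older, or is trivial). Note also that the constant is $1$ and is manifestly independent of $R$, since the argument is carried out on the fixed domain $B_R^c$ with the ambient Lebesgue measure and no scaling is involved.
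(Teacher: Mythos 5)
Your proof is correct and follows essentially the same route as the paper: both split the weight using $\tfrac{d}{p}=\tfrac{d}{p_1}+\tfrac{d}{p_2}$, absorb the weight factors into $f$ and $g$, and apply the classical H\"older inequality with the conjugate pair $p_1/p$, $p_2/p$, handling the case $p_2=\infty$ separately.
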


\begin{proof}[Proof of Lemma \ref{lem:holder_with_weights}]
Assume that $1<p<p_1,p_2<\infty$. Note that the relation $\frac{1}{p}=\frac{1}{p_1}+\frac{1}{p_2}$ implies that 
$\frac{1}{(p_1/p)}+\frac{1}{(p_2/p)}=1$. Then by using the (standard) H\"older inequality we obtain
\begin{eqnarray}
\|fg\|_{L^p_{\delta_1+\delta_2-(d/p)}(B_R^c)}^p&=&
\int_{B_R^c}\Big|fg |{\rm x}|^{\delta_1+\delta_2-(d/p)}\Big|^p\,d{\rm x}=
\int_{B_R^c}\Big|f|{\rm x}|^{\delta_1-\frac{d}{p_1}}\Big|^p\Big|g|{\rm x}|^{\delta_2-\frac{d}{p_2}}\Big|^p\,d{\rm x}\nonumber\\
&\le&\Big(\int_{B_R^c}\Big|f|{\rm x}|^{\delta_1-\frac{d}{p_1}}\Big|^{p_1}\,d{\rm x}\Big)^{p/p_1}
\Big(\int_{B_R^c}\Big|g|{\rm x}|^{\delta_1-\frac{d}{p_2}}\Big|^{p_2}\,d{\rm x}\Big)^{p/p_2}
\end{eqnarray}
which implies the generalized H\"older inequality with weights in the considered case.
It is a straightforward task to check that the inequality also holds with $p_1=p$ and $p_2=\infty$.
This completes the proof of the lemma.
\end{proof}

Now, we a ready to prove Lemma \ref{coro:strict_banach}. Assume that $d=2$ and identify $\R^2$ with the complex plane $\C$
as explained in the Introduction.

\begin{proof} [Proof of Lemma \ref{coro:strict_banach}.]
Assume that $m>2/p$ and that $R\ge 1$.
In the case when $u_1, u_2\in W^{m,p}_{\gamma_N}(B_R^c)$ we obtain from  Proposition \ref{prop:W_R} that
\begin{align}\label{eq:ring_property}
|u_1 u_2|_{\mathcal{Z}^{m,p}_N(B_R^c)} \le C |u_1|_{\mathcal{Z}^{m,p}_N(B_R^c)} |u_2|_{\mathcal{Z}^{m,p}_N(B_R^c)}.
\end{align}
Now, assume that $u_1 = \frac{a}{z^k{\bar z}^l}$, $k,l\ge 0$, and  $u_2 \in W^{m,p}_{\gamma_N}(B_R^c)$.
Then one easily sees that $u_1 u_2 \in W^{m,p}_{\gamma_N+k+l}(B_R^c)\subseteq W^{m,p}_{\gamma_N}(B_R^c)$. 
We also have 
\begin{align*}
\abnorm{u_1 u_2}^p_{\mathcal{Z}^{m,p}_N(B_R^c)} &= \sum_{0 \le |\alpha| \le m} \int_{B_R^c} 
\abnorm{|z|^{\gamma_N + |\alpha|}\partial^{\alpha}\left(\frac{a}{z^k{\bar z}^l}\,u_2\right)}^p\,dx\,dy\\
&\le \sum_{0\le|\alpha|\le m}|a|^p \int_{B_R^c}\left(|z|^{\gamma_N + |\alpha|} 
\sum_{\beta\le \alpha}\binom{\alpha}{\beta}\abnorm{\partial^{\alpha - \beta}\left(\frac{1}{z^k{\bar z}^l}\right)}
\abnorm{\partial^{\beta}u_2}\right)^p\,dx\,dy\\
&\le C_0\sum_{0\le|\alpha|\le m}|a|^p \int_{B_R^c}\left(|z|^{\gamma_N + |\alpha|} \sum_{\beta\le\alpha} 
\frac{1}{|z|^{k+l+|\alpha| - |\beta|}}\abnorm{\partial^{\beta}u_2}\right)^p\,dx\,dy\\
&\le |a|^p C_1\sum_{0\le|\alpha|\le m} \int_{B_R^c}\left(\sum_{\beta\le\alpha} |z|^{\gamma_N+k+l+|\beta|} 
\abnorm{\partial^{\beta}u_2}\right)^p\,dx\,dy\\
&\le C_2 |a|^p \abnorm{u_2}^p_{\mathcal{Z}^{m,p}_N(B_R^c)}
\end{align*}
where $C_2>0$ is independent of the choice of $R\ge 1$ and 
$\partial^\alpha\equiv\partial_z^{\alpha_1}\partial_{\bar z}^{\alpha_2}$.
Hence, \eqref{eq:ring_property} holds with a constant $C>0$ independent of the choice of $R\ge 1$.
In the case when $u_1=\frac{1}{z^{k_1}{\bar z}^{l_1}}$ and  $u_1=\frac{1}{z^{k_1}{\bar z}^{l_1}}$ with
$0\le (k_1+l_1)+(k_2+l_2)\le N$ and $k_1,l_1,k_2,l_2\ge 0$, we obviously have that
\begin{align*}
\left|\frac{a}{z^{k_1}\bar{z}^{l_1}}\frac{b}{z^{k_2}\bar{z}^{l_2}}\right|_{\mathcal{Z}^{m,p}_N(B_R^c)} 
=\left|\frac{a}{z^{k_1}\bar{z}^{l_1}}\right|_{\mathcal{Z}^{m,p}_N(B_R^c)}  
\left| \frac{b}{z^{k_2}\bar{z}^{l_2}}\right|_{\mathcal{Z}^{m,p}_N(B_R^c)}
\end{align*}
by the definition of the norm \eqref{eq:ZR-norm} on $\mathcal{Z}^{m,p}_N(B_R^c)$.
If $(k_1+l_1)+(k_2+l_2)\ge N+1$, one easily sees that $u_1u_2\in W^{m,p}_{\gamma_N}(B_R^c)$
and that \eqref{eq:ring_property} holds with a constant independent of the choice of $R\ge 1$.
The general case easily follows from the considered cases.
\end{proof}

We will also need the following simple lemma.

\begin{lemma}\label{coro:norms}
Assume that $m>2/p$. Then for any $u\in\mathcal{Z}^{m,p}_N$ we have
\begin{enumerate}
\item[(i)] $|u|_{\mathcal{Z}^{m,p}_N(B_R^c)}\to a_{00} $ as $R\to\infty$.
\item[(ii)] For $R\ge 2$ we have that $\sup_{B_R^c}|u| \le |u|_{\mathcal{Z}^{m,p}_N(B_R^c)}$.
\item[(iii)] For any $u\in\mathcal{Z}^{m,p}_N$ we have that $\|u\|_{L^{\infty}} \le C \norm{u}_{\mathcal{Z}^{m,p}_N}$. 
\end{enumerate}
\end{lemma}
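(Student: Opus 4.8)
The plan is to establish the three parts in the order (ii), (iii), (i): the pointwise estimate (ii) is the substantive one, (iii) drops out of (ii) together with the localization Lemma \ref{lem:break}, and (i) is a routine limiting argument. There is no genuinely hard step here — it is a bookkeeping lemma — so the plan below is mainly a matter of invoking the right preceding results.

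For (ii) the decisive observation is that $R\ge 2$ forces $\chi\equiv 1$ on $B_R^c=\{|z|>R\}$, so that any $u\in\mathcal{Z}^{m,p}_N$ restricts on $B_R^c$ to $\sum_{0\le k+l\le N}a_{kl}z^{-k}\bar z^{-l}+f$ with $f\in W^{m,p}_{\gamma_N}$. I would first bound the asymptotic part: for $z\in B_R^c$ one has $|z|>R$, hence $|z|^{-(k+l)}\le R^{-(k+l)}$ for every pair with $k+l\le N$, and therefore $\bigl|\sum_{0\le k+l\le N}a_{kl}z^{-k}\bar z^{-l}\bigr|\le\sum_{0\le k+l\le N}|a_{kl}|R^{-(k+l)}$ on $B_R^c$. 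For the remainder I would apply Proposition \ref{prop:main_technical}(ii) (with $d=2$, regularity $m>2/p$, embedding order $0$, which is admissible since $0<m-(2/p)$, and weight $\gamma_N=\delta-(2/p)$): this gives the continuous inclusion $W^{m,p}_{\gamma_N}(B_R^c)\subseteq C^0(B_R^c)$ together with $\sup_{z\in B_R^c}\bigl(\z^{\gamma_N+2/p}|f(z)|\bigr)\le C\,\|f\|_{W^{m,p}_{\gamma_N}(B_R^c)}$ with $C$ \emph{independent} of $R\ge 1$; since $\gamma_0+(2/p)>0$ forces $\gamma_N+(2/p)>0$, the factor $\z^{\gamma_N+2/p}$ is $\ge 1$ on $B_R^c$, and so $\sup_{B_R^c}|f|\le C\,\|f\|_{W^{m,p}_{\gamma_N}(B_R^c)}$. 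Adding the two estimates and comparing with the definition \eqref{eq:ZR-norm} of $|\cdot|_{\mathcal{Z}^{m,p}_N(B_R^c)}$ yields the pointwise bound (ii); the only mild loss is the fixed embedding constant, which in all uses of (ii) (e.g.\ in Lemma \ref{lem:division} and Lemma \ref{lem:composition}) is absorbed by taking $R$ large and is otherwise immaterial.

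For (iii), fix $R=2$ and invoke Lemma \ref{lem:break}: the norm $\|u\|_{\mathcal{Z}^{m,p}_N}$ is equivalent to $\|u\|_{H^{m,p}(B_3)}+|u|_{\mathcal{Z}^{m,p}_N(B_2^c)}$. On the bounded ball $B_3$, since $m>2/p=d/p$, the ordinary Sobolev (Morrey) embedding gives $\sup_{B_3}|u|\le C\|u\|_{H^{m,p}(B_3)}$, while part (ii) gives $\sup_{B_2^c}|u|\le C\,|u|_{\mathcal{Z}^{m,p}_N(B_2^c)}$. As $\C=\bar B_3\cup B_2^c$, the quantity $\|u\|_{L^\infty}$ does not exceed the larger of these two suprema, hence $\|u\|_{L^\infty}\le C\bigl(\|u\|_{H^{m,p}(B_3)}+|u|_{\mathcal{Z}^{m,p}_N(B_2^c)}\bigr)\le C'\|u\|_{\mathcal{Z}^{m,p}_N}$, which is (iii).

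For (i), split the defining norm as $|u|_{\mathcal{Z}^{m,p}_N(B_R^c)}=|a_{00}|+\sum_{1\le k+l\le N}|a_{kl}|R^{-(k+l)}+\|f\|_{W^{m,p}_{\gamma_N}(B_R^c)}$. The finite middle sum tends to $0$ as $R\to\infty$ because every term carries a factor $R^{-(k+l)}$ with $k+l\ge 1$. For the remainder term, use that $|z|$ and $\z$ are comparable on $\{|z|\ge 1\}$ to dominate $\|f\|_{W^{m,p}_{\gamma_N}(B_R^c)}^{\,p}$ by a fixed multiple of $\sum_{|\alpha|\le m}\int_{\C}\bigl(\z^{\gamma_N+|\alpha|}|\partial^\alpha f|\bigr)^p\,\mathbf 1_{B_R^c}\,dx\,dy$; each integrand lies in $L^1(\C)$ because $f$ belongs to the weighted Sobolev space $W^{m,p}_{\gamma_N}$ over all of $\C$, and $\mathbf 1_{B_R^c}\to 0$ pointwise a.e.\ as $R\to\infty$, so the dominated convergence theorem forces $\|f\|_{W^{m,p}_{\gamma_N}(B_R^c)}\to 0$. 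Hence $|u|_{\mathcal{Z}^{m,p}_N(B_R^c)}\to|a_{00}|$. The only point in the whole argument requiring a little care is the uniformity in $R$ of the embedding used in (ii) and of the weight comparison, which is exactly what Proposition \ref{prop:main_technical}(ii) (together with Remark \ref{rem:R^d}) is designed to provide.
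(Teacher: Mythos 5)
Your proof is correct and takes essentially the same route as the paper's: part (ii) rests on the $R$-uniform weighted Sobolev embedding of Proposition \ref{prop:main_technical} (ii) applied to the remainder, part (i) is the same limiting argument in the definition \eqref{eq:ZR-norm}, and part (iii) differs only in that you go through Lemma \ref{lem:break} and the interior Sobolev embedding where the paper invokes Remark \ref{rem:R^d} and the global embedding directly --- an immaterial variation. The one point worth recording is that your version of (ii) carries the embedding constant $C$ from Proposition \ref{prop:main_technical} (ii), whereas the statement (and the paper's own proof) writes the inequality with constant $1$; since that constant is never shown to be $\le 1$, your formulation is the defensible one, and, as you note, the extra constant is harmless everywhere the estimate is used.
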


\begin{proof}[Proof of Lemma \ref{coro:norms}]
Take $u\in\mathcal{Z}^{m,p}_N$ such that $u = \chi\sum_{0 \le k+l\le N}\frac{a_{kl}}{z^k\bar{z}^l} + f$ and 
$f\in W^{m,p}_{\gamma_N}$. 
Then, item $(i)$ follows directly from the definition \eqref{eq:ZR-norm} of the norm 
$|u|_{\mathcal{Z}^{m,p}_N(B_R^c)}$ together with the fact that $\norm{f}_{W^{m,p}_{\gamma_N}(B_R^c)}\to 0$
as $R\to\infty$. 
In order to prove $(ii)$ take $R\ge 2$. Then, by Proposition \ref{prop:main_technical} $(ii)$, we have
\begin{equation*}
\sup_{B_R^c}|u| \le \sum_{0 \le k+l\le N} \frac{|a_{kl}|}{R^{k+l}} + \sup_{B_R^c}|f| \le 
\sum_{0\le k+l\le N} \frac{|a_{kl}|}{R^{k+l}} + \norm{f}_{W^{m,p}_{\gamma_N}(B_R^c)} = 
|u|_{\mathcal{Z}^{m,p}_N(B_R^c)}.
\end{equation*}
In view of Remark \ref{rem:R^d} we also have
\begin{equation*}
\|u\|_{L^{\infty}}\le C\Big(\sum_{0 \le k+l\le N} |a_{kl}| + \|f\|_{L^\infty}\Big)\le\|u\|_{\mathcal{Z}^{m,p}_N}
\end{equation*}
with $C:=\max\big(1,\sup\chi\big)$.
This completes the proof of the lemma.
\end{proof}

The proof of the following lemma follows easily from the properties of the Fourier transform in $S'$.

\begin{lemma}\label{lem:commutator}
Assume that $u,\partial_z^{-1}u,\partial_z^{-1}\partial u\in S'(\R^2,\R^2)$ where $\partial$ stands for 
$\partial_z$ or $\partial_{\bar z}$. Then the commutator $\big[\partial,\partial_z^{-1}\big]u$ is a polynomial of $\bar z$
with complex coefficients.
\end{lemma}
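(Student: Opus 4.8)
The statement to prove is Lemma \ref{lem:commutator}: if $u$, $\partial_z^{-1}u$, and $\partial_z^{-1}\partial u$ all lie in $S'(\R^2,\R^2)$ (where $\partial$ is $\partial_z$ or $\partial_{\bar z}$), then the commutator $\big[\partial,\partial_z^{-1}\big]u$ is a polynomial in $\bar z$ with complex coefficients.

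\textbf{Plan.} The plan is to work entirely on the Fourier transform side, using that the Cauchy operators act as multiplication operators and that the only distributions killed by $\partial_z$ are those whose Fourier transforms are supported at the origin. First I would recall (cf. Remark \ref{rem:cauchy_kernel}) that $\partial_z : S'\to S'$ has kernel $\C[\bar z]$, which on the Fourier side corresponds to distributions supported at $\{0\}$; more precisely, $\widehat{\partial_z v}(\xi)= \tfrac{i}{2}(\xi_1+i\xi_2)\,\hat v(\xi)$ up to normalization, so $\partial_z v=0$ iff $\hat v$ is a finite linear combination of derivatives of $\delta_0$, i.e. $v\in\C[\bar z]$ (the precise monomials $\bar z^k$ matching the derivatives $\partial^\alpha\delta_0$). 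The key point is that $\partial_z^{-1}$, as used in the paper, is a \emph{choice} of right inverse: for $w$ in the appropriate range, $\partial_z^{-1}w$ is a distribution with $\partial_z(\partial_z^{-1}w)=w$, determined only up to an element of $\ker\partial_z=\C[\bar z]$.

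\textbf{Key steps.} Set $v:=\big[\partial,\partial_z^{-1}\big]u=\partial(\partial_z^{-1}u)-\partial_z^{-1}(\partial u)$; by hypothesis all three terms $\partial_z^{-1}u$, $\partial(\partial_z^{-1}u)$ (which is in $S'$ since differentiation preserves $S'$), and $\partial_z^{-1}(\partial u)$ lie in $S'$, so $v\in S'$ is well defined. Now apply $\partial_z$ to $v$ and use that $\partial_z$ commutes with $\partial$ on all of $S'$ (both are Fourier multipliers, hence commute): $\partial_z\big(\partial(\partial_z^{-1}u)\big)=\partial\big(\partial_z(\partial_z^{-1}u)\big)=\partial u$, and $\partial_z\big(\partial_z^{-1}(\partial u)\big)=\partial u$ because $\partial_z^{-1}$ is a right inverse. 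Subtracting gives $\partial_z v=0$ in $S'$. By the description of $\ker\partial_z$ on $S'$ (Remark \ref{rem:cauchy_kernel}), $v\in\C[\bar z]$, i.e. $v$ is a polynomial in $\bar z$ with complex coefficients, which is exactly the claim. One should note the two places where "right inverse" and "Fourier multiplier" are used: $\partial_z\circ\partial_z^{-1}=\mathrm{id}$ on whatever range the relevant objects sit in, and the commutativity $\partial_z\partial=\partial\partial_z$ on $S'$; both are immediate once everything is phrased via the Fourier transform, where $\partial_z$ and $\partial_{\bar z}$ become multiplication by (scalar multiples of) $\xi_1\pm i\xi_2$.

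\textbf{Main obstacle.} There is essentially no hard analytic obstacle; the only thing requiring care is bookkeeping of \emph{which} right inverse $\partial_z^{-1}$ denotes in each occurrence and verifying that all the intermediate distributions genuinely lie in $S'$ so that the algebra $\partial_z v = \partial u - \partial u = 0$ is legitimate in $S'$ — this is precisely what the hypotheses $u,\partial_z^{-1}u,\partial_z^{-1}\partial u\in S'$ guarantee. The other mild subtlety is pinning down that $\ker(\partial_z\colon S'\to S')$ is exactly $\C[\bar z]$ rather than something larger: this is the standard fact that a tempered distribution whose Fourier transform is supported at a point is a finite linear combination of derivatives of $\delta_0$, translated back via $\widehat{\bar z^k}\sim \partial^k\delta_0$. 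Once that is in hand the proof is two lines.
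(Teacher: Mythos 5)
Your proof is correct and is exactly the argument the paper has in mind: the paper gives no written proof, remarking only that the lemma "follows easily from the properties of the Fourier transform in $S'$," and your computation $\partial_z\big([\partial,\partial_z^{-1}]u\big)=\partial u-\partial u=0$ combined with the characterization $\ker(\partial_z\colon S'\to S')=\C[\bar z]$ from Remark \ref{rem:cauchy_kernel} is precisely that Fourier-side argument. The two points you flag — that $\partial_z^{-1}$ is only a right inverse and that the kernel consists exactly of polynomials in $\bar z$ — are indeed the only things to check, and you handle both correctly.
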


\section{Appendix: Euler's equation in complex coordinates} \label{appendix:complex_form}
Using the complex structure on $\mathbb{C}$ we will rewrite the Euler equation on $\R^2$ in terms of 
the holomorphic component of the fluid velocity vector field $v$. For simplicity we assume in this Appendix that all functions and 
vector fields are $C^\infty$-smooth on $\R^2$. A vector field $v = A(x,y)\partial_x + B(x,y) \partial_y$ on 
$\R^2$ can be written as
\begin{align*}
v=a(z,\bar{z})\,\partial_z + b(z, \bar{z})\,\partial_{\bar{z}},
\end{align*}
where $a = A+i B$, $b = A-i B$, and $\partial_z:=\frac{1}{2}(\partial_x-i\partial_y)$, 
$\partial_{\bar z}:=\frac{1}{2}(\partial_x+i\partial_y)$ are the Cauchy operators. 
In this way the function $a : \C\to\C$, $a\equiv a(z,{\bar z})$, is the {\em holomorphic component} of the real vector field $v$.
Note that $v$ is completely determined by $a(z,\bar{z})$. We will use this fact and rewrite the 2d Euler equation in therms
of its holomorphic component and the Cauchy operators.

\medskip

A direct computation gives
\begin{align}
\divop {v}&=\partial_x A+\partial_y B=(\partial_z+\partial_{\bar{z}})A+i(\partial_z-\partial_{\bar{z}})B\nonumber\\ 
&=\partial_z(A+iB)+\partial_{\bar{z}}(A-iB)\nonumber\\
&=a_z+\bar{a}_{\bar{z}}=2\mathop{\rm Re}(a_z).\label{eq:div-complex}
\end{align}
By identifying $v$ with its holomorphic component $a$, we can write 
\[
\divop a=a_z+\bar{a}_{\bar{z}}.
\]
Similarly, we have
\begin{equation}\label{eq:curl}
\curlop a=\frac{1}{2i}\big(a_z-\bar{a}_{\bar{z}}\big).
\end{equation}
Note that for $a=a(z,\bar{z})$ divergence free we obtain that $\curlop a=-i\,\partial_za$. We also have
\begin{equation*}
v\cdot\nabla v=\big(a\partial_z + \bar{a}\partial_{\bar{z}}\big)(a)\,\partial_z+
\big(a\partial_z+\bar{a}\partial_{\bar{z}}\big)(\bar{a})\,\partial_{\bar{z}}=
(aa_z+\bar{a}a_{\bar{z}})\,\partial_z+(a\bar{a}_z+\bar{a}\bar{a}_{\bar{z}})\,\partial_{\bar{z}}.
\end{equation*}
For a smooth function $\p:\R^2\to\R$ we get
\begin{align*}
\nabla\p &=(\partial_x\p)\partial_x+(\partial_y\p)\partial_y =\p_x(\partial_z+\partial_{\bar{z}}) + 
i\p_y(\partial_z-\partial_{\bar{z}})\\
&=(\p_x+i\p_y)\,\partial_z+(\p_x -i\p_y)\,\partial_{\bar{z}}\\
&=2\p_{\bar{z}}\partial_z+2\p_z\partial_{\bar{z}}.
\end{align*}
As a direct consequence of these identities we obtain that the 2d Euler equation \eqref{eq:euler} can be rewritten as
\begin{equation}\label{EQEulerComplexVar}
u_t+(uu_z+\bar{u}u_{\bar{z}})=-2\p_{\bar{z}},\quad\divop u=u_z+\bar{u}_{\bar{z}}=0,
\end{equation}
where $u\equiv a$ is the holomorphic component of the fluid velocity $v$ and $\p: \C\to\R$ is the pressure.

\medskip

Finally, note that for a divergence free vector $v$ we have that
\begin{eqnarray}
\divop(aa_z+\bar{a}a_{\bar{z}})&=&\partial_z (aa_z+\bar{a}a_{\bar{z}})+\partial_{\bar{z}}(\bar{a}\bar{a}_{\bar{z}}+a\bar{a}_z)\nonumber\\ 
&=&(a_z)^2+(\bar{a}_{\bar{z}})^2+2\bar{a}_z a_{\bar{z}}+a\big(a_{zz}+(\bar{a}_{\bar{z}})_z\big)+
\bar{a}\big((\bar{a}_{\bar{z}})_{\bar{z}}+a_{z\bar{z}}\big)\nonumber\\
&=&2(a_z)^2+2|a_{\bar{z}}|^2,\label{eq:Q}
\end{eqnarray}
where we used that $0=\divop v=a_z+\bar{a}_{\bar{z}}$ by \eqref{eq:div-complex}.


\end{document}